\let\ORIlabel\label
\let\ORIrefstepcounter\refstepcounter
   \let\label\ORIlabel 
   \let\refstepcounter\ORIrefstepcounter}
\newcolumntype{Y}{>{\centering\arraybackslash}X}
\tikzset{
  hatch size/.store in=\hatchsize,
  hatch angle/.store in=\hatchangle,
  hatch line width/.store in=\hatchlinewidth,
  hatch size=5pt,
  hatch angle=0pt,
  hatch line width=.5pt,
}
\def\N			{\mathbb N}
\def\R			{\mathbb R}
\def\Sphere		{\mathbb{S}_1}
\def\K		    {\mathbb{K}}
\def\M	        {\mathcal M}
\def\Lebesgue	{\mathrm L}
\def\Radon		{\mathcal R}
\def\NRCDT		{\mathcal N}
\def\maxNRCDT	{\NRCDT_{\mathrm{m}}}
\def\meanNRCDT	{\NRCDT_{\mathrm{a}}}
\def\GL			{\mathrm{GL}}
\def\d			{\mathop{}\!\mathrm{d}}
\def\D			{\mathrm D}
\def\bfx			{\mathbf{x}}
\def\bfy			{\mathbf{y}}
\def\bfzero		{\mathbf{0}}
\def\bfA			{\mathbf{A}}
\def\bfI			{\mathbf{I}}
\def\bftheta		{{\boldsymbol{\theta}}}
\def\P			{\mathcal P}
\def\X			{\mathcal X}
\def\Glue		{\mathcal I}
\def\F			{\mathbb F}
\def\G			{\mathbb G}
\def\H			{\mathbb H}
\DeclareMathOperator{\diam}{diam}
\DeclareMathOperator{\supp}{supp}
\DeclareMathOperator{\conv}{conv}
\DeclareMathOperator{\mean}{mean}
\DeclareMathOperator{\std}{std}
\DeclareMathOperator*{\argmin}{arg\,min}
\newcommand{\subsubset}{\subset\joinrel\subset}
\def\mNRCDT			{\textsubscript{m}NR-CDT}
\def\aNRCDT			{\textsubscript{a}NR-CDT}
\DeclareFontFamily{U}{mathx}{\hyphenchar\font45}
\DeclareFontShape{U}{mathx}{m}{n}{<-> mathx10}{}
\DeclareSymbolFont{mathx}{U}{mathx}{m}{n}
\DeclareMathAccent{\widebar}{0}{mathx}{"73}
\title{Normalized Radon Cumulative Distribution Transforms for Invariance and Robustness in Optimal Transport Based Image Classification%
\thanks{Preliminary and exploratory ideas have been presented in our conference paper~\cite{Beckmann2024a}.}}
\author{Matthias Beckmann\thanks{Center for Industrial Mathematics, University of Bremen, Germany \& Department of Electrical and Electronic Engineering, Imperial College London, UK 
  (\email{research@mbeckmann.de}).}
\and Robert Beinert\thanks{Institut f\"{u}r Mathematik, Technische Universit\"{a}t Berlin, Germany 
  (\email{beinert@math.tu-berlin.de}).}
\and Jonas Bresch\thanks{Institut f\"{u}r Mathematik, Technische Universit\"{a}t Berlin, Germany 
  (\email{bresch@math.tu-berlin.de}).}}
\begin{document}

\maketitle

\begin{abstract}
The Radon cumulative distribution transform (R-CDT),
is an easy-to-compute feature extractor
that facilitates image classification tasks
especially in the small data regime.
It is closely related to the sliced Wasserstein distance and
provably guaranties the linear separability of image classes
that emerge from translations or scalings.
In many real-world applications,
like the recognition of watermarks in filigranology,
however,
the data is subject to general affine transformations
originating from the measurement process.
To overcome this issue,
we recently introduced the so-called max-normalized R-CDT
that only requires elementary operations and
guaranties the separability under arbitrary affine transformations.
The aim of this paper is to continue 
our study of the max-normalized R-CDT
especially 
with respect to its robustness
against non-affine image deformations.
Our sensitivity analysis shows
that its separability properties 
are stable provided 
the Wasserstein-infinity distance
between the samples can be controlled.
Since the Wasserstein-infinity distance only allows
small local image deformations,
we moreover introduce
a mean-normalized version of the R-CDT.
In this case,
robustness relates to the Wasserstein-2 distance
and also covers image deformations 
caused by impulsive noise
for instance.
Our theoretical results are supported by numerical experiments
showing the effectiveness of our novel feature extractors
as well as their robustness against
local non-affine deformations and impulsive noise.
\end{abstract}

\begin{keywords}
Radon-CDT,
sliced Wasserstein distance,
feature representation,
invariance,
image classification, pattern recognition,
small data regime
\end{keywords}

\begin{AMS}
94A08, 65D18, 68T10
\end{AMS}

\section{Introduction}

Automatic pattern recognition and data classification
play a crucial role in various scientific disciplines and applications,
like medical imaging, biometrics, computer vision or document analysis,
to name just a few.
As of today,
end-to-end deep neural networks provide the state of the art
if sufficient training data is available. 
In the small data regime,
however,
or,
if performance guarantees are important,
hand-crafted feature extractors and classifiers
are still the first choice.
Ideally,
the feature representation is designed to transform the different classes
to linearly separable subsets.
This can,
for instance,
be achieved by applying
the Radon cumulative distribution transform (R-CDT)
introduced in~\cite{Kolouri2016},
which is based on one-dimensional optimal transport maps,
also called cumulative distribution transform~\cite{Park2018, Aldroubi2022},
that are generalized to two-dimensional data
by applying the Radon transform~\cite{Radon1917, Helgason1999},
known from computerized tomography~\cite{Ramm1996, Natterer2001}.
This approach shows great potential in many applications
\cite{Kolouri2017, Shifat-E-Rabbi2021, Aldroubi2021, DiazMartin2024, Zhuang2025}
and is closely related to the sliced Wasserstein distance \cite{Bonneel2015, Shifat-E-Rabbi2023, Liu2024, Park2025, Piening2025}.
A similar approach for data on the sphere is studied in \cite{Quellmalz2023,Quellmalz2024},
for multi-dimensional optimal transport maps in \cite{Moosmueller2023, Cloninger2025},
and for optimal Gromov--Wasserstein transport maps in \cite{Beier2022,Beinert2023}.

In our recent work~\cite{Beckmann2024a}, we introduced a novel normalization of the R-CDT, we referred to as max-normalized R-CDT (\mNRCDT), to enhance linear separability in the context of affine transformations.
This was inspired by the special needs for applying pattern recognition techniques in filigranology---the study of watermarks.
These play a central role in provenance research like dating of historical manuscripts, scribe identification and paper mill attribution.
For automatic classification,
the main issues are the enormous number of classes
with only few members per class,
see WZIS%
\footnote{Wasserzeichen-Informationssystem: \url{www.wasserzeichen-online.de}.},
as well as the uncertainty with respect to the position, size, orientation and slight distortion of the watermark in the digitized image.
A first end-to-end processing pipeline for thermograms of watermarks 
including an R-CDT-based classification is proposed
in \cite{Hauser2024}, where classification invariance
with respect to translation and dilation of the watermark is achieved,
but other affine transformations are not included.
In contrast to this, our \mNRCDT{} ensures the linear separability of affinely transformed classes without any restrictions on the affine transformations, as theoretically shown in~\cite[Theorem~1]{Beckmann2024a} and numerically validated
by proof-of-concept experiments in~\cite[\S~4]{Beckmann2024a}.

In this work, we go one step further and analyse the linear separability in \mNRCDT{} space when allowing for small perturbations measured in Wasserstein-$\infty$ space in addition to affine transformations.
Recall that our definition of the \mNRCDT{} assumes a compact support of the considered measure.
To weaken this assumption, we introduce a new normalization of the R-CDT, which we call mean-normalized R-CDT (\aNRCDT).
We study the linear separability in \aNRCDT{} space for measure classes constructed by affinely transforming distinguishable template measures.
We observe that, in contrast to \mNRCDT, our new normalization step poses restrictions on the affine transformations in order to guarantee separability.
However, when considering perturbations of the templates, our new normalization allows for distortions measured in Wasserstein-$2$ distance instead of the more restrictive Wasserstein-$\infty$ metric.

This manuscript is organized as follows.
In Section~\ref{sec:prelim}, we introduce basic concepts and fix our notation.
Section~\ref{sec:RCDT} is devoted to the R-CDT for bivariate measures, where we start with explaining the CDT for probability measures on $\R$ and, then, extend it to the R-CDT for probability measures on $\R^2$ by means of the Radon transform.
In Section~\ref{sec:NRCDT} we first recall our definition of the normalized R-CDT from~\cite{Beckmann2024a} and show elementary properties.
The \mNRCDT{} is explained in Section~\ref{ssec:mNRCDT}, where we also recall the linear separability result from~\cite{Beckmann2024a} and extend it by considering perturbations in Wasserstein-$\infty$ space.
Thereon, our novel \aNRCDT{} is introduced in Section~\ref{ssec:aNRCDT} and we show linear separability in \aNRCDT{} space under affine transformations and, additionally, perturbations in Wasserstein-$2$ space.
Our theoretical findings are illustrated by numerical experiments in Section~\ref{sec:numerics}, showing the effectiveness of our approach.
Section~\ref{sec:conclusion} concludes with a discussion of our results and future research direction.

\section{Preliminaries}
\label{sec:prelim}

Throughout the paper, 
we restrict our attention to 
functions and measures on
the Euclidean space $(\R^d, \lVert \cdot\rVert)$
and the infinite cylinder $\R \times \Sphere$
with $\Sphere \coloneqq \{ \bfx \in \R^2 \mid \lVert\bfx\rVert=1\}$.
Compact subsets of these are indicated
using the symbol $\subsubset$.
For domain $X$,
we denote the Lebesgue spaces by $L^p(X)$;
the space of continuous, bounded functions by $C_b(X)$;
the space of continuous functions vanishing at infinity by $C_0(X)$;
and the space of finite, signed, regular (Borel) measures by $\M(X)$.
Recall that $\M(X)$ is the continuous dual of $C_0(X)$.
For $\mu \in \M(X)$,
its \emph{support} $\supp(\mu)$ is the minimal closed subset $Y \subset X$
such that $\mu(X \setminus Y) = 0$.
For $\mu \in \M(\R^d)$,
the \emph{dimension} of the affine hull of $\supp(\mu)$
is denoted by $\dim (\mu)$,
and the \emph{diameter} is given by
$\diam(\mu) \coloneqq \sup_{\bfx,\bfy \in \supp(\mu)} \lVert \bfx - \bfy \rVert$.

For two domains $X$ and $Y$,
the \emph{push-forward} of a Borel probability measure $\mu \in \P(X)$
via a mapping $T \colon X \to Y$
is defined by $T_\# \mu \coloneqq \mu \circ T^{-1}$.
To compare two measures,
we use the so-called Wasserstein or Kantorovich--Rubinstein metric,
which is defined on spaces of \emph{probability measures
with finite $p$th moment} given by
\begin{align*}
    \P_p(\R^d)
    &\coloneqq
    \Bigl\{ \mu \in \P(\R^d) 
    \Bigm\vert
    \int_{\R^d} \lVert \bfx \rVert^p \d\bfx < \infty \Bigr\},
    \qquad p \in [1, \infty),
    \\
    \P_\infty(\R^d)
    &\coloneqq
    \Bigl\{ \mu \in \P(\R^d)
    \Bigm\vert 
    \sup\nolimits_{\bfx \in \supp(\mu)} \lVert \bfx \rVert < \infty \Bigr\}.
\end{align*}
Moreover,
we introduce the canonical projections 
$P_1(\bfx,\bfy) \coloneqq \bfx$ 
and $P_2(\bfx, \bfy) \coloneqq \bfy$
as well as the \emph{set of transport plans}
\begin{equation*}
    \Pi(\mu,\nu)
    \coloneqq 
    \bigl\{
    \pi \in \P(\R^d \times \R^d)
    \bigm\vert 
    (P_{1})_{\#} \pi = \mu, ~
    (P_{2})_{\#} \pi = \nu
    \bigr\},
    \qquad
    \mu,\nu \in \P(\R^d).
\end{equation*}
Then,
the \emph{Wasserstein-$p$ distance} 
between $\mu,\nu \in \P_p(\R^d)$
is defined as
\begin{subequations}
    \label{eq:wasserstein}
    \begin{align}
        W_p(\mu, \nu) 
        &\coloneqq
        \inf_{\pi \in \Pi(\mu,\nu)}
        \Bigl( 
        \int_{\R^d \times \R^d} 
        \lVert \bfx - \bfy \rVert^p 
        \d \pi(\bfx, \bfy)
        \Bigr)^\frac{1}{p},
        \qquad p \in [1, \infty)
        \\
        W_\infty(\mu, \nu)
        &\coloneqq 
        \inf_{\pi \in \Pi(\mu,\nu)}
        \sup_{(\bfx,\bfy) \in \supp(\pi)} \lVert \bfx - \bfy \rVert.
    \end{align}
\end{subequations}
The Wasserstein-$p$ space $(\P_p(\R^d), W_p)$ is a metric space,
and the infima in \eqref{eq:wasserstein} are attained by
an optimal transport plan $\pi \in \Pi(\mu,\nu)$;
see \cite[Prop.~1 \& 2]{Givens1984}.

\section{Radon Cumulative Distribution Transform}
\label{sec:RCDT}

Following the approach in~\cite{Park2018}, for a probability measure $\mu \in \P(\R)$, we consider its cumulative distribution function $F_\mu \colon \R \to [0,1]$ given by
\begin{equation*}
F_\mu(t) = \mu((-\infty,t]),
\quad  t \in \R,
\end{equation*}
and define the {\em cumulative distribution transform} $\widehat{\mu} \colon \R \to \R$, in short CDT, via
\begin{equation*}
\widehat{\mu} = F_\mu^{[-1]} \circ F_\rho
\end{equation*}
with the generalized inverse, known as quantile function,
\begin{equation*}
F_\mu^{[-1]}(t) = \inf \{s \in \R \mid F_\mu(s) > t\},
\quad t \in \R
\end{equation*}
and a reference $\rho \in \P_2(\R)$ that does not give mass to atoms, e.g.,
$\rho = \chi_{[0,1]} \lambda_\R$, where $\lambda_\R$ denotes the standard Lebesgue measure on $\R$.
Note that, if $\mu \in \P_2(\R)$ has finite second moment, we have
\begin{equation*}
\widehat{\mu} = \argmin_{T_\# \rho = \mu} \int_\R |s-T(s)|^2 \: \d \rho(s),
\end{equation*}
i.e., $\widehat{\mu}$ is the unique map $T \colon \R \to \R$ transporting $\rho$ to $\mu$ while minimizing the cost, cf.~\cite{Villani2003}.
Moreover, $\widehat{\mu}$ is square integrable with respect to $\rho$, i.e., $\widehat{\mu} \in L^2_\rho(\R)$, and, for $\mu, \nu \in \P_2(\R)$, we have
\begin{equation*}
    \lVert \widehat{\mu} - \widehat{\nu} \rVert_\rho
    \coloneqq
    \Bigl(
        \int_\R
        \lvert \mu(t) - \nu(t) \rvert^2
        \: \d \rho(t)
    \Bigr)^{\frac{1}{2}}
    =
    W_2(\mu,\nu).
\end{equation*}

To deal with a probability measure $\mu \in \P(\R^2)$, we adapt the approach in~\cite{Kolouri2016} and apply the so-called Radon transform to obtain a family of probability measures on $\R$.
For a bivariate function $f \in L^1(\R^2)$, 
its {\em Radon transform} $\Radon [f] \colon \R \times \Sphere \to \R$ is defined as the line integral 
\begin{equation*}
    \Radon [f] (t, \bftheta) 
    \coloneqq 
    \int_{\ell_{t,\bftheta}} 
    f(s) 
    \: \d s,
    \quad 
    (t,\bftheta) \in \R \times \Sphere,
\end{equation*}
where $\d s$ denotes the arc length element of the straight line $\ell_{t,\bftheta}$ 
with signed distance $t \in \R$ to the origin and normal direction 
$\bftheta \in \Sphere \coloneqq \{\bfx \in \R^2 \mid \lVert \bfx \rVert = 1\}$,
i.e.,
\begin{equation*}
    \ell_{t,\bftheta} 
    \coloneqq 
    \{ t \bftheta + \tau \bftheta^\perp 
        \mid 
        \tau \in \R\}
    =
    S_\bftheta^{-1}(t)
    \subset \R^2
\end{equation*}
with \textit{slicing operator} $S_\bftheta\colon \R^2 \to \R$ given by
\begin{equation*}
    S_\bftheta(\bfx) \coloneqq \langle \bfx, \bftheta \rangle,
    \quad
    \bfx \in \R^2.
\end{equation*}
This defines the \textit{Radon operator} $\Radon \colon L^1(\R^2) \to L^1(\R \times \Sphere)$
and, for fixed $\bftheta \in \Sphere$, 
we set $\Radon_\bftheta \coloneqq \Radon(\cdot, \bftheta)$, 
which is referred to as the \textit{restricted Radon operator} 
$\Radon_\bftheta \colon L^1(\R^2) \to L^1(\R)$.
It is well known that $\Radon$ preserves mass, cf.~\cite{Natterer2001}, in the sense that,
for any $f \in L^1(\R^2)$, 
we have
\begin{equation} \label{eq:RT_fun_mass}
    \int_{\R} \Radon_\bftheta [f](t) \d t = \int_{\R^2} f(\bfx) \: \d \bfx,
    \qquad
    \int_{\Sphere} \int_{\R} \Radon [f](t, \bftheta) \: \d t \, \d u_{\Sphere}(\bftheta) = \int_{\R^2} f(\bfx) \: \d \bfx,
\end{equation}
where $u_{\Sphere} = \frac{\sigma_{\Sphere}}{2\pi}$ with surface measure $\sigma_{\Sphere}$ on $\Sphere$.
The adjoint $\Radon^* \colon L^\infty(\R \times \Sphere) \to L^\infty(\R^2)$ of Radon operator $\Radon \colon L^1(\R^2) \to L^1(\R \times \Sphere)$, called {\em back projection operator}, is given by 
\begin{equation*}
   \Radon^* [g](\bfx) \coloneqq \int_{\Sphere} g(S_\bftheta(\bfx), \bftheta) \: \d \sigma_{\Sphere}(\bftheta),
    \quad \bfx \in \R^2,
\end{equation*}
and, for fixed $\bftheta \in \Sphere$, the adjoint $\Radon^*_\bftheta \colon L^\infty(\R) \to L^\infty(\R^2)$ of the restricted Radon operator $\Radon_\bftheta \colon L^1(\R^2) \to L^1(\R)$ is given by 
\begin{equation*}
    \Radon^*_\bftheta [h] (\bfx) = h(S_\bftheta(\bfx)),
    \quad \bfx \in \R^2.
\end{equation*}
Furthermore,
$\Radon^* \colon C_b(\R \times \Sphere) \to C_b(\R^2)$
and
$\Radon^*_\bftheta \colon C_b(\R) \to C_b(\R^2)$.
This allows us to translate the concept of the Radon transform to signed, regular, finite measures $\mu \in \M(\R^2)$.
For a fixed direction $\bftheta \in \Sphere$, we generalize the \textit{restricted Radon transform} $\Radon_\bftheta$ to measures by setting
\begin{equation*}
    \Radon_\bftheta \colon \M(\R^2) \to \M(\R), \quad
    \mu \mapsto (S_\bftheta)_\# \mu = \mu \circ S_\bftheta^{-1},
\end{equation*}
which corresponds to the integration along the lines $\ell_{t,\bftheta}$.
As for functions in~\eqref{eq:RT_fun_mass}, we have
\begin{equation*}
    \Radon_\bftheta[\mu](\R) = \mu(\R^2)
    \quad \forall \, \bftheta \in \Sphere,
\end{equation*}
thus, mass is preserved by $\Radon_\bftheta$.
In measure theory, $\Radon_\bftheta$ can be considered as a disintegration family and,
heuristically, 
we generalize the Radon transform 
by integrating $\Radon_\bftheta$ along $\bftheta \in \Sphere$.
Therefore, we define the \textit{Radon transform} $\Radon \colon \M(\R^2) \to \M(\R \times \Sphere)$ via
\begin{equation*}
    \Radon [\mu] \coloneqq \Glue_\#[\mu \times u_{\Sphere}]
\end{equation*}
with {\em glueing operator} $\Glue\colon \R^2 \times \Sphere \to \R \times \Sphere$ given by
\begin{equation*}
    \Glue(\bfx, \bftheta) \coloneqq (S_\bftheta(\bfx), \bftheta),
    \quad (\bfx, \bftheta) \in \R^2 \times \Sphere.
\end{equation*}

In~\cite[Proposition 1]{Beckmann2024a} we have shown that, for $\mu \in \M(\R^2)$, $\Radon [\mu]$ can indeed be disintegrated into the family $\Radon_\bftheta [\mu]$ with respect to the uniform measure $u_{\Sphere} = \tfrac{\sigma_{\Sphere}}{2\pi}$,
i.e., for all $g \in C_0(\R \times \Sphere)$, we have
\begin{equation*}
    \langle \Radon [\mu], g\rangle = \int_{\Sphere} \langle \Radon_{\bftheta} [\mu], g(\cdot, \bftheta)\rangle \: \d u_{\Sphere}(\bftheta).
\end{equation*}
Moreover, in~\cite[Proposition 2]{Beckmann2024a} we have proven that  the measure-valued transforms $\Radon$ and $\Radon_\bftheta$ are the adjoints of the back projection operators $\Radon^*$ and $\Radon_\bftheta^*$ from above.
More precisely, the Radon transform of $\mu \in \M(\R^2)$ satisfies
\begin{equation} \label{eq:RT_duality}
    \langle \Radon [\mu], g\rangle = \langle \mu, \Radon^* [g]\rangle
    \quad \forall \,
    g \in L^\infty(\R \times \Sphere)
\end{equation}
and
\begin{equation*}
    \langle \Radon_\bftheta [\mu], h\rangle = \langle \mu, \Radon_\bftheta^* [h]\rangle
    \quad \forall \,
    h \in L^\infty(\R) ~
    \forall \,
    \bftheta \in \Sphere.
\end{equation*}
This observation suggests 
that the Radon transform for measures 
can equivalently be defined through duality.
However, 
the dual space of $\M(\R)$ is $C_0(\R)$, 
whereas $h \in C_0(\R) \setminus \{0\}$ 
does not imply that $\Radon_\bftheta^* [h] \in C_0(\R)$ 
for $\bftheta \in \Sphere$.
But if $h \in C_0(\R\times \Sphere)$,
we have $\Radon^*[h] \in C_0(\R^2)$.

\begin{proposition}
    Let $h \in C_0(\R\times \Sphere)$.
    Then,
    we have $\Radon^*[h] \in C_0(\R^2)$.
\end{proposition}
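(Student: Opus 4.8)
The plan is to leverage the mapping property $\Radon^* \colon C_b(\R\times\Sphere) \to C_b(\R^2)$ recalled just above, which already yields continuity and boundedness of $\Radon^*[h]$, and to establish the missing decay $\Radon^*[h](\bfx) \to 0$ as $\lVert\bfx\rVert\to\infty$ directly from the representation $\Radon^*[h](\bfx) = \int_\Sphere h(S_\bftheta(\bfx),\bftheta)\,\d\sigma_\Sphere(\bftheta)$ with $S_\bftheta(\bfx)=\langle\bfx,\bftheta\rangle$.

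First I would use the compactness of $\Sphere$: since $h\in C_0(\R\times\Sphere)$, for every $\varepsilon>0$ the super-level set $\{(t,\bftheta)\in\R\times\Sphere : \lvert h(t,\bftheta)\rvert\ge\varepsilon\}$ is compact, hence contained in a slab $[-R_\varepsilon,R_\varepsilon]\times\Sphere$ for some $R_\varepsilon>0$. Equivalently, $\lvert h(t,\bftheta)\rvert<\varepsilon$ whenever $\lvert t\rvert>R_\varepsilon$, uniformly in $\bftheta\in\Sphere$. This uniform-in-$\bftheta$ control is exactly where compactness of the sphere enters, and it is the reason the statement fails for $C_0(\R\times\R)$ as noted before the proposition.

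Next, for a fixed $\bfx\in\R^2$ I would split $\Sphere$ into the ``good'' directions $G_\bfx\coloneqq\{\bftheta\in\Sphere : \lvert S_\bftheta(\bfx)\rvert>R_\varepsilon\}$ and the ``bad'' ones $B_\bfx\coloneqq\Sphere\setminus G_\bfx$. On $G_\bfx$ the integrand is bounded by $\varepsilon$ by the previous step, contributing at most $\varepsilon\,\sigma_\Sphere(\Sphere)$; on $B_\bfx$ it is bounded by $\lVert h\rVert_\infty$, contributing at most $\lVert h\rVert_\infty\,\sigma_\Sphere(B_\bfx)$. The only computational point is that $\sigma_\Sphere(B_\bfx)\to 0$ as $\lVert\bfx\rVert\to\infty$: writing $\bftheta=(\cos\phi,\sin\phi)$ and $\bfx=\lVert\bfx\rVert(\cos\alpha,\sin\alpha)$ gives $S_\bftheta(\bfx)=\lVert\bfx\rVert\cos(\phi-\alpha)$, so $B_\bfx$ corresponds to $\lvert\cos(\phi-\alpha)\rvert\le R_\varepsilon/\lVert\bfx\rVert$, an angular set of measure $O(R_\varepsilon/\lVert\bfx\rVert)$ as soon as $\lVert\bfx\rVert>R_\varepsilon$.

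Combining, given $\varepsilon>0$ I first fix $R_\varepsilon$, then choose $\rho>R_\varepsilon$ so large that $\lVert h\rVert_\infty\,\sigma_\Sphere(B_\bfx)<\varepsilon$ whenever $\lVert\bfx\rVert>\rho$; then $\lvert\Radon^*[h](\bfx)\rvert<(\sigma_\Sphere(\Sphere)+1)\varepsilon$ for all such $\bfx$. As $\varepsilon$ was arbitrary, $\Radon^*[h]$ vanishes at infinity, and together with $\Radon^*[h]\in C_b(\R^2)$ this gives $\Radon^*[h]\in C_0(\R^2)$. I do not expect a genuine obstacle here; the only care needed is ordering the quantifiers (choose $R_\varepsilon$ before $\rho$) and invoking the compactness of $\Sphere$ for the uniform decay of $h$.
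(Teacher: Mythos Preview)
Your proposal is correct and follows essentially the same approach as the paper: both arguments use the compactness of $\Sphere$ to obtain uniform decay of $h$ in the radial variable, split the angular integral according to whether $\lvert\langle\bfx,\bftheta\rangle\rvert$ exceeds the resulting threshold, and show the measure of the ``bad'' set of directions tends to zero as $\lVert\bfx\rVert\to\infty$. Your write-up is slightly more explicit about invoking $\Radon^*\colon C_b\to C_b$ for continuity and about the role of the compactness of $\Sphere$, but the mathematical content is the same.
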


\begin{proof}
	Let $(\bfx_n)_{n \in \N} \subseteq \R^2$
	such that $\lVert\bfx_n\rVert \to \infty$ for $n \to \infty$.
	For arbitrarily $\varepsilon > 0$ 
	there exists $M_\varepsilon > 0$ such that 
	$\lvert h(x,\bftheta)\rvert < \tfrac{\varepsilon}{2}$
	for all $\lvert x \rvert \geq M_\varepsilon$
	and $\bftheta \in \Sphere$.
	Hence,
	we have 
	\begin{equation*}
		|\Radon^*[h](\bfx_n)|
		=
		\biggl\lvert \int_{\Sphere} 
		h(\langle \bfx_n, \bftheta \rangle, \bftheta) 
		\: \d u_{\Sphere}(\bftheta)\biggr\rvert
		\leq 
		\frac{\varepsilon}{2}
		+ 
		\int_{\{\lvert \langle \bfx_n, \bftheta\rangle\rvert < M_\varepsilon\}}
		\lvert h(\langle \bfx_n, \bftheta \rangle, \bftheta) \rvert
		\: \d u_{\Sphere}(\bftheta).
	\end{equation*}
	Since $h \in C_0(\R\times \Sphere)$,
	there exits for $\varepsilon' > 0$ 
	a compact set $K_{\varepsilon'}$ such that 
	$\rvert h|_{K_{\varepsilon'} \times \Sphere}\lvert \geq \varepsilon'$
	and hence there exists 
	$C \coloneqq \max_{(x,\bftheta) \in \R\times \Sphere} |h(x, \bftheta)| > 0$.
	Combining both yields 
	\begin{equation*}
		|\Radon^*[h](\bfx_n)|
		\leq 
		\frac{\varepsilon}{2}
		+ 
		C \, u_{\Sphere}(\{\lvert \langle \bfx_n, \bftheta\rangle\rvert < M_\varepsilon\}).
	\end{equation*}
	Since 
	\begin{equation*}
		\lvert \langle \bfx_n, \bftheta\rangle\rvert 
		< 
		M_\varepsilon
		\quad\Leftrightarrow\quad
		\Bigl\lvert \Bigl\langle \frac{\bfx_n}{\lVert\bfx_n\rVert}, \bftheta\Bigr\rangle\Bigr\rvert 
		< 
		\frac{M_\varepsilon}{\lVert\bfx_n\rVert},
	\end{equation*}
	there exists $N \in \N$ such that 
	$u_{\Sphere}(\{\lvert \langle \bfx_n, \bftheta\rangle\rvert < M_\varepsilon\}) < \tfrac{\varepsilon}{2C}$
	and the assertion follows.
\end{proof}

The definition of the Radon transform for measures is compatible with the classical definition for functions in the sense that the Radon transform of an absolutely continuous measure is again absolutely continuous.
To see this, we denote the surface measure by $\sigma_\K$ and the Lebesgue measure by $\lambda_\K$ for the different sets $\K \in \{\R, \R^2, \Sphere, \R \times \Sphere\}$.
    
\begin{proposition}
    Let $f \in L^1(\R^2)$.
    The Radon transform satisfies 
    \begin{equation*}
    \Radon[f \lambda_{\R^2}]  =  \Radon [f] \, \sigma_{\R \times \Sphere}
    \quad\text{and}\quad
    \Radon_\bftheta[f \lambda_{\R^2}] = \Radon_\bftheta [f] \, \lambda_{\R}.
    \end{equation*}
\end{proposition}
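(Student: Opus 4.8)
The plan is to establish the identity for the restricted transform $\Radon_\bftheta$ first and then assemble the full statement from it. A preliminary observation is that for $f\in L^1(\R^2)$ the line integral $\Radon_\bftheta[f](t)=\int_{\ell_{t,\bftheta}}f\,\d s=\int_\R f(t\bftheta+\tau\bftheta^\perp)\,\d\tau$ exists for $\lambda_\R$-almost every $t$ and, by Fubini's theorem in the orthonormal coordinates $(t,\tau)\mapsto t\bftheta+\tau\bftheta^\perp$, satisfies $\Radon_\bftheta[f]\in L^1(\R)$ with $\lVert\Radon_\bftheta[f]\rVert_{L^1(\R)}\le\lVert f\rVert_{L^1(\R^2)}$; likewise $\Radon[f]\in L^1(\R\times\Sphere)$. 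Hence $\Radon_\bftheta[f]\,\lambda_\R$ and $\Radon[f]\,\sigma_{\R\times\Sphere}$ are genuine finite (signed) measures, and since the left-hand sides are finite measures as well, it suffices to check agreement on Borel sets.

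For the restricted identity I would evaluate the push-forward directly: recalling $\Radon_\bftheta[f\lambda_{\R^2}]=(S_\bftheta)_\#(f\lambda_{\R^2})$, for any Borel $B\subseteq\R$ we have $\Radon_\bftheta[f\lambda_{\R^2}](B)=\int_{S_\bftheta^{-1}(B)}f\,\d\lambda_{\R^2}$. The rotation $(t,\tau)\mapsto t\bftheta+\tau\bftheta^\perp$ has unit Jacobian and carries $B\times\R$ onto the strip $S_\bftheta^{-1}(B)$, so Fubini gives $\int_{S_\bftheta^{-1}(B)}f\,\d\lambda_{\R^2}=\int_B\bigl(\int_\R f(t\bftheta+\tau\bftheta^\perp)\,\d\tau\bigr)\d t=\int_B\Radon_\bftheta[f](t)\,\d t=(\Radon_\bftheta[f]\,\lambda_\R)(B)$, which proves $\Radon_\bftheta[f\lambda_{\R^2}]=\Radon_\bftheta[f]\,\lambda_\R$.

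For the full transform I would expand $\Radon[f\lambda_{\R^2}]=\Glue_\#\bigl[(f\lambda_{\R^2})\times u_\Sphere\bigr]$ and evaluate on a Borel set $A\subseteq\R\times\Sphere$. With the slice $A_\bftheta\coloneqq\{t\in\R\mid(t,\bftheta)\in A\}$ and $\Glue^{-1}(A)=\{(\bfx,\bftheta)\mid S_\bftheta(\bfx)\in A_\bftheta\}$, Fubini on $\R^2\times\Sphere$ yields $\Radon[f\lambda_{\R^2}](A)=\int_\Sphere\bigl(\int_{S_\bftheta^{-1}(A_\bftheta)}f\,\d\lambda_{\R^2}\bigr)\d u_\Sphere(\bftheta)$; inserting the restricted identity and using $\Radon[f](t,\bftheta)=\Radon_\bftheta[f](t)$ turns this into $\int_\Sphere\int_{A_\bftheta}\Radon[f](t,\bftheta)\,\d t\,\d u_\Sphere(\bftheta)=\int_A\Radon[f]\,\d(\lambda_\R\otimes u_\Sphere)$. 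Identifying $\lambda_\R\otimes u_\Sphere$ with the surface measure $\sigma_{\R\times\Sphere}$ on the cylinder (consistently with the normalization $u_\Sphere=\sigma_\Sphere/(2\pi)$) gives $\Radon[f\lambda_{\R^2}]=\Radon[f]\,\sigma_{\R\times\Sphere}$. As an alternative one could pair both sides with $g\in C_0(\R\times\Sphere)$ and use the duality~\eqref{eq:RT_duality} together with the disintegration $\langle\Radon[\mu],g\rangle=\int_\Sphere\langle\Radon_\bftheta[\mu],g(\cdot,\bftheta)\rangle\,\d u_\Sphere(\bftheta)$ from \cite[Proposition~1]{Beckmann2024a}.

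I expect no real difficulty here: the only technical point is to run Fubini's theorem and the rotational change of variables at the level of $L^1$-functions (rather than merely for continuous test functions) and to confirm the resulting partial integrals are measurable in $\bftheta$ --- both standard --- while everything else, including the bookkeeping of the $2\pi$-factor relating $u_\Sphere$ and $\sigma_{\R\times\Sphere}$, is routine.
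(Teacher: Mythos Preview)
Your proof is correct and self-contained, but it takes a different route from the paper. The paper argues entirely through duality: for $g\in C_0(\R\times\Sphere)$ it chains the pairings
\[
\langle\Radon[f\lambda_{\R^2}],g\rangle_\M=\langle f\lambda_{\R^2},\Radon^*[g]\rangle_\M=\langle f,\Radon^*[g]\rangle_L=\langle\Radon[f],g\rangle_L=\langle\Radon[f]\,\sigma_{\R\times\Sphere},g\rangle_\M,
\]
using only the already-established adjoint relation~\eqref{eq:RT_duality} and the fact that $\Radon^*$ is the $L^1$--$L^\infty$ adjoint of the function-level Radon operator; the restricted version is implicit in the same chain. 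You instead unfold the push-forward definitions and compute the measures on Borel sets via Fubini and an orthogonal change of variables, first for $\Radon_\bftheta$ and then for $\Radon$ through the glueing operator. Your argument is more elementary in that it avoids appealing to prior duality results (which you do mention as an alternative), at the cost of being longer; the paper's argument is a three-line reduction to machinery already in place. Both are valid, and your closing remark that the only nontrivial point is keeping track of the $2\pi$ normalization between $u_\Sphere$ and $\sigma_{\R\times\Sphere}$ is exactly right --- indeed the paper's duality chain also silently absorbs this convention when it identifies $\langle\cdot,\cdot\rangle_L$ on $\R\times\Sphere$ with integration against $\sigma_{\R\times\Sphere}$.
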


\begin{proof}
    We denote by $\langle\cdot, \cdot\rangle_{\M}$ the dual pairing between $\M$ and $C_0$ and by $\langle\cdot, \cdot \rangle_L$ the dual pairing between $L^1$ and $L^\infty$.
    Then, the duality relation in~\eqref{eq:RT_duality} gives
    \begin{equation*}
        \langle \Radon[f \lambda_{\R^2}], g\rangle_{\M} 
        = \langle f \lambda_{\R^2}, \Radon^* [g]\rangle_{\M} = \langle f , \Radon^* [g]\rangle_{L} 
        = \langle \Radon [f] , g\rangle_{L} = \langle \Radon [f] \, \sigma_{\R \times \Sphere} , g\rangle_{\M}
    \end{equation*}
    for all $g \in C_0(\R \times \Sphere)$.
\end{proof}

We have the following connection between the Wasserstein distance of measures and the Wasserstein distance of the corresponding restricted Radon transforms.

\begin{proposition} \label{prop:wass}
    Let $\bftheta \in \Sphere$.
    Then, we have
    \begin{align*}
        W_\infty(\Radon_\bftheta[\mu], \Radon_\bftheta[\nu]) 
        &\leq W_\infty(\mu, \nu)
        \quad \forall \, \mu, \nu \in \P_c(\R^2),
        \\
        W_2(\Radon_\bftheta[\mu], \Radon_\bftheta[\nu])
        &\leq W_2(\mu, \nu)
        \quad \forall \, \mu, \nu \in \P_2(\R^2).
    \end{align*}
\end{proposition}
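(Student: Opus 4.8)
The plan is to exploit that the slicing operator $S_\bftheta$ is a $1$-Lipschitz contraction and to transport optimal plans forward along it. The only geometric input I need is that, since $\lVert\bftheta\rVert=1$, the Cauchy--Schwarz inequality gives
\begin{equation*}
  \lvert S_\bftheta(\bfx) - S_\bftheta(\bfy)\rvert = \lvert\langle \bfx - \bfy, \bftheta\rangle\rvert \le \lVert\bfx - \bfy\rVert
  \qquad \forall\, \bfx, \bfy \in \R^2,
\end{equation*}
together with the identity $\Radon_\bftheta[\mu] = (S_\bftheta)_\#\mu$ from the definition; in particular this bound already shows that $\Radon_\bftheta$ maps $\P_2(\R^2)$ into $\P_2(\R)$ and $\P_c(\R^2)$ into $\P_c(\R)$, so both sides of the claimed inequalities are well defined.

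First I would fix $\mu,\nu$ (in $\P_2(\R^2)$ for the second inequality, in $\P_c(\R^2)$ for the first) and choose an optimal transport plan $\pi \in \Pi(\mu, \nu)$ for the relevant Wasserstein distance; such a plan exists by \cite[Prop.~1 \& 2]{Givens1984}. I would then define $\tilde\pi \coloneqq (S_\bftheta \times S_\bftheta)_\# \pi \in \P(\R \times \R)$, where $S_\bftheta \times S_\bftheta \colon \R^2 \times \R^2 \to \R \times \R$, $(\bfx, \bfy) \mapsto (S_\bftheta(\bfx), S_\bftheta(\bfy))$. Since $P_i \circ (S_\bftheta \times S_\bftheta) = S_\bftheta \circ P_i$ and $(g \circ f)_\# \rho = g_\#(f_\# \rho)$, the marginals compute as $(P_1)_\# \tilde\pi = (S_\bftheta)_\# \mu = \Radon_\bftheta[\mu]$ and $(P_2)_\# \tilde\pi = \Radon_\bftheta[\nu]$, so $\tilde\pi \in \Pi(\Radon_\bftheta[\mu], \Radon_\bftheta[\nu])$ is an admissible competitor in the respective infimum.

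For the $W_2$ bound I would plug $\tilde\pi$ into the infimum and use the change-of-variables formula for push-forwards:
\begin{align*}
  W_2(\Radon_\bftheta[\mu], \Radon_\bftheta[\nu])^2
  &\le \int_{\R \times \R} \lvert s - t\rvert^2 \,\d\tilde\pi(s, t)
  = \int_{\R^2 \times \R^2} \lvert S_\bftheta(\bfx) - S_\bftheta(\bfy)\rvert^2 \,\d\pi(\bfx, \bfy) \\
  &\le \int_{\R^2 \times \R^2} \lVert\bfx - \bfy\rVert^2 \,\d\pi(\bfx, \bfy) = W_2(\mu, \nu)^2,
\end{align*}
where the last equality uses the optimality of $\pi$. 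For $W_\infty$ the same competitor works, but the argument now runs at the level of supports: since $\mu,\nu$ have compact support, so does $\pi$ (indeed $\supp\pi \subseteq \supp\mu \times \supp\nu$), and continuity of $S_\bftheta \times S_\bftheta$ gives $\supp\tilde\pi \subseteq (S_\bftheta \times S_\bftheta)(\supp\pi)$, the image of a compact set being compact and hence closed. Thus every $(s, t) \in \supp\tilde\pi$ equals $(S_\bftheta(\bfx), S_\bftheta(\bfy))$ for some $(\bfx, \bfy) \in \supp\pi$, so the contraction bound gives $\lvert s - t\rvert \le \lVert\bfx - \bfy\rVert \le \sup_{(\bfx,\bfy) \in \supp\pi} \lVert\bfx - \bfy\rVert = W_\infty(\mu, \nu)$; taking the supremum over $\supp\tilde\pi$ and using $\tilde\pi$ as competitor yields $W_\infty(\Radon_\bftheta[\mu], \Radon_\bftheta[\nu]) \le W_\infty(\mu, \nu)$.

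The one step I expect to require real care is the support inclusion $\supp\tilde\pi \subseteq (S_\bftheta \times S_\bftheta)(\supp\pi)$ behind the $W_\infty$ estimate: for a general push-forward only $\supp\tilde\pi \subseteq \overline{(S_\bftheta \times S_\bftheta)(\supp\pi)}$ holds, and it is precisely the compact-support hypothesis that lets me replace the closure by the image itself --- which also explains why that hypothesis is needed for the first inequality but not the second. Everything else (existence of optimal plans, the marginal identities, the change-of-variables computation, and the moment bound making both sides well defined) is routine.
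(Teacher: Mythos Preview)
Your proof is correct and follows essentially the same approach as the paper: push forward an optimal plan via $(S_\bftheta, S_\bftheta)$, verify it is admissible, and use the $1$-Lipschitz property of $S_\bftheta$. The paper actually gives fewer details than you do---it writes out the $W_2$ computation and dismisses the $W_\infty$ case as ``established analogously''---so your careful treatment of the support inclusion, and your observation that compactness is what makes the closure unnecessary there, add value rather than diverge from the intended argument.
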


\begin{proof}
    For $\mu, \nu \in \P_2(\R^2)$,
    let $\pi \in \Pi(\mu,\nu)$
    realize $W_2(\mu,\nu)$.
    Since the push-forward plan
    satisfies
    $(S_\bftheta, S_\bftheta)_\# \pi 
    \in 
    \Pi(\Radon_\bftheta[\mu], \Radon_\bftheta[\nu])$,
    the second inequality follows from
    \begin{align*}
        W_2^2(\mu,\nu)
        &=
        \int_{\R^2 \times \R^2}
        \lVert \bfx - \bfy \rVert^2
        \: \d \pi(\bfx, \bfy)
        \ge
        \int_{\R^2 \times \R^2}
        \lvert \langle \bfx - \bfy, \bftheta \rangle \rvert^2
        \: \d \pi(\bfx, \bfy)
        \\
        &=
        \int_{\R \times \R}
        \lvert t - s \rvert^2
        \: \d (S_\bftheta, S_\bftheta)_\# \pi(t,s)
        \ge
        W_2^2(\Radon_\bftheta[\mu], \Radon_\bftheta[\nu]).
    \end{align*}
    The first inequality can be established analogously.
\end{proof}

We are now prepared to adapt the CDT to a probability measure $\mu \in \P(\R^2)$.
To this end, we first consider its Radon transform $\Radon[\mu] \in \M(\R \times \Sphere)$, which satisfies
\begin{equation*}
\Radon_\bftheta [\mu] \in \P(\R)
\quad \forall \, \bftheta \in \Sphere,
\end{equation*}
and then, for each fixed $\bftheta \in \Sphere$, the CDT $\widehat{\mu}_\bftheta$ of the Radon projection $\mu_\bftheta = \Radon_\bftheta [\mu]$, yielding the {\em Radon cumulative distribution transform} (R-CDT) $\widehat{\Radon}\mu \colon \R \times \Sphere \to \R$ of $\mu$ via
\begin{equation*}
\widehat{\Radon} [\mu](t,\bftheta) = \widehat{\mu}_\bftheta(t),
\quad  (t,\bftheta) \in \R \times \Sphere.
\end{equation*}
In this way, any probability measure $\mu \in \P(\R^2)$ is mapped to its R-CDT $\widehat{\Radon}[\mu] \colon \R \times \Sphere \to \R$.
If $\mu \in \P_2(\R^2)$,
then the Radon projection $\Radon_\bftheta [\mu] \in \P_2(\R)$
has finite second moment as well and we have
$\widehat{\Radon}[\mu] \in L_{\rho \times u_{\Sphere}}^2(\R \times \Sphere)$.
Moreover, for $\mu, \nu \in \P_2(\R^2)$,
the norm distance
\begin{equation*}
    \lVert 
        \widehat{\Radon}[\mu] - \widehat{\Radon}[\nu]
    \rVert_{\rho \times u_{\Sphere}}
    \coloneqq
    \Bigl(
        \int_{\Sphere} \int_\R
        \lvert 
            \widehat{\Radon}[\mu](t, \bftheta)
            -
            \widehat{\Radon}[\nu](t, \bftheta)
        \rvert^2
        \: \d \rho(t) \, \d u_{\Sphere}(\bftheta)
    \Bigr)^{\frac{1}{2}}
\end{equation*}
agrees with the so-called sliced Wasserstein-2 distance \cite{Bonneel2015}.

\paragraph{Linear separability in R-CDT space}

We now show that the above feature representation via R-CDT enhances linear separability of distinct classes that are generated from template measures by certain transformations.
To be more precise, we assume that, for fixed $\bftheta_0 \in \Sphere$, we are given two classes $\F_{\bftheta_0}, \G_{\bftheta_0}$ that are generated by template measures $\mu_0,\ \nu_0 \in \P(\R^2)$ via
\begin{align*}
\F_{\bftheta_0} &= \bigl\{\mu \in \P(\R^2) \mid \exists \, h \in \H\colon \Radon_{\bftheta_0} [\mu] = h_\# \Radon_{\bftheta_0}[\mu_0]\bigr\}, \\
\G_{\bftheta_0} &= \bigl\{\nu \in \P(\R^2) \mid \exists \, h \in \H\colon \Radon_{\bftheta_0} [\nu] = h_\# \Radon_{\bftheta_0}[\nu_0] \bigr\},
\end{align*}
where $\H$ is a convex set of increasing bijections $h \colon \R \to \R$.
Then, we will prove that the transformed function classes in R-CDT space
\begin{equation*}
\widehat{\F}_{\bftheta_0} = \bigl\{\widehat{\Radon}_{\bftheta_0} [\mu] \colon \R \to \R \mid \mu \in \F_{\bftheta_0}\bigr\}, \qquad
\widehat{\G}_{\bftheta_0} = \bigl\{\widehat{\Radon}_{\bftheta_0} [\nu] \colon \R \to \R \mid \nu \in \G_{\bftheta_0}\bigr\}
\end{equation*}
are linearly separable if $\Radon_{\bftheta_0} \F_{\bftheta_0} \cap \Radon_{\bftheta_0} \G_{\bftheta_0} = \emptyset$, where
\begin{equation*}
\Radon_{\bftheta_0}\F_{\bftheta_0} = \{\Radon_{\bftheta_0} [\mu] \mid \mu \in \F_{\bftheta_0}\}, \qquad
\Radon_{\bftheta_0}\G_{\bftheta_0} = \{\Radon_{\bftheta_0} [\nu] \mid \nu \in \G_{\bftheta_0}\},
\end{equation*}
in the sense that for any two non-empty, finite subsets $\F_0 \subset \F_{\bftheta_0}$ and $\G_0 \subset \G_{\bftheta_0}$ there exist a continuous linear functional $\Phi \colon L^2_\rho(\R) \to \R$ and a constant $c \in \R$ such that
\begin{equation*}
\Phi\Bigl(\widehat{\Radon}_{\bftheta_0}[\mu]\Bigr) < c < \Phi\Bigl(\widehat{\Radon}_{\bftheta_0}[\nu]\Bigr)
\quad \forall \, \mu \in \F_0 ~ \forall \, \nu \in \G_0.
\end{equation*}
Note that a slightly different version of this result has first been stated in~\cite[\S~3.2]{Hauser2024} without a proof and we now close this gap in the literature.

\begin{theorem} 
\label{thm:sep-rcdt}
For templates $\mu_0, \nu_0 \in \P_2(\R^2)$ and $\bftheta_0 \in \Sphere$ consider the classes
\begin{align*}
\F_{\bftheta_0} &= \bigl\{\mu \in \P(\R^2) \mid \exists \, h \in \H\colon \Radon_{\bftheta_0} [\mu] = h_\# \Radon_{\bftheta_0}[\mu_0]\bigr\}, \\
\G_{\bftheta_0} &= \bigl\{\nu \in \P(\R^2) \mid \exists \, h \in \H\colon \Radon_{\bftheta_0} [\nu] = h_\# \Radon_{\bftheta_0}[\nu_0] \bigr\},
\end{align*}
where $\H$ is a convex set of increasing bijections $h \colon \R \to \R$.
Then, any non-empty, finite subsets $\F_0 \subset \F_{\bftheta_0}$ and $\G_0 \subset \G_{\bftheta_0}$ are linearly separable in R-CDT space if $\Radon_{\bftheta_0} \F_{\bftheta_0} \cap \Radon_{\bftheta_0} \G_{\bftheta_0} = \emptyset$ in the sense that
\begin{equation*}
\widehat{\F}_0 = \bigl\{\widehat{\Radon}_{\bftheta_0}[\mu] \colon \R \to \R \mid \mu \in \F_0\bigr\}, \qquad
\widehat{\G}_0 = \bigl\{\widehat{\Radon}_{\bftheta_0}[\nu] \colon \R \to \R \mid \nu \in \G_0\bigr\}
\end{equation*}
are linearly separable in $\Lebesgue^2_\rho(\R)$.
\end{theorem}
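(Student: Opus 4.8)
The plan is to exploit the key CDT identity recalled in the excerpt, namely that for $\mu,\nu\in\P_2(\R)$ one has $\lVert\widehat\mu-\widehat\nu\rVert_\rho=W_2(\mu,\nu)$, together with the fact that the CDT conjugates the push-forward by an increasing bijection $h$ into precomposition. Concretely, if $\Radon_{\bftheta_0}[\mu]=h_\#\,\Radon_{\bftheta_0}[\mu_0]$, then the quantile function of $h_\#\,\Radon_{\bftheta_0}[\mu_0]$ is $h\circ F_{\Radon_{\bftheta_0}[\mu_0]}^{[-1]}$, so $\widehat{\Radon}_{\bftheta_0}[\mu]=h\circ\widehat{\Radon}_{\bftheta_0}[\mu_0]$ as functions on $\R$. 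First I would record this conjugation formula, and deduce that $\widehat\F_0\subset\{h\circ\widehat{\Radon}_{\bftheta_0}[\mu_0]\mid h\in\H\}$ and similarly for $\widehat\G_0$. Since $\H$ is convex and $h\mapsto h\circ g$ is affine (indeed linear) in $h$ for a fixed $g$, the sets $\widehat{\F}_{\bftheta_0}$ and $\widehat{\G}_{\bftheta_0}$ sit inside two \emph{convex} subsets of $L^2_\rho(\R)$.

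Next I would argue that these two convex sets are disjoint. Suppose $h_1\circ\widehat{\Radon}_{\bftheta_0}[\mu_0]=h_2\circ\widehat{\Radon}_{\bftheta_0}[\nu_0]$ $\rho$-a.e. for some $h_1,h_2\in\H$. Applying the (left) inverse of the CDT — i.e. pushing $\rho$ forward by both sides — gives $({h_1})_\#\,\Radon_{\bftheta_0}[\mu_0]=({h_2})_\#\,\Radon_{\bftheta_0}[\nu_0]$, i.e. an element of $\Radon_{\bftheta_0}\F_{\bftheta_0}\cap\Radon_{\bftheta_0}\G_{\bftheta_0}$, contradicting the hypothesis. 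Hence $\overline{\conv}\,\widehat{\F}_{\bftheta_0}$ and $\overline{\conv}\,\widehat{\G}_{\bftheta_0}$ are disjoint; in particular the finite subsets $\widehat\F_0,\widehat\G_0$ and their convex hulls are disjoint. A finite set has compact convex hull, so I would separate the compact convex hull $\conv\widehat\F_0$ from the closed convex set $\overline{\conv}\,\widehat{\G}_{\bftheta_0}$ (or just from the compact $\conv\widehat\G_0$) by the Hahn–Banach separation theorem in the Hilbert space $L^2_\rho(\R)$, producing a continuous linear functional $\Phi$ and a constant $c$ with strict inequalities on the two finite sets, which is exactly the claimed linear separability.

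There is one gap to close carefully: the hypothesis is stated as disjointness of $\Radon_{\bftheta_0}\F_{\bftheta_0}$ and $\Radon_{\bftheta_0}\G_{\bftheta_0}$, but Hahn–Banach needs disjointness of \emph{convex hulls} of the $\widehat{\F},\widehat{\G}$. The conjugation formula bridges this: membership in $\widehat{\F}_{\bftheta_0}$ is literally $\{h\circ g_{\mu_0}\mid h\in\H\}$ with $g_{\mu_0}\coloneqq\widehat{\Radon}_{\bftheta_0}[\mu_0]$, and because $\H$ is convex this set is already convex (no hull needed), and the CDT being a bijection between $\P_2(\R)$ and its image means two such functions coincide $\rho$-a.e.\ iff the underlying measures coincide. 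So the convex-hull disjointness is equivalent to the stated measure-level disjointness.

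The main obstacle I anticipate is the measurability/regularity bookkeeping around the generalized inverse: verifying that $\widehat{\Radon}_{\bftheta_0}[h_\#\,\Radon_{\bftheta_0}[\mu_0]]=h\circ\widehat{\Radon}_{\bftheta_0}[\mu_0]$ holds \emph{everywhere} (or $\rho$-a.e.), that $h\circ g_{\mu_0}\in L^2_\rho(\R)$ for all $h\in\H$ (so that the affine structure lives in the right Hilbert space), and that passing from an $L^2_\rho$-a.e.\ identity of CDTs back to an identity of measures is legitimate. Once that conjugation identity is nailed down, the convexity and the Hahn–Banach step are routine; indeed, since $\widehat\F_0$ and $\widehat\G_0$ are finite, one does not even need closedness of the full classes — separating two disjoint finite sets whose convex hulls are disjoint is elementary.
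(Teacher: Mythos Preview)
Your proposal is correct and follows essentially the same route as the paper: both arguments rest on the conjugation identity $\widehat{\Radon}_{\bftheta_0}[h_\#\Radon_{\bftheta_0}[\mu_0]]=h\circ\widehat{\Radon}_{\bftheta_0}[\mu_0]$, use convexity of $\H$ to obtain convex image sets in $L^2_\rho(\R)$, transfer the disjointness hypothesis to CDT level, and then apply Hahn--Banach to the compact convex hulls of the finite subsets. The only cosmetic difference is that the paper verifies $\widehat\F_{\bftheta_0}$ itself is convex by explicitly exhibiting a preimage $\mu_\alpha\in\F_{\bftheta_0}$ for each convex combination, whereas you work directly with the (possibly larger) convex sets $\{h\circ g_{\mu_0}:h\in\H\}$ and $\{h\circ g_{\nu_0}:h\in\H\}$ and show those are disjoint---which is arguably cleaner and avoids the need to construct $\mu_\alpha$.
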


\begin{proof}
As $\mu_0, \nu_0 \in \P_2(\R^2)$ have finite second moments, $\Radon_{\bftheta_0} [\mu_0], \Radon_{\bftheta_0} [\nu_0] \in \P_2(\R)$ have finite second moments as well,
and we get $\widehat{\Radon}_{\bftheta_0} [\mu_0], \widehat{\Radon}_{\bftheta_0} [\nu_0] \in \Lebesgue^2_\rho(\R)$ so that, in particular, $\widehat{\F}_{\bftheta_0}, \widehat{\G}_{\bftheta_0} \subset \Lebesgue^2_\rho(\R)$.
We now show that $\widehat{\F}_{\bftheta_0}$ and $\widehat{\G}_{\bftheta_0}$ are convex.
To this end, let $\widehat{p}_1, \widehat{p}_2 \in \widehat{\F}_{\bftheta_0}$ and $\alpha \in [0,1]$.
Then, there exist $\mu_1, \mu_2 \in \F_{\bftheta_0}$ such that $\widehat{p}_1 = \widehat{\Radon}_{\bftheta_0} [\mu_1]$ and $\widehat{p}_2 = \widehat{\Radon}_{\bftheta_0} [\mu_2]$.
Set $p_0 = \Radon_{\bftheta_0} [\mu_0]$, $p_1 = \Radon_{\bftheta_0} [\mu_1]$ and $p_2 = \Radon_{\bftheta_0} [\mu_2]$.
By the definition of $\F_{\bftheta_0}$ there are $h_1, h_2 \in \H$ such that $p_i = (h_i)_\# p_0$ for $i \in \{1,2\}$, where $F_{p_i} = F_{p_0} \circ h_i^{-1}$ so that
\begin{equation*}
\widehat{p}_i = F_{p_i}^{[-1]} \circ F_\rho = h_i \circ \bigl(F_{p_0}^{[-1]} \circ F_\rho\bigr) = h_i \circ \widehat{p}_0.
\end{equation*}
Consequently,
\begin{equation*}
\alpha \widehat{p}_1 + (1-\alpha) \widehat{p}_2 = (\alpha h_1 + (1-\alpha) h_2) \circ \widehat{p}_0 = h_\alpha \circ \widehat{\Radon}_{\bftheta_0} [\mu_0]
\end{equation*}
with $h_\alpha = \alpha h_1 + (1-\alpha) h_2 \in \H$ as $\H$ is convex.
Now, choose $\mu_\alpha \in \F_{\bftheta_0}$ such that $\Radon_{\bftheta_0} [\mu_\alpha] = (h_\alpha)_\# \Radon_{\bftheta_0} [\mu_0]$,
e.g., $\mu_\alpha \coloneqq ( \cdot \, \bftheta_0)_\# (h_\alpha)_\# \Radon_{\bftheta_0} [\mu_0]$.
As before, we then have $\widehat{\Radon}_{\bftheta_0} [\mu_\alpha] = h_\alpha \circ \widehat{\Radon}_{\bftheta_0} [\mu_0]$ and, hence, we conclude that $\alpha \widehat{p}_1 + (1-\alpha) \widehat{p}_2 \in \widehat{\F}_{\bftheta_0}$ so that $\widehat{\F}_{\bftheta_0}$ is indeed convex.
Analogously, we obtain the convexity of $\widehat{\G}_{\bftheta_0}$.
Now, let $\F_0 \subset \F_{\bftheta_0}$ and $\G_0 \subset \G_{\bftheta_0}$ be non-empty and finite.
Then, $\conv(\widehat{\F}_0) \subset \widehat{\F}_{\bftheta_0}$ and $\conv(\widehat{\G}_0) \subset \widehat{\G}_{\bftheta_0}$ are convex and compact.
As $\Radon_{\bftheta_0} \F_{\bftheta_0} \cap \Radon_{\bftheta_0} \G_{\bftheta_0} = \emptyset$, we also have $\widehat{\F}_{\bftheta_0} \cap \widehat{\G}_{\bftheta_0} = \emptyset$ and, in particular, $\conv(\widehat{\F}_0) \cap \conv(\widehat{\G}_0) = \emptyset$.
Therefore, by the Hahn-Banach separation theorem $\conv(\widehat{\F}_0)$ and $\conv(\widehat{\G}_0)$ are linearly separable in $\Lebesgue^2_\rho(\R)$, which implies that also $\widehat{\F}_0$ and $\widehat{\G}_0$ are linearly separable in $\Lebesgue^2_\rho(\R)$.
\end{proof}

\begin{remark}
Note that Theorem~\ref{thm:sep-rcdt} is similar to the result in~\cite{Kolouri2016}.
There, however, the authors consider certain classes of functions that can be linearly separated in R-CDT space when considering {\em all} directions $\bftheta \in \Sphere$.
As opposed to this, our result only needs {\em one} $\bftheta_0 \in \Sphere$ such that the corresponding restricted Radon transforms of the classes are distinguishable.
\end{remark}

\begin{table}[t]
	\caption{Summary of common transformations for $\mu \in \M(\R^2)$ 
        with $a,b > 0$ and $c, \varphi \in \R$.
        The unit circle is parametrized by
        $\bftheta(\vartheta) \coloneqq (\cos(\vartheta), \sin(\vartheta))^\top$.
        The Radon transform for the left half of $\Sphere$ follows by symmetry.}
    \begin{minipage}{\linewidth}
    \resizebox{\linewidth}{!}{
        \begin{tabular}{l@{\enspace}l@{\enspace}l@{\enspace}l}
            \toprule
            transformation 
            & 
            $\bfA$
            &
            $\bfy$
            &
            $\Radon_{\bftheta(\vartheta)} [\mu_{\bfA, \bfy}]$,
            $\vartheta \in (-\tfrac{\pi}{2}, \tfrac{\pi}{2})$
            \\
            \midrule
            translation 
            & 
            $\bfI$ 
            &
            $\R^2$ 
            &
            $\Radon_{\bftheta(\vartheta)} [\mu] \circ (\cdot - \langle \bfy, \bftheta(\vartheta) \rangle)$ 
            \\[1ex]
            rotation
            & 
            $\bigl(\begin{smallmatrix} \cos(\varphi) & -\sin(\varphi) \\ \sin(\varphi) & \hphantom{-}\cos(\varphi) \end{smallmatrix}\bigr)$
            &
            $\bfzero$ 
            & 
            $\Radon_{\bftheta(\vartheta - \varphi)} [\mu]$ 
            \\[1ex]
            reflection 
            &
            $\bigl(\begin{smallmatrix} \cos(\varphi) & \hphantom{-}\sin(\varphi) \\ \sin(\varphi) & -\cos(\varphi) \end{smallmatrix}\bigr)$
            &
            $\bfzero$  
            &
            $\Radon_{\bftheta(\varphi - \vartheta)} [\mu]$ 
            \\[1ex]
            anisotropic scaling 
            & 
            $\bigl(\begin{smallmatrix} a & 0 \\ 0 & b \end{smallmatrix}\bigr)$
            &
            $\bfzero$  
            &
            $\Radon_{\bftheta(\arctan(\frac{b}{a} \tan(\vartheta)))}[\mu] \circ ([a^2 \cos^2(\vartheta) + b^2 \sin^2(\vartheta)]^{-1/2} \cdot)$ 
            \\[1ex]
            vertical shear
            & 
            $\bigl(\begin{smallmatrix} 1 & 0 \\ c & 1 \end{smallmatrix}\bigr)$
            &
            $\bfzero$ 
            & 
            $\Radon_{\bftheta(\arctan(c + \tan(\vartheta)))}[\mu] \circ ([1 + c^2 \cos^2(\vartheta) + c \sin(2\vartheta)]^{-1/2} \cdot)$
            \\
            \bottomrule
        \end{tabular}}
    \end{minipage}
    \label{tab:radon-aff-trans}
\end{table}

To study an example satisfying the assumptions of Theorem~\ref{thm:sep-rcdt}, we consider affinely transformed finite measure $\mu \in \M(\R^2)$.
To this end, let $\bfA \in \GL(2)$, $\bfy \in \R^2$
and define $\mu_{\bfA,\bfy} \in \M(\R^2)$ via
\begin{equation*}
\mu_{\bfA,\bfy} \coloneqq (\bfA \cdot + \bfy)_{\#} \mu = \mu \circ (\bfA^{-1}(\cdot-\bfy)).
\end{equation*}
Then, in~\cite[Proposition 3]{Beckmann2024a} we have shown that,
for any $\bftheta \in \Sphere$, the restricted Radon transform of $\mu_{\bfA,\bfy}$ is given by 
\begin{equation*}
    \Radon_{\bftheta} [\mu_{\bfA,\bfy}] = (\lVert A^\top \bftheta\rVert \cdot + \langle \bfy, \bftheta\rangle)_{\#} \Radon_{\frac{A^\top \bftheta}{\lVert A^\top \bftheta \rVert}}[\mu]
    = \Radon_{\frac{A^\top \bftheta}{\lVert A^\top \bftheta \rVert}}[\mu] \circ \Bigl(\tfrac{\cdot - \langle \bfy, \bftheta\rangle}{\lVert A^\top \bftheta \rVert}\Bigr).
\end{equation*}
The effect of common affine transformations on the Radon transform
is given in Table~\ref{tab:radon-aff-trans}.
In order to describe the deformation with respect to $\bftheta$,
we over-parametrize the unit circle $\Sphere$ via
$\bftheta(\vartheta) \coloneqq (\cos(\vartheta), \sin(\vartheta))^\top$, $\vartheta \in \R$.
We see that an affine transformation essentially causes a translation and scaling 
of the transformed measure 
together with a non-affine mapping in $\bftheta$.

Note that, if $\mu = f \lambda_{\R^2}$ is absolutely continuous with respect to the Lebesgue measure $\lambda_{\R^2}$ with density function $f \in L^1(\R^2)$, we have
\begin{equation*}
\mu_{\bfA,\bfy} = \Bigl(\lvert\det(\bfA)\rvert^{-1} \, f(\bfA^{-1}(\cdot-\bfy))\Bigr) \lambda_{\R^2}
\end{equation*}
and, for any $\bftheta \in \Sphere$,
\begin{equation*}
\Radon_\bftheta[\mu_{\bfA,\bfy}] = \biggl(\tfrac{1}{\lVert A^\top \bftheta \rVert} \Radon_{\frac{A^\top\bftheta}{\lVert A^\top \bftheta \rVert}} [f]\Bigl(\tfrac{\cdot - \langle \bfy, \bftheta\rangle}{\lVert A^\top \bftheta \rVert}\Bigr)\biggr) \lambda_\R,
\end{equation*}
i.e., $\mu_{\bfA,\bfy} \in \M(\R^2)$ and $\Radon_\bftheta[\mu_{\bfA,\bfy}] \in \M(\R)$ are absolutely continuous as well.

\begin{example}
The set of affine linear function $\H = \{x \mapsto ax + b \mid a > 0, ~ b \in \R\}$ satisfies the assumptions of Theorem~\ref{thm:sep-rcdt} and corresponds to translation and isotropic scaling, see Table~\ref{tab:radon-aff-trans}.
\end{example}

\section{Normalized Radon Cumulative Distribution Transform}
\label{sec:NRCDT}

Inspecting Table~\ref{tab:radon-aff-trans} reveals that the only affine transformations that satisfy the assumptions of Theorem~\ref{thm:sep-rcdt} are translation and isotropic scaling.
To account for general affine transformations, in~\cite{Beckmann2024a} we introduced a two-step normalization scheme for the R-CDT, which we recall here for the sake of completeness.
To this end, we define the {\em normalized R-CDT} (NR-CDT)
$\NRCDT [\mu] \colon \R \times \Sphere \to \R$ 
of $\mu \in \P_2(\R^2)$ via
\begin{equation*}
    \NRCDT [\mu](t,\bftheta) 
    \coloneqq
    \frac
    {\widehat{\Radon}_\bftheta[\mu](t) 
    - 
    \mean(\widehat{\Radon}_\bftheta[\mu])}
    {\std(\widehat{\Radon}_\bftheta[\mu])},
    \quad (t,\bftheta) \in \R \times \Sphere,
\end{equation*}
where, for $g \in \Lebesgue^2_\rho(\R)$,
\begin{equation*}
    \mean(g) = \int_\R g(s) \: \d \rho(s),
    \qquad
    \std(g) = \sqrt{\int_\R \lvert g(s) - \mean(g) \rvert^2 \: \d \rho(s)}.
\end{equation*}

To ensure
that the NR-CDT is well defined,
we restrict ourselves to measures
whose support is not contained in a straight line.
More precisely,
we consider the class
\begin{equation*}
    \P_2^*(\R^2) = \{\mu \in \P_2(\R^2) \mid \dim(\mu) > 1\},
\end{equation*}
and show that for these measures
the standard deviation of the restricted Radon transform
is bounded away from zero and
cannot vanish.

\begin{lemma} \label{lem:sigma_bounded}
    Let $\mu \in \P_2^*(\R^2)$.
    Then, there exists a constant $c > 0$ such that
    \begin{equation*}
        \std(\widehat{\Radon}_\bftheta [\mu]) \geq c
        \quad \forall \, \bftheta \in \Sphere.
    \end{equation*}
\end{lemma}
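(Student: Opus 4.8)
The plan is to identify $\std(\widehat{\Radon}_\bftheta[\mu])$ with the ordinary standard deviation of the one-dimensional Radon projection $\mu_\bftheta \coloneqq \Radon_\bftheta[\mu]$, to recognise its square as the quadratic form $\bftheta^\top\Sigma_\mu\bftheta$ associated with the covariance matrix $\Sigma_\mu$ of $\mu$, and then to use the non-degeneracy hypothesis $\dim(\mu)>1$ together with the compactness of $\Sphere$ to extract a uniform positive lower bound.

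First I would recall that, by the optimal-transport characterisation of the CDT recalled above, the map $\widehat{\Radon}_\bftheta[\mu] = F_{\mu_\bftheta}^{[-1]}\circ F_\rho$ transports $\rho$ onto $\mu_\bftheta$, i.e.\ $(\widehat{\Radon}_\bftheta[\mu])_\#\rho = \mu_\bftheta$; this is legitimate since $\mu\in\P_2(\R^2)$ forces $\mu_\bftheta\in\P_2(\R)$. The change-of-variables formula then gives $\mean(\widehat{\Radon}_\bftheta[\mu]) = \int_\R t\,\d\mu_\bftheta(t)$ and
\[
  \std(\widehat{\Radon}_\bftheta[\mu])^2
  = \int_\R \bigl(t-\mean(\mu_\bftheta)\bigr)^2\,\d\mu_\bftheta(t)
  = \var(\mu_\bftheta).
\]
Writing $\mu_\bftheta = (S_\bftheta)_\#\mu$ with $S_\bftheta(\bfx)=\langle\bfx,\bftheta\rangle$ and $\bar\bfx\coloneqq\int_{\R^2}\bfx\,\d\mu(\bfx)$ (finite because $\mu\in\P_2(\R^2)$), so that $\mean(\mu_\bftheta)=\langle\bar\bfx,\bftheta\rangle$, a further change of variables yields
\[
  \std(\widehat{\Radon}_\bftheta[\mu])^2
  = \int_{\R^2}\langle\bfx-\bar\bfx,\bftheta\rangle^2\,\d\mu(\bfx)
  = \bftheta^\top\Sigma_\mu\bftheta,
  \qquad
  \Sigma_\mu\coloneqq\int_{\R^2}(\bfx-\bar\bfx)(\bfx-\bar\bfx)^\top\,\d\mu(\bfx).
\]

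Next I would show that the symmetric positive semidefinite matrix $\Sigma_\mu$ is in fact positive definite. If it were singular, there would be $\bftheta\in\Sphere$ with $\bftheta^\top\Sigma_\mu\bftheta=0$, hence $\langle\bfx-\bar\bfx,\bftheta\rangle=0$ for $\mu$-a.e.\ $\bfx$, so $\supp(\mu)$ would lie in the line $\{\bfx:\langle\bfx-\bar\bfx,\bftheta\rangle=0\}$, contradicting $\dim(\mu)>1$. Therefore the smallest eigenvalue $\lambda_{\min}(\Sigma_\mu)$ is strictly positive; equivalently, the continuous function $\bftheta\mapsto\bftheta^\top\Sigma_\mu\bftheta$ attains on the compact set $\Sphere$ a minimum $c^2\coloneqq\lambda_{\min}(\Sigma_\mu)>0$. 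Combining this with the previous step gives $\std(\widehat{\Radon}_\bftheta[\mu])=\sqrt{\bftheta^\top\Sigma_\mu\bftheta}\ge c$ for all $\bftheta\in\Sphere$, which is the assertion.

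I expect the only slightly delicate point to be the first step, namely verifying that the $\Lebesgue^2_\rho$-mean and $\Lebesgue^2_\rho$-standard deviation of the transport map $\widehat{\Radon}_\bftheta[\mu]$ really coincide with the ordinary mean and standard deviation of $\mu_\bftheta$; once that push-forward identity is in place, everything reduces to elementary linear algebra and the compactness of $\Sphere$. A minor point worth recording is that finiteness of $\bar\bfx$ and of the entries of $\Sigma_\mu$ is precisely what the assumption $\mu\in\P_2^*(\R^2)\subset\P_2(\R^2)$ guarantees.
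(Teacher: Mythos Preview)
Your proof is correct but proceeds differently from the paper. Both arguments begin with the same push-forward identity, rewriting $\std^2(\widehat{\Radon}_\bftheta[\mu])$ as $\int_{\R^2}\lvert\langle\bfx,\bftheta\rangle-\mean(\widehat{\Radon}_\bftheta[\mu])\rvert^2\,\d\mu(\bfx)$. From there the paper establishes continuity of $\bftheta\mapsto\std(\widehat{\Radon}_\bftheta[\mu])$ via dominated convergence, invokes compactness of $\Sphere$ to attain the infimum, and derives a contradiction with $\dim(\mu)>1$ if that infimum were zero. You instead recognise the variance as the quadratic form $\bftheta^\top\Sigma_\mu\bftheta$ and reduce the question to positive definiteness of the covariance matrix. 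Your route is more explicit---it identifies the optimal constant $c=\sqrt{\lambda_{\min}(\Sigma_\mu)}$ and gets continuity (indeed smoothness) of the map for free---whereas the paper's argument is phrased more abstractly and yields the separate continuity lemma as a by-product, which is reused later in the article.
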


The proof is based on the following continuity result.

\begin{lemma}
    For fixed measure $\mu \in \P_2^*(\R^2)$,
    the functions
    $\Sphere \ni \bftheta \mapsto \mean(\widehat{\Radon}_\bftheta [\mu]) \in \R$
    and
    $\Sphere \ni \bftheta \mapsto \std(\widehat{\Radon}_\bftheta [\mu]) \in \R_{\geq 0}$
    are continuous.
\end{lemma}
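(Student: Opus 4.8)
The plan is to establish continuity of $\bftheta \mapsto \mean(\widehat{\Radon}_\bftheta[\mu])$ and $\bftheta \mapsto \std(\widehat{\Radon}_\bftheta[\mu])$ by showing that the map $\bftheta \mapsto \widehat{\Radon}_\bftheta[\mu]$ is continuous from $\Sphere$ into $L^2_\rho(\R)$; the two scalar functionals $\mean$ and $\std$ are then continuous on $L^2_\rho(\R)$ (being, respectively, a bounded linear functional and the $L^2_\rho$-distance to the mean, composed with continuous maps), so the composition is continuous. Since $\widehat{\Radon}_\bftheta[\mu]$ is exactly the CDT of the Radon projection $\Radon_\bftheta[\mu] = (S_\bftheta)_\#\mu$, and since $\lVert \widehat{\Radon}_\bftheta[\mu] - \widehat{\Radon}_{\bftheta'}[\mu]\rVert_\rho = W_2(\Radon_\bftheta[\mu], \Radon_{\bftheta'}[\mu])$ by the CDT isometry recalled in Section~\ref{sec:RCDT}, it suffices to prove that $\bftheta \mapsto \Radon_\bftheta[\mu] = (S_\bftheta)_\#\mu$ is continuous from $\Sphere$ into $(\P_2(\R), W_2)$.

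For this last step I would argue as follows. Fix $\bftheta_0 \in \Sphere$ and a sequence $\bftheta_n \to \bftheta_0$. The slicing maps $S_{\bftheta_n}(\bfx) = \langle \bfx, \bftheta_n\rangle$ converge to $S_{\bftheta_0}$ uniformly on every bounded set, hence uniformly on $\supp(\mu)$ if $\mu$ has compact support, and in general in $L^2_\mu$ since $\lvert S_{\bftheta_n}(\bfx) - S_{\bftheta_0}(\bfx)\rvert \le \lVert \bftheta_n - \bftheta_0\rVert\,\lVert\bfx\rVert$ and $\bfx \mapsto \lVert\bfx\rVert \in L^2_\mu$ because $\mu \in \P_2(\R^2)$. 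The coupling $(S_{\bftheta_n}, S_{\bftheta_0})_\#\mu \in \Pi(\Radon_{\bftheta_n}[\mu], \Radon_{\bftheta_0}[\mu])$ then gives
\begin{equation*}
W_2^2(\Radon_{\bftheta_n}[\mu], \Radon_{\bftheta_0}[\mu])
\le \int_{\R^2} \lvert S_{\bftheta_n}(\bfx) - S_{\bftheta_0}(\bfx)\rvert^2 \d\mu(\bfx)
\le \lVert \bftheta_n - \bftheta_0\rVert^2 \int_{\R^2} \lVert \bfx\rVert^2 \d\mu(\bfx) \to 0,
\end{equation*}
which is precisely the desired continuity (in fact Lipschitz continuity in $\bftheta$ with constant $(\int \lVert\bfx\rVert^2\d\mu)^{1/2}$). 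Note this even shows more than continuity into $W_2$, and combining it with the CDT isometry yields $\lVert \widehat{\Radon}_{\bftheta_n}[\mu] - \widehat{\Radon}_{\bftheta_0}[\mu]\rVert_\rho \to 0$ directly.

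Finally, to conclude continuity of $\mean$ and $\std$: $\mean(g) = \int_\R g\,\d\rho$ is linear with $\lvert\mean(g)\rvert \le \lVert g\rVert_\rho$ by Cauchy–Schwarz (as $\rho$ is a probability measure), so it is $1$-Lipschitz on $L^2_\rho(\R)$; and $\std(g) = \lVert g - \mean(g)\rVert_\rho = \operatorname{dist}_{L^2_\rho}(g, \mathbb{R}\mathbf{1})$ is a composition of the Lipschitz map $g \mapsto g - \mean(g)\mathbf 1$ with the norm, hence $1$-Lipschitz as well. Composing with the continuous map $\bftheta \mapsto \widehat{\Radon}_\bftheta[\mu]$ proves the lemma. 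The only mild subtlety — and the one point deserving care rather than an actual obstacle — is that $\widehat{\Radon}_\bftheta[\mu]$ is defined pointwise via quantile functions and the CDT isometry identifies its $L^2_\rho$-distances with $W_2$; one should state explicitly that the hypothesis $\mu \in \P_2^*(\R^2) \subset \P_2(\R^2)$ guarantees $\Radon_\bftheta[\mu] \in \P_2(\R)$ for every $\bftheta$ (already recorded in the excerpt), so that the isometry and the finite-second-moment estimate above both apply.
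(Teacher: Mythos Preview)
Your proof is correct but takes a genuinely different route from the paper's. The paper rewrites $\mean(\widehat{\Radon}_\bftheta[\mu])$ and $\std^2(\widehat{\Radon}_\bftheta[\mu])$ directly as integrals over $\R^2$ against $\mu$ via the push-forward identity (e.g.\ $\mean(\widehat{\Radon}_\bftheta[\mu]) = \int_{\R^2} \langle \bfx,\bftheta\rangle \, \d\mu(\bfx)$), and then applies dominated convergence in $\bftheta$ using the $L^2$-integrable dominating function $\lVert\bfx\rVert$. You instead prove the stronger statement that $\bftheta \mapsto \widehat{\Radon}_\bftheta[\mu]$ is Lipschitz from $\Sphere$ into $L^2_\rho(\R)$, by combining the CDT--$W_2$ isometry with the coupling $(S_{\bftheta},S_{\bftheta'})_\#\mu$, and then observe that $\mean$ and $\std$ are $1$-Lipschitz functionals on $L^2_\rho(\R)$. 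Your argument yields a quantitative (Lipschitz) bound and continuity of the full CDT map, which could be reused elsewhere; the paper's argument is more elementary and self-contained, needing only dominated convergence and no appeal to the Wasserstein isometry.
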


\begin{proof}
    We rewrite the mean as
    \begin{equation*}
        \mean(\widehat{\Radon}_\bftheta[\mu])
        =
        \int_\R \widehat{\Radon}_\bftheta [\mu] (t) \: \d \rho(t)
        =
        \int_\R t \: \d \Radon_\bftheta [\mu] (t)
        =
        \int_{\R^2} \langle \bfx, \bftheta \rangle \: \d \mu(\bfx).
    \end{equation*}
    Since the integrand is continuous in $\bftheta$
    and uniformly bounded by 
    $\lvert \langle \cdot, \bftheta \rangle \rvert \le \lVert \cdot \rVert$,
    the dominated convergence theorem yields the assertion.
    Analogously,
    we have
    \begin{equation*}
        \std^2(\widehat{\Radon}_\bftheta [\mu])
        =
        \int_\R 
        \lvert
            \widehat{\Radon}_\bftheta [\mu] (t)
            -
            \mean(\widehat{\Radon}_\bftheta [\mu])
        \rvert^2
        \: \d \rho(t)
        =
        \int_{\R^2} 
        \lvert
            \langle \bfx, \bftheta \rangle
            -
            \mean(\widehat{\Radon}_\bftheta [\mu])
        \rvert^2
        \: \d \mu(\bfx).
    \end{equation*}
    The integrand is again continuous in $\bftheta$
    and uniformly bounded by
    \begin{equation*}
        \lvert
            \langle \bfx, \bftheta \rangle
            -
            \mean(\widehat{\Radon}_\bftheta [\mu])
        \rvert^2
        \le 
        2 \lVert \cdot \rVert^2 
        +
        2 \max_{\bftheta \in \Sphere}
        \mean^2(\widehat{\Radon}_\bftheta [\mu]);
    \end{equation*}
    thus,
    the standard deviation is continuous by dominated convergence.
\end{proof}

\begin{proof}[Proof of Lemma~\ref{lem:sigma_bounded}]
    Assume the contrary,
    this is,
    $c = 0$.
    Then,
    due to the continuity of $\bftheta \mapsto \std(\widehat{\Radon}_\bftheta [\mu])$,
    there exists a minimizing and convergent sequence in $\Sphere$
    whose limit $\bftheta$ is attained and satisfies
    $\std(\widehat{\Radon}_\bftheta [\mu]) = 0$,
    i.e.,
    \begin{equation*}
        \int_{\R^2} \lvert \langle \bfx, \bftheta \rangle - \mean(\widehat{\Radon}_\bftheta [\mu]) \rvert^2 \: \d \mu(\bfx) = 0.
    \end{equation*}
    Hence, the support of $\mu$ is contained in the line $\bigl\{\bfx \in \R^2 \bigm| \langle \bfx, \bftheta \rangle = \mean(\widehat{\Radon}_\bftheta [\mu])\bigr\}$ in contradiction to $\mu \in \P_2^*(\R^2)$.
\end{proof}

Lemma~\ref{lem:sigma_bounded} implies the well-definedness and square-integrability of $\NRCDT[\mu]$.

\begin{proposition} \label{prop:nrcdt_l2}
    Let $\mu \in \P_2^*(\R^2)$.
    Then, 
    $\NRCDT [\mu] \in \Lebesgue^2_{\rho \times u_{\Sphere}}(\R \times \Sphere)$.
\end{proposition}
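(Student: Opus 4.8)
The plan is to show that $\NRCDT[\mu]$ is in fact not just square-integrable but of unit norm in $\Lebesgue^2_{\rho\times u_{\Sphere}}(\R\times\Sphere)$, by reducing the $\Lebesgue^2$-norm slice-by-slice to the very definition of $\std$. First I would record the well-definedness ingredients: since $\mu\in\P_2^*(\R^2)\subset\P_2(\R^2)$, each restricted Radon projection satisfies $\Radon_\bftheta[\mu]\in\P_2(\R)$, hence $\widehat{\Radon}_\bftheta[\mu]\in\Lebesgue^2_\rho(\R)$ for \emph{every} $\bftheta\in\Sphere$, so $\mean(\widehat{\Radon}_\bftheta[\mu])$ and $\std(\widehat{\Radon}_\bftheta[\mu])$ are well-defined for every $\bftheta$. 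By the preceding continuity lemma these two maps are continuous on $\Sphere$, hence measurable, and by Lemma~\ref{lem:sigma_bounded} there is $c>0$ with $\std(\widehat{\Radon}_\bftheta[\mu])\ge c$ uniformly in $\bftheta$. Together with the joint measurability of $\widehat{\Radon}[\mu]$ (recalled above, as $\widehat{\Radon}[\mu]\in\Lebesgue^2_{\rho\times u_{\Sphere}}$ for $\mu\in\P_2(\R^2)$), this shows that the quotient defining $\NRCDT[\mu]$ is a well-defined, jointly measurable function on $\R\times\Sphere$ with nowhere-vanishing denominator.

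The key computation is then immediate: for each fixed $\bftheta\in\Sphere$,
\[
\int_\R \lvert \NRCDT[\mu](t,\bftheta)\rvert^2 \,\d\rho(t)
=
\frac{1}{\std^2(\widehat{\Radon}_\bftheta[\mu])}
\int_\R \bigl\lvert \widehat{\Radon}_\bftheta[\mu](t) - \mean(\widehat{\Radon}_\bftheta[\mu]) \bigr\rvert^2 \,\d\rho(t)
=
\frac{\std^2(\widehat{\Radon}_\bftheta[\mu])}{\std^2(\widehat{\Radon}_\bftheta[\mu])}
= 1,
\]
where the middle equality is exactly the definition of $\std$. Integrating over $\bftheta$ against the probability measure $u_{\Sphere}=\sigma_{\Sphere}/(2\pi)$ and invoking Tonelli's theorem for the nonnegative measurable integrand yields
\[
\lVert \NRCDT[\mu] \rVert_{\rho\times u_{\Sphere}}^2
=
\int_{\Sphere}\int_\R \lvert \NRCDT[\mu](t,\bftheta)\rvert^2 \,\d\rho(t)\,\d u_{\Sphere}(\bftheta)
=
\int_{\Sphere} 1 \,\d u_{\Sphere}(\bftheta)
= 1 < \infty,
\]
so $\NRCDT[\mu]\in\Lebesgue^2_{\rho\times u_{\Sphere}}(\R\times\Sphere)$, as claimed.

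There is no real obstacle here: the only points needing care are precisely the ones that justify the slice-wise identity — that every slice $\widehat{\Radon}_\bftheta[\mu]$ genuinely lies in $\Lebesgue^2_\rho(\R)$, that the denominator never degenerates (which is the content of Lemma~\ref{lem:sigma_bounded}, and the reason the class $\P_2^*$ rather than $\P_2$ is the correct one), and that the quotient is jointly measurable so that Tonelli applies. Everything else is bookkeeping, and as a by-product one obtains the sharper statement $\lVert \NRCDT[\mu]\rVert_{\rho\times u_{\Sphere}} = 1$.
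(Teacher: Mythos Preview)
Your proof is correct and in fact sharper than the paper's: you compute $\lVert\NRCDT[\mu]\rVert_{\rho\times u_{\Sphere}}=1$ exactly, whereas the paper only establishes the cruder bound $\lVert\NRCDT[\mu]\rVert_{\rho\times u_{\Sphere}}^2\le 4c^{-2}\lVert\widehat{\Radon}[\mu]\rVert_{\rho\times u_{\Sphere}}^2$ by pulling the uniform lower bound $\std(\widehat{\Radon}_\bftheta[\mu])\ge c$ from Lemma~\ref{lem:sigma_bounded} outside the integral and then estimating $\lvert g-\mean(g)\rvert^2\le 4\lvert g\rvert^2$ in $L^1_\rho$. Your route exploits the elementary observation that each slice of $\NRCDT[\mu]$ is, by construction, standardized to have $\rho$-variance one, so the $\bftheta$-integral against the probability measure $u_{\Sphere}$ is trivially one; the uniform bound $c>0$ is then only needed for well-definedness (nonvanishing denominator) rather than for the norm estimate itself. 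The paper's approach buys nothing extra here---your argument is both shorter and yields the exact value of the norm as a by-product, which the paper does not record.
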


\begin{proof}
For $\mu \in \P_2^*(\R^2)$ 
we have $\widehat{\Radon} [\mu] \in \Lebesgue^2_{\rho \times u_{\Sphere}}(\R \times \Sphere)$ and, 
due to Lemma~\ref{lem:sigma_bounded}, 
there exists a constant $c > 0$ such that 
\begin{align*}
\std(\widehat{\Radon}_\bftheta [\mu]) \geq c 
\quad \forall \, \bftheta \in \Sphere.
\end{align*}
Hence,
\begin{align*}
    \|\NRCDT [\mu]\|_{\rho \times u_{\Sphere}}^2 
    &\leq c^{-2} \int_{\Sphere} \int_\R \lvert \widehat{\Radon}_\bftheta [\mu] (t) 
    - 
    \mean(\widehat{\Radon}_\bftheta [\mu]) \rvert^2 \: \d \rho(t) \, \d u_{\Sphere}(\bftheta) \\
    &\leq 4c^{-2} \, \|\widehat{\Radon} [\mu]\|_{\rho \times u_{\Sphere}}^2 < \infty
\end{align*}
so that $\NRCDT [\mu] \in \Lebesgue^2_{\rho \times u_{\Sphere}}(\R \times \Sphere)$, as stated.
\end{proof}

\subsection{Max-normalized R-CDT}
\label{ssec:mNRCDT}

As in~\cite{Beckmann2024a}, we now consider the more restricted class
\begin{equation*}
    \P_c^*(\R^2) = \{\mu \in \P(\R^2) \mid \supp(\mu) \subsubset \R^2 \land \dim(\mu) > 1\}
\end{equation*}
and define the {\em max-normalized R-CDT} (\mNRCDT) $\maxNRCDT [\mu] \colon \R \to \R$
via
\begin{equation*}
    \maxNRCDT [\mu](t) 
    \coloneqq  
    \max_{\bftheta \in \Sphere} \NRCDT [\mu](t,\bftheta)
    \quad \mbox{for } t \in \R.
\end{equation*}
In~\cite[Proposition 6]{Beckmann2024a} we have seen that $\maxNRCDT [\mu] \in L^\infty_\rho(\R)$ for all $\mu \in \P_c^*(\R^2)$ and we now focus on the linear separability of classes induced by affine transformations of template measures.

\paragraph{Linear separability in \mNRCDT{} space}

Let $\mu_0 \in \P_2^*(\R^2)$ be a template measure and assume that $\mu \in \P_2(\R^2)$ satisfies
\begin{equation*}
\Radon_{\bftheta} [\mu] = (a_\bftheta \cdot + b_\bftheta)_{\#} \Radon_{h(\bftheta)}[\mu]
\quad \mbox{ with } a_\bftheta > 0, ~ b_\bftheta \in \R,
\end{equation*}
where $h \colon \Sphere \to \Sphere$ is bijective.
Then,
\begin{equation*}
\widehat{\Radon} [\mu](t,\bftheta) = a_\bftheta \, \widehat{\Radon} [\mu_0](t,h(\bftheta)) + b_\bftheta
\end{equation*}
so that
\begin{equation*}
\mean(\widehat{\Radon} [\mu](\cdot,\bftheta)) = a_\bftheta \, \mean(\widehat{\Radon} [\mu_0](\cdot,h(\bftheta))) + b_\bftheta,
\quad
\std(\widehat{\Radon} [\mu](\cdot,\bftheta)) = a_\bftheta \, \std(\widehat{\Radon} [\mu_0](\cdot,h(\bftheta))).
\end{equation*}
Consequently,
\begin{equation*}
\NRCDT [\mu](t,\bftheta) = \frac{\widehat{\Radon} [\mu_0](t,h(\bftheta)) - \mean(\widehat{\Radon} [\mu_0](\cdot,h(\bftheta)))}{\std(\widehat{\Radon} [\mu_0](\cdot,h(\bftheta)))} = \NRCDT [\mu_0](t,h(\bftheta))
\end{equation*}
and, if $\mu_0 \in \P_c^*(\R^2)$,
\begin{equation*}
\maxNRCDT [\mu](t) = \max_{\bftheta \in \Sphere} \NRCDT [\mu](t,\bftheta) = \max_{\bftheta \in \Sphere} \NRCDT [\mu_0](t,h(\bftheta)) = \maxNRCDT [\mu_0](t).
\end{equation*}
This observation implies linear separability in \mNRCDT{} space if we consider classes in $\P_c^*(\R^2)$ generated by arbitrary affine-linear transforms, which has been shown in~\cite{Beckmann2024a}.

\begin{theorem}[cf.~{\cite[Theorem 1]{Beckmann2024a}}] 
\label{thm:sep-max-nrcdt}
For template measures $\mu_0, \nu_0 \in \P_c^*(\R^2)$ with
\begin{equation*}
\maxNRCDT [\mu_0] \neq \maxNRCDT [\nu_0]
\end{equation*}
consider the classes
\begin{align*}
\F &= \bigl\{\mu \in \P(\R^2) \mid \exists \, \bfA \in \GL(2), \, \bfy \in \R^2 \colon \mu = (\bfA \cdot + \bfy)_\# \mu_0\bigr\}, \\
\G &= \bigl\{\nu \in \P(\R^2) \mid \exists \, \bfA \in \GL(2), \, \bfy \in \R^2 \colon \nu = (\bfA \cdot + \bfy)_\# \nu_0\bigr\}.
\end{align*}
Then, any non-empty subsets $\F_0 \subseteq \F$ and $\G_0 \subseteq \G$ are linearly separable in \mNRCDT{} space.
\end{theorem}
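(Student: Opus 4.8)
The plan is to promote the pointwise identity worked out immediately before the statement into the assertion that $\maxNRCDT$ is constant on each affine orbit, and then to separate two one-point sets by Hahn--Banach. Concretely, I would first prove that $\maxNRCDT[\mu] = \maxNRCDT[\mu_0]$ for every $\mu \in \F$ and $\maxNRCDT[\nu] = \maxNRCDT[\nu_0]$ for every $\nu \in \G$, and then note that the separation step becomes trivial since the images of $\F_0$ and $\G_0$ under $\maxNRCDT$ are singletons.

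For the invariance, fix $\mu = (\bfA\cdot + \bfy)_\# \mu_0$ with $\bfA \in \GL(2)$, $\bfy \in \R^2$. Since affine maps preserve compactness of supports and the dimension of affine hulls, $\mu \in \P_c^*(\R^2)$, so $\maxNRCDT[\mu] \in L^\infty_\rho(\R)$ by \cite[Prop.~6]{Beckmann2024a}, and since $\mu,\mu_0 \in \P_2^*(\R^2)$, Lemma~\ref{lem:sigma_bounded} keeps all standard deviations below bounded away from zero. By \cite[Prop.~3]{Beckmann2024a}, $\mu$ satisfies $\Radon_\bftheta[\mu] = (a_\bftheta\cdot + b_\bftheta)_\#\Radon_{h(\bftheta)}[\mu_0]$ with $a_\bftheta = \lVert\bfA^\top\bftheta\rVert > 0$, $b_\bftheta = \langle\bfy,\bftheta\rangle$ and $h(\bftheta) = \bfA^\top\bftheta/\lVert\bfA^\top\bftheta\rVert$. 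The map $h$ is a bijection of $\Sphere$: if $h(\bftheta_1)=h(\bftheta_2)$ then $\bfA^\top\bftheta_1$ and $\bfA^\top\bftheta_2$ are positive multiples of each other, hence so are the unit vectors $\bftheta_1,\bftheta_2$, forcing $\bftheta_1=\bftheta_2$; and $h$ is onto because $h\bigl((\bfA^\top)^{-1}\bftheta'/\lVert(\bfA^\top)^{-1}\bftheta'\rVert\bigr)=\bftheta'$ for every $\bftheta'\in\Sphere$. The chain of identities displayed before the theorem (the affine factors $b_\bftheta$ cancelling in the numerator and $a_\bftheta$ cancelling between numerator and denominator) then gives $\NRCDT[\mu](t,\bftheta) = \NRCDT[\mu_0](t,h(\bftheta))$ for all $(t,\bftheta)$; since $h$ is onto, $\{\NRCDT[\mu](t,\bftheta)\mid\bftheta\in\Sphere\} = \{\NRCDT[\mu_0](t,\bftheta')\mid\bftheta'\in\Sphere\}$ for each $t$, so taking maxima over $\bftheta$ yields $\maxNRCDT[\mu] = \maxNRCDT[\mu_0]$ in $L^\infty_\rho(\R)$. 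The argument for $\G$ is identical.

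With the invariance in hand, for any non-empty $\F_0\subseteq\F$ and $\G_0\subseteq\G$ the image sets $\widehat\F_0 = \{\maxNRCDT[\mu]\mid\mu\in\F_0\} = \{\maxNRCDT[\mu_0]\}$ and $\widehat\G_0 = \{\maxNRCDT[\nu]\mid\nu\in\G_0\} = \{\maxNRCDT[\nu_0]\}$ are singletons in $L^\infty_\rho(\R)$, distinct by hypothesis, so $g \coloneqq \maxNRCDT[\nu_0]-\maxNRCDT[\mu_0] \neq 0$. By the Hahn--Banach theorem there is a continuous linear functional $\Phi\colon L^\infty_\rho(\R)\to\R$ with $\Phi(g)\neq 0$, and replacing $\Phi$ by $-\Phi$ if necessary we may assume $\Phi(\maxNRCDT[\mu_0]) < \Phi(\maxNRCDT[\nu_0])$; any $c$ strictly between these values yields $\Phi(\maxNRCDT[\mu]) < c < \Phi(\maxNRCDT[\nu])$ for all $\mu\in\F_0$, $\nu\in\G_0$, which is the claimed linear separability in \mNRCDT{} space. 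There is no real obstacle once the cited results of \cite{Beckmann2024a} are available: the only point requiring an explicit argument is that $h(\bftheta)=\bfA^\top\bftheta/\lVert\bfA^\top\bftheta\rVert$ is a bijection of $\Sphere$, which is precisely what lets the maximum over directions be taken indifferently before or after the substitution; note also that, because $\maxNRCDT$ is constant on each orbit, no finiteness or compactness hypothesis on $\F_0,\G_0$ is needed, in contrast to Theorem~\ref{thm:sep-rcdt}.
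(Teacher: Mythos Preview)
Your proof is correct and follows essentially the same approach as the paper: the discussion preceding the theorem establishes $\NRCDT[\mu](t,\bftheta)=\NRCDT[\mu_0](t,h(\bftheta))$ and hence $\maxNRCDT[\mu]=\maxNRCDT[\mu_0]$ via bijectivity of $h$, and the paper explicitly notes afterwards that $\maxNRCDT[\F]$ and $\maxNRCDT[\G]$ are one-point sets, from which separation is immediate. Your added verification that $h(\bftheta)=\bfA^\top\bftheta/\lVert\bfA^\top\bftheta\rVert$ is a bijection of $\Sphere$ and that $\mu\in\P_c^*(\R^2)$ fills in details the paper leaves to \cite{Beckmann2024a}.
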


The proof of Theorem~\ref{thm:sep-max-nrcdt} shows that \mNRCDT{}
maps $\F$ and $\G$ to one-point sets.
More precisely,
\begin{equation*}
    \maxNRCDT[\F] = \{\maxNRCDT [\mu_0]\}
    \qquad \text{and} \qquad
    \maxNRCDT[\G] = \{\maxNRCDT [\nu_0]\}.
\end{equation*}
In the next step, we consider the linear separability of two generated classes when allowing for slight perturbations of the underlying template measures.

\paragraph{Linear separability under perturbations in Wasserstein space}

To study the uncertainty of the max-normalized R-CDT
under perturbations 
with respect to the Wasserstein-$\infty$ distance,
we first analyse how these effect the non-normalized R-CDT.

\begin{proposition} 
    \label{prop:quant-w-inf}
    Let $\mu_0, \mu_\epsilon \in \mathcal{P}(\R^2)$ 
    with $W_\infty(\mu_0, \mu_\epsilon) \le \epsilon$.
    Then 
    \begin{equation*}
        \lVert 
        \widehat{\Radon}_\bftheta [\mu_0] 
        - 
        \widehat{\Radon}_\bftheta [\mu_\epsilon] 
        \rVert_\infty 
        \leq 
        \epsilon.
    \end{equation*}
\end{proposition}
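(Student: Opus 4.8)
The plan is to reduce the bivariate statement to a one‑dimensional estimate on quantile functions. If $W_\infty(\mu_0,\mu_\epsilon)=\infty$ there is nothing to show, so assume the distance is finite. First I would pass to the slices: arguing exactly as in the proof of Proposition~\ref{prop:wass} (for $\delta>0$ pick $\pi\in\Pi(\mu_0,\mu_\epsilon)$ with $\lVert\bfx-\bfy\rVert<\epsilon+\delta$ on $\supp\pi$, push it forward under $(S_\bftheta,S_\bftheta)$, and use $\lvert\langle\bfx-\bfy,\bftheta\rangle\rvert\le\lVert\bfx-\bfy\rVert$) one obtains $W_\infty(\Radon_\bftheta[\mu_0],\Radon_\bftheta[\mu_\epsilon])\le\epsilon$. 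Since $\widehat{\Radon}_\bftheta[\mu]=F_{\Radon_\bftheta[\mu]}^{[-1]}\circ F_\rho$, it then suffices to prove: for $\alpha,\beta\in\P(\R)$ with $W_\infty(\alpha,\beta)\le\epsilon$ one has $\lvert F_\alpha^{[-1]}(p)-F_\beta^{[-1]}(p)\rvert\le\epsilon$ for every $p\in(0,1)$.

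For the core one‑dimensional step, fix $\delta>0$ and choose $\gamma\in\Pi(\alpha,\beta)$ with $\lvert u-v\rvert<\epsilon+\delta$ for all $(u,v)\in\supp\gamma$. Then for every $\tau\in\R$,
\[
F_\alpha(\tau)=\gamma(\{(u,v)\mid u\le\tau\})\le\gamma(\{(u,v)\mid v\le\tau+\epsilon+\delta\})=F_\beta(\tau+\epsilon+\delta),
\]
because $u\le\tau$ together with $\lvert u-v\rvert<\epsilon+\delta$ forces $v\le\tau+\epsilon+\delta$ on $\supp\gamma$; by symmetry also $F_\beta(\tau)\le F_\alpha(\tau+\epsilon+\delta)$. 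Now $G\le H$ pointwise implies $G^{[-1]}\ge H^{[-1]}$ pointwise (the defining infima run over nested super‑level sets), and $(H(\cdot+c))^{[-1]}=H^{[-1]}(\cdot)-c$; applying this to the two CDF inequalities gives $F_\beta^{[-1]}(p)-(\epsilon+\delta)\le F_\alpha^{[-1]}(p)\le F_\beta^{[-1]}(p)+(\epsilon+\delta)$ for all $p\in(0,1)$, and letting $\delta\to0$ yields $\lvert F_\alpha^{[-1]}(p)-F_\beta^{[-1]}(p)\rvert\le\epsilon$ on $(0,1)$.

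Finally I would transfer this through the reference. Since $\rho$ gives no mass to atoms, $F_\rho$ is continuous and $F_\rho(t)\in(0,1)$ for $\rho$‑a.e.\ $t\in\R$ (the level sets $\{F_\rho=0\}$ and $\{F_\rho=1\}$ are $\rho$‑null). Composing the quantile bound with $F_\rho$ therefore gives $\lvert\widehat{\Radon}_\bftheta[\mu_0](t)-\widehat{\Radon}_\bftheta[\mu_\epsilon](t)\rvert\le\epsilon$ for $\rho$‑a.e.\ $t$, i.e.\ $\lVert\widehat{\Radon}_\bftheta[\mu_0]-\widehat{\Radon}_\bftheta[\mu_\epsilon]\rVert_\infty\le\epsilon$.

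The argument is largely routine; the only points requiring genuine care are the order‑reversing behaviour of the generalized inverse combined with the shift identity, and the observation that $F_\rho$ stays in the open interval $(0,1)$ almost everywhere — this matters because the quantile estimate fails (the quantile function may even be infinite) at the endpoints $0$ and $1$, so one must know those values are irrelevant for the $\rho$‑essential supremum.
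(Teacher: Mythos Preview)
Your proof is correct and follows essentially the same route as the paper: reduce to the one-dimensional slices via Proposition~\ref{prop:wass}, derive the CDF shift inequality $F_\alpha(\tau)\le F_\beta(\tau+\epsilon)$ from a near-optimal coupling, and invert it to a uniform quantile bound. The only cosmetic differences are that the paper uses an optimal $W_\infty$ plan directly (so no $\delta$-approximation) and inverts the CDF inequality via the Galois-connection identities $t\le F_\nu(F_\nu^{[-1]}(t))$ and $F_\nu^{[-1]}(F_\nu(s))\le s$ rather than your order-reversal plus shift identity; your extra care about $F_\rho$ taking values in $(0,1)$ $\rho$-a.e.\ is a point the paper leaves implicit.
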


\begin{proof}
    For any measure $\nu \in \P(\R)$,
    the cumulative distribution and the quantile function fulfil
    \begin{equation*}
        t \le F_\nu \bigl(F_\nu^{[-1]}(t)\bigr)
        \quad \forall t \in (0,1)
        \qquad\text{and}\qquad
        F_\nu^{[-1]} \bigl(F_\nu (s)\bigr) \le s
        \quad \forall s \in \R.
    \end{equation*}
    Exploiting 
    $W_\infty(\Radon_\bftheta [\mu_0], \Radon_\bftheta [\mu_\epsilon]) 
    \le 
    W_\infty (\mu_0, \mu_\epsilon) \le \epsilon$
    due to Proposition~\ref{prop:wass},
    and utilizing any $W_\infty$ optimal plan 
    $\pi_\bftheta \in \Pi(\Radon_\bftheta [\mu_0], \Radon_\bftheta [\mu_\epsilon])$,
    we observe
    \begin{align*}
        F_{\Radon_\bftheta[\mu_0]}(s) 
        &=
        \pi_\bftheta ((-\infty, s] \times \R)
        =
        \pi_\bftheta ((-\infty, s] \times (-\infty, s + \epsilon])
        \\
        &\le
        \pi_\bftheta (\R \times (-\infty, s + \epsilon])
        =
        F_{\Radon_\bftheta [\mu_\epsilon]}(s + \epsilon)
    \end{align*}
    for all $s \in \R$.
    Because of the monotonicity,
    we further have
    \begin{align*}
        F_{\Radon_\bftheta [\mu_\epsilon]}^{[-1]}(t)
        & \le
        F_{\Radon_\bftheta [\mu_\epsilon]}^{[-1]}
        \bigl(F_{\Radon_\bftheta [\mu_0]}
        \bigl(F_{\Radon_\bftheta [\mu_0]}^{[-1]}(t)\bigr)\bigr) \\
        & \le
        F_{\Radon_\bftheta [\mu_\epsilon]}^{[-1]}
        \bigl(F_{\Radon_\bftheta [\mu_\epsilon]}
        \bigl(F_{\Radon_\bftheta [\mu_0]}^{[-1]}(t) + \epsilon\bigr)\bigr)
        \le
        F_{\Radon_\bftheta [\mu_0]}^{[-1]}(t) + \epsilon
    \end{align*}
    Interchanging the role of $\mu_0$ and $\mu_\epsilon$,
    we obtain the lower bound,
    which yields the assertion.
\end{proof}

To simplify the notation,
for $\mu \in \P_1(\R^2)$,
we introduce the zero-mean quantile functions
\begin{equation*}
    \widetilde\NRCDT_\bftheta [\mu] (t)
    \coloneqq
    \widehat{\Radon}_\bftheta [\mu] (t) 
    - 
    \mean(\widehat{\Radon}_\bftheta [\mu])
    \quad \forall \, \bftheta \in \Sphere
    ~ \forall \, t \in \R,
\end{equation*}
which corresponds to the first normalization step
of the normalized R-CDT.
Thus, we now transfer the estimate for R-CDT to the zero-mean quantile functions.

\begin{lemma} \label{lem:zero-quant-w-infty}
    Let $\mu_0, \mu_\epsilon \in \P_1(\R^2)$ 
    with $W_\infty (\mu_0, \mu_\epsilon) \le \epsilon$.
    Then
    \begin{equation*}
        \lVert 
        \widetilde\NRCDT_\bftheta [\mu_0] 
        -
        \widetilde\NRCDT_\bftheta [\mu_\epsilon]
        \rVert_\infty 
        \le 
        2 \epsilon.
    \end{equation*}
\end{lemma}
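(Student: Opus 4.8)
The plan is to reduce the claim to Proposition~\ref{prop:quant-w-inf} by a triangle inequality, using that the second normalization ingredient—the mean—is itself an average against the probability measure $\rho$. Unfolding the definition of the zero-mean quantile function, the quantity to be estimated splits as
\begin{equation*}
    \widetilde\NRCDT_\bftheta [\mu_0](t) - \widetilde\NRCDT_\bftheta [\mu_\epsilon](t)
    =
    \bigl( \widehat{\Radon}_\bftheta [\mu_0](t) - \widehat{\Radon}_\bftheta [\mu_\epsilon](t) \bigr)
    -
    \bigl( \mean(\widehat{\Radon}_\bftheta [\mu_0]) - \mean(\widehat{\Radon}_\bftheta [\mu_\epsilon]) \bigr),
\end{equation*}
so that it suffices to bound the pointwise difference of the R-CDTs by $\epsilon$ and the (constant in $t$) difference of their means by $\epsilon$ as well.

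First I would dispatch the pointwise term: since $W_\infty(\mu_0,\mu_\epsilon)\le\epsilon$, Proposition~\ref{prop:quant-w-inf} directly yields $\lVert \widehat{\Radon}_\bftheta [\mu_0] - \widehat{\Radon}_\bftheta [\mu_\epsilon] \rVert_\infty \le \epsilon$. For the mean term I would first note that $\mean(\widehat{\Radon}_\bftheta [\mu])$ is well defined on all of $\P_1(\R^2)$—not merely on $\P_2^*(\R^2)$—via the identity $\mean(\widehat{\Radon}_\bftheta [\mu]) = \int_\R \widehat{\Radon}_\bftheta [\mu](t)\,\d\rho(t) = \int_{\R^2}\langle\bfx,\bftheta\rangle\,\d\mu(\bfx)$ already used in the continuity lemma above, whose right-hand side is finite for a measure with finite first moment. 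Then, since $\rho$ is a probability measure,
\begin{equation*}
    \bigl\lvert \mean(\widehat{\Radon}_\bftheta [\mu_0]) - \mean(\widehat{\Radon}_\bftheta [\mu_\epsilon]) \bigr\rvert
    =
    \Bigl\lvert \int_\R \bigl( \widehat{\Radon}_\bftheta [\mu_0](t) - \widehat{\Radon}_\bftheta [\mu_\epsilon](t) \bigr)\,\d\rho(t) \Bigr\rvert
    \le
    \lVert \widehat{\Radon}_\bftheta [\mu_0] - \widehat{\Radon}_\bftheta [\mu_\epsilon] \rVert_\infty
    \le
    \epsilon,
\end{equation*}
again by Proposition~\ref{prop:quant-w-inf}. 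Adding the two estimates gives the asserted bound $2\epsilon$, uniformly in $\bftheta\in\Sphere$.

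I do not expect a genuine obstacle here. The only point worth spelling out carefully is that $\lVert\cdot\rVert_\infty$ is the $\rho$-essential supremum: the proof of Proposition~\ref{prop:quant-w-inf} in fact establishes the two-sided bound $|\widehat{\Radon}_\bftheta[\mu_0](t)-\widehat{\Radon}_\bftheta[\mu_\epsilon](t)|\le\epsilon$ for $t\in(0,1)$, i.e.\ exactly on $\supp(\rho)$, which legitimizes its use both as an $L^\infty_\rho$-bound and inside the integral against $\rho$. It is also worth remarking that the factor $2$ cannot be improved by this triangle-inequality argument: if the R-CDT error is near $+\epsilon$ on a subset of $(0,1)$ of measure close to $1$ and near $-\epsilon$ on the complement, then its $\rho$-mean is near $\epsilon$ while the zero-mean error attains values near $2\epsilon$.
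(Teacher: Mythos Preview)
Your proof is correct and follows essentially the same route as the paper: triangle inequality into the R-CDT difference and the mean difference, then Proposition~\ref{prop:quant-w-inf} to bound each by $\epsilon$. Your additional remarks on well-definedness of the mean in $\P_1(\R^2)$ and on the $\rho$-essential supremum are welcome clarifications the paper leaves implicit.
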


\begin{proof}
    Due to Proposition~\ref{prop:quant-w-inf},
    we have
    \begin{align*}
        \lVert 
            \widetilde\NRCDT_\bftheta [\mu_0] 
            -
            \widetilde\NRCDT_\bftheta [\mu_\epsilon]
        \rVert_\infty 
        &\le 
        \lVert 
            \widehat{\Radon}_\bftheta [\mu_0]
            -
            \widehat{\Radon}_\bftheta [\mu_\epsilon]
        \rVert_\infty
        +
        \lvert
            \mean(\widehat{\Radon}_\bftheta [\mu_0])
            -
            \mean(\widehat{\Radon}_\bftheta [\mu_\epsilon])
        \rvert
        \\
        &\le
        \lVert 
            \widehat{\Radon}_\bftheta [\mu_0]
            -
            \widehat{\Radon}_\bftheta [\mu_\epsilon]
        \rVert_\infty
        +
        \int_\R
        \lvert
            \widehat{\Radon}_\bftheta [\mu_0] (t)
            -
            \widehat{\Radon}_\bftheta [\mu_\epsilon] (t)
        \rvert
        \: \d \rho(t) 
        \le
        2 \epsilon.
        \tag*{\qed}
    \end{align*}
\end{proof}

For the next normalization step in \mNRCDT{},
we have to divide the zero-mean quantile functions
by their standard deviations
\begin{equation*}
    \std(\widehat{\Radon}_\bftheta [\mu])
    = 
    \std(\widetilde\NRCDT_\bftheta [\mu])
    =
    \lVert \widetilde\NRCDT_\bftheta [\mu] \rVert_\rho.
\end{equation*}
Depending on the smallest occurring standard deviation,
a perturbation of the initial measure $\mu_0$ may cause
the following variation 
of the max-normalized R-CDT.

\begin{proposition} \label{prop:max-nrcdt-w-infty}
    Let $\mu_0 \in \P_c^*(\R^2)$
    and $\mu_\epsilon \in \P(\R^2)$
    with $W_\infty(\mu_0, \mu_\epsilon) \le \epsilon$,
    and define 
    $c_0 \coloneqq \min_{\bftheta \in \Sphere} \std(\widehat{\Radon}_\bftheta [\mu_0]) > 0$.
    If $\epsilon < c_0 / 2$,
    then
    \begin{equation*}
        \lVert
            \maxNRCDT [\mu_0]
            -
            \maxNRCDT [\mu_\epsilon]
        \rVert_\infty
        \le
        \frac{\diam(\mu_0) + 2\epsilon}{c_0(c_0 - 2\epsilon)} \, 4\epsilon.
    \end{equation*}
\end{proposition}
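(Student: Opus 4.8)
The plan is to establish a bound on $\NRCDT[\mu_0]-\NRCDT[\mu_\epsilon]$ that is uniform in both arguments and then pass to the maximum over $\bftheta$. First note that $W_\infty(\mu_0,\mu_\epsilon)\le\epsilon<\infty$ together with $\mu_0\in\P_c^*(\R^2)$ forces $\mu_\epsilon$ to have bounded support, so $\mu_\epsilon\in\P_c(\R^2)\subset\P_1(\R^2)$ and all the quantities below are meaningful. Since $\maxNRCDT[\mu](t)=\max_{\bftheta\in\Sphere}\NRCDT[\mu](t,\bftheta)$ and $\lvert\max_\bftheta a_\bftheta-\max_\bftheta b_\bftheta\rvert\le\max_\bftheta\lvert a_\bftheta-b_\bftheta\rvert$, it suffices to bound $\lvert\NRCDT[\mu_0](t,\bftheta)-\NRCDT[\mu_\epsilon](t,\bftheta)\rvert$ uniformly in $t\in\R$ and $\bftheta\in\Sphere$ and to take $\sup_t$ at the end. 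Fix $\bftheta$ and abbreviate $u_0\coloneqq\widetilde\NRCDT_\bftheta[\mu_0]$, $u_\epsilon\coloneqq\widetilde\NRCDT_\bftheta[\mu_\epsilon]$, and $s_0\coloneqq\std(\widehat{\Radon}_\bftheta[\mu_0])=\lVert u_0\rVert_\rho$, $s_\epsilon\coloneqq\std(\widehat{\Radon}_\bftheta[\mu_\epsilon])=\lVert u_\epsilon\rVert_\rho$, so that $\NRCDT[\mu_0](\cdot,\bftheta)=u_0/s_0$ and $\NRCDT[\mu_\epsilon](\cdot,\bftheta)=u_\epsilon/s_\epsilon$.

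Then I would collect three facts. (i) Lemma~\ref{lem:zero-quant-w-infty} gives $\lVert u_0-u_\epsilon\rVert_\infty\le 2\epsilon$; as $\rho$ is a probability measure this yields $\lVert u_0-u_\epsilon\rVert_\rho\le 2\epsilon$, and the reverse triangle inequality in $L^2_\rho(\R)$ gives $\lvert s_0-s_\epsilon\rvert\le 2\epsilon$. With $s_0\ge c_0$ and $\epsilon<c_0/2$ this forces $s_\epsilon\ge c_0-2\epsilon>0$, hence $s_0 s_\epsilon\ge c_0(c_0-2\epsilon)$; in particular $\std(\widehat{\Radon}_\bftheta[\mu_\epsilon])>0$ for every $\bftheta$, so $\dim(\mu_\epsilon)>1$ and $\maxNRCDT[\mu_\epsilon]$ is well defined. (ii) Because $\Radon_\bftheta[\mu_0]=(S_\bftheta)_\#\mu_0$ with $S_\bftheta$ a $1$-Lipschitz map, $\diam(\Radon_\bftheta[\mu_0])\le\diam(\mu_0)$; since the quantile function $\widehat{\Radon}_\bftheta[\mu_0]$ takes values in $\supp(\Radon_\bftheta[\mu_0])$ and $u_0(t)=\int_\R\bigl(\widehat{\Radon}_\bftheta[\mu_0](t)-\widehat{\Radon}_\bftheta[\mu_0](s)\bigr)\,\d\rho(s)$, the value $\lvert u_0(t)\rvert$ is at most the oscillation of $\widehat{\Radon}_\bftheta[\mu_0]$, whence $\lVert u_0\rVert_\infty\le\diam(\mu_0)$ and, by (i), $\lVert u_\epsilon\rVert_\infty\le\diam(\mu_0)+2\epsilon$. (iii) $s_\epsilon=\lVert u_\epsilon\rVert_\rho\le\lVert u_\epsilon\rVert_\infty$.

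With these in hand the estimate is the regrouping
\[
  \frac{u_0(t)}{s_0}-\frac{u_\epsilon(t)}{s_\epsilon}
  =\frac{s_\epsilon\,\bigl(u_0(t)-u_\epsilon(t)\bigr)+(s_\epsilon-s_0)\,u_\epsilon(t)}{s_0\,s_\epsilon},
\]
whose numerator is bounded, using (i) and (iii), by $2\epsilon\,(s_\epsilon+\lvert u_\epsilon(t)\rvert)\le 2\epsilon\cdot 2\lVert u_\epsilon\rVert_\infty=4\epsilon\lVert u_\epsilon\rVert_\infty\le 4\epsilon\,(\diam(\mu_0)+2\epsilon)$ by (ii), while the denominator is bounded below by $c_0(c_0-2\epsilon)$. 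Hence
\[
  \bigl\lvert\NRCDT[\mu_0](t,\bftheta)-\NRCDT[\mu_\epsilon](t,\bftheta)\bigr\rvert
  \le\frac{\diam(\mu_0)+2\epsilon}{c_0(c_0-2\epsilon)}\,4\epsilon
\]
for all $t$ and $\bftheta$, and taking $\max_\bftheta$ and then $\sup_t$ gives the claimed bound on $\lVert\maxNRCDT[\mu_0]-\maxNRCDT[\mu_\epsilon]\rVert_\infty$.

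The main obstacle is essentially bookkeeping: one must choose the regrouping of $u_0/s_0-u_\epsilon/s_\epsilon$ so that the $O(\epsilon)$ gain from Lemma~\ref{lem:zero-quant-w-infty} is preserved while the $\diam(\mu_0)$ dependence surfaces cleanly, and the key trick is to absorb $s_\epsilon$ into $\lVert u_\epsilon\rVert_\infty$ via $s_\epsilon=\lVert u_\epsilon\rVert_\rho$, collapsing the numerator to $4\epsilon\lVert u_\epsilon\rVert_\infty$. The only other point needing a short argument is the uniform oscillation bound $\lVert u_0\rVert_\infty\le\diam(\mu_0)$, which follows from the $1$-Lipschitz continuity of the slicing operator $S_\bftheta$.
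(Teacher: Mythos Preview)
Your proof is correct and follows essentially the same route as the paper: both use Lemma~\ref{lem:zero-quant-w-infty} to get $\lVert u_0-u_\epsilon\rVert_\infty\le 2\epsilon$ and $\lvert s_0-s_\epsilon\rvert\le 2\epsilon$, apply the same algebraic splitting of $u_0/s_0-u_\epsilon/s_\epsilon$, and finish with the uniform bound $\lVert u_\epsilon\rVert_\infty\le\diam(\mu_0)+2\epsilon$. The only cosmetic difference is that you collapse $s_\epsilon+\lvert u_\epsilon(t)\rvert\le 2\lVert u_\epsilon\rVert_\infty$ in one stroke via $s_\epsilon=\lVert u_\epsilon\rVert_\rho\le\lVert u_\epsilon\rVert_\infty$, whereas the paper bounds $\lVert u_\epsilon\rVert_\rho$ and $\lVert u_\epsilon\rVert_\infty$ separately before combining; your version is marginally more direct but not materially different.
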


\begin{proof}
    Since $W_\infty(\mu_0, \mu_\epsilon) \le \epsilon$ and $\mu_0 \in \P_c^*(\R^2)$, the measure $\mu_\epsilon \in \P(\R^2)$ has compact support and, in particular, $\mu_\epsilon \in \P_1(\R^2)$.
    Employing Lemma~\ref{lem:zero-quant-w-infty}, we have
    \begin{equation} \label{eq:err-std}
        \bigl\lvert
            \lVert \widetilde\NRCDT_\bftheta [\mu_0] \rVert_\rho
            -
            \lVert \widetilde\NRCDT_\bftheta [\mu_\epsilon] \rVert_\rho
        \bigr\rvert
        \le
        \lVert 
            \widetilde\NRCDT_\bftheta [\mu_0] 
            - 
            \widetilde\NRCDT_\bftheta [\mu_\epsilon]
        \rVert_\rho
        \le
        \sqrt{ \int_\R 4 \epsilon^2 \: \d \rho(t)}
        =
        2 \epsilon.
    \end{equation}
    Moreover, $\std(\widehat{\Radon}_\bftheta [\mu_\epsilon]) = \lVert \widetilde\NRCDT_\bftheta [\mu_\epsilon] \rVert_\rho$ is bounded away from zero and, thus, $\maxNRCDT [\mu_\epsilon]$ is well defined.
    Indeed, Lemma~\ref{lem:zero-quant-w-infty} in combination with $2\epsilon < c_0$ gives
    \begin{equation*}
        \lVert \widetilde\NRCDT_\bftheta [\mu_\epsilon] \rVert_\rho \geq \lVert \widetilde\NRCDT_\bftheta [\mu_0] \rVert_\rho - \lVert \widetilde\NRCDT_\bftheta [\mu_0] - \widetilde\NRCDT_\bftheta [\mu_\epsilon] \rVert_\rho \geq c_0 - 2\epsilon > 0.
    \end{equation*}
    On the basis of \eqref{eq:err-std},
    the perturbation after the second normalization step is for all $t \in \R$ bounded by
    \begin{align*}
        \lvert
            \NRCDT_\bftheta [\mu_0] (t) - \NRCDT_\bftheta [\mu_\epsilon] (t)
        \lvert
        &\le
        \biggl\lvert
            \frac
            {\widetilde\NRCDT_\bftheta [\mu_0](t)}
            {\lVert \widetilde\NRCDT_\bftheta [\mu_0] \rVert_\rho}
            -
            \frac
            {\widetilde\NRCDT_\bftheta [\mu_\epsilon](t)}
            {\lVert \widetilde\NRCDT_\bftheta [\mu_0] \rVert_\rho}
        \biggr\rvert
        +
        \biggl\lvert
            \frac
            {\widetilde\NRCDT_\bftheta [\mu_\epsilon](t)}
            {\lVert \widetilde\NRCDT_\bftheta [\mu_0] \rVert_\rho}
            -
            \frac
            {\widetilde\NRCDT_\bftheta [\mu_\epsilon](t)}
            {\lVert \widetilde\NRCDT_\bftheta [\mu_\epsilon] \rVert_\rho}
        \biggr\rvert
        \\
        &\le
        \frac{2\epsilon}{\lVert \widetilde\NRCDT_\bftheta [\mu_0] \rVert_\rho}
        +
        \lvert \widetilde\NRCDT_\bftheta [\mu_\epsilon] (t)\rvert
        \,
        \frac{2 \epsilon}
        {\lVert \widetilde\NRCDT_\bftheta [\mu_0] \rVert_\rho 
        \lVert \widetilde\NRCDT_\bftheta [\mu_\epsilon] \rVert_\rho}
        \\
        &\le
        2 \epsilon \frac
        {\lVert \widetilde\NRCDT_\bftheta [\mu_\epsilon] \rVert_\rho 
        +
        \lVert \widetilde\NRCDT_\bftheta [\mu_\epsilon] \rVert_\infty}
        {c_0 (c_0 - 2 \epsilon)}
        \le
        2 \epsilon \frac
        {(\lVert \widetilde\NRCDT_\bftheta [\mu_0] \rVert_\rho + 2 \epsilon)
        +
        (\diam(\mu_0) + 2 \epsilon)}
        {c_0 (c_0 - 2 \epsilon)}
        \\
        &\le
        \frac{\diam(\mu_0) + 2\epsilon}{c_0(c_0 - 2\epsilon)} \, 4\epsilon.
    \end{align*}
    Since this upper bound is independent of $\bftheta$,
    for fixed $t \in \R$,
    it remains valid for the supremum over $\bftheta \in \Sphere$,
    which yields the assertion.
\end{proof}
    
An affine transformation only causes 
a non-linear deformation of the normalized R-CDT  
in the argument $\bftheta$;
therefore
the estimate in Proposition~\ref{prop:max-nrcdt-w-infty} 
can be immediately generalized to small perturbations of the initial measure
followed by an affine transformation.

\begin{corollary} 
\label{cor:max-nrcdt-w-infty}
    Let $\mu_0 \in \P_c^*(\R^2)$
    and $\mu_\epsilon \in \P(\R^2)$
    with $W_\infty(\mu_0, \mu_\epsilon) \le \epsilon$,
    and define 
    $c_0 \coloneqq \min_{\bftheta \in \Sphere} \std(\widehat{\Radon}_\bftheta [\mu_0]) > 0$.
    For $\bfA \in \GL(2)$, $\bfy \in \R^2$,
    let $\mu \coloneqq (\bfA \cdot + \bfy)_\# \mu_\epsilon$.
    If $2\epsilon < c_0$,
    then
    \begin{equation*}
        \lVert
            \maxNRCDT [\mu_0]
            -
            \maxNRCDT [\mu]
        \rVert_\infty
        \le
        \frac{\diam(\mu_0) + 2\epsilon}{c_0(c_0 - 2\epsilon)} \, 4\epsilon.
    \end{equation*}    
\end{corollary}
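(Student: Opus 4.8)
The plan is to reduce the statement to Proposition~\ref{prop:max-nrcdt-w-infty} by showing that the subsequent affine transformation does not change the max-normalized R-CDT at all, i.e.\ $\maxNRCDT[\mu] = \maxNRCDT[\mu_\epsilon]$, so that the estimate for $\lVert\maxNRCDT[\mu_0]-\maxNRCDT[\mu_\epsilon]\rVert_\infty$ transfers verbatim.

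First I would recall the action of an affine map on the restricted Radon transform from~\cite[Proposition~3]{Beckmann2024a}: for $\mu=(\bfA\cdot+\bfy)_\#\mu_\epsilon$ with $\bfA\in\GL(2)$, $\bfy\in\R^2$, and every $\bftheta\in\Sphere$,
\[
  \Radon_\bftheta[\mu]
  = \bigl(\lVert\bfA^\top\bftheta\rVert\,\cdot + \langle\bfy,\bftheta\rangle\bigr)_\#\Radon_{h(\bftheta)}[\mu_\epsilon],
  \qquad h(\bftheta)\coloneqq\frac{\bfA^\top\bftheta}{\lVert\bfA^\top\bftheta\rVert}.
\]
Since $\bfA^\top$ is invertible, $h\colon\Sphere\to\Sphere$ is a bijection, and $a_\bftheta\coloneqq\lVert\bfA^\top\bftheta\rVert>0$ depends continuously on $\bftheta$, so $\inf_{\bftheta\in\Sphere}a_\bftheta>0$ by compactness of $\Sphere$. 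I would also note that $\mu$ has compact support, because $\mu_\epsilon$ does (as established in the proof of Proposition~\ref{prop:max-nrcdt-w-infty} from $W_\infty(\mu_0,\mu_\epsilon)\le\epsilon$ and $\mu_0\in\P_c^*(\R^2)$) and affine maps preserve compactness.

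Next I would repeat the elementary bookkeeping preceding Theorem~\ref{thm:sep-max-nrcdt}: the identity above gives $\widehat{\Radon}_\bftheta[\mu]=a_\bftheta\,\widehat{\Radon}_{h(\bftheta)}[\mu_\epsilon]+\langle\bfy,\bftheta\rangle$, hence $\mean(\widehat{\Radon}_\bftheta[\mu])=a_\bftheta\,\mean(\widehat{\Radon}_{h(\bftheta)}[\mu_\epsilon])+\langle\bfy,\bftheta\rangle$ and $\std(\widehat{\Radon}_\bftheta[\mu])=a_\bftheta\,\std(\widehat{\Radon}_{h(\bftheta)}[\mu_\epsilon])$. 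Before forming $\NRCDT[\mu]$ one must check it is well defined, and this is where the only real care is needed: using the lower bound $\std(\widehat{\Radon}_\bftheta[\mu_\epsilon])\ge c_0-2\epsilon>0$ from the proof of Proposition~\ref{prop:max-nrcdt-w-infty} (valid since $2\epsilon<c_0$), one gets $\std(\widehat{\Radon}_\bftheta[\mu])\ge(\inf_{\bftheta}a_\bftheta)(c_0-2\epsilon)>0$ uniformly in $\bftheta$, so the first normalization is admissible and, together with continuity of $\bftheta\mapsto\NRCDT[\mu](t,\bftheta)$ on the compact circle, $\maxNRCDT[\mu]$ is well defined. The two normalization steps cancel $a_\bftheta$ and $\langle\bfy,\bftheta\rangle$, yielding $\NRCDT[\mu](t,\bftheta)=\NRCDT[\mu_\epsilon](t,h(\bftheta))$ for all $(t,\bftheta)\in\R\times\Sphere$; taking the maximum over $\bftheta$ and using that $h$ is a bijection of $\Sphere$ gives $\maxNRCDT[\mu](t)=\maxNRCDT[\mu_\epsilon](t)$ for every $t$.

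Finally, since $2\epsilon<c_0$, Proposition~\ref{prop:max-nrcdt-w-infty} applies to $\mu_0$ and $\mu_\epsilon$, and substituting $\maxNRCDT[\mu]=\maxNRCDT[\mu_\epsilon]$ yields
\[
  \lVert\maxNRCDT[\mu_0]-\maxNRCDT[\mu]\rVert_\infty
  = \lVert\maxNRCDT[\mu_0]-\maxNRCDT[\mu_\epsilon]\rVert_\infty
  \le \frac{\diam(\mu_0)+2\epsilon}{c_0(c_0-2\epsilon)}\,4\epsilon,
\]
which is the assertion. I do not expect any serious obstacle here; the proof is essentially the combination of the affine-invariance identity of Section~\ref{ssec:mNRCDT} with Proposition~\ref{prop:max-nrcdt-w-infty}, the one subtle point being to verify that the standard deviations $\std(\widehat{\Radon}_\bftheta[\mu])$ remain uniformly bounded away from zero so that $\maxNRCDT[\mu]$ is defined.
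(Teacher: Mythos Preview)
Your proposal is correct and follows essentially the same approach as the paper: the paper does not give a formal proof of the corollary but simply remarks beforehand that an affine transformation only causes a non-linear deformation of the normalized R-CDT in the argument $\bftheta$, so Proposition~\ref{prop:max-nrcdt-w-infty} transfers immediately. Your write-up spells out exactly this argument, including the additional care of checking that $\std(\widehat{\Radon}_\bftheta[\mu])$ stays bounded away from zero so that $\maxNRCDT[\mu]$ is well defined, which the paper leaves implicit.
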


The uniform bounds for the uncertainty of the max-normalized R-CDT
can be incorporated into the separation guarantee
in Theorem~\ref{thm:sep-max-nrcdt} to obtain linear separability even in the cases of slight perturbations in $W_\infty$.

\begin{theorem} \label{thm:sep-pert-max-nrcdt}
    For template measures $\mu_0, \nu_0 \in \P_c^*(\R^2)$
    with $\maxNRCDT [\mu_0] \neq \maxNRCDT [\nu_0]$,
    define 
    $c_\mu \coloneqq \min_{\bftheta\in\Sphere} \std(\widehat{\Radon}_\bftheta [\mu_0] )$
    and
    $c_\nu \coloneqq \min_{\bftheta\in\Sphere} \std(\widehat{\Radon}_\bftheta [\nu_0])$.
    Let $\epsilon < \tfrac{\min\{c_\mu, c_\nu\}}{2}$ satisfy
    \begin{equation*}
		4 \epsilon        
        \biggl(
            \frac
            {\diam(\mu_0) + 2\epsilon}
            {c_\mu (c_\mu - 2 \epsilon)}
            +
            \frac
            {\diam(\nu_0) + 2\epsilon}
            {c_\nu (c_\nu - 2 \epsilon)}
        \biggr)
        <
        \lVert 
            \maxNRCDT [\mu_0] - \maxNRCDT [\nu_0]
        \rVert_\infty.
    \end{equation*}
    Consider the classes
    \begin{align*}
        \F 
        &= 
        \bigl\{
            (\bfA \cdot + \bfy)_\# \mu
            \mid 
            \bfA \in \GL(2), 
            \, \bfy \in \R^2,
            \, \mu \in \P(\R^2),
            \, W_\infty(\mu, \mu_0) \le \epsilon 
        \bigr\}, 
        \\
        \G 
        &= 
        \bigl\{
            (\bfA \cdot + \bfy)_\# \nu 
            \mid 
            \bfA \in \GL(2), 
            \, \bfy \in \R^2, 
            \, \nu \in \P(\R^2),
            \, W_\infty(\nu, \nu_0) \le \epsilon
        \bigr\}.
    \end{align*}
    Then,
    any non-empty subsets
    $\F_0 \subseteq \F$ and $\G_0 \subseteq \G$ 
    are linearly separable in \mNRCDT{} space.
\end{theorem}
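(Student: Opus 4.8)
The plan is to push the whole class $\F$ through $\maxNRCDT$ into a small $\lVert\cdot\rVert_\infty$-ball around $\maxNRCDT[\mu_0]$ and the whole class $\G$ into a small $\lVert\cdot\rVert_\infty$-ball around $\maxNRCDT[\nu_0]$, to note that the quantitative assumption on $\epsilon$ forces these two balls to have a strictly positive gap, and then to separate them by a single functional obtained from Hahn--Banach.

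\emph{Step 1 (localizing the two classes).}
Fix $\mu \in \F$, say $\mu = (\bfA\cdot+\bfy)_\#\mu_\epsilon$ with $\bfA\in\GL(2)$, $\bfy\in\R^2$ and $\mu_\epsilon\in\P(\R^2)$ such that $W_\infty(\mu_\epsilon,\mu_0)\le\epsilon$. Since $\epsilon<c_\mu/2$, Corollary~\ref{cor:max-nrcdt-w-infty} is applicable and yields, at once, that $\maxNRCDT[\mu]$ is well defined --- the standard deviations occurring in the second normalization step stay $\ge c_\mu-2\epsilon>0$, and the subsequent affine transformation only rescales and reparametrizes in the direction variable --- together with the uniform estimate
\begin{equation*}
\bigl\lVert\maxNRCDT[\mu_0]-\maxNRCDT[\mu]\bigr\rVert_\infty
\le r_\mu
\coloneqq\frac{\diam(\mu_0)+2\epsilon}{c_\mu(c_\mu-2\epsilon)}\,4\epsilon.
\end{equation*}
As $\maxNRCDT[\mu_0]\in L^\infty_\rho(\R)$ this shows $\maxNRCDT[\F]\subseteq\bar B_\mu$, the closed $\lVert\cdot\rVert_\infty$-ball of radius $r_\mu$ around $\maxNRCDT[\mu_0]$. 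Running the same argument with $\epsilon<c_\nu/2$ gives $\maxNRCDT[\G]\subseteq\bar B_\nu$, the closed $\lVert\cdot\rVert_\infty$-ball of radius $r_\nu\coloneqq\frac{\diam(\nu_0)+2\epsilon}{c_\nu(c_\nu-2\epsilon)}\,4\epsilon$ around $\maxNRCDT[\nu_0]$. The hypothesis on $\epsilon$ is precisely $r_\mu+r_\nu<\lVert\maxNRCDT[\mu_0]-\maxNRCDT[\nu_0]\rVert_\infty$, so $\bar B_\mu$ and $\bar B_\nu$ are disjoint with positive distance.

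\emph{Step 2 (separating the balls).}
By Hahn--Banach pick $\Phi\in(L^\infty_\rho(\R))^*$ with $\lVert\Phi\rVert=1$ and $\Phi\bigl(\maxNRCDT[\nu_0]-\maxNRCDT[\mu_0]\bigr)=\lVert\maxNRCDT[\nu_0]-\maxNRCDT[\mu_0]\rVert_\infty$. For $\mu\in\F_0$ the inclusion $\maxNRCDT[\mu]\in\bar B_\mu$ and $\lVert\Phi\rVert=1$ give $\Phi(\maxNRCDT[\mu])\le\Phi(\maxNRCDT[\mu_0])+r_\mu$, and for $\nu\in\G_0$ similarly $\Phi(\maxNRCDT[\nu])\ge\Phi(\maxNRCDT[\nu_0])-r_\nu$. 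Since
\begin{equation*}
\bigl(\Phi(\maxNRCDT[\nu_0])-r_\nu\bigr)-\bigl(\Phi(\maxNRCDT[\mu_0])+r_\mu\bigr)
=\lVert\maxNRCDT[\mu_0]-\maxNRCDT[\nu_0]\rVert_\infty-(r_\mu+r_\nu)>0,
\end{equation*}
any constant $c$ in the non-empty interval $\bigl(\Phi(\maxNRCDT[\mu_0])+r_\mu,\ \Phi(\maxNRCDT[\nu_0])-r_\nu\bigr)$ satisfies $\Phi(\maxNRCDT[\mu])<c<\Phi(\maxNRCDT[\nu])$ for all $\mu\in\F_0$ and all $\nu\in\G_0$, which is the asserted linear separability in \mNRCDT{} space. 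Note that $\F_0,\G_0$ need not be finite here, since we have in fact separated the full balls $\bar B_\mu$ and $\bar B_\nu$.

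\emph{Main obstacle.}
The only delicate point is Step~1: making sure Corollary~\ref{cor:max-nrcdt-w-infty} genuinely applies to \emph{every} element of $\F$ and $\G$, in particular that composing an $\epsilon$-perturbation with an arbitrary $\bfA\in\GL(2)$ does not spoil the standard-deviation lower bound underlying the well-definedness of $\maxNRCDT$. This, however, is already contained in that corollary (which in turn rests on Proposition~\ref{prop:max-nrcdt-w-infty} and on the fact that an affine transformation leaves the normalized R-CDT unchanged up to a substitution in $\bftheta$); everything after that reduces to the textbook separation of two disjoint balls by a norm-attaining functional.
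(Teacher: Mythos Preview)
Your proof is correct and follows essentially the same route as the paper: localize $\maxNRCDT[\F]$ and $\maxNRCDT[\G]$ into $\lVert\cdot\rVert_\infty$-balls via Corollary~\ref{cor:max-nrcdt-w-infty}, observe that the quantitative hypothesis on $\epsilon$ makes the sum of the radii strictly smaller than the distance between the centers, and then separate. The only cosmetic difference is that the paper simply asserts that two closed balls at positive distance are linearly separable, whereas you explicitly produce the separating functional as a norm-attaining Hahn--Banach functional on $\maxNRCDT[\nu_0]-\maxNRCDT[\mu_0]$; this makes your Step~2 slightly more self-contained but does not change the argument.
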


\begin{proof}
For $\mu \in \F$ there exist $\bfA \in \GL(2)$, $\bfy \in \R^2$ and $\mu_\epsilon \in \P(\R^2)$ with $W_\infty(\mu_\epsilon, \mu_0) \le \epsilon$ such that $\mu = (\bfA \cdot + \bfy)_\# \mu_\epsilon$.
Consequently, Corollary~\ref{cor:max-nrcdt-w-infty} yields
\begin{equation*}
\lVert \maxNRCDT [\mu_0] - \maxNRCDT [\mu] \rVert_\infty \le \frac{\diam(\mu_0) + 2\epsilon}{c_\mu(c_\mu - 2\epsilon)} \, 4\epsilon = C_{\mu,\epsilon}
\end{equation*}
so that
$\maxNRCDT \F \subseteq B_{C_{\mu,\epsilon}}(\maxNRCDT [\mu_0]) \subset \Lebesgue^\infty_\rho(\R)$.
Analogously, for $\nu \in \G$ we obtain
\begin{equation*}
\lVert \maxNRCDT [\nu_0] - \maxNRCDT [\nu] \rVert_\infty \le \frac{\diam(\nu_0) + 2\epsilon}{c_\nu(c_\nu - 2\epsilon)} \, 4\epsilon = C_{\nu,\epsilon}
\end{equation*}
and $\maxNRCDT \G \subseteq B_{C_{\nu,\epsilon}}(\maxNRCDT [\nu_0]) \subset \Lebesgue^\infty_\rho(\R)$.
Since $\lVert \maxNRCDT [\mu_0] - \maxNRCDT [\nu_0] \rVert_\infty > C_{\mu,\epsilon} + C_{\nu,\epsilon}$, the closed balls $B_{C_{\mu,\epsilon}}(\maxNRCDT [\mu_0])$ and $B_{C_{\nu,\epsilon}}(\maxNRCDT [\nu_0])$ are linearly separable in $\Lebesgue^\infty_\rho(\R)$.
This implies the linear separability of $\maxNRCDT \F_0$ and $\maxNRCDT \G_0$ in $\Lebesgue^\infty_\rho(\R)$ for any non-empty subsets $\F_0 \subseteq \F$ and $\G_0 \subseteq \G$.
\end{proof}

\subsection{Mean-normalized R-CDT}
\label{ssec:aNRCDT}

To deal with a more general measure $\mu \in \P_2^*(\R^2)$ and perturbations in $W_2$, we define the {\em mean-normalized R-CDT} (\aNRCDT) $\meanNRCDT [\mu] \colon \R \to \R$ via
\begin{equation*}
\meanNRCDT [\mu](t) = \int_{\Sphere} \NRCDT [\mu](t,\bftheta) \: \d u_{\Sphere} (\bftheta)
\quad \mbox{for } t \in \R.
\end{equation*}

Proposition~\ref{prop:nrcdt_l2} implies the well-definedness and square-integrability of $\meanNRCDT[\mu]$.

\begin{lemma}
Let $\mu \in \P_2^*(\R^2)$.
Then, $\meanNRCDT [\mu] \in L^2_\rho(\R)$.
\end{lemma}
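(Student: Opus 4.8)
The plan is to reduce the claim to Proposition~\ref{prop:nrcdt_l2} together with the Cauchy--Schwarz (or Jensen) inequality on the probability space $(\Sphere, u_{\Sphere})$. First I would recall that by Proposition~\ref{prop:nrcdt_l2} we already know $\NRCDT[\mu] \in \Lebesgue^2_{\rho \times u_{\Sphere}}(\R \times \Sphere)$, i.e.\ the double integral
\begin{equation*}
    \int_\Sphere \int_\R \lvert \NRCDT[\mu](t,\bftheta)\rvert^2 \: \d\rho(t) \, \d u_{\Sphere}(\bftheta)
\end{equation*}
is finite. In particular, for $\rho$-almost every $t$ the slice $\bftheta \mapsto \NRCDT[\mu](t,\bftheta)$ lies in $\Lebesgue^1_{u_{\Sphere}}(\Sphere) \subseteq \Lebesgue^2_{u_{\Sphere}}(\Sphere)$ (since $u_{\Sphere}$ is a probability measure), so $\meanNRCDT[\mu](t)$ is a well-defined finite number for a.e.\ $t$.

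The key estimate is then pointwise in $t$: by Jensen's inequality applied to the convex function $x \mapsto x^2$ and the probability measure $u_{\Sphere}$,
\begin{equation*}
    \lvert \meanNRCDT[\mu](t)\rvert^2
    = \Bigl\lvert \int_\Sphere \NRCDT[\mu](t,\bftheta) \: \d u_{\Sphere}(\bftheta)\Bigr\rvert^2
    \le \int_\Sphere \lvert \NRCDT[\mu](t,\bftheta)\rvert^2 \: \d u_{\Sphere}(\bftheta).
\end{equation*}
Integrating this inequality against $\d\rho(t)$ and invoking Tonelli's theorem to swap the order of integration on the right-hand side, I get
\begin{equation*}
    \lVert \meanNRCDT[\mu]\rVert_\rho^2
    \le \int_\R \int_\Sphere \lvert \NRCDT[\mu](t,\bftheta)\rvert^2 \: \d u_{\Sphere}(\bftheta)\, \d\rho(t)
    = \lVert \NRCDT[\mu]\rVert_{\rho \times u_{\Sphere}}^2 < \infty,
\end{equation*}
where finiteness is exactly Proposition~\ref{prop:nrcdt_l2}. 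This shows $\meanNRCDT[\mu] \in \Lebesgue^2_\rho(\R)$, and along the way also that measurability of $\meanNRCDT[\mu]$ follows from Fubini--Tonelli applied to the jointly measurable function $\NRCDT[\mu]$.

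I do not expect any genuine obstacle here; this is essentially the statement that averaging (a contraction) maps $\Lebesgue^2$ into $\Lebesgue^2$ with norm control. The only minor care points are: checking joint measurability of $\NRCDT[\mu]$ in $(t,\bftheta)$ so that Fubini--Tonelli applies (this is implicit in Proposition~\ref{prop:nrcdt_l2}, since square-integrability on the product space presupposes measurability there), and noting that $u_{\Sphere}$ being a probability measure is what makes Jensen available and the $\Lebesgue^1 \subseteq \Lebesgue^2$ inclusion hold. If one prefers to avoid Jensen, the identical bound follows from Cauchy--Schwarz: $\lvert\int_\Sphere g \, \d u_{\Sphere}\rvert \le (\int_\Sphere 1 \, \d u_{\Sphere})^{1/2}(\int_\Sphere g^2 \, \d u_{\Sphere})^{1/2} = (\int_\Sphere g^2 \, \d u_{\Sphere})^{1/2}$.
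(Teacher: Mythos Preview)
Your proof is correct and follows exactly the paper's approach: invoke Proposition~\ref{prop:nrcdt_l2} to get $\NRCDT[\mu]\in \Lebesgue^2_{\rho\times u_{\Sphere}}(\R\times\Sphere)$, then apply Jensen's inequality on the probability space $(\Sphere,u_{\Sphere})$ to obtain $\lVert\meanNRCDT[\mu]\rVert_\rho^2 \le \lVert\NRCDT[\mu]\rVert_{\rho\times u_{\Sphere}}^2 < \infty$. One minor slip: on a probability space the inclusion runs $\Lebesgue^2_{u_{\Sphere}}\subseteq \Lebesgue^1_{u_{\Sphere}}$, not the other way around---what you actually need (and implicitly use) is that the a.e.\ $L^2$ slice lies in $L^1$, so the $\bftheta$-average is well defined.
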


\begin{proof}
For $\mu \in \P_2^*(\R^2)$ we have $\NRCDT [\mu] \in \Lebesgue^2_{\rho \times u_{\Sphere}}(\R \times \Sphere)$ according to Proposition~\ref{prop:nrcdt_l2}.
Consequently, Jensen's inequality gives
\begin{equation*}
\lVert\meanNRCDT [\mu]\rVert_\rho^2 \leq \int_\R \int_{\Sphere} \lvert\NRCDT [\mu](t,\bftheta)\rvert^2 \: \d u_{\Sphere} (\bftheta) = \|\NRCDT [\mu]\|_{\rho \times u_{\Sphere}}^2 < \infty
\end{equation*}
so that $\meanNRCDT [\mu] \in L^2_\rho(\R)$, as stated.
\end{proof}

We again focus on the linear separability of classes generated by template measures.

\paragraph{Linear separability in \aNRCDT{} space}

The linear separability in \aNRCDT{} space of classes in $\P_2^*(\R^2)$ requires more restrictions on the admissible affine-linear transforms.

\begin{theorem} 
\label{thm:sep-mean-nrcdt}
For template measures $\mu_0, \nu_0 \in \P_2^*(\R^2)$ with
\begin{equation*}
\meanNRCDT [\mu_0] \neq \meanNRCDT [\nu_0]
\end{equation*}
consider the classes
\begin{align*}
\F &= \bigl\{\mu \in \P(\R^2) \mid \exists \, \bfA \in \X, \, \bfy \in \R^2 \colon \mu = (\bfA \cdot + \bfy)_\# \mu_0\bigr\}, \\
\G &= \bigl\{\nu \in \P(\R^2) \mid \exists \, \bfA \in \X, \, \bfy \in \R^2 \colon \nu = (\bfA \cdot + \bfy)_\# \nu_0\bigr\},
\end{align*}
where, for some $c \in (0,\frac{1}{2})$,
\begin{equation*}
\X = \biggl\{\bfA \in \GL(2) \biggm| \frac{\sigma_{\max}(\bfA) - \sigma_{\min}(\bfA)}{\sigma_{\min}(\bfA)} \leq \frac{c \lVert\meanNRCDT [\mu_0] - \meanNRCDT [\nu_0]\lVert_\rho}{\max\{\lVert\NRCDT [\mu_0]\rVert_{\rho \times u_{\Sphere}}, \lVert\NRCDT [\nu_0]\rVert_{\rho \times u_{\Sphere}}\}} \biggr\}.
\end{equation*}
Then, any non-empty subsets $\F_0 \subseteq \F$ and $\G_0 \subseteq \G$ are linearly separable in \aNRCDT{} space.
\end{theorem}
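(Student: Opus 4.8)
The plan is to show that the mean-normalized R-CDT collapses each class into a small ball, namely $\meanNRCDT[\F] \subseteq B_{c\delta}(\meanNRCDT[\mu_0])$ and $\meanNRCDT[\G] \subseteq B_{c\delta}(\meanNRCDT[\nu_0])$ in $\Lebesgue^2_\rho(\R)$, where $\delta \coloneqq \lVert \meanNRCDT[\mu_0] - \meanNRCDT[\nu_0] \rVert_\rho > 0$ and $B_r(g)$ denotes the closed ball of radius $r$ around $g$. Since $c < \tfrac12$, these two balls are disjoint — in fact their distance is at least $(1-2c)\delta > 0$ — so the continuous linear functional $g \mapsto \langle g, \meanNRCDT[\nu_0] - \meanNRCDT[\mu_0] \rangle_\rho$ together with a suitable threshold strictly separates them, hence separates $\meanNRCDT[\F_0]$ from $\meanNRCDT[\G_0]$ for arbitrary non-empty subsets $\F_0 \subseteq \F$, $\G_0 \subseteq \G$. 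The entire proof therefore reduces to the stability estimate $\lVert \meanNRCDT[(\bfA \cdot + \bfy)_\# \mu_0] - \meanNRCDT[\mu_0] \rVert_\rho \le c\delta$ for all $\bfA \in \X$, $\bfy \in \R^2$, together with its analogue for $\nu_0$ (note that $(\bfA \cdot + \bfy)_\# \mu_0$ again lies in $\P_2^*(\R^2)$, so the left-hand side is well defined).

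First I would use the affine covariance already recorded in Section~\ref{ssec:mNRCDT}: by \cite[Proposition 3]{Beckmann2024a} and the computation preceding Theorem~\ref{thm:sep-max-nrcdt}, the translation $\bfy$ and the stretching factor $\lVert \bfA^\top \bftheta \rVert$ are annihilated by the two normalization steps, so that $\mu \coloneqq (\bfA \cdot + \bfy)_\# \mu_0$ satisfies $\NRCDT[\mu](t, \bftheta) = \NRCDT[\mu_0](t, h_\bfA(\bftheta))$ with the circle diffeomorphism $h_\bfA(\bftheta) \coloneqq \bfA^\top \bftheta / \lVert \bfA^\top \bftheta \rVert$. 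Integrating over $\Sphere$ and pushing forward under $h_\bfA$ gives
\[
    \meanNRCDT[\mu](t) = \int_\Sphere \NRCDT[\mu_0](t, h_\bfA(\bftheta)) \: \d u_{\Sphere}(\bftheta) = \int_\Sphere \NRCDT[\mu_0](t, \bfpsi) \, \varphi_\bfA(\bfpsi) \: \d u_{\Sphere}(\bfpsi),
\]
where $\varphi_\bfA \coloneqq \d (h_\bfA)_\# u_{\Sphere} / \d u_{\Sphere}$ is the smooth positive Jacobian density of the push-forward on the circle. Subtracting $\meanNRCDT[\mu_0](t) = \int_\Sphere \NRCDT[\mu_0](t, \bfpsi) \: \d u_{\Sphere}(\bfpsi)$, applying the Cauchy--Schwarz inequality in $\bfpsi$ for each fixed $t$, and integrating against $\rho$ yields
\[
    \lVert \meanNRCDT[\mu] - \meanNRCDT[\mu_0] \rVert_\rho \le \lVert \varphi_\bfA - 1 \rVert_{\Lebesgue^\infty(\Sphere)} \, \lVert \NRCDT[\mu_0] \rVert_{\rho \times u_{\Sphere}}.
\]

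The key technical step is to identify $\lVert \varphi_\bfA - 1 \rVert_{\Lebesgue^\infty(\Sphere)} = (\sigma_{\max}(\bfA) - \sigma_{\min}(\bfA)) / \sigma_{\min}(\bfA)$, which is exactly the quantity constrained in the definition of $\X$. With a singular value decomposition $\bfA = \bfU \Sigma \bfV^\top$, $\Sigma = \mathrm{diag}(\sigma_{\max}(\bfA), \sigma_{\min}(\bfA))$, one has $h_\bfA = (\bfV \, \cdot) \circ g_\Sigma \circ (\bfU^\top \cdot)$ with $g_\Sigma(\bftheta) \coloneqq \Sigma \bftheta / \lVert \Sigma \bftheta \rVert$; since the orthogonal pre- and post-factors leave $u_{\Sphere}$ invariant, $\varphi_\bfA$ takes the same range of values as the density of $(g_\Sigma)_\# u_{\Sphere}$. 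Parametrizing $\Sphere$ by angle and differentiating the relation $\tan\beta = (\sigma_{\min}(\bfA)/\sigma_{\max}(\bfA)) \tan\alpha$ between the angle $\alpha$ of $\bftheta$ and the angle $\beta$ of $g_\Sigma(\bftheta)$, I obtain $\d\beta/\d\alpha = \sigma_{\max}(\bfA)\sigma_{\min}(\bfA) / (\sigma_{\max}^2(\bfA)\cos^2\alpha + \sigma_{\min}^2(\bfA)\sin^2\alpha)$, so the density at $g_\Sigma(\bftheta)$ equals $(\sigma_{\max}^2(\bfA)\cos^2\alpha + \sigma_{\min}^2(\bfA)\sin^2\alpha)/(\sigma_{\max}(\bfA)\sigma_{\min}(\bfA))$, whose range is the interval $[\sigma_{\min}(\bfA)/\sigma_{\max}(\bfA), \sigma_{\max}(\bfA)/\sigma_{\min}(\bfA)]$. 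Since $\sigma_{\max}(\bfA)/\sigma_{\min}(\bfA) + \sigma_{\min}(\bfA)/\sigma_{\max}(\bfA) \ge 2$, the maximal deviation from $1$ is $\sigma_{\max}(\bfA)/\sigma_{\min}(\bfA) - 1 = (\sigma_{\max}(\bfA) - \sigma_{\min}(\bfA))/\sigma_{\min}(\bfA)$, as claimed.

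Combining the previous two displays with $\bfA \in \X$ and $\lVert \NRCDT[\mu_0] \rVert_{\rho \times u_{\Sphere}} \le \max\{ \lVert \NRCDT[\mu_0] \rVert_{\rho \times u_{\Sphere}}, \lVert \NRCDT[\nu_0] \rVert_{\rho \times u_{\Sphere}} \}$ gives
\[
    \lVert \meanNRCDT[\mu] - \meanNRCDT[\mu_0] \rVert_\rho \le \frac{\sigma_{\max}(\bfA) - \sigma_{\min}(\bfA)}{\sigma_{\min}(\bfA)} \, \lVert \NRCDT[\mu_0] \rVert_{\rho \times u_{\Sphere}} \le c \, \lVert \meanNRCDT[\mu_0] - \meanNRCDT[\nu_0] \rVert_\rho = c\delta,
\]
and symmetrically $\lVert \meanNRCDT[\nu] - \meanNRCDT[\nu_0] \rVert_\rho \le c\delta$ for all $\nu \in \G$, which is the stability estimate announced above; the separation conclusion then follows as described in the first paragraph. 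The covariance identity is already available and the ball-separation/Hahn--Banach argument is routine, so I expect the only genuinely delicate point to be the circle-density computation in the third paragraph, where one must carry out the singular value reduction correctly and verify that the extremal value of $\varphi_\bfA$ is the one matching the bound defining $\X$.
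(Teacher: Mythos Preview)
Your proposal is correct and follows essentially the same route as the paper: both reduce to the stability estimate $\lVert \meanNRCDT[\mu] - \meanNRCDT[\mu_0] \rVert_\rho \le \tfrac{\sigma_{\max}(\bfA)-\sigma_{\min}(\bfA)}{\sigma_{\min}(\bfA)}\,\lVert\NRCDT[\mu_0]\rVert_{\rho\times u_{\Sphere}}$ via the identity $\NRCDT[\mu](t,\bftheta)=\NRCDT[\mu_0](t,h_\bfA(\bftheta))$ and then conclude by a ball-separation argument. The only cosmetic difference is that you package the circle computation through the push-forward density $\varphi_\bfA$ and an SVD reduction, whereas the paper parametrizes $\eta(\varphi)=h_\bfA(\bftheta(\varphi))$ directly and computes $\lVert\dot\eta(\varphi)\rVert=\lvert\det(\bfA)\rvert/\lVert\bfA^\top\bftheta(\varphi)\rVert^2$; these are the same quantity (indeed $\varphi_\bfA(\eta(\varphi))=\lVert\dot\eta(\varphi)\rVert^{-1}$), and both yield the range $[\sigma_{\min}(\bfA)/\sigma_{\max}(\bfA),\sigma_{\max}(\bfA)/\sigma_{\min}(\bfA)]$.
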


\begin{proof}
For $\mu \in \F$ there exist $\bfA \in \X$ and $\bfy \in \R^2$ with $\mu = (\bfA \cdot + \bfy)_\# \mu_0$ implying that
\begin{equation*}
\NRCDT [\mu](t,\bftheta) = \NRCDT [\mu_0](t,h(\bftheta))
\quad
\text{with}
\quad
h(\bfx) = \frac{\bfA^\top\bfx}{\lVert\bfA^\top\bfx\rVert},
\quad \bfx \in \R^2 \setminus \{0\}.
\end{equation*}
Setting $\bftheta(\varphi) = (\cos(\varphi), \sin(\varphi))^\top \in \Sphere$ for $\varphi \in [0,2\pi)$, the definition of $\meanNRCDT$ gives, for all $t \in \R$,
\begin{equation*}
\meanNRCDT [\mu](t) = \int_{\Sphere} \NRCDT [\mu](t,\bftheta) \: \d u_{\Sphere}(\bftheta) = \frac{1}{2\pi} \int_0^{2\pi} \NRCDT [\mu_0](t,h(\bftheta(\varphi))) \: \d \varphi.
\end{equation*}
Now, the parametrization $\eta \colon [0,2\pi) \to \Sphere, ~ \varphi \mapsto h(\bftheta(\varphi))$
is bijective and continuously differentiable on $(0,2\pi)$ with
\begin{equation*}
\dot{\eta}(\varphi) = \D h(\bftheta(\varphi)) \dot{\bftheta}(\varphi) = \frac{1}{\lVert\bfA^\top \bftheta(\varphi)\rVert} \biggl(\bfA^\top \dot{\bftheta}(\varphi) - \frac{\bfA^\top \bftheta(\varphi) \bftheta(\varphi)^\top \bfA \bfA^\top \dot{\bftheta}(\varphi)}{\lVert\bfA^\top \bftheta(\varphi)\rVert^2}\biggr)
\end{equation*}
so that
\begin{equation*}
\lVert\dot{\eta}(\varphi)\rVert^2 = \frac{\lVert\bfA^\top \bftheta(\varphi)\rVert^2 \, \lVert\bfA^\top \dot{\bftheta}(\varphi)\rVert^2 - \lvert\langle \bfA^\top \bftheta(\varphi), \bfA^\top \dot{\bftheta}(\varphi) \rangle\rvert^2}{\rVert\bfA^\top \bftheta(\varphi)\rVert^4}.
\end{equation*}
Consequently,
\begin{equation*}
\meanNRCDT [\mu_0](t) = \int_{\Sphere} \NRCDT [\mu_0](t,\bftheta) \: \d u_{\Sphere}(\bftheta) = \frac{1}{2\pi} \int_0^{2\pi} \NRCDT [\mu_0](t,\eta(\varphi)) \, \lVert\dot{\eta}(\varphi)\rVert \: \d \varphi,
\end{equation*}
which implies that
\begin{align*}
\meanNRCDT [\mu](t) - \meanNRCDT [\mu_0](t) &= \frac{1}{2\pi} \int_0^{2\pi} \NRCDT [\mu_0](t,\eta(\varphi)) \, (1 - \lVert\dot{\eta}(\varphi)\rVert) \: \d \varphi \\
&= \frac{1}{2\pi} \int_0^{2\pi} \NRCDT [\mu_0](t,\eta(\varphi)) \, \lVert\dot{\eta}(\varphi)\rVert \, \bigl(\lVert\dot{\eta}(\varphi)\rVert^{-1} - 1\bigr) \: \d \varphi
\end{align*}
and, hence, with $c_\bfA = \max_{\varphi \in [0,2\pi)} \lvert 1 - \lVert\dot{\eta}(\varphi)\rVert^{-1} \rvert$ follows that
\begin{equation*}
\bigl\lvert (\meanNRCDT [\mu] - \meanNRCDT [\mu_0])(t) \bigr\rvert \leq \frac{c_\bfA}{2\pi} \int_0^{2\pi} \lvert \NRCDT [\mu_0](t,\eta(\varphi)) \rvert \, \lVert\dot{\eta}(\varphi)\rVert \: \d \varphi = c_\bfA \int_{\Sphere} \lvert \NRCDT [\mu_0](t,\bftheta) \rvert \: \d u_{\Sphere}(\bftheta).
\end{equation*}
Thereon, Hölder's inequality gives
\begin{equation*}
\bigl\lvert (\meanNRCDT [\mu] - \meanNRCDT [\mu_0])(t) \bigr\rvert^2 \leq c_\bfA^2 \biggl(\int_{\Sphere} \lvert \NRCDT [\mu_0](t,\bftheta) \rvert \: \d u_{\Sphere}(\bftheta)\biggr)^2 \leq c_\bfA^2 \int_{\Sphere} \lvert \NRCDT [\mu_0](t,\bftheta) \rvert^2 \: \d u_{\Sphere}(\bftheta)
\end{equation*}
so that
\begin{equation*}
\lVert \meanNRCDT [\mu] - \meanNRCDT [\mu_0] \rVert_\rho \leq \Bigl(\max_{\varphi \in [0,2\pi)} \bigl\lvert 1 - \lVert\dot{\eta}(\varphi)\rVert^{-1} \bigr\rvert\Bigr) \, \lVert \NRCDT [\mu_0] \lVert_{\rho \times u_{\Sphere}}.
\end{equation*}
Direct calculations show that
\begin{equation*}
\lVert\bfA^\top \bftheta(\varphi)\rVert^2 \, \lVert\bfA^\top \dot{\bftheta}(\varphi)\rVert^2 - \lvert\langle \bfA^\top \bftheta(\varphi), \bfA^\top \dot{\bftheta}(\varphi) \rangle\rvert^2 = \lvert \det(\bfA) \rvert^2
\end{equation*}
and, thus,
\begin{equation*}
\lVert\dot{\eta}(\varphi)\rVert = \frac{\lvert \det(\bfA) \rvert}{\rVert\bfA^\top \bftheta(\varphi)\rVert^2}
\end{equation*}
with $\lvert \det(\bfA) \rvert = \sigma_{\min}(\bfA) \, \sigma_{\max}(\bfA)$ and $\rVert\bfA^\top \bftheta(\varphi)\rVert_2 \in [\sigma_{\min}(\bfA), \sigma_{\max}(\bfA)]$ for all $\varphi \in [0,2\pi)$.
This gives
\begin{equation*}
\lVert\dot{\eta}(\varphi)\rVert^{-1} \in \biggl[\frac{\sigma_{\min}(\bfA)}{\sigma_{\max}(\bfA)}, \frac{\sigma_{\max}(\bfA)}{\sigma_{\min}(\bfA)}\biggr]
\quad \forall \, \varphi \in [0,2\pi),
\end{equation*}
which in turn implies
\begin{equation*}
\max_{\varphi \in [0,2\pi)} \bigl\lvert 1 - \lVert\dot{\eta}(\varphi)\rVert^{-1} \bigr\rvert \leq \frac{\sigma_{\max}(\bfA)}{\sigma_{\min}(\bfA)} - 1 = \frac{\sigma_{\max}(\bfA) - \sigma_{\min}(\bfA)}{\sigma_{\min}(\bfA)}
\end{equation*}
and the assumption $\bfA \in \X$ guarantees that
\begin{align}
\label{eq:nrcdt-sig-val}
\lVert \meanNRCDT [\mu] - \meanNRCDT [\mu_0] \rVert_\rho &\leq \frac{\sigma_{\max}(\bfA) - \sigma_{\min}(\bfA)}{\sigma_{\min}(\bfA)} \, \lVert \NRCDT [\mu_0] \lVert_{\rho \times u_{\Sphere}} \\
&\leq c \lVert\meanNRCDT [\mu_0] - \meanNRCDT [\nu_0]\lVert_\rho \eqqcolon r_0.
\nonumber
\end{align}
Consequently,
$
\meanNRCDT \F \subseteq B_{r_0}(\meanNRCDT [\mu_0]) \subset \Lebesgue^2_\rho(\R)
$
and, analogously,
$
\meanNRCDT \G \subseteq B_{r_0}(\meanNRCDT [\nu_0]) \subset \Lebesgue^2_\rho(\R).
$
Since $c \in (0,\frac{1}{2})$, $B_{r_0}(\meanNRCDT [\mu_0])$ and $B_{r_0}(\meanNRCDT [\nu_0])$ are linearly separable in $\Lebesgue^2_\rho(\R)$.
This implies the linear separability of $\meanNRCDT \F_0$ and $\meanNRCDT \G_0$ in $\Lebesgue^2_\rho(\R)$ for any non-empty $\F_0 \subseteq \F$ and $\G_0 \subseteq \G$.
\end{proof}

In the next step, we again consider the linear separability of two generated classes when allowing for slight perturbations of the underlying template measures.

\paragraph{Linear separability under perturbations in Wasserstein space}

To study the uncertainty of the mean-normalized R-CDT
under perturbations 
with respect to the Wasserstein-$2$ distance,
we again start with analysing how these effect the non-normalized R-CDT.

\begin{proposition} 
\label{prop:zero-quant-w-2}
    Let $\mu_0, \mu_\epsilon \in \P_2(\R^d)$ 
    with $W_2(\mu_0, \mu_\epsilon) \le \epsilon$.
    Then
    \begin{equation*}
        \lVert 
            \widehat{\Radon}_\bftheta [\mu_0]
            -
            \widehat{\Radon}_\bftheta [\mu_\epsilon]
        \rVert_\rho
        \le
        \epsilon.
    \end{equation*}
\end{proposition}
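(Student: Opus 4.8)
The plan is to reduce the claim to the fact, recalled in Section~\ref{sec:RCDT}, that the one-dimensional CDT is an isometry, i.e.\ $\lVert \widehat{\eta} - \widehat{\zeta} \rVert_\rho = W_2(\eta, \zeta)$ for all $\eta, \zeta \in \P_2(\R)$. Since by definition $\widehat{\Radon}_\bftheta[\mu] = \widehat{\eta}$ with $\eta = \Radon_\bftheta[\mu] = (S_\bftheta)_\# \mu$, the left-hand side of the asserted inequality equals $W_2(\Radon_\bftheta[\mu_0], \Radon_\bftheta[\mu_\epsilon])$ as soon as both Radon projections are known to lie in $\P_2(\R)$; from there Proposition~\ref{prop:wass} and the hypothesis finish the job.

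First I would verify the $\P_2$-membership of the slices: because $\lvert S_\bftheta(\bfx) \rvert = \lvert \langle \bfx, \bftheta \rangle \rvert \le \lVert \bfx \rVert$, the change-of-variables formula for push-forwards gives, for $\mu \in \{\mu_0, \mu_\epsilon\}$,
\begin{equation*}
\int_\R \lvert t \rvert^2 \: \d \Radon_\bftheta[\mu](t) = \int_{\R^2} \lvert \langle \bfx, \bftheta \rangle \rvert^2 \: \d \mu(\bfx) \le \int_{\R^2} \lVert \bfx \rVert^2 \: \d \mu(\bfx) < \infty,
\end{equation*}
so $\Radon_\bftheta[\mu_0], \Radon_\bftheta[\mu_\epsilon] \in \P_2(\R)$ and the CDT isometry applies to this pair. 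This is the same elementary estimate already exploited in the continuity lemma preceding Lemma~\ref{lem:sigma_bounded}.

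Then I would chain the CDT isometry with the slicing contraction of Proposition~\ref{prop:wass} and the assumption $W_2(\mu_0, \mu_\epsilon) \le \epsilon$, obtaining
\begin{equation*}
\lVert \widehat{\Radon}_\bftheta[\mu_0] - \widehat{\Radon}_\bftheta[\mu_\epsilon] \rVert_\rho = W_2(\Radon_\bftheta[\mu_0], \Radon_\bftheta[\mu_\epsilon]) \le W_2(\mu_0, \mu_\epsilon) \le \epsilon,
\end{equation*}
which is the assertion. I do not expect a genuine obstacle here: the statement is essentially a one-line corollary of the CDT isometry and Proposition~\ref{prop:wass}, and the only point worth spelling out is the routine second-moment bound for the Radon projections. (If the domain in the statement is meant to be $\R^d$ for general $d$ rather than $\R^2$, the identical argument goes through once $\Radon_\bftheta$ and $\widehat{\Radon}_\bftheta$ are read as the one-dimensional projection along $\bftheta$ and its CDT, since both ingredients only use the slicing map $\bfx \mapsto \langle \bfx, \bftheta \rangle$.)
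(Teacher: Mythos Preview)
Your proposal is correct and follows essentially the same route as the paper: the authors' proof is the single chain $\lVert \widehat{\Radon}_\bftheta[\mu_0] - \widehat{\Radon}_\bftheta[\mu_\epsilon] \rVert_\rho = W_2(\Radon_\bftheta[\mu_0], \Radon_\bftheta[\mu_\epsilon]) \le W_2(\mu_0, \mu_\epsilon) \le \epsilon$, invoking the CDT isometry and Proposition~\ref{prop:wass}. You additionally spell out the $\P_2$-membership of the slices, which the paper leaves implicit.
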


\begin{proof}
    The statement follows from Proposition~\ref{prop:wass} via
    \begin{align*}
        \lVert
            \widehat\Radon_\bftheta [\mu_0]
            -
            \widehat\Radon_\bftheta [\mu_\epsilon]
        \rVert_\rho
        =
        W_2(\Radon_\bftheta [\mu_0], \Radon_\bftheta [\mu_\epsilon])
        \le
        W_2(\mu_0, \mu_\epsilon)
        \le
        \epsilon.
        \tag*{\qed}
    \end{align*}
\end{proof}

Next, we transfer the estimate for the R-CDT to the zero-mean quantile functions.

\begin{lemma} \label{lem:zero-quant-w-2}
    Let $\mu_0, \mu_\epsilon \in \P_2(\R^2)$
    with $W_2(\mu_0, \mu_\epsilon) \le \epsilon$.
    Then
    \begin{equation*}
        \lVert
            \widetilde\NRCDT_\bftheta [\mu_0]
            -
            \widetilde\NRCDT_\bftheta [\mu_\epsilon]
        \rVert_\rho
        \le
        2 \epsilon.
    \end{equation*}
\end{lemma}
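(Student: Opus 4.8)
The statement is the $W_2$-analogue of Lemma~\ref{lem:zero-quant-w-infty}, so I would follow exactly the same three-line argument, replacing the $\infty$-norm estimate of Proposition~\ref{prop:quant-w-inf} by the $\rho$-norm estimate of Proposition~\ref{prop:zero-quant-w-2}. First I would write the difference of zero-mean quantile functions as the difference of the R-CDTs minus the difference of their means,
\begin{equation*}
    \widetilde\NRCDT_\bftheta[\mu_0] - \widetilde\NRCDT_\bftheta[\mu_\epsilon]
    =
    \bigl(\widehat{\Radon}_\bftheta[\mu_0] - \widehat{\Radon}_\bftheta[\mu_\epsilon]\bigr)
    -
    \bigl(\mean(\widehat{\Radon}_\bftheta[\mu_0]) - \mean(\widehat{\Radon}_\bftheta[\mu_\epsilon])\bigr),
\end{equation*}
and apply the triangle inequality in $L^2_\rho(\R)$. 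Since $\rho$ is a probability measure, the $L^2_\rho$-norm of the constant function equal to the mean difference is just the absolute value of that difference.

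\textbf{Key steps.}
The first summand is bounded by $\epsilon$ directly by Proposition~\ref{prop:zero-quant-w-2}. For the second, I would note
\begin{equation*}
    \bigl\lvert \mean(\widehat{\Radon}_\bftheta[\mu_0]) - \mean(\widehat{\Radon}_\bftheta[\mu_\epsilon]) \bigr\rvert
    \le
    \int_\R \bigl\lvert \widehat{\Radon}_\bftheta[\mu_0](t) - \widehat{\Radon}_\bftheta[\mu_\epsilon](t) \bigr\rvert \d\rho(t)
    \le
    \lVert \widehat{\Radon}_\bftheta[\mu_0] - \widehat{\Radon}_\bftheta[\mu_\epsilon] \rVert_\rho
    \le
    \epsilon,
\end{equation*}
where the second inequality is Cauchy--Schwarz (equivalently Jensen) using that $\rho$ is a probability measure, and the last is again Proposition~\ref{prop:zero-quant-w-2}. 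Adding the two bounds gives $2\epsilon$.

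\textbf{Main obstacle.}
There is essentially no obstacle here; the only point requiring a moment of care is that the mean functional on $L^2_\rho(\R)$ is controlled by the $L^2_\rho$-norm, which holds precisely because $\rho$ has unit mass, so that the first normalization step is $1$-Lipschitz-with-constant-$2$ (from $L^2_\rho$ to $L^2_\rho$) on quantile functions. Everything else is the verbatim structure of the proof of Lemma~\ref{lem:zero-quant-w-infty}.
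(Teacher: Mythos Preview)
Your proposal is correct and follows essentially the same approach as the paper: split via the triangle inequality into the R-CDT difference and the mean difference, bound the first by Proposition~\ref{prop:zero-quant-w-2}, and bound the second by the $L^2_\rho$-norm via Cauchy--Schwarz (using that $\rho$ is a probability measure) before applying Proposition~\ref{prop:zero-quant-w-2} once more.
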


\begin{proof}
    Utilizing Proposition~\ref{prop:zero-quant-w-2}
    and the Cauchy--Schwarz inequality,
    we obtain
    \begin{align*}
        \lVert
            \widetilde\NRCDT_\bftheta [\mu_0]
            -
            \widetilde\NRCDT_\bftheta [\mu_\epsilon]
        \rVert_\rho
        &\le
        \lVert
            \widehat{\Radon}_\bftheta [\mu_0]
            -
            \widehat{\Radon}_\bftheta [\mu_\epsilon]
        \rVert_\rho
        +
        \lvert 
            \mean(\widehat{\Radon}_\bftheta [\mu_0])
            - 
            \mean(\widehat{\Radon}_\bftheta [\mu_\epsilon]) 
        \rvert
        \\
        &\le
        \lVert
            \widehat{\Radon}_\bftheta [\mu_0]
            -
            \widehat{\Radon}_\bftheta [\mu_\epsilon]
        \rVert_\rho
        +
        \int_\R 
        \lvert
            \widehat{\Radon}_\bftheta [\mu_0] (t)
            - 
            \widehat{\Radon}_\bftheta [\mu_\epsilon] (t)
        \rvert
        \: \d \rho(t) 
        \\
        &\le
        2 \,
        \lVert 
            \widehat{\Radon}_\bftheta [\mu_0]
            - 
            \widehat{\Radon}_\bftheta [\mu_\epsilon]
        \rVert_\rho
        \le
        2\epsilon.
        \tag*{\qed}
    \end{align*}
\end{proof}

We can now study the effect of perturbations in $W_2$ on the mean-normalized R-CDT.

\begin{proposition} \label{prop:mean-nrcdt-w-2}
    Let $\mu_0 \in \P_2^*(\R^2)$
    and $\mu_\epsilon \in \P_2(\R^2)$
    with $W_2(\mu_0, \mu_\epsilon) \le \epsilon$,
    and define 
    $c_0 \coloneqq \min_{\bftheta \in \Sphere} \std(\widehat{\Radon}_\bftheta \mu_0)$.
    If $\epsilon < c_0 / 2$,
    then
    \begin{equation*}
        \lVert
            \meanNRCDT [\mu_0]
            -
            \meanNRCDT [\mu_\epsilon]
        \rVert_\rho
        \le
        \frac{4\epsilon}{c_0}.
    \end{equation*}
\end{proposition}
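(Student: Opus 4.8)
The plan is to mimic the structure of the proof of Proposition~\ref{prop:max-nrcdt-w-infty}, replacing every $\lVert\cdot\rVert_\infty$ estimate by an $\lVert\cdot\rVert_\rho$ estimate and tracking how the second normalization step interacts with the averaging over $\bftheta$. First I would record, using Lemma~\ref{lem:zero-quant-w-2}, the bound $\lVert\widetilde\NRCDT_\bftheta[\mu_0]-\widetilde\NRCDT_\bftheta[\mu_\epsilon]\rVert_\rho\le 2\epsilon$ for every $\bftheta\in\Sphere$, and deduce as before that $\lVert\widetilde\NRCDT_\bftheta[\mu_\epsilon]\rVert_\rho\ge c_0-2\epsilon>0$, so that $\meanNRCDT[\mu_\epsilon]$ is well defined. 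Then, for fixed $\bftheta$, I would split
\begin{equation*}
    \NRCDT_\bftheta[\mu_0]-\NRCDT_\bftheta[\mu_\epsilon]
    =
    \frac{\widetilde\NRCDT_\bftheta[\mu_0]-\widetilde\NRCDT_\bftheta[\mu_\epsilon]}{\lVert\widetilde\NRCDT_\bftheta[\mu_0]\rVert_\rho}
    +
    \widetilde\NRCDT_\bftheta[\mu_\epsilon]\,
    \frac{\lVert\widetilde\NRCDT_\bftheta[\mu_\epsilon]\rVert_\rho-\lVert\widetilde\NRCDT_\bftheta[\mu_0]\rVert_\rho}{\lVert\widetilde\NRCDT_\bftheta[\mu_0]\rVert_\rho\,\lVert\widetilde\NRCDT_\bftheta[\mu_\epsilon]\rVert_\rho},
\end{equation*}
take the $\rho$-norm in $t$, and bound the two terms by $\tfrac{2\epsilon}{c_0}$ and $\tfrac{2\epsilon}{c_0}$ respectively (using $\lVert\widetilde\NRCDT_\bftheta[\mu_\epsilon]\rVert_\rho$ in the numerator of the second term cancels one denominator factor), giving $\lVert\NRCDT_\bftheta[\mu_0]-\NRCDT_\bftheta[\mu_\epsilon]\rVert_\rho\le\tfrac{4\epsilon}{c_0}$ uniformly in $\bftheta$.

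To pass from the sliced estimate to the statement about $\meanNRCDT$, I would write $\meanNRCDT[\mu_0](t)-\meanNRCDT[\mu_\epsilon](t)=\int_\Sphere(\NRCDT_\bftheta[\mu_0](t)-\NRCDT_\bftheta[\mu_\epsilon](t))\,\d u_\Sphere(\bftheta)$, apply Jensen's inequality to pull the square inside the integral over $\Sphere$, and then use Fubini to integrate in $t$:
\begin{equation*}
    \lVert\meanNRCDT[\mu_0]-\meanNRCDT[\mu_\epsilon]\rVert_\rho^2
    \le
    \int_\Sphere\lVert\NRCDT_\bftheta[\mu_0]-\NRCDT_\bftheta[\mu_\epsilon]\rVert_\rho^2\,\d u_\Sphere(\bftheta)
    \le
    \Bigl(\frac{4\epsilon}{c_0}\Bigr)^2,
\end{equation*}
since $u_\Sphere$ is a probability measure. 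Taking square roots yields the claim. The fact that averaging over $\bftheta$ does not inflate the constant is exactly the gain over the $\mNRCDT$ case, where the $\max$ over $\bftheta$ forces the $\lVert\cdot\rVert_\infty$ route and brings in $\diam(\mu_0)$.

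The main obstacle is bookkeeping in the two-term split of the second normalization step: one must be careful to put $\lVert\widetilde\NRCDT_\bftheta[\mu_\epsilon]\rVert_\rho$ (not $\lVert\cdot\rVert_\infty$, which is unavailable for non-compactly-supported $\mu_\epsilon$) in the numerator of the second term so that the reverse triangle inequality $\bigl|\lVert\widetilde\NRCDT_\bftheta[\mu_\epsilon]\rVert_\rho-\lVert\widetilde\NRCDT_\bftheta[\mu_0]\rVert_\rho\bigr|\le 2\epsilon$ suffices and one denominator factor cancels cleanly; this is what keeps the bound linear in $\epsilon$ with constant $4/c_0$ and avoids any dependence on the diameter of the support. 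Everything else is a routine application of Lemma~\ref{lem:zero-quant-w-2}, Lemma~\ref{lem:sigma_bounded}, Jensen, and Fubini.
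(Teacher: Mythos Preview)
Your proposal is correct and follows essentially the same approach as the paper: the paper also invokes Lemma~\ref{lem:zero-quant-w-2} to get the $2\epsilon$ bound on the centred quantiles, verifies $\std(\widehat{\Radon}_\bftheta[\mu_\epsilon])\ge c_0-2\epsilon>0$, performs the identical two-term split of $\NRCDT_\bftheta[\mu_0]-\NRCDT_\bftheta[\mu_\epsilon]$ with the same cancellation in the second term to reach $\lVert\NRCDT_\bftheta[\mu_0]-\NRCDT_\bftheta[\mu_\epsilon]\rVert_\rho\le 4\epsilon/c_0$, and then passes to $\meanNRCDT$ via Jensen over $\Sphere$. Your commentary on why the diameter drops out compared to the \mNRCDT{} case is exactly the point.
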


\begin{proof}
    Equation~\eqref{eq:err-std} remains valid
    under the given assumptions.
    Since $2\epsilon < c_0$,
    the standard deviation 
    $\std(\widehat{\Radon}_\bftheta [\mu_\epsilon]) = \lVert \widetilde\NRCDT_\bftheta [\mu_\epsilon] \rVert_\rho$
    is uniformly bounded away from zero
    such that 
    $\mu_\epsilon \in \P_2^*(\R^2)$.
    In the view of Lemma~\ref{lem:zero-quant-w-2},
    we obtain
    \begin{align}
        \lVert
            \NRCDT_\bftheta [\mu_0] - \NRCDT_\bftheta [\mu_\epsilon]
        \rVert_\rho
        &=
        \biggl\lVert
            \frac
            {\widetilde\NRCDT_\bftheta [\mu_0]}
            {\lVert \widetilde\NRCDT_\bftheta [\mu_0] \rVert_\rho}
            -
            \frac
            {\widetilde\NRCDT_\bftheta [\mu_\epsilon]}
            {\lVert \widetilde\NRCDT_\bftheta [\mu_\epsilon] \rVert_\rho}
        \biggr\rVert_\rho
        \nonumber
        \\
        &\le
        \biggl\lVert
            \frac
            {\widetilde\NRCDT_\bftheta [\mu_0]}
            {\lVert \widetilde\NRCDT_\bftheta [\mu_0] \rVert_\rho}
            -
            \frac
            {\widetilde\NRCDT_\bftheta [\mu_\epsilon]}
            {\lVert \widetilde\NRCDT_\bftheta [\mu_0] \rVert_\rho}
        \biggr\rVert_\rho
        \!\! + \!
        \biggl\lVert
            \frac
            {\widetilde\NRCDT_\bftheta [\mu_\epsilon]}
            {\lVert \widetilde\NRCDT_\bftheta [\mu_0] \rVert_\rho}
            -
            \frac
            {\widetilde\NRCDT_\bftheta [\mu_\epsilon]}
            {\lVert \widetilde\NRCDT_\bftheta [\mu_\epsilon] \rVert_\rho}
        \biggr\rVert_\rho
        \nonumber
        \\
        &=
        \frac
        {\lVert \widetilde\NRCDT_\bftheta [\mu_0] 
            - 
            \widetilde\NRCDT_\bftheta [\mu_\epsilon] \rVert_\rho}
        {\lVert \widetilde\NRCDT_\bftheta [\mu_0] \rVert_\rho}
        \! + \!
        \frac
        {\bigl\lvert 
            \lVert \widetilde\NRCDT_\bftheta [\mu_\epsilon] \rVert_\rho
            -
            \lVert \widetilde\NRCDT_\bftheta [\mu_0] \rVert_\rho
        \bigr\rvert}
        {\lVert \widetilde\NRCDT_\bftheta [\mu_0] \rVert_\rho
            \lVert \widetilde\NRCDT_\bftheta [\mu_\epsilon] \rVert_\rho}
        \lVert \widetilde\NRCDT_\bftheta [\mu_\epsilon] \rVert_\rho
        \nonumber
        \\
        &\le
        2 \, 
        \frac
        {\lVert
            \widetilde\NRCDT_\bftheta [\mu_0] 
            - 
            \widetilde\NRCDT_\bftheta [\mu_\epsilon]
        \rVert_\rho}
        {\lVert \widetilde\NRCDT_\bftheta [\mu_0] \rVert_\rho}
        \le
        \frac{4\epsilon}{c_0}.
        \label{eq:norm-nrcdt-w-2}
    \end{align}
    Employing Jensen's inequality,
    we may bound the perturbation after normalization by
    \begin{align*}
        \lVert 
            \meanNRCDT [\mu_0] 
            - 
            \meanNRCDT [\mu_\epsilon] 
        \rVert_\rho^2
        =
        \int_\R 
        \biggl\lvert
        \int_{\Sphere}
        \NRCDT_\bftheta [\mu_0] (t)
        -
        \NRCDT_\bftheta [\mu_\epsilon] (t)
        \: \d u_{\Sphere}(\bftheta)
        \biggr\rvert^2
        \: \d \rho(t)
        \le 
        \Bigl( \frac{4\epsilon}{c_0} \Bigr)^2.
        \tag*{\qed}
    \end{align*}
\end{proof}

The squared bounds for the uncertainty of the mean-normalized R-CDT 
can be incorporated into the separation guarantee
in Theorem~\ref{thm:sep-mean-nrcdt}
to obtain linear separability even in the cases of slight perturbations in $W_2$.

\begin{theorem} 
    \label{thm:sep-pert-mean-nrcdt}
    For template measures $\mu_0, \nu_0 \in \P_2^*(\R^2)$
    with $\meanNRCDT \mu_0 \neq \meanNRCDT \nu_0$,
    define
    $c_\mu \coloneqq \min_{\bftheta\in\Sphere} \std(\widehat{\Radon}_\bftheta [\mu_0])$,
    $C_\mu \coloneqq \lVert\NRCDT [\mu_0]\rVert_{\rho \times u_{\Sphere}}$
    and
    $c_\nu \coloneqq \min_{\bftheta\in\Sphere} \std(\widehat{\Radon}_\bftheta [\nu_0])$,
    $C_\nu \coloneqq \lVert\NRCDT [\nu_0]\rVert_{\rho \times u_{\Sphere}}$.
    Let $c \in (0,\tfrac{1}{2})$,
    $c^\prime \in (c,\tfrac{1}{2})$
    and let $\epsilon < \min\{c_\mu, c_\nu\} / 2$ satisfy
    \begin{equation*}
        \epsilon 
        < 
        \frac{c^\prime - c}{4} \, 
        \min  \{ c_\mu, c_\nu \} \, 
        \frac
        { \lVert\meanNRCDT [\mu_0] - \meanNRCDT [\nu_0]\lVert_\rho \, \max\bigl\{C_\mu, C_\nu\bigr\}}
        {c \lVert\meanNRCDT [\mu_0] - \meanNRCDT [\nu_0]\lVert_\rho + \max\{C_\mu, C_\nu\}}.
    \end{equation*}
    Consider the classes
    \begin{align*}
        \F 
        &\coloneqq 
        \bigl\{
            (\bfA \cdot + \bfy)_\# \mu
            \mid 
            \bfA \in \X, 
            \, \bfy \in \R^2,
            \, \mu \in \P_2(\R^2),
            \, W_2(\mu, \mu_0) \le \epsilon
        \bigr\}, 
        \\
        \G 
        &\coloneqq 
        \bigl\{
            (\bfA \cdot + \bfy)_\# \nu
            \mid 
            \bfA \in \X, 
            \, \bfy \in \R^2,
            \, \nu \in \P_2(\R^2),
            \, W_2(\nu, \nu_0) \le \epsilon
        \bigr\}, 
    \end{align*}
    where
    \begin{equation*}
        \X 
        = 
        \biggl\{
            \bfA \in \GL(2) 
            \biggm| 
            \frac
            {\sigma_{\max}(\bfA) - \sigma_{\min}(\bfA)}
            {\sigma_{\min}(\bfA)}
            \leq 
            \frac
            {c}
            {\max\bigl\{
                C_\mu, 
                C_\nu
            \bigr\}}
            \,
            \lVert\meanNRCDT [\mu_0] - \meanNRCDT [\nu_0]\lVert_\rho
        \biggr\}
    \end{equation*}
    Then, any non-empty subsets $\F_0 \subseteq \F$ and $\G_0 \subseteq \G$ are linearly separable in \aNRCDT{} space.
\end{theorem}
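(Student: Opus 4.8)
The plan is to combine the affine-invariance estimate underlying Theorem~\ref{thm:sep-mean-nrcdt} with the $W_2$-perturbation bound of Proposition~\ref{prop:mean-nrcdt-w-2}, carefully tracking constants so that the stated threshold on $\epsilon$ emerges. Abbreviate $\Delta \coloneqq \meanNRCDT[\mu_0] - \meanNRCDT[\nu_0]$, $M \coloneqq \max\{C_\mu, C_\nu\}$ and $r_0 \coloneqq c' \lVert \Delta \rVert_\rho$.

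First I would fix $\mu \in \F$ and write $\mu = (\bfA \cdot + \bfy)_\# \mu_\epsilon$ with $\bfA \in \X$, $\bfy \in \R^2$ and $\mu_\epsilon \in \P_2(\R^2)$ satisfying $W_2(\mu_\epsilon, \mu_0) \le \epsilon$. Since $\epsilon < c_\mu/2$, the reasoning in the proof of Proposition~\ref{prop:mean-nrcdt-w-2} gives $\std(\widehat{\Radon}_\bftheta[\mu_\epsilon]) \ge c_\mu - 2\epsilon > 0$ for every $\bftheta \in \Sphere$, so $\mu_\epsilon \in \P_2^*(\R^2)$ and $\meanNRCDT[\mu_\epsilon]$ is well defined. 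The computation in the proof of Theorem~\ref{thm:sep-mean-nrcdt} uses only that the template lies in $\P_2^*(\R^2)$; applied with $\mu_\epsilon$ in place of $\mu_0$, it yields
\[
  \lVert \meanNRCDT[\mu] - \meanNRCDT[\mu_\epsilon] \rVert_\rho
  \le
  \frac{\sigma_{\max}(\bfA) - \sigma_{\min}(\bfA)}{\sigma_{\min}(\bfA)}
  \, \lVert \NRCDT[\mu_\epsilon] \rVert_{\rho \times u_{\Sphere}} .
\]
Next I would bound $\lVert \NRCDT[\mu_\epsilon] \rVert_{\rho \times u_{\Sphere}}$: the estimate~\eqref{eq:norm-nrcdt-w-2} obtained inside the proof of Proposition~\ref{prop:mean-nrcdt-w-2} gives $\lVert \NRCDT_\bftheta[\mu_0] - \NRCDT_\bftheta[\mu_\epsilon] \rVert_\rho \le 4\epsilon/c_\mu$ uniformly in $\bftheta$, hence $\lVert \NRCDT[\mu_\epsilon] \rVert_{\rho \times u_{\Sphere}} \le C_\mu + 4\epsilon/c_\mu$. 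Inserting the membership $\bfA \in \X$, which bounds the relative spectral gap by $\tfrac{c}{M} \lVert \Delta \rVert_\rho$, together with the direct perturbation bound $\lVert \meanNRCDT[\mu_\epsilon] - \meanNRCDT[\mu_0] \rVert_\rho \le 4\epsilon/c_\mu$ from Proposition~\ref{prop:mean-nrcdt-w-2}, the triangle inequality and $C_\mu \le M$ give
\[
  \lVert \meanNRCDT[\mu] - \meanNRCDT[\mu_0] \rVert_\rho
  \le
  \frac{c \lVert \Delta \rVert_\rho}{M} \Bigl( C_\mu + \frac{4\epsilon}{c_\mu} \Bigr)
  + \frac{4\epsilon}{c_\mu}
  \le
  c \lVert \Delta \rVert_\rho
  + \frac{4\epsilon}{c_\mu} \cdot \frac{c \lVert \Delta \rVert_\rho + M}{M}.
\]
The chosen upper bound on $\epsilon$ (using $c_\mu \ge \min\{c_\mu, c_\nu\}$) is exactly what forces the right-hand side below $r_0 = c' \lVert \Delta \rVert_\rho$, so $\meanNRCDT \F \subseteq B_{r_0}(\meanNRCDT[\mu_0])$; running the same argument over $\G$ with $c_\nu$, $C_\nu$ in place of $c_\mu$, $C_\mu$ yields $\meanNRCDT \G \subseteq B_{r_0}(\meanNRCDT[\nu_0])$.

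Finally, since $c' < \tfrac12$ we have $2 r_0 < \lVert \Delta \rVert_\rho = \lVert \meanNRCDT[\mu_0] - \meanNRCDT[\nu_0] \rVert_\rho$, so the two closed balls $B_{r_0}(\meanNRCDT[\mu_0])$ and $B_{r_0}(\meanNRCDT[\nu_0])$ are disjoint. The Hahn--Banach separation theorem then provides a continuous linear functional on $L^2_\rho(\R)$ and a constant strictly separating them, and the same functional separates $\meanNRCDT \F_0$ and $\meanNRCDT \G_0$ for any non-empty $\F_0 \subseteq \F$, $\G_0 \subseteq \G$, exactly as in the proof of Theorem~\ref{thm:sep-pert-max-nrcdt}. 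I expect the main obstacle to be step three: checking that the affine estimate from the proof of Theorem~\ref{thm:sep-mean-nrcdt} really depends on the template only through its membership in $\P_2^*(\R^2)$ (so that it applies verbatim to $\mu_\epsilon$), and then doing the bookkeeping of the two $4\epsilon/c_\mu$ contributions so that the somewhat intricate threshold on $\epsilon$ drops out; the remaining pieces are the triangle inequality and Hahn--Banach.
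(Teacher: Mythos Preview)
Your proposal is correct and follows essentially the same route as the paper: split $\lVert \meanNRCDT[\mu] - \meanNRCDT[\mu_0] \rVert_\rho$ via the triangle inequality, bound the affine part using the estimate \eqref{eq:nrcdt-sig-val} from the proof of Theorem~\ref{thm:sep-mean-nrcdt} applied with $\mu_\epsilon$ as template, control $\lVert \NRCDT[\mu_\epsilon] \rVert_{\rho \times u_{\Sphere}}$ via \eqref{eq:norm-nrcdt-w-2}, bound the perturbation part by Proposition~\ref{prop:mean-nrcdt-w-2}, and then separate the resulting balls. Your explicit verification that $\mu_\epsilon \in \P_2^*(\R^2)$ and your bookkeeping of the two $4\epsilon/c_\mu$ contributions match the paper's argument line by line.
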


\begin{proof}
    For $\mu \in \F$,
    there are $\bfA \in \X$, $\bfy \in \R^2$ and $\mu_\epsilon \in \P_2(\R^2)$
    with $W_2(\mu_\epsilon, \mu_0) \le \epsilon$
    such that $\mu = (\bfA \cdot + \bfy)_\# \mu_\epsilon$.
    Consequently,
    the proof of Theorem~\ref{thm:sep-mean-nrcdt} 
    in combination with Proposition~\ref{prop:mean-nrcdt-w-2} 
    gives
    \begin{align*}
        \lVert \meanNRCDT [\mu] - \meanNRCDT [\mu_0] \rVert_\rho 
        & \leq 
        \lVert \meanNRCDT [\mu] - \meanNRCDT [\mu_\epsilon] \rVert_\rho 
        +
        \lVert \meanNRCDT [\mu_\epsilon] - \meanNRCDT [\mu_0] \rVert_\rho
        \\
        & \leq
        \frac{\sigma_{\max}(\bfA) - \sigma_{\min}(\bfA)}{\sigma_{\min}(\bfA)} 
        \, \lVert \NRCDT [\mu_\epsilon] \lVert_{\rho \times u_{\Sphere}} 
        +
        \frac{4\epsilon}{c_\mu}.
    \end{align*}
    Utilizing \eqref{eq:norm-nrcdt-w-2},
    we obtain
    \begin{align*}
        \lVert \NRCDT [\mu_\epsilon] \lVert_{\rho \times u_{\Sphere}} 
        &\leq 
        \lVert \NRCDT [\mu_0] \lVert_{\rho \times u_{\Sphere}} 
        +
        \lVert \NRCDT [\mu_\epsilon] - \NRCDT [\mu_0] \lVert_{\rho \times u_{\Sphere}} 
        \\
        &\leq 
        \lVert \NRCDT [\mu_0] \lVert_{\rho \times u_{\Sphere}} 
        +
        \Bigl(
            \int_{\Sphere} \int_\R
            \lvert
            \NRCDT_\bftheta [\mu_\epsilon] (t) 
            - 
            \NRCDT_\bftheta [\mu_0] (t) 
            \lvert^2
            \: \d \rho(t) \, \d u_{\Sphere}(\bftheta)
        \Bigr)^{\frac{1}{2}}
        \\
        &\leq
        \lVert \NRCDT [\mu_0] \lVert_{\rho \times u_{\Sphere}} 
        +
        \frac{4\epsilon}{c_\mu}.
    \end{align*}
    Consequently, 
    the assumption $\bfA \in \X$ and \eqref{eq:nrcdt-sig-val} give
    \begin{equation*}
    \lVert \meanNRCDT [\mu] - \meanNRCDT [\mu_0] \rVert_\rho \leq c \lVert\meanNRCDT [\mu_0] - \meanNRCDT [\nu_0]\lVert_\rho + \biggl(\frac{c \lVert\meanNRCDT [\mu_0] - \meanNRCDT [\nu_0]\lVert_\rho}{\max\{C_\mu, C_\nu\}} + 1\biggr) \frac{4\epsilon}{c_\mu}
    \end{equation*}
    and the choice of $\epsilon > 0$ guarantees that
    \begin{equation*}
    \lVert \meanNRCDT [\mu] - \meanNRCDT [\mu_0] \rVert_\rho < c^\prime \, \lVert\meanNRCDT [\mu_0] - \meanNRCDT [\nu_0] \lVert_\rho
    \eqqcolon r_0.
    \end{equation*}
    Consequently,
    $\meanNRCDT \F \subseteq B_{r_0}(\meanNRCDT [\mu_0]) \subset \Lebesgue^2_\rho(\R)$
    and, analogously,
    $\meanNRCDT \G \subseteq B_{r_0}(\meanNRCDT [\nu_0]) \subset \Lebesgue^2_\rho(\R)$.
    Since $c^\prime \in (c,\frac{1}{2})$, $B_{r_0}(\meanNRCDT [\mu_0])$ and $B_{r_0}(\meanNRCDT [\nu_0])$ are linearly separable in $\Lebesgue^2_\rho(\R)$.
    This implies the linear separability of $\meanNRCDT \F_0$ and $\meanNRCDT \G_0$ in $\Lebesgue^2_\rho(\R)$ for any non-empty $\F_0 \subseteq \F$ and $\G_0 \subseteq \G$.
\end{proof}

\section{Numerical experiments}
\label{sec:numerics}

In order to support our theoretical findings with numerical evidence,
we provide a series of proof-of-concept simulations,
which focus on the (linear) separability 
under affine transformations  
established in Theorem~\ref{thm:sep-max-nrcdt} 
and \ref{thm:sep-mean-nrcdt}
and on the influence of non-affine perturbations
studied in Theorem~\ref{thm:sep-pert-max-nrcdt}
and~\ref{thm:sep-pert-mean-nrcdt}.
Since the original R-CDT \cite{Kolouri2016} already outperforms 
other state-of-the-art classifiers
in the small data regime \cite{Shifat-E-Rabbi2021},
we restrict the experiments to comparisons
with the R-CDT and the na\"\i{}ve Euclidean approach
and omit comparisons with neural network classifiers.
The new \mNRCDT\ and \aNRCDT\ methods significantly increase
the classification accuracy
showing their potential as feature extractor.
The methods are implemented in Julia%
\footnote{The Julia Programming Language -- Version 1.9.2 
(\url{https://docs.julialang.org}).},
and the code is publicly available at
\url{https://github.com/DrBeckmann/NR-CDT}.
All experiments are performed on an off-the-shelve MacBookPro 2020 
with Intel Core i5 (4‑Core CPU, 1.4~GHz) and 8~GB~RAM.

\subsection{Academic Datasets}

For the majority of the numerical simulations,
we rely on academic datasets
that allow us to fully control 
the occurring affine and non-affine perturbations.
Starting from the synthetic symbols in Figure~\ref{fig:academic_dataset},
whose domains are contained in the unit disc, 
we generate datasets with up to twelve classes
by (anisotropic) scaling, rotating, shearing, and shifting
the shown templates
on the pixel grid using bi-quadratic interpolations.
Scaling and shearing is here independently applied twice 
with respect to the horizontal and vertical direction.
Depending on the experiment,
we further apply non-affine transformations
or add impulsive noise.
Finally,
the gray values are normalized
to represent a (absolutely continuous) probability measure,
where each pixel corresponds to a constant density on a square.

\begin{figure}[t]
    \resizebox{\linewidth}{!}{%
    \tiny
    \begin{tabular}{@{\,}*{12}{c@{\,}}}
        symbol 1
        & symbol 2
        & symbol 3
        & symbol 4
        & symbol 5
        & symbol 6
        & symbol 7
        & symbol 8
        & symbol 9
        & symbol 10
        & symbol 11
        & symbol 12
        \\
        \includegraphics[width=0.083\linewidth, clip=true, trim=140pt 30pt 120pt 20pt]{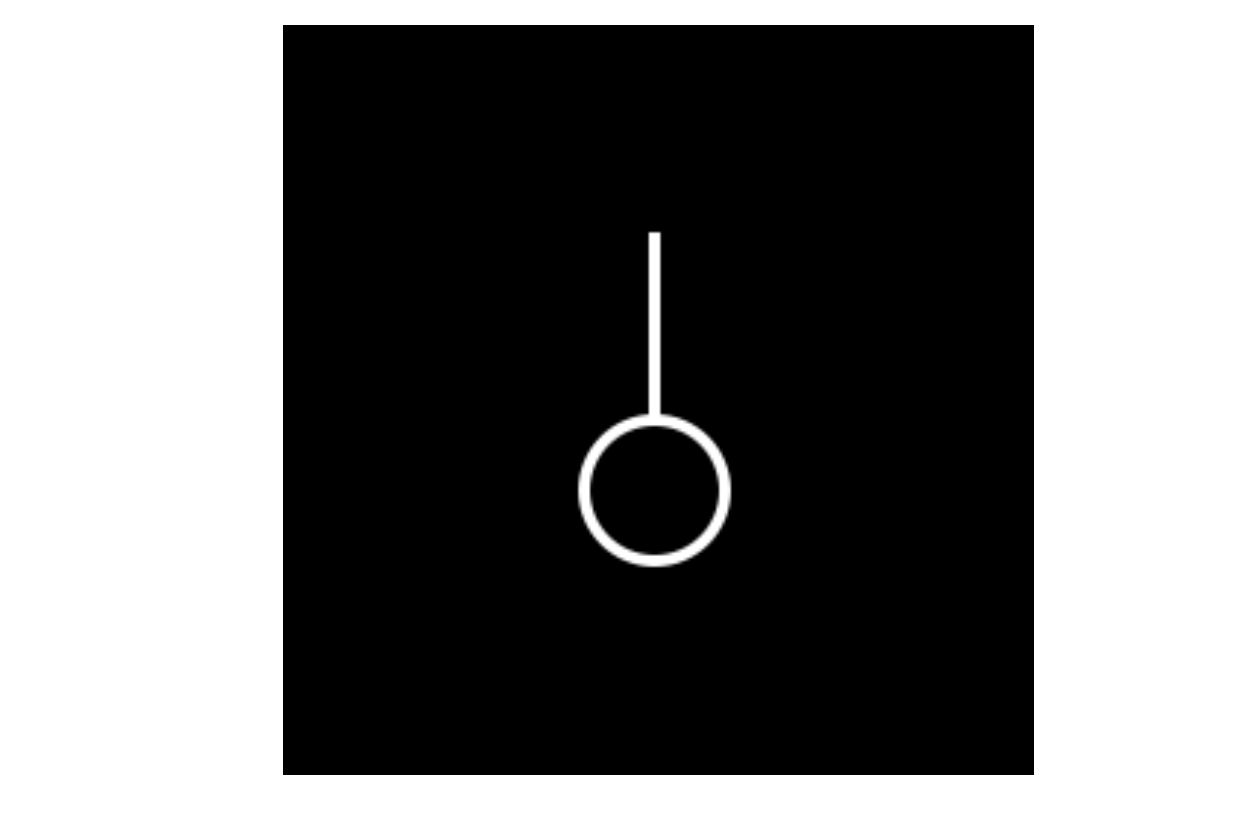}
        &\includegraphics[width=0.083\linewidth, clip=true, trim=140pt 30pt 120pt 20pt]{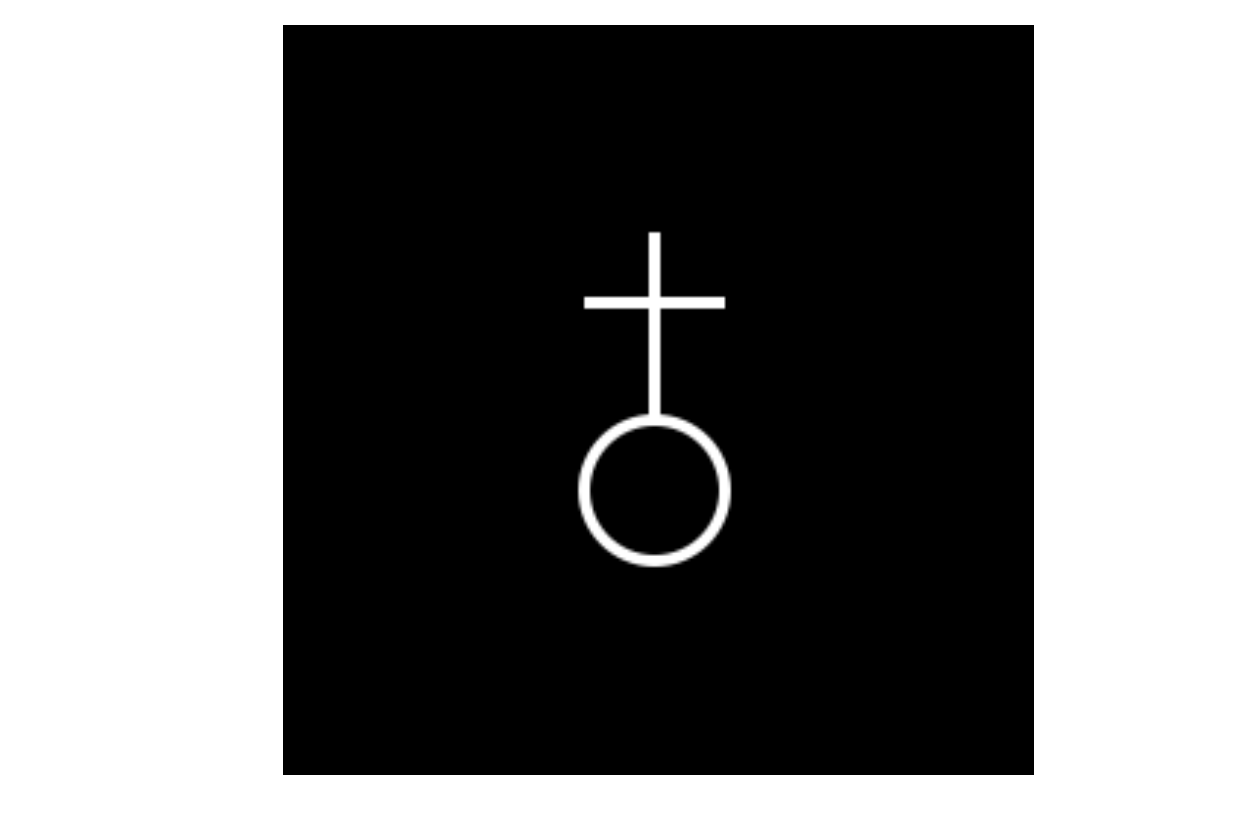}
        &\includegraphics[width=0.083\linewidth, clip=true, trim=140pt 30pt 120pt 20pt]{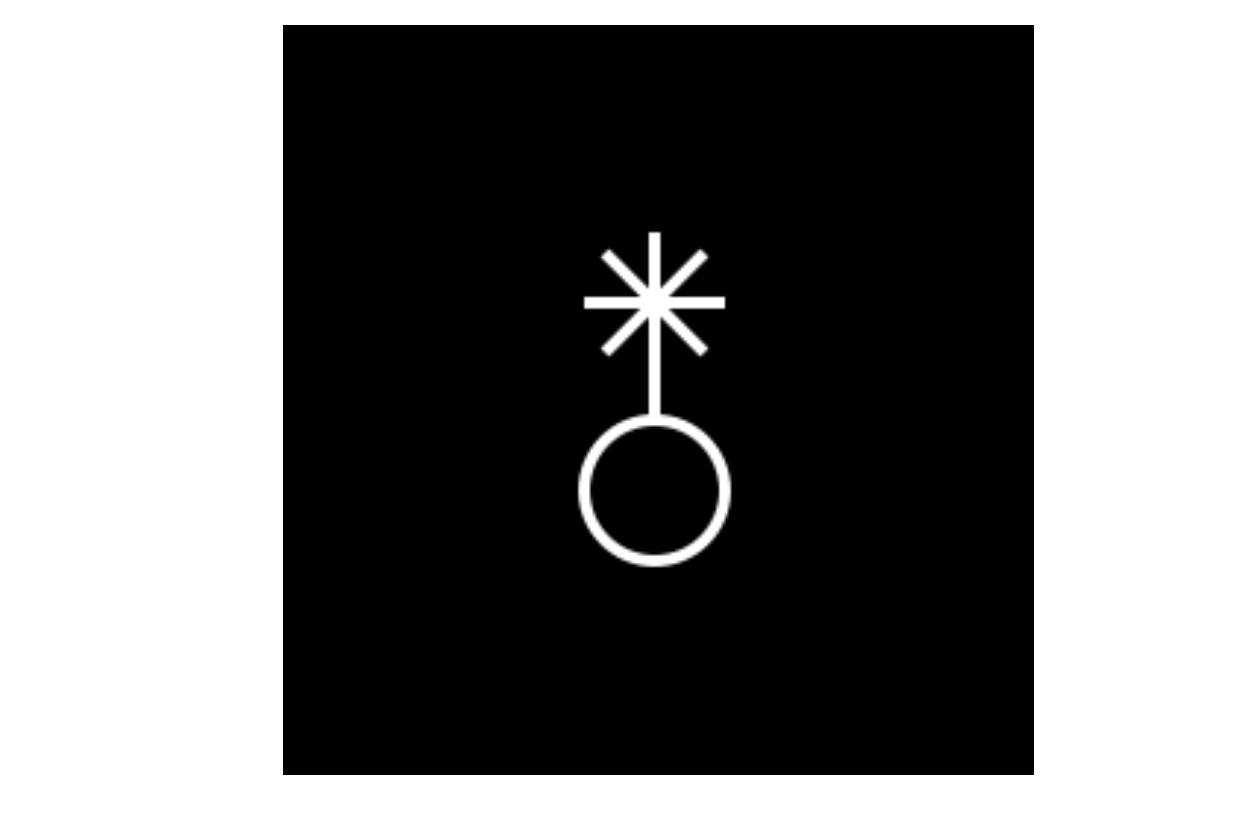}
        &\includegraphics[width=0.083\linewidth, clip=true, trim=140pt 30pt 120pt 20pt]{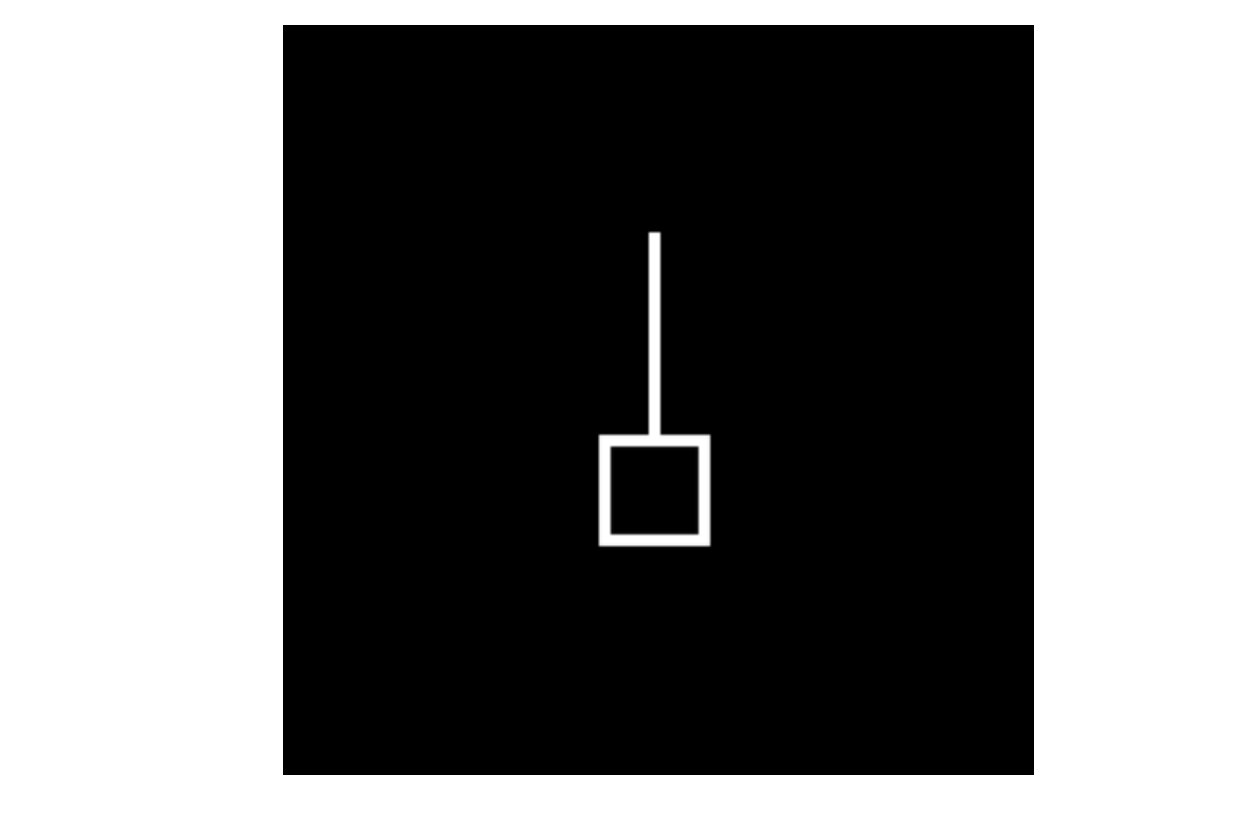}
        &\includegraphics[width=0.083\linewidth, clip=true, trim=140pt 30pt 120pt 20pt]{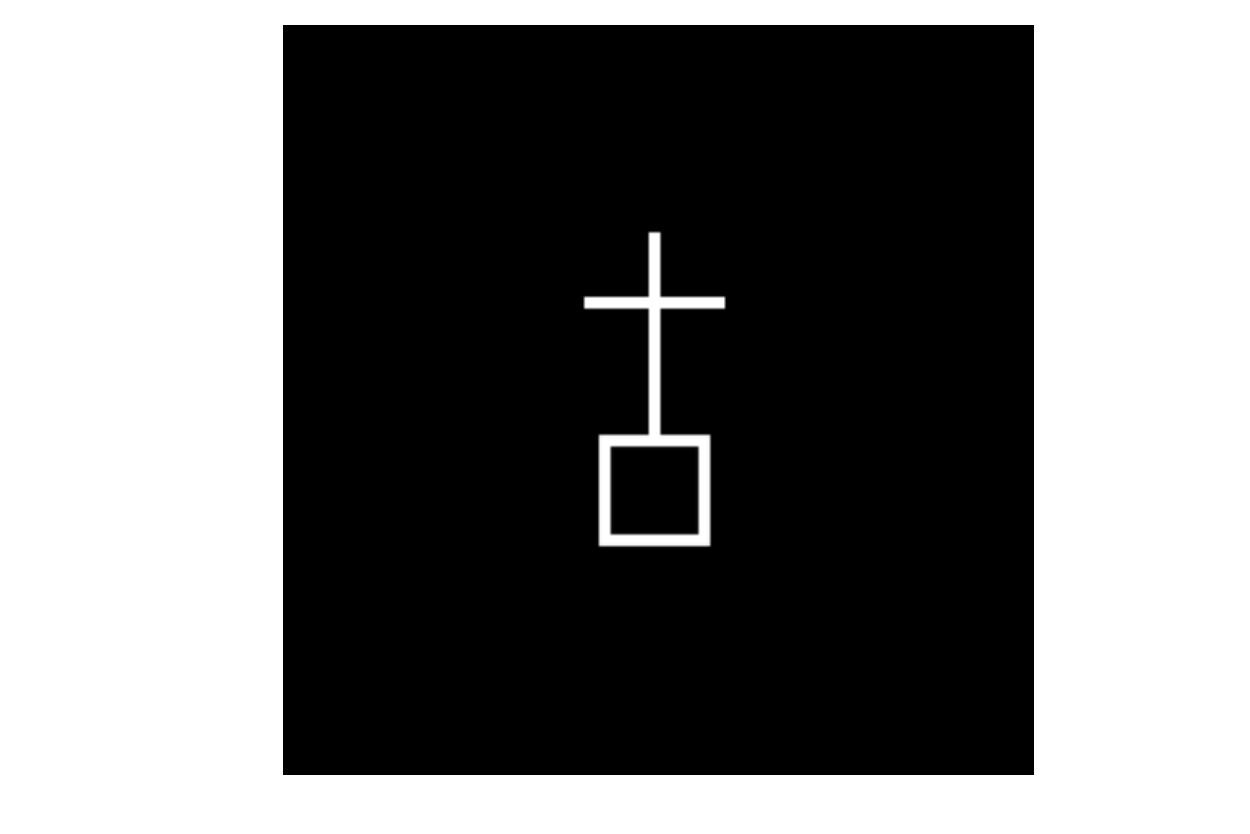}
        &\includegraphics[width=0.083\linewidth, clip=true, trim=140pt 30pt 120pt 20pt]{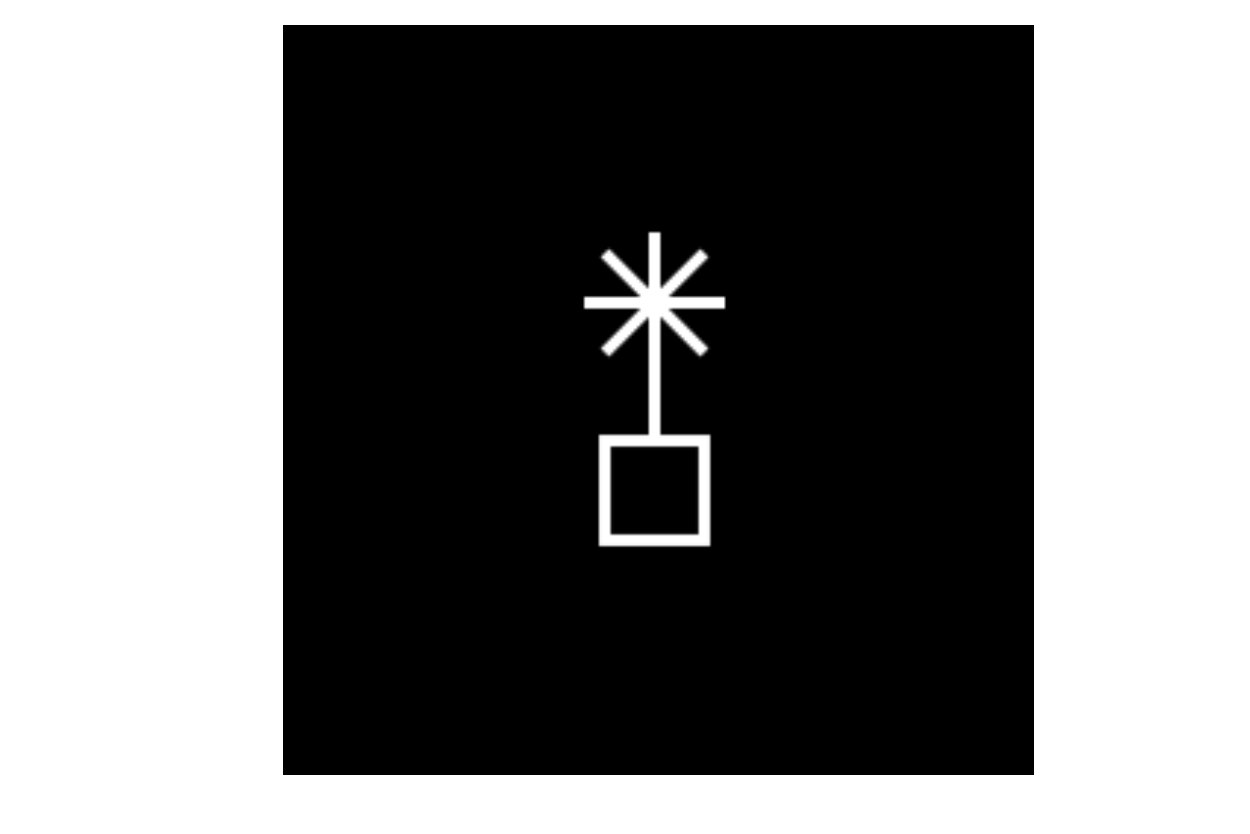} 
        &\includegraphics[width=0.083\linewidth, clip=true, trim=140pt 30pt 120pt 20pt]{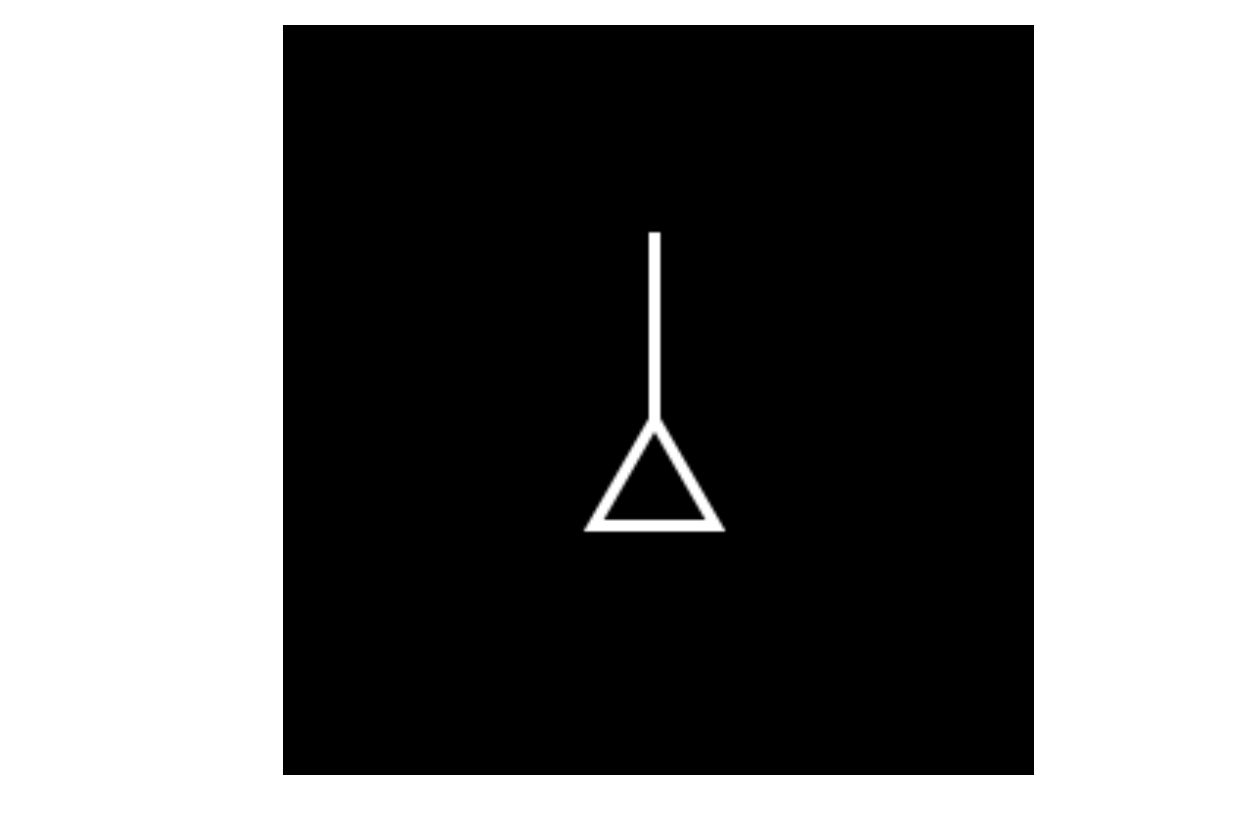}
        &\includegraphics[width=0.083\linewidth, clip=true, trim=140pt 30pt 120pt 20pt]{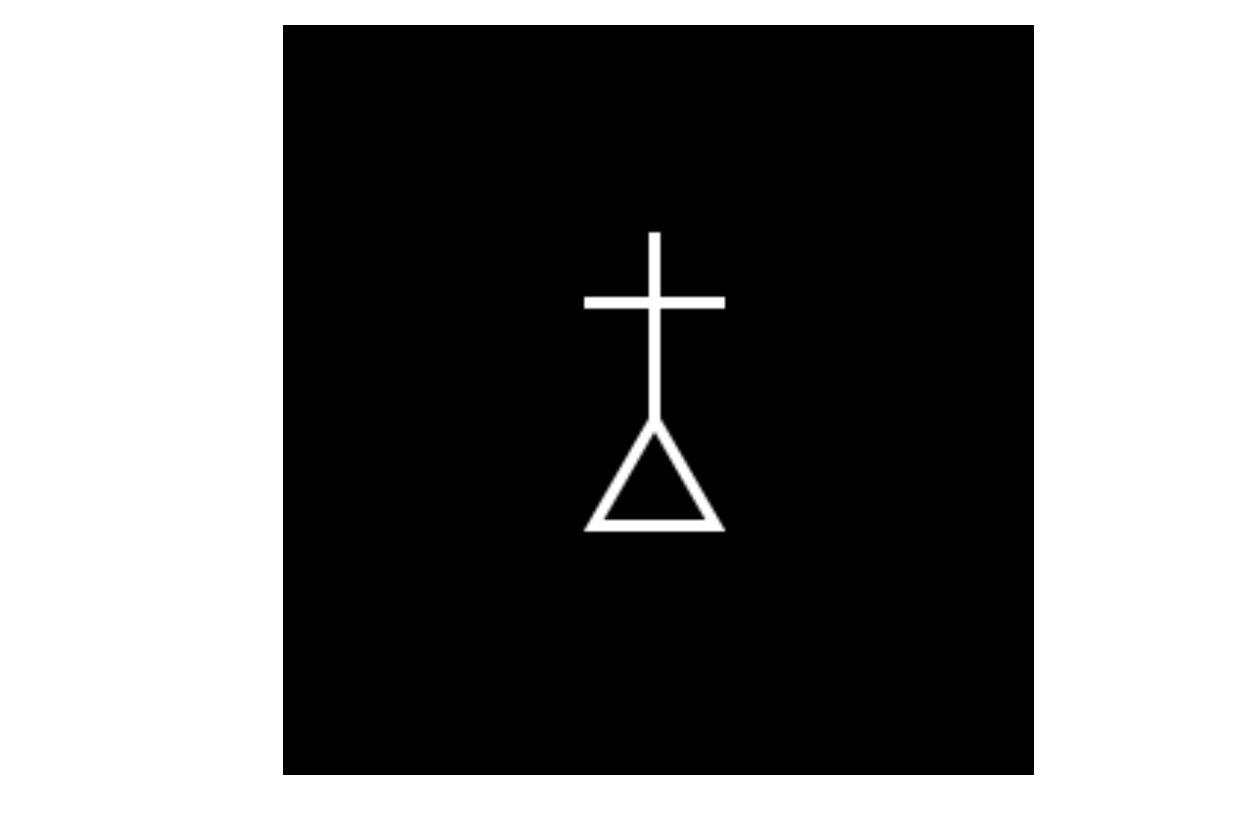}
        &\includegraphics[width=0.083\linewidth, clip=true, trim=140pt 30pt 120pt 20pt]{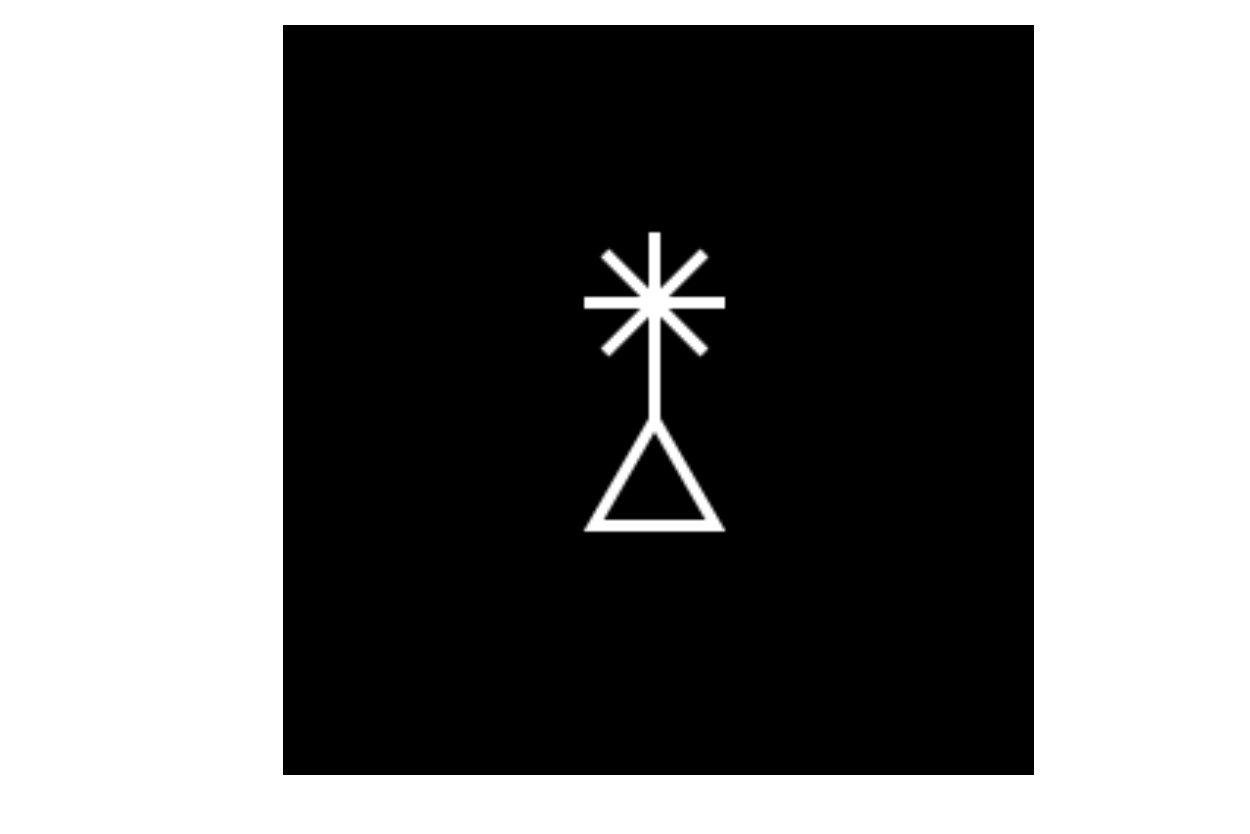}
        &\includegraphics[width=0.083\linewidth, clip=true, trim=140pt 30pt 120pt 20pt]{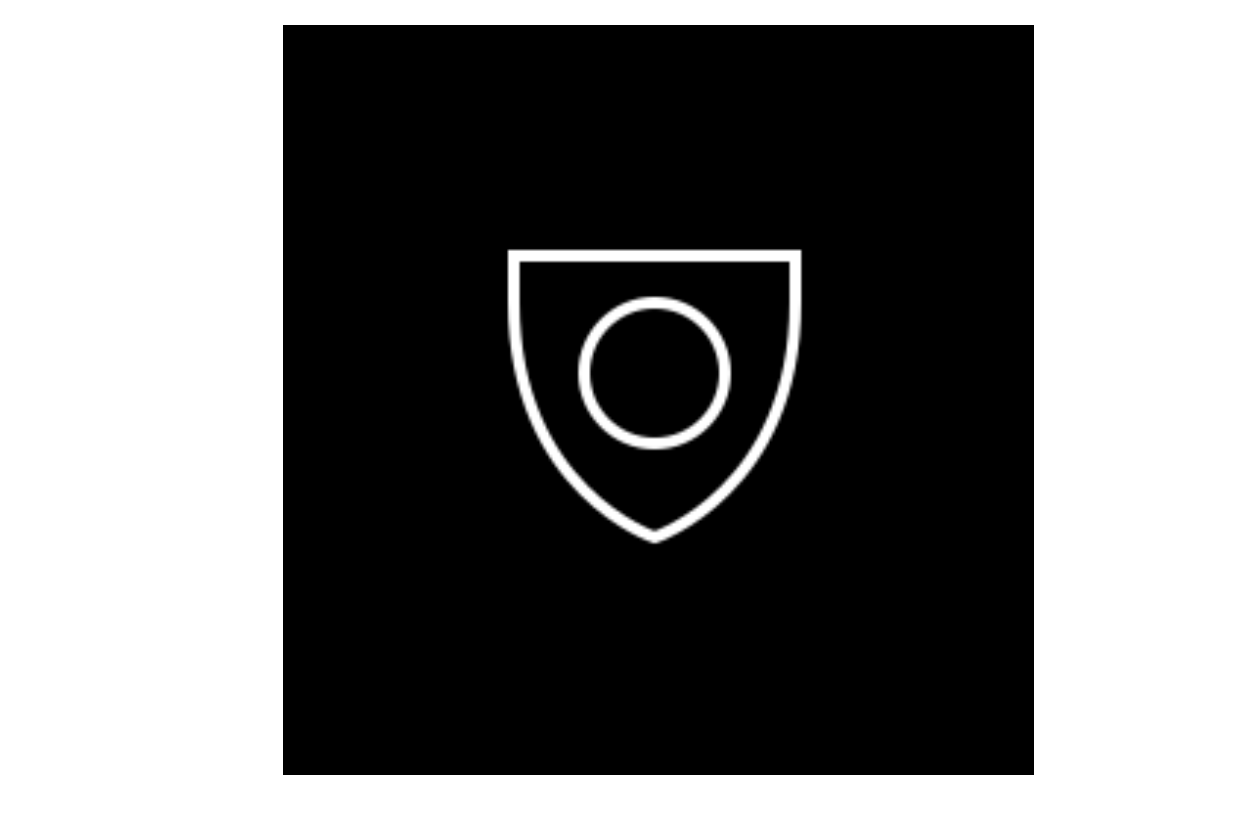}
        &\includegraphics[width=0.083\linewidth, clip=true, trim=140pt 30pt 120pt 20pt]{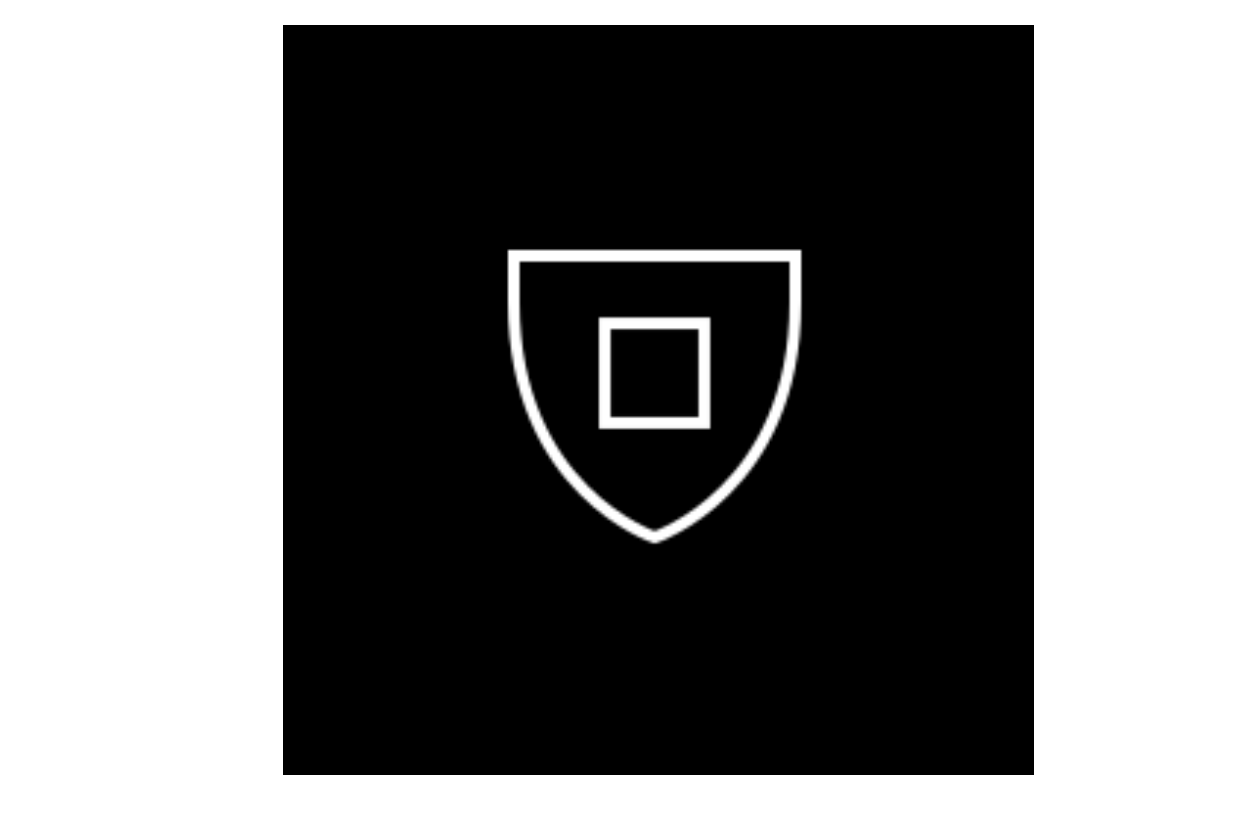}
        &\includegraphics[width=0.083\linewidth, clip=true, trim=140pt 30pt 120pt 20pt]{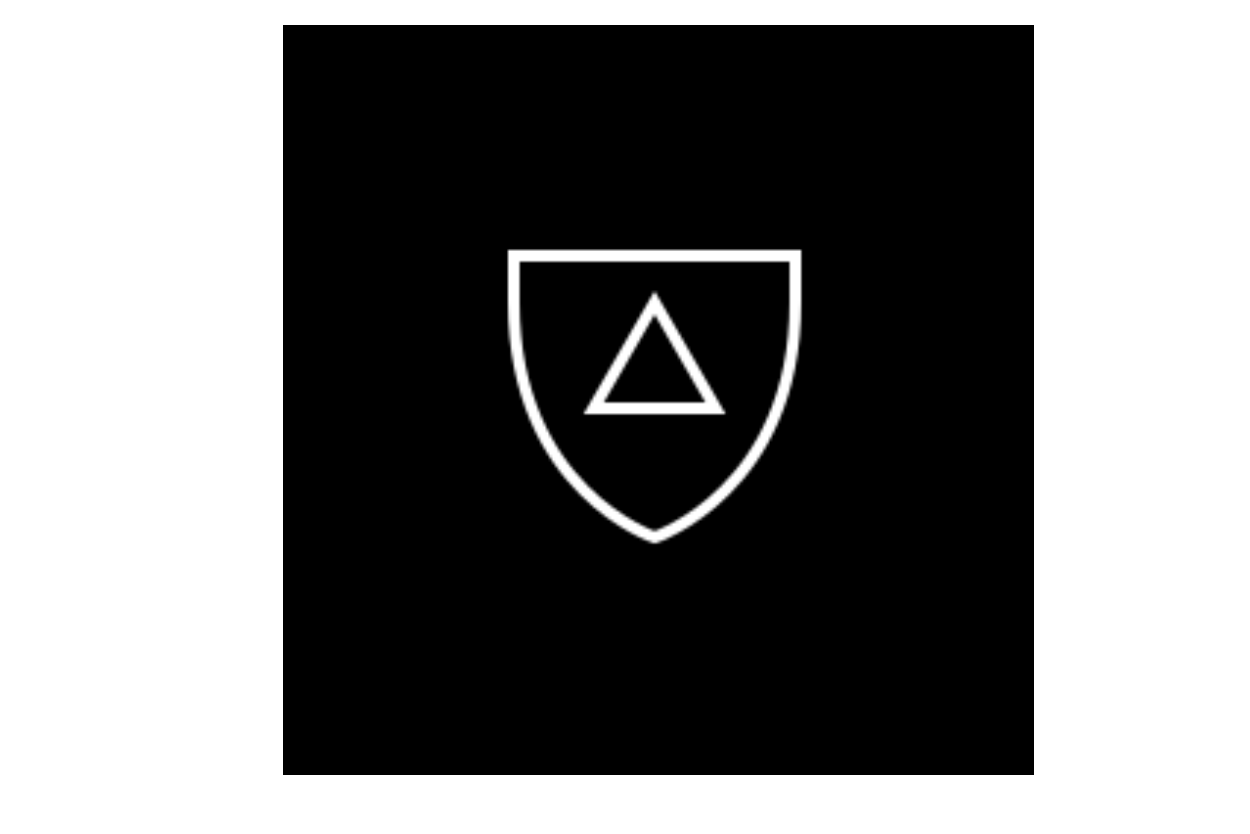}
    \end{tabular}}%
    \caption{
        Templates for academic datasets
        with domain $[-\frac{1}{\sqrt 2}, \frac{1}{\sqrt 2}]^2$
        represented by $256{\times}256$ pixels.
        }%
    \label{fig:academic_dataset}%
\end{figure}

\subsubsection{Classification under Affine Transformations}
\label{sec:class-aff-trans}

The first numerical example deals with the ideal, unperturbed setting,
where we aim to classify 10 affinely transformed versions
of all twelve symbols. 
The theory behind \mNRCDT\ in Theorem~\ref{thm:sep-max-nrcdt} predicts
that every class is transformed to a single point in \mNRCDT{} space.
In order to observe this behaviour numerically,
the underlying Radon transform and CDT
have to be discretized fine enough.
In this and all experiment regarding the academic datasets,
we choose 850 equispaced radii in $[-1,1]$
and 128 equispaced angles in $[0, 2\pi)$
for the Radon transform
and 64 equispaced interpolation points in $(0,1)$
for the CDT.
For illustration,
Figure~\ref{fig:academic_NRCDT_affine} shows
the \mNRCDT\ for the affine classes
with respect to templates~5 and~12.
The remaining numerical errors originate from
bi-quadratic interpolations
underlying the affine image transformations.
Figure~\ref{fig:academic_NRCDT_affine} also shows the \aNRCDT\ of both classes.
Note that the \aNRCDT\ does not transform an affine class to a single point
but to a small ball
around the template
whose radius depends on the eigenvalues of the affine transformations,
see Theorem~\ref{thm:sep-mean-nrcdt}.
The quality of both transformations is comparable
but visually the \mNRCDT\ yields a larger distance 
between the classes.

\begin{SCfigure}[3][t]
    \includegraphics[width=0.29\linewidth, clip=true, trim=10pt 10pt 20pt 20pt]{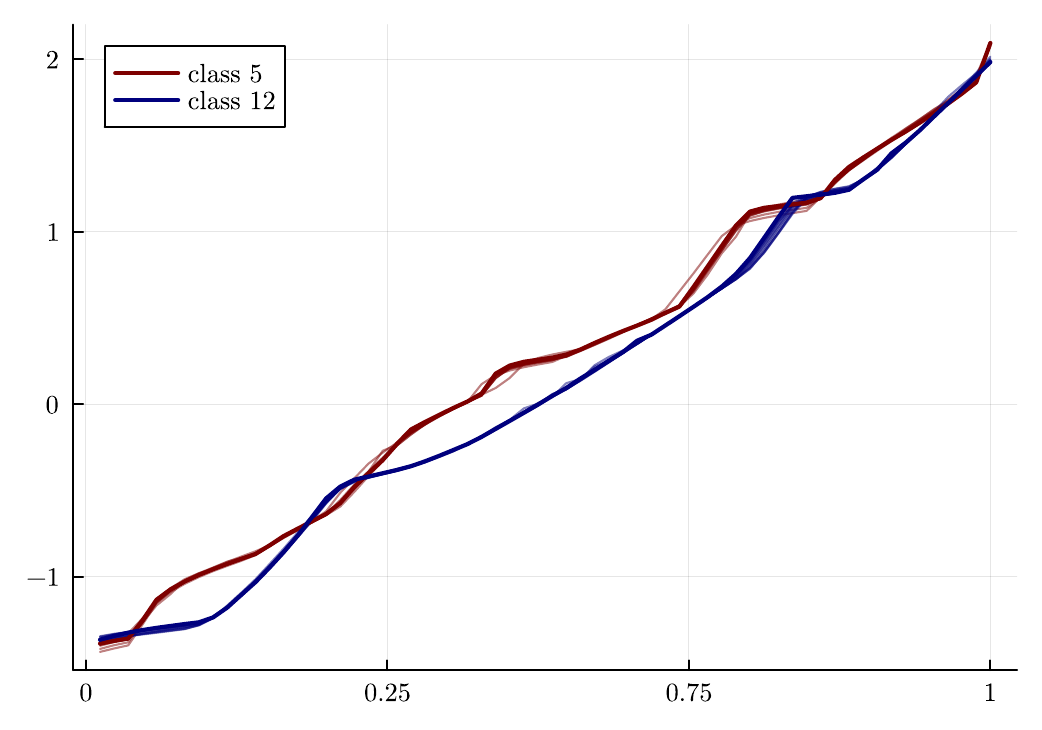}
    \includegraphics[width=0.29\linewidth, clip=true, trim=10pt 10pt 20pt 20pt]{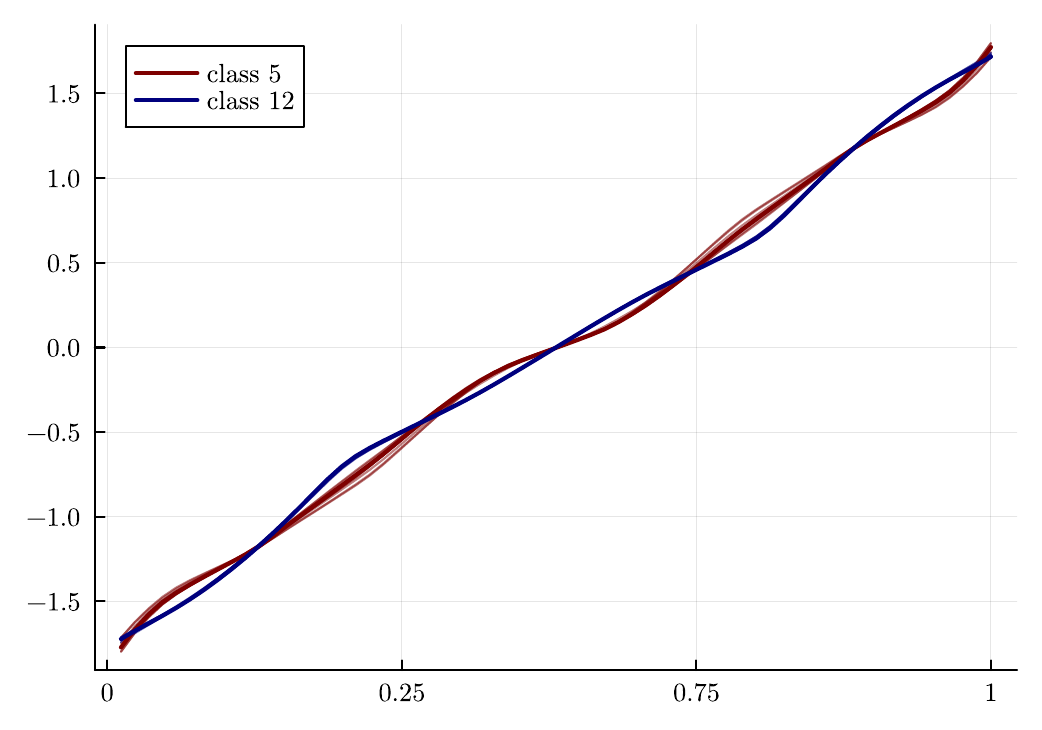}
    \caption{
    \mNRCDT{} (left) and \aNRCDT{} (right)
    of affine classes 5 and 12.
    The classes are generated using
    horizontal/vertical scaling by a factor in $[0.5,1.25]$,
    rotation by an angle in $[0^\circ, 360^\circ]$,
    shearing of horizontal/vertical axis by an angle in $[-45^\circ,45^\circ]$,
    and horizontal/vertical shifting by a pixel number in $[-20,20]$.
    }
    \label{fig:academic_NRCDT_affine}
\end{SCfigure}

\begin{table}[t]
    \caption{NT classification accuracies for
        academic datasets with 10 samples per class
        and different parameter ranges for random affine transformations.
        Similar to Figure~\ref{fig:academic_NRCDT_affine},
        rotation angles are in $[0^\circ,360^\circ]$,
        pixel shifts in $[-20,20]$.
        The best result per dataset and angle number is highlighted.}
    \resizebox{\linewidth}{!}{%
    \begin{tabular}{*{25}{l}}
    \toprule
    angles  
    & \multicolumn{6}{l}{scaling in $[0.5, 1.25]$, shearing in $[-45^\circ,45^\circ]$} 
    & \multicolumn{6}{l}{scaling in $[0.75, 1.25]$, shearing in $[-35^\circ,35^\circ]$} 
    & \multicolumn{6}{l}{scaling in $[0.75, 1.0]$, shearing in $[-15^\circ,15^\circ]$}
    & \multicolumn{6}{l}{no scaling and shearing}
    \\
    \cmidrule(lr){2-7}
    \cmidrule(lr){8-13}
    \cmidrule(lr){14-19}
    \cmidrule(lr){20-25}
    & \multicolumn{2}{l}{R-CDT} & \multicolumn{2}{l}{\mNRCDT} & \multicolumn{2}{l}{\aNRCDT} 
    & \multicolumn{2}{l}{R-CDT} & \multicolumn{2}{l}{\mNRCDT} & \multicolumn{2}{l}{\aNRCDT}
    & \multicolumn{2}{l}{R-CDT} & \multicolumn{2}{l}{\mNRCDT} & \multicolumn{2}{l}{\aNRCDT} 
    & \multicolumn{2}{l}{R-CDT} & \multicolumn{2}{l}{\mNRCDT} & \multicolumn{2}{l}{\aNRCDT}\\
    & $\|\cdot\|_\infty$ & $\|\cdot\|_2$ & $\|\cdot\|_\infty$ & $\|\cdot\|_2$ 
    & $\|\cdot\|_\infty$ & $\|\cdot\|_2$ & $\|\cdot\|_\infty$ & $\|\cdot\|_2$
    & $\|\cdot\|_\infty$ & $\|\cdot\|_2$ & $\|\cdot\|_\infty$ & $\|\cdot\|_2$
    & $\|\cdot\|_\infty$ & $\|\cdot\|_2$ & $\|\cdot\|_\infty$ & $\|\cdot\|_2$ 
    & $\|\cdot\|_\infty$ & $\|\cdot\|_2$ & $\|\cdot\|_\infty$ & $\|\cdot\|_2$
    & $\|\cdot\|_\infty$ & $\|\cdot\|_2$ & $\|\cdot\|_\infty$ & $\|\cdot\|_2$
    \\
    \midrule
    1 
    & $0.1083$ & $0.2333$ & $0.1750$ & \bm{$0.3416$} & $0.1750$ & \bm{$0.3416$}
    & $0.1166$ & $0.2500$ & $0.1500$ & \bm{$0.3083$} & $0.1500$ & \bm{$0.3083$} 
    & $0.1666$ & $0.2666$ & $0.1916$ & \bm{$0.3083$} & $0.1916$ & \bm{$0.3083$} 
    & $0.2000$ & $0.2583$ & $0.1583$ & \bm{$0.3166$} & $0.1583$ & \bm{$0.3166$}
    \\
    2  
    & $0.1083$ & $0.2333$ & $0.1750$ & \bm{$0.3416$} & $0.3000$ & \bm{$0.3416$} 
    & $0.1166$ & $0.2500$ & $0.1666$ & \bm{$0.3083$} & $0.2333$ & $0.2916$ 
    & $0.1666$ & $0.2666$ & $0.1916$ & \bm{$0.3083$} & $0.2166$ & $0.3000$ 
    & $0.2000$ & $0.2583$ & $0.1500$ & \bm{$0.3333$} & $0.2500$ & $0.3083$
    \\
    4 
    & $0.1333$ & $0.2500$ & $0.2166$ & \bm{$0.5333$} & $0.2416$ & $0.3583$ 
    & $0.1583$ & $0.2250$ & $0.2833$ & \bm{$0.5416$} & $0.2500$ & $0.3833$ 
    & $0.2250$ & $0.2083$ & $0.3500$ & \bm{$0.5666$} & $0.1916$ & $0.3833$ 
    & $0.2166$ & $0.2083$ & $0.3250$ & \bm{$0.5333$} & $0.1833$ & $0.3583$
    \\
    8
    & $0.0916$ & $0.2000$ & $0.4416$ & \bm{$0.6333$} & $0.4333$ & $0.4750$
    & $0.1333$ & $0.2083$ & $0.3916$ & \bm{$0.6083$} & $0.4250$ & $0.4583$ 
    & $0.1750$ & $0.2250$ & $0.4166$ & \bm{$0.6250$} & $0.4333$ & $0.5000$
    & $0.2166$ & $0.2250$ & $0.4583$ & \bm{$0.6500$} & $0.4833$ & $0.4750$
    \\
    16
    & $0.1416$ & $0.1916$ & $0.6333$ & \bm{$0.8250$} & $0.7250$ & $0.7750$
    & $0.1583$ & $0.2250$ & $0.6916$ & $0.8583$ & $0.8333$ & \bm{$0.8750$} 
    & $0.1666$ & $0.2250$ & $0.7916$ & \bm{$0.9083$} & $0.8583$ & \bm{$0.9083$}
    & $0.2500$ & $0.2250$ & $0.7833$ & \bm{$0.9166$} & $0.9250$ & $0.9000$
    \\
    32
    & $0.1500$ & $0.1916$ & $0.8833$ & \bm{$0.9916$} & $0.8416$ & $0.8750$
    & $0.2000$ & $0.2333$ & $0.9083$ & \bm{$1.0000$} & $0.9583$ & $0.9416$ 
    & $0.1750$ & $0.2083$ & $0.9500$ & \bm{$1.0000$} & \bm{$1.0000$} & \bm{$1.0000$}
    & $0.2500$ & $0.1266$ & $0.9500$ & $\bf{1.0000}$ & $\bf{1.0000}$ & $\bf{1.0000}$
    \\
    64 
    & $0.1666$ & $0.1916$ & $0.9500$ & $\bf{1.0000}$ & $0.8333$ & $0.8583$
    & $0.2000$ & $0.2250$ & $0.9750$ & \bm{$1.0000$} & $0.9583$ & $0.9416$ 
    & $0.1666$ & $0.2083$ & $0.9750$ & \bm{$1.0000$} & \bm{$1.0000$} & \bm{$1.0000$}
    & $0.2416$ & $0.2083$ & $0.9500$ & $\bf{1.0000}$ & $\bf{1.0000}$ & $\bf{1.0000}$
    \\
    128 
    & $0.1666$ & $0.1916$ & $\bf{1.0000}$ & $\bf{1.0000}$ & $0.8500$ & $0.8583$
    & $0.2000$ & $0.2250$ & \bm{$1.0000$} & \bm{$1.0000$} & $0.9583$ & $0.9416$ 
    & $0.1666$ & $0.2083$ & \bm{$1.0000$} & \bm{$1.0000$} & \bm{$1.0000$} & $0.9916$
    & $0.2250$ & $0.2083$ & $\bf{1.0000}$ & $\bf{1.0000}$ & $\bf{1.0000}$ & $\bf{1.0000}$
    \\
    256 
    & $0.1666$ & $0.1916$ & $0.9666$ & $\bf{1.0000}$ & $0.8500$ & $0.8666$
    & $0.1916$ & $0.2166$ & \bm{$1.0000$} & \bm{$1.0000$} & $0.9583$ & $0.9416$ 
    & $0.1833$ & $0.2083$ & $0.9750$ & \bm{$1.0000$} & \bm{$1.0000$} & $0.9833$
    & $0.2166$ & $0.2083$ & $0.9583$ & $0.9916$ & $\bf{1.0000}$ & $\bf{1.0000}$
    \\
    \midrule
    Eucl.
    & $0.0833$ & $0.0916$ & & & &
    & $0.0833$ & $0.0833$ & & & &
    & $0.0833$ & $0.0666$ & & & &
    & $0.0833$ & $0.0666$ & & & &
    \\
    \bottomrule
    \end{tabular}}%
    \label{tab:academic_NT_affine}
\end{table}

To classify a given datum,
we assign the label of the closest template
in the considered feature spaces.
Henceforth,
we refer to this approach as
\emph{nearest template} (NT) \emph{classification}.
Since the quality of the \aNRCDT\ mainly depends
of the size of the anisotropic scaling and shearing,
we repeat the experiment for different parameter ranges.
Our classification results are reported
in Table~\ref{tab:academic_NT_affine},
where we compare the NT performance of the \mNRCDT{} and \aNRCDT{}
with the R-CDT representation from~\cite{Kolouri2016}
and the Euclidean representation as baseline.
In feature space,
we use the Euclidean $\|\cdot\|_2$ 
and Chebyshev $\|\cdot\|_\infty$ norm
to assign the labels.
Moreover, we vary the number of
equispaced angles for the underlying Radon transform.
The \mNRCDT\ and \aNRCDT\ feature representations clearly outperform
the R-CDT and the Euclidean baseline
for classification under affine transformations.
The results of the \mNRCDT\ surpass the accuracies of the \aNRCDT\ 
especially in the presence of large anisotropic scaling and shearing,
which is covered by the developed theory.
Notice that
already small numbers of Radon angles yield high accuracies.
Finally, the Euclidean norm outperforms the Chebyshev norm;
for this reason,
we restrict ourselves henceforth to the Euclidean norm.

\subsubsection{Classification under Non-affine Deformations}

The theory behind Theorem~\ref{thm:sep-pert-max-nrcdt}
and~\ref{thm:sep-pert-mean-nrcdt} guarantees 
the separability of affine classes
in \mNRCDT{} and \aNRCDT{} space
even for imperfect affine transformations.
In the next experiments,
we study the robustness of the proposed methods
against non-affine deformations 
and additive impulsive noise.
Both error sources are illustrated in Figure~\ref{fig:academic_distortions}
together with optimal transport plans 
from the underlying true template.
Already light impulsive noise, 
which is referred to as salt noise,
has a similar effect on the Wasserstein-2 distance
as strong non-affine perturbations.
Therefore,
we expect that
\mNRCDT\ and \aNRCDT\ can manage non-affine distortions better than impulsive noise.

\begin{figure}
  \resizebox{\linewidth}{!}{%
    \tiny%
    \begin{tabular}{c c c}
        non-affine deformation & 
        salt noise & non-affine deformation \& salt noise \\
        \includegraphics[height=0.16\linewidth, clip=true, trim=130pt 10pt 100pt 0pt]{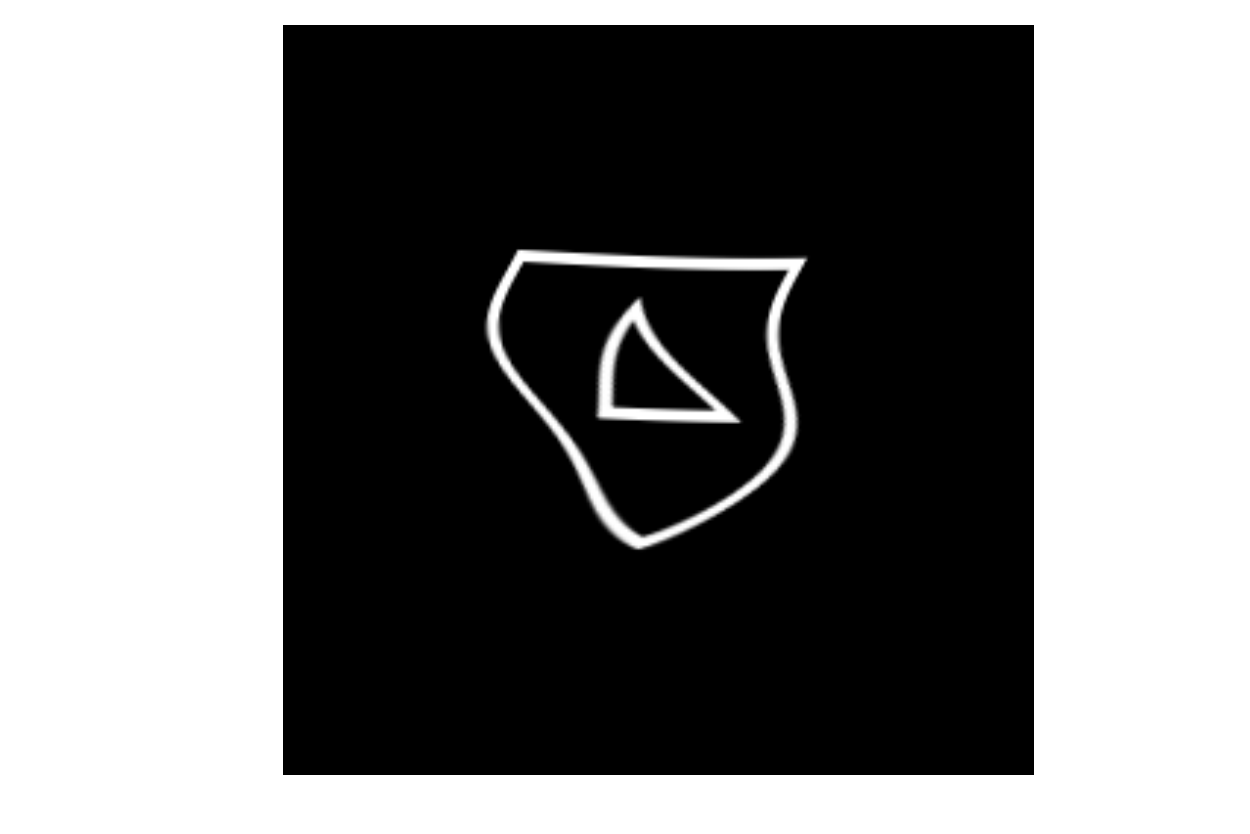}
        \includegraphics[height=0.16\linewidth, clip=true, trim=-60pt -250pt -60pt -240pt]{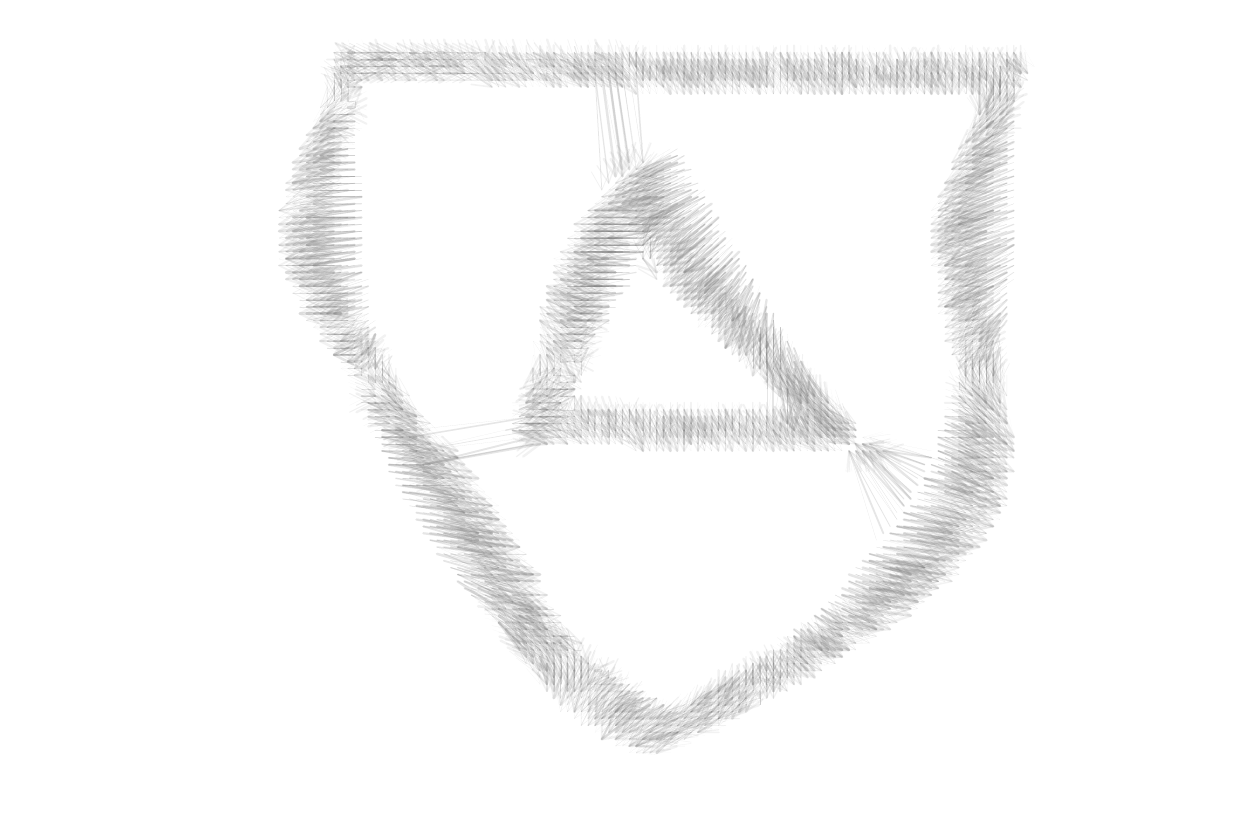}
        & \includegraphics[height=0.16\linewidth, clip=true, trim=130pt 10pt 100pt 0pt]{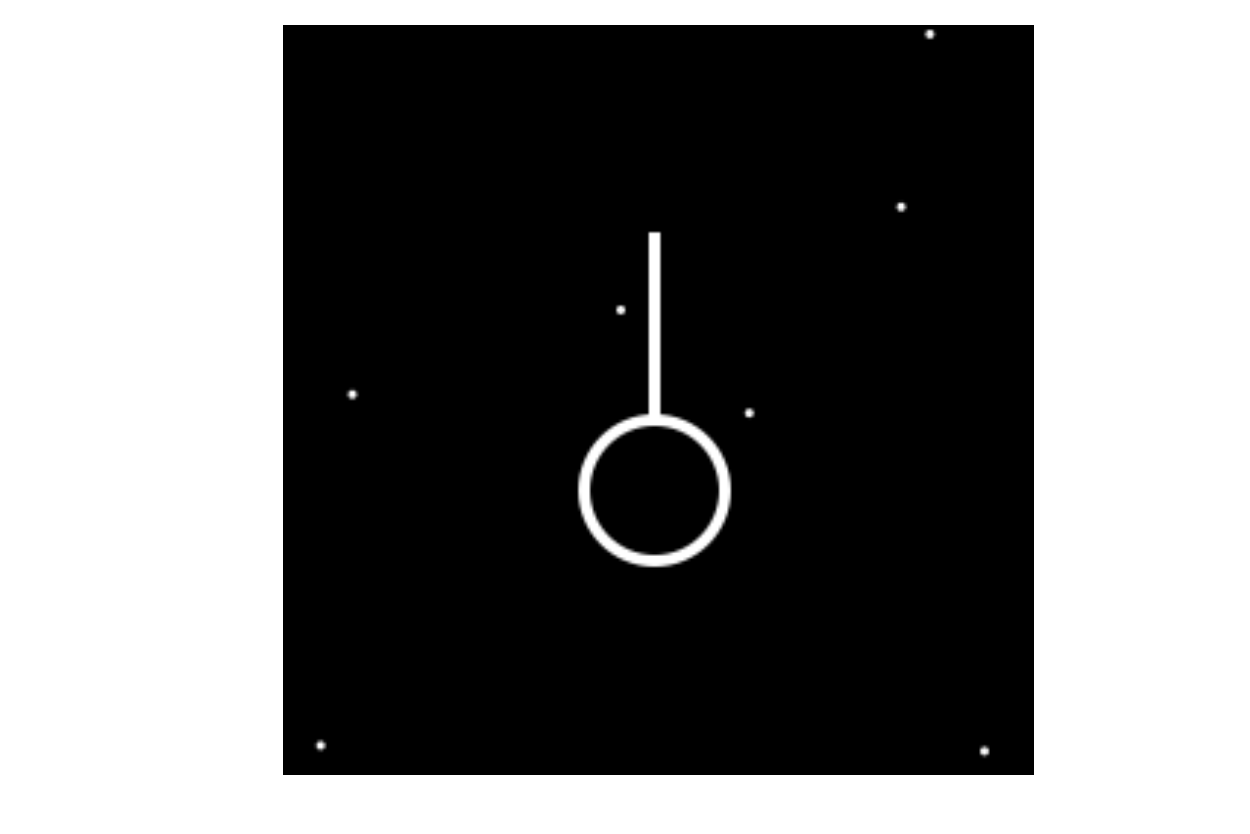}
        \includegraphics[height=0.16\linewidth, clip=true, trim=150pt 15pt 120pt 5pt]{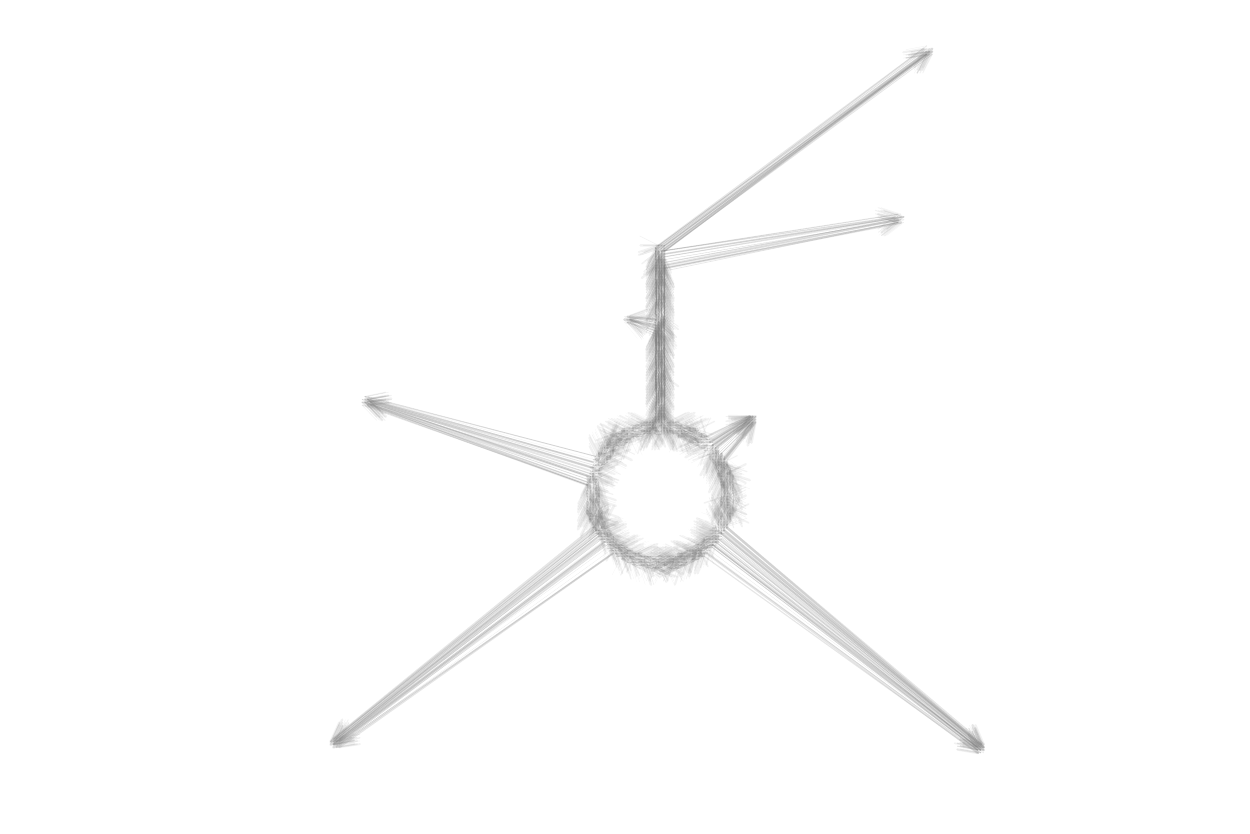}
        & \includegraphics[height=0.16\linewidth, clip=true, trim=130pt 10pt 100pt 0pt]{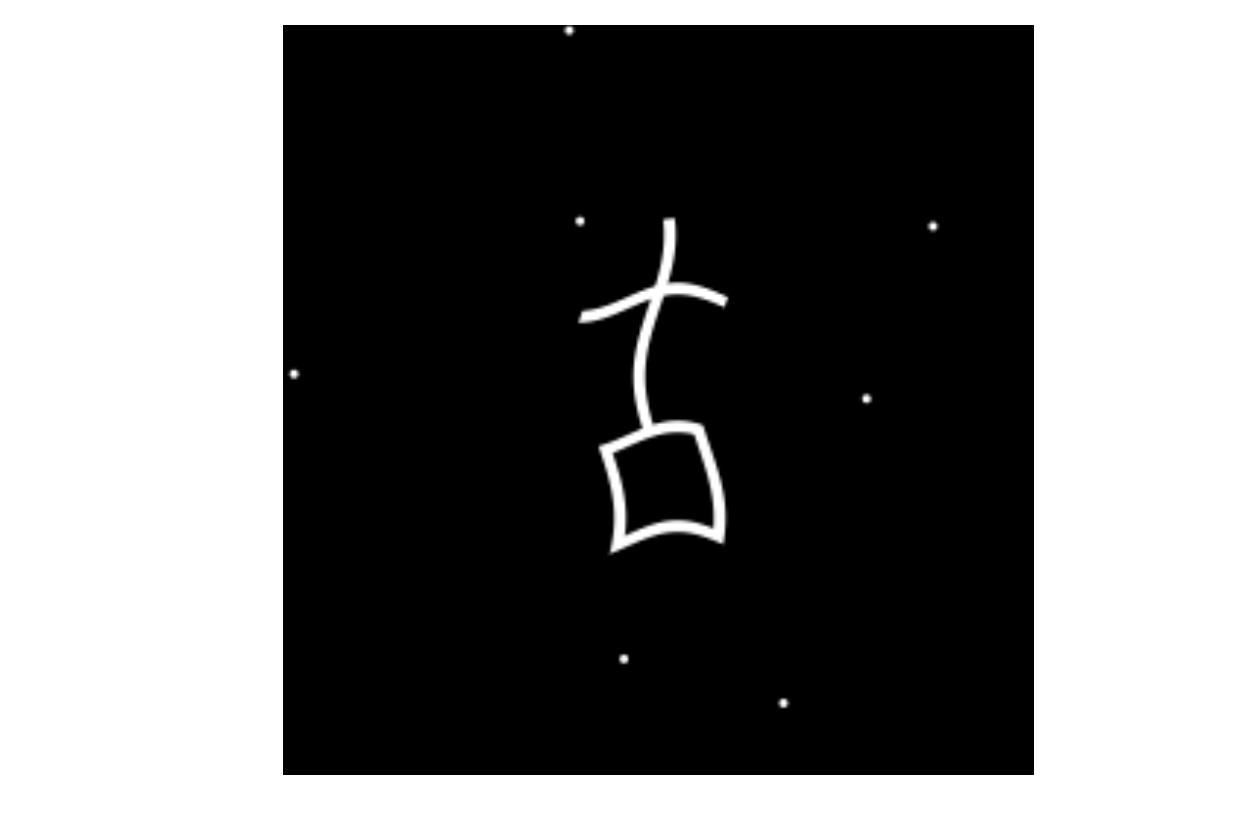}
        \includegraphics[height=0.16\linewidth, clip=true, trim=80pt -10pt 50pt 5pt]{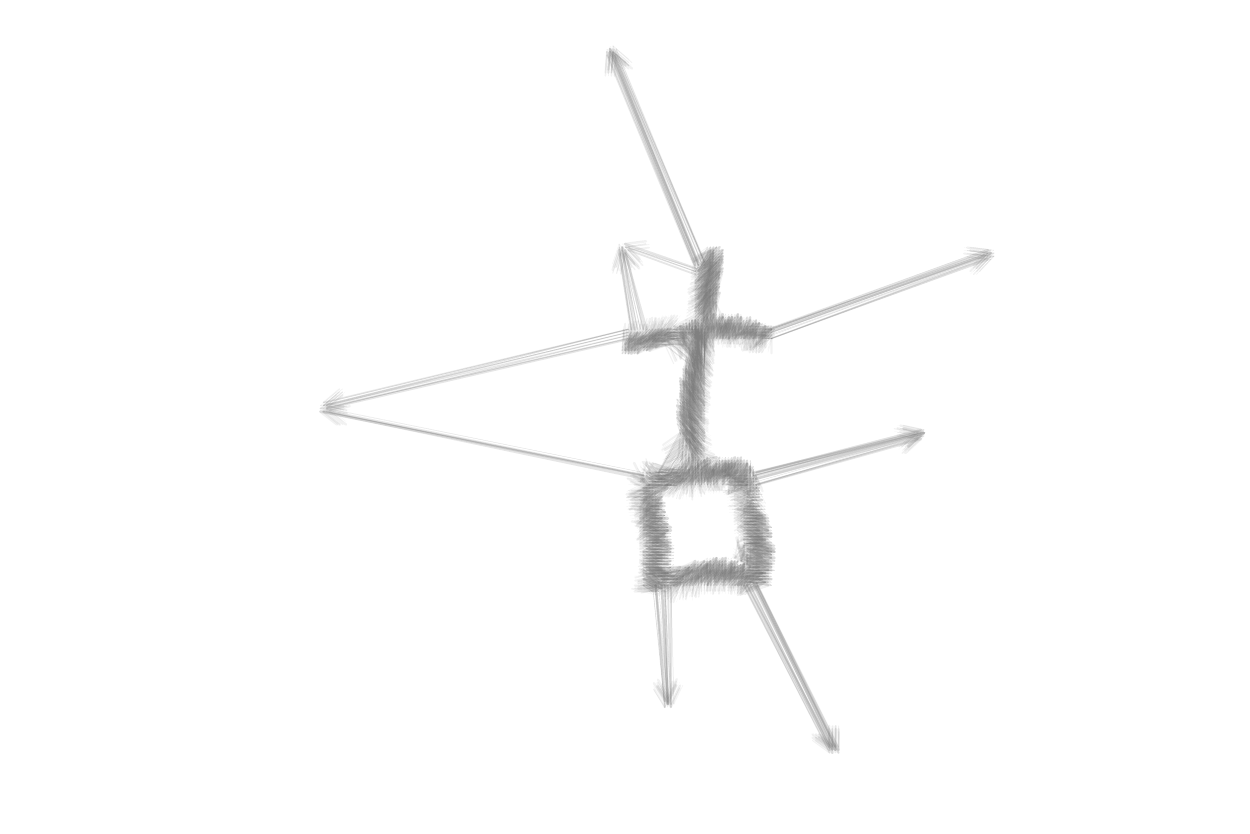} \\
        $W_2(\rho_{\text{org}}, \rho_{\text{dis}}) = 1.9860 \cdot 10^{-3}$ &
        $W_2(\rho_{\text{org}}, \rho_{\text{dis}}) = 3.1034 \cdot 10^{-3}$ &
        $W_2(\rho_{\text{org}}, \rho_{\text{dis}}) = 5.77112\cdot10^{-3}$ \\
    \end{tabular}}
    \caption{ 
    Instances of corrupted data regarding non-affine deformations and
    impulsive/salt noise considered in the robustness analysis.
    The accompanying vector fields illustrate the optimal Wasserstein-2 transport
    between the corrupted datum $\rho_\text{dis}$ and the true template $\rho_\text{org}$.
    }
    \label{fig:academic_distortions}
\end{figure}

\paragraph{Non-affine Deformations}

\begin{SCfigure}[3][t]
    \includegraphics[width=0.29\linewidth, clip=true, trim=10pt 10pt 20pt 20pt]{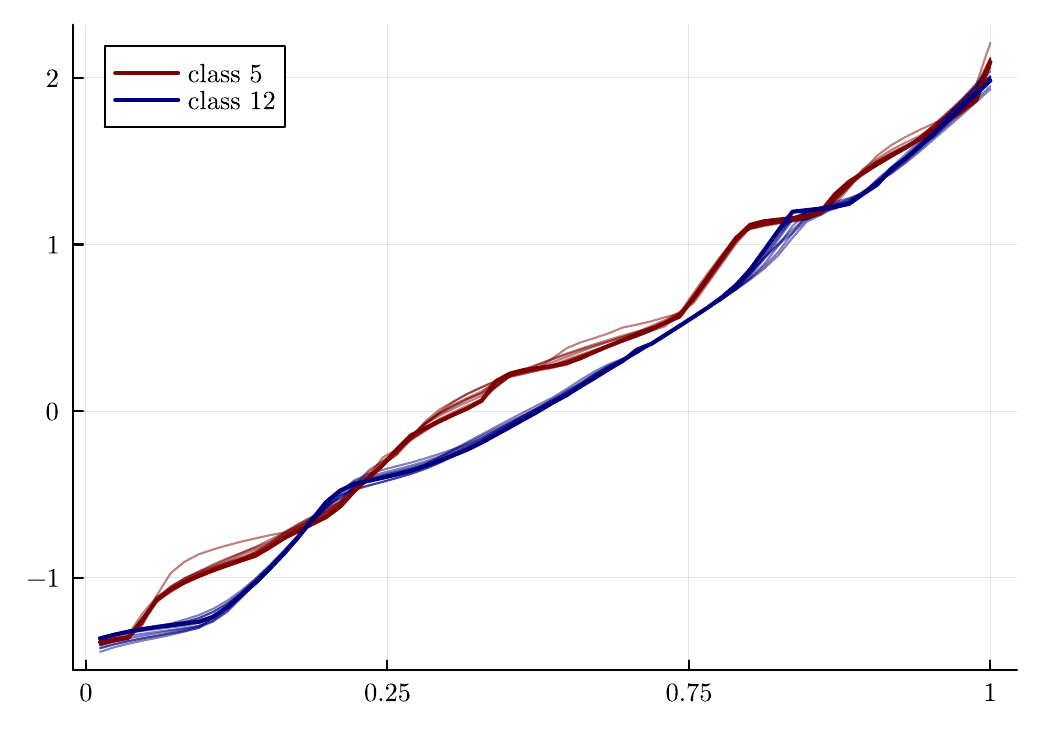}
    \includegraphics[width=0.29\linewidth, clip=true, trim=10pt 10pt 20pt 20pt]{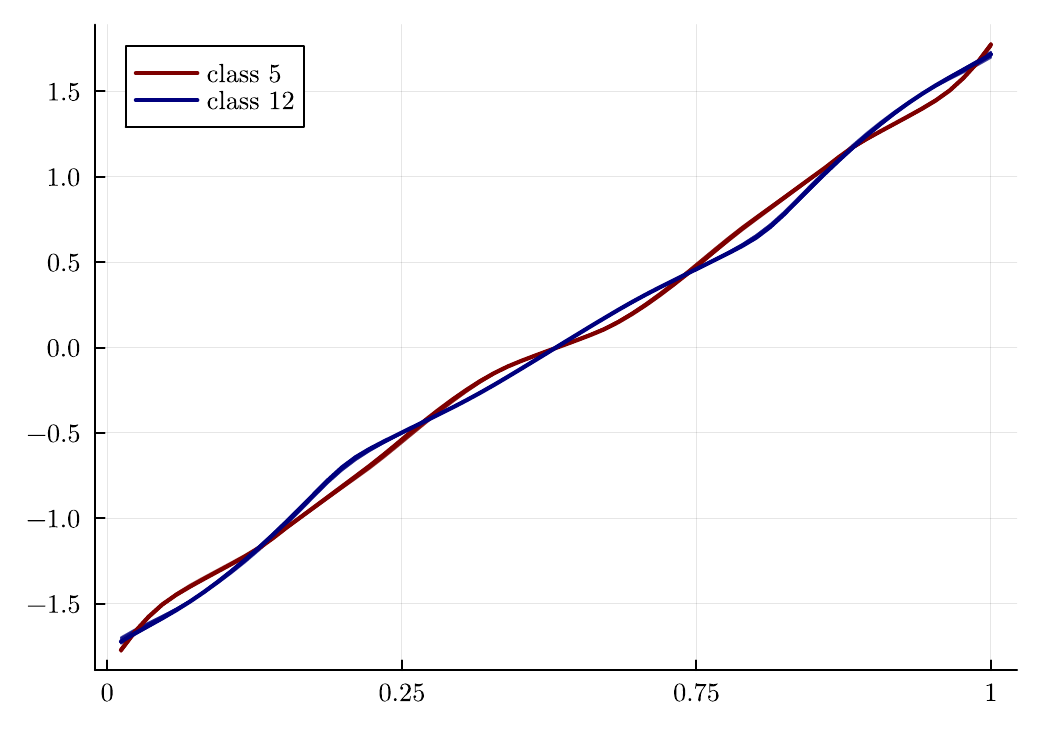}
    \caption{
    Visualization of
    \mNRCDT{} (left) and \aNRCDT{} (right)
    for classes 5 and 12 of academic dataset,
    each of size $10$ and generated by,
    first,
    random non-affine deformations
    induced by sine/cosine functions
    with amplitudes in $[2.5,7.5]$
    and frequencies in $[0.5,2.0]$,
    and, second,
    random affine transformation as in
    Figure~\ref{fig:academic_NRCDT_affine}.
    }
    \label{fig:academic_NRCDT_non-affine}
\end{SCfigure}

\begin{table}[t!]
    \caption{
    NT classification accuracies for the academic dataset
    with $10$ samples per class and
    different parameter ranges for
    random non-affine distortions.
    Additionally,
    random affine transformations
    are applied with
    scaling in $[0.75, 1.0]$,
    shearing in $[-5^\circ,5^\circ]$,
    rotation angles in $[0^\circ,360^\circ]$ and
    pixel shifts in $[-20,20]$.
    The best result per dataset and angle is highlighted.
    }
    \resizebox{\linewidth}{!}{%
    \begin{tabular}{*{19}{l}}
    \toprule
    angles
    & \multicolumn{3}{l}{no non-affine distortion} 
    & \multicolumn{3}{l}{freq.\ in $[0.5, 2.0]$, amp.\ in $[2.5, 7.5]$}
    & \multicolumn{3}{l}{freq.\ in $[0.5, 2.0]$, amp.\ in $[8.0, 13.0]$}
    & \multicolumn{3}{l}{freq.\ in $[0.5, 4.0]$, amp.\ in $[0.5, 2.0]$}
    & \multicolumn{3}{l}{freq.\ in $[0.5, 4.0]$, amp.\ in $[0.5, 7.5]$} 
    & \multicolumn{3}{l}{freq.\ in $[0.5, 4.0]$, amp.\ in $[2.5, 7.5]$}
    \\
    \cmidrule(lr){2-4}
    \cmidrule(lr){5-7}
    \cmidrule(lr){8-10}
    \cmidrule(lr){11-13}
    \cmidrule(lr){14-16}
    \cmidrule(lr){17-19} 
    & R-CDT & \mNRCDT & \aNRCDT
    & R-CDT & \mNRCDT & \aNRCDT 
    & R-CDT & \mNRCDT & \aNRCDT
    & R-CDT & \mNRCDT & \aNRCDT
    & R-CDT & \mNRCDT & \aNRCDT
    & R-CDT & \mNRCDT & \aNRCDT
    \\
    \midrule
    1 
    & $0.2583$ & \bm{$0.3083$} & \bm{$0.3083$} 
    & $0.2250$ & \bm{$0.2666$} & \bm{$0.2666$} 
    & $0.2250$ & \bm{$0.2500$} & \bm{$0.2500$} 
    & $0.2416$ & \bm{$0.2666$} & \bm{$0.2666$} 
    & $0.2166$ & \bm{$0.2250$} & \bm{$0.2250$} 
    & $0.2000$ & \bm{$0.2166$} & \bm{$0.2166$} 
    \\
    2  
    & $0.2883$ & \bm{$0.3083$} & $0.2916$ 
    & $0.2250$ & \bm{$0.2750$} & \bm{$0.2750$} 
    & $0.2250$ & \bm{$0.2416$} & $0.2333$ 
    & $0.2416$ & \bm{$0.2583$} & $0.2500$ 
    & $0.2166$ & $0.2166$ & \bm{$0.2333$} 
    & $0.2000$ & \bm{$0.2250$} & $0.2166$ 
    \\
    4 
    & $0.1833$ & \bm{$0.5250$} & $0.3666$ 
    & $0.1750$ & \bm{$0.4333$} & $0.2500$ 
    & $0.2083$ & \bm{$0.4000$} & $0.2083$ 
    & $0.1916$ & \bm{$0.4583$} & $0.3166$ 
    & $0.1666$ & \bm{$0.3916$} & $0.2333$ 
    & $0.1750$ & \bm{$0.3750$} & $0.2083$ 
    \\
    8
    & $0.2083$ & \bm{$0.6500$} & $0.4916$ 
    & $0.1583$ & \bm{$0.6000$} & $0.4166$ 
    & $0.1333$ & \bm{$0.5583$} & $0.4333$ 
    & $0.1833$ & \bm{$0.6500$} & $0.5166$ 
    & $0.1966$ & \bm{$0.5833$} & $0.4416$ 
    & $0.1916$ & \bm{$0.5833$} & $0.4250$ 
    \\
    16
    & $0.2000$ & \bm{$0.9166$} & $0.8750$ 
    & $0.1583$ & $0.8250$ & \bm{$0.8666$} 
    & $0.1250$ & $0.7083$ & \bm{$0.7500$} 
    & $0.1583$ & \bm{$0.9083$} & $0.8833$ 
    & $0.1916$ & $0.7916$ & \bm{$0.8083$} 
    & $0.2083$ & \bm{$0.7583$} & $0.7250$ 
    \\
    32
    & $0.2000$ & \bm{$1.0000$} & \bm{$1.0000$} 
    & $0.1666$ & $0.9916$ & \bm{$1.0000$} 
    & $0.1250$ & $0.8083$ & \bm{$0.9083$} 
    & $0.1666$ & \bm{$1.0000$} & \bm{$1.0000$} 
    & $0.1916$ & $0.9500$ & \bm{$0.9666$} 
    & $0.2000$ & $0.9083$ & \bm{$0.9250$} 
    \\
    64 
    & $0.1916$ & \bm{$1.0000$} & \bm{$1.0000$} 
    & $0.1666$ & $0.9833$ & \bm{$1.0000$} 
    & $0.1250$ & $0.8000$ & \bm{$0.9166$} 
    & $0.1583$ & \bm{$1.0000$}& \bm{$1.0000$} 
    & $0.1916$ & $0.9333$ & \bm{$0.9666$} 
    & $0.2000$ & $0.9083$ & \bm{$0.9166$} 
    \\
    128 
    & $0.1916$ & \bm{$1.0000$} & \bm{$1.0000$} 
    & $0.1666$ & $0.9916$ & \bm{$1.0000$} 
    & $0.1250$ & $0.8166$ & \bm{$0.9250$} 
    & $0.1666$ & \bm{$1.0000$} & \bm{$1.0000$} 
    & $0.1916$ & $0.9333$ & \bm{$0.9750$} 
    & $0.2000$ & $0.9000$ & \bm{$0.9250$} 
    \\
    256 
    & $0.1916$ & $0.9916$ & \bm{$1.0000$} 
    & $0.1666$ & $0.9916$ & \bm{$1.0000$} 
    & $0.1580$ & $0.8416$ & \bm{$1.0000$} 
    & $0.1666$ & \bm{$1.0000$} & \bm{$1.0000$} 
    & $0.1916$ & $0.9416$ & \bm{$0.9750$} 
    & $0.2000$ & $0.9000$ & \bm{$0.9250$} 
    \\
    \midrule
    Eucl.\
    & $0.0833$ & &
    & $0.0583$ & &
    & $0.0666$ & &
    & $0.0750$ & &
    & $0.0750$ & &
    & $0.0750$ & & 
    \\
    \bottomrule
    \end{tabular}}
    \label{tab:academic_NT_non-affine}
\end{table}

To generate non-affine deformations
of an ($N{\times}N$)-pixel image,
we assign to the $(j,k)$th pixel
the bi-quadratically interpolated gray value
at the perturbed location
\begin{equation*}
    \bigl(j + a_1 \sin\bigl(\tfrac{2\pi f_1}{N} \, k\bigr), k + a_2 \cos\bigl(\tfrac{2\pi f_2}{N} \, j\bigr)\bigr)
\end{equation*}
with fixed random frequencies $f_1, f_2$ and amplitudes $a_1, a_2$.
Figuratively,
this deformation yields local bendings of the template symbols;
see Figure~\ref{fig:academic_distortions} (left).
The datasets for the following experiments consist of
10 non-affinely deformed samples for each of the twelve templates
in Figure~\ref{fig:academic_dataset},
followed by the application of a random affine transformation.
The impact on the \mNRCDT{}s and \aNRCDT{}s is illustrated
in Figure~\ref{fig:academic_NRCDT_non-affine}.
While the non-affine deformations only have
a very small impact on the \aNRCDT{} representation,
we clearly observe within-class variations for the \mNRCDT{}.
For classification,
we again use the NT approach
and repeat the entire experiment
for different academic datasets
whose amplitudes and frequencies 
of the random non-affine deformations
lie in varying parameter ranges.
Our results are reported
in Table~\ref{tab:academic_NT_non-affine}.
We observe that
our \mNRCDT{} and \aNRCDT{} representations
clearly outperform R-CDT and Euclidean representations.
In particular,
for a small amount of non-affine distortions
both---\mNRCDT{} and \aNRCDT{}---yield
perfect classification
for sufficiently many angles.
With increasing amount of distortions,
we see that \aNRCDT{} performs better
than \mNRCDT{},
as expected by our observations
based on Figure~\ref{fig:academic_distortions}.

\paragraph{Salt Noise}

\begin{figure}[t]
    \centering
    \includegraphics[width=\linewidth, clip=true, trim=135pt 360pt 105pt 20pt]{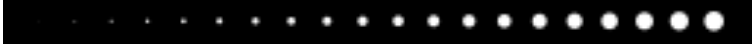}
    \resizebox{\linewidth}{!}{%
    \tiny%
    \begin{tabularx}{\linewidth}{@{\hspace{0.0cm}} *{21}{Y} @{\hspace{0.65cm}}}
        $\ell$ & 1 & 2 & 3 & 4 & 5 & 6 & 7 & 8 & 9 & 10 & 11 & 12 & 13 & 14 & 15 & 16 & 17 & 18 & 19 & 20
    \end{tabularx}}
    \vspace{-15pt}
    \caption{Scala of exemplarily rendered salt noise of strength $\ell \in \{1,\ldots,20\}$
        used in the experiments.
        The salt noise illustrated in Figure~\ref{fig:academic_distortions} has strength $\ell = 9$.
        }
    \label{fig:salt-noise}
\end{figure}

\begin{SCfigure}[3][t]
    \includegraphics[width=0.29\linewidth, clip=true, trim=10pt 10pt 20pt 20pt]{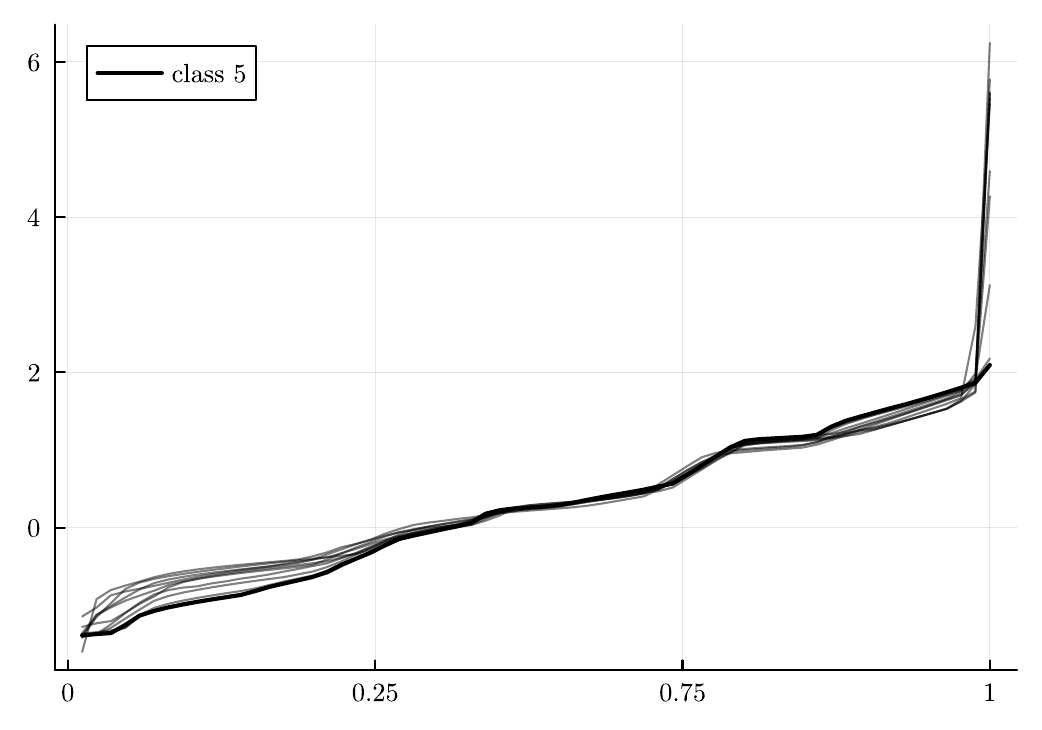}
    \includegraphics[width=0.29\linewidth, clip=true, trim=10pt 10pt 20pt 20pt]{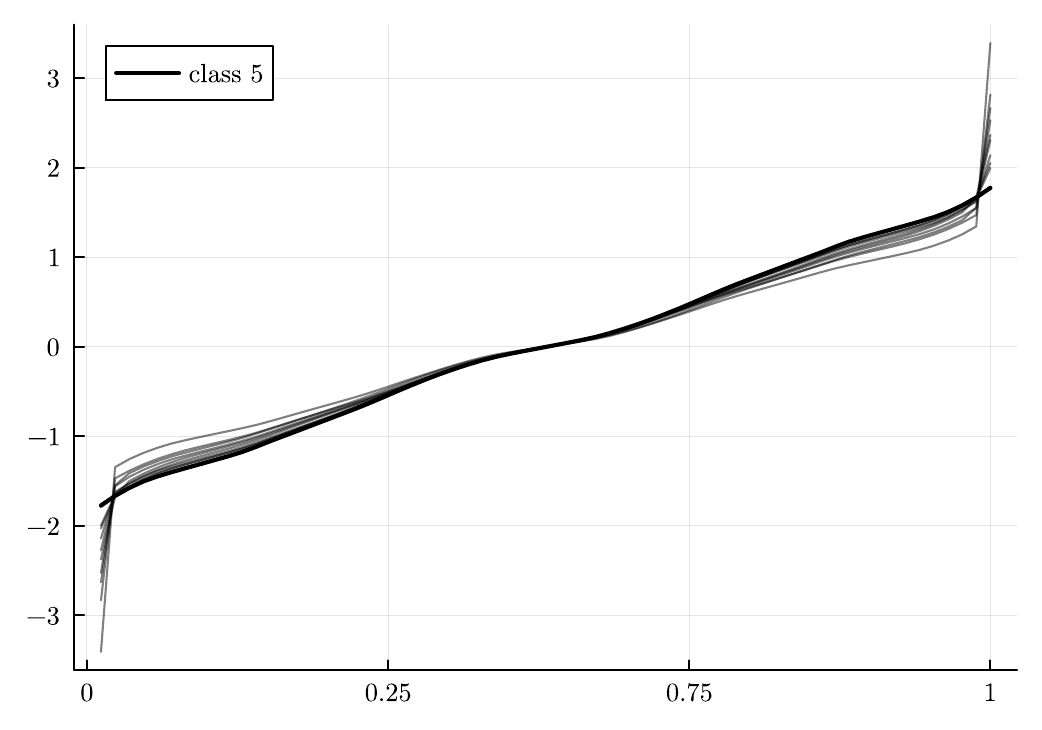}
    \caption{
    Visualization of \mNRCDT{} and \aNRCDT{} for
    class 5 of academic dataset
    of size $10$,
    generated by,
    first,
    random affine transformations with
    scaling in $[0.75,1.0]$,
    shear in $[-5^\circ,5^\circ]$,
    rotation in $[0^\circ,360^\circ]$,
    shift in $[-20,20]$
    and, second,
    adding salt noise
    of strength 9
    according to Figure~\ref{fig:salt-noise}
    at $4$ to $7$ locations.
    }
    \label{fig:academic_NRCDT_noise}
\end{SCfigure}

\begin{figure}[t!]
    \resizebox{\linewidth}{!}{%
    \scriptsize
    \begin{tabular}{c @{\hspace{3pt}} c @{\hspace{10pt}} c @{\hspace{3pt}} c @{\hspace{10pt}} c @{\hspace{3pt}} c @{\hspace{3pt}} c}
        \multicolumn{2}{c}{16 equispaced Radon angles}
        & \multicolumn{2}{c}{64 equispaced Radon angles}
        & \multicolumn{2}{c}{256 equispaced Radon angles}
        \\
        \includegraphics[height=0.155\linewidth, clip=true, trim=10pt 10pt 37pt 5pt]{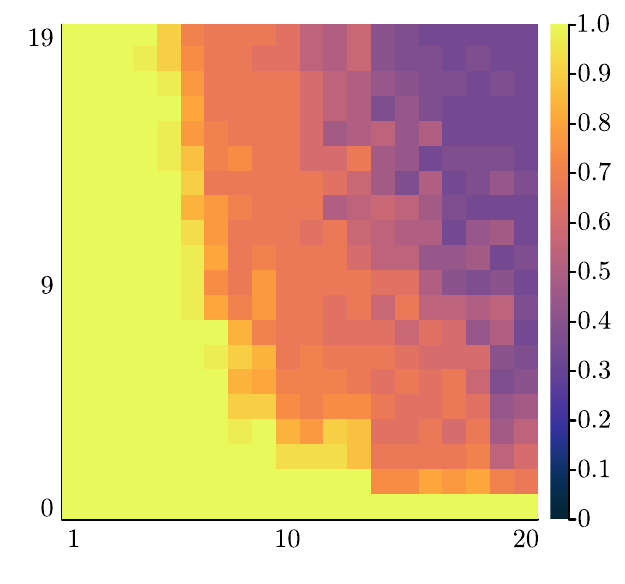}
        & \includegraphics[height=0.155\linewidth, clip=true, trim=10pt 10pt 37pt 5pt]{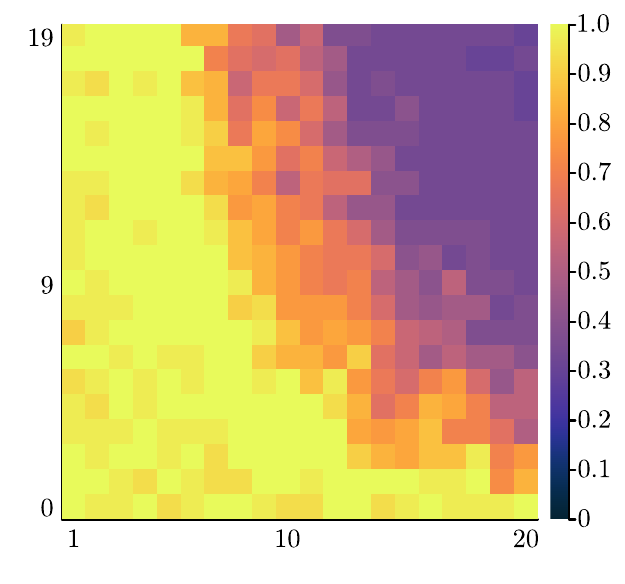}
        &\includegraphics[height=0.155\linewidth, clip=true, trim=10pt 10pt 37pt 5pt]{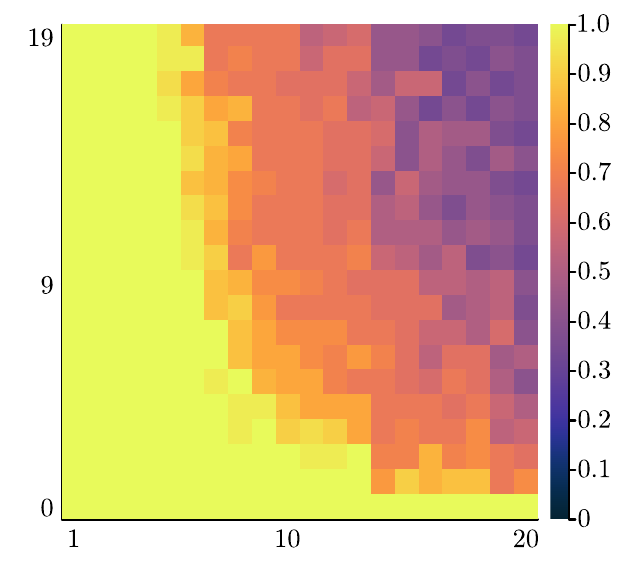}
        &\includegraphics[height=0.155\linewidth, clip=true, trim=10pt 10pt 37pt 5pt]{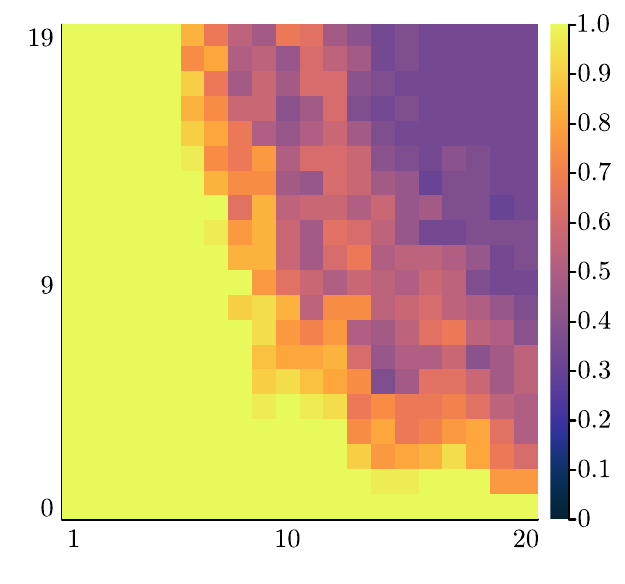}
        &\includegraphics[height=0.155\linewidth, clip=true, trim=10pt 10pt 37pt 5pt]{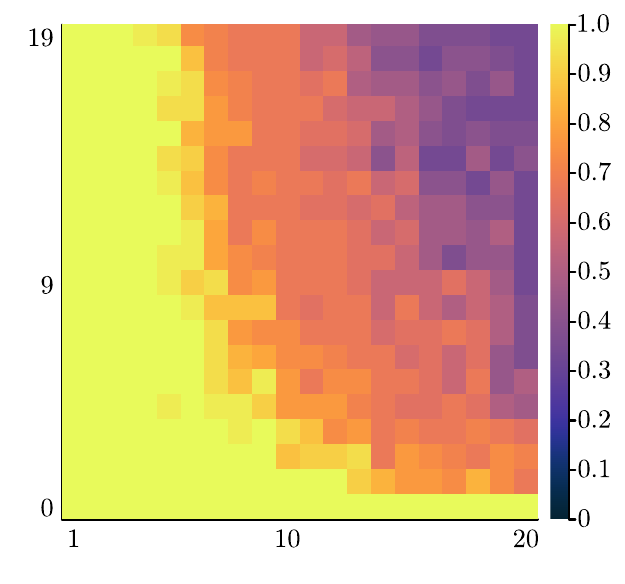}     
        &\includegraphics[height=0.155\linewidth, clip=true, trim=10pt 10pt 37pt 5pt]{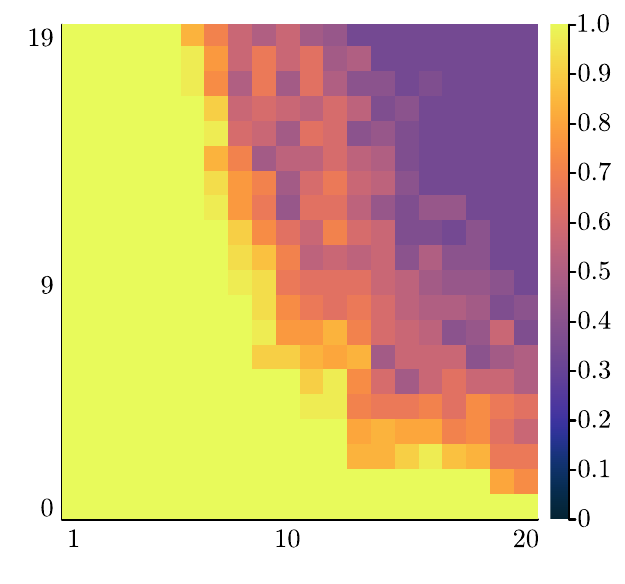} 
        &\includegraphics[height=0.155\linewidth, clip=true, trim=0pt 0pt 20pt 5pt]{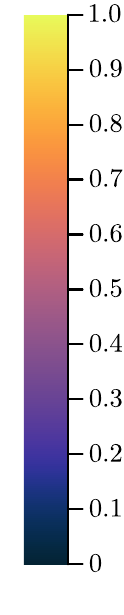}\\
        \mNRCDT
        & \aNRCDT
        & \mNRCDT
        & \aNRCDT
        & \mNRCDT
        & \aNRCDT
    \end{tabular}}%
    \caption{
        Phase transition of NT classification accuracies 
        for 3-class academic datasets with 10 samples per class
        and different salt noise 
        (vertical: component numbers, 
        horizontal: noise strengths, cf.\ Figure~\ref{fig:salt-noise}).
        The random affine transformations consist of rotations with angle in $[0^\circ,360^\circ]$
        and pixel shifts in $[-20,20]$.
        The experiment is repeated for varying numbers of Radon angles.
    }
    \label{fig:salt_added}
\end{figure}

In style of salt-and-pepper noise,
we use the term \emph{salt noise} for 
image distortions caused 
by adding white discs with fixed radius to the image.
Figure~\ref{fig:salt-noise} shows a gamut
of rendered salt noise 
used in our experiments.
The impact of salt noise to the \mNRCDT\ and \aNRCDT\ 
is illustrated in Figure~\ref{fig:academic_NRCDT_noise}
and mainly consists in heavy disturbances 
at the end points of the domain $(0,1)$,
which also affect the \mNRCDT{s} and \aNRCDT{s} as a whole.
The 3-class academic datasets in this experiment are generated as follows:
first, 
the template images 1, 5, and 12 in Figure~\ref{fig:academic_dataset}
are randomly affinely transformed 
(without anisotropic scaling and shearing to avoid additional disturbances of \aNRCDT{s});
second,
the obtained images are corrupted by
salt noise.
The final datasets consist of 10 samples per class
and are classified using the NT approach.
Repeating the experiment for datasets
with different noise strength and component numbers,
we obtain the phase transitions in Figure~\ref{fig:salt_added}.
The \aNRCDT\ requires more angles 
but then performs better than \mNRCDT,
whose phase transition is less sharp.

\begin{figure}[t]
    \resizebox{\linewidth}{!}{%
    \scriptsize
    \begin{tabular}{c @{\hspace{3pt}} c @{\hspace{10pt}} c @{\hspace{3pt}} c @{\hspace{10pt}} c @{\hspace{3pt}} c @{\hspace{3pt}} c}
        \multicolumn{2}{c}{16 equispaced Radon angles}
        & \multicolumn{2}{c}{64 equispaced Radon angles}
        & \multicolumn{2}{c}{256 equispaced Radon angles}
        \\
        \includegraphics[height=0.147\linewidth, clip=true, trim=10pt 10pt 0pt 5pt]{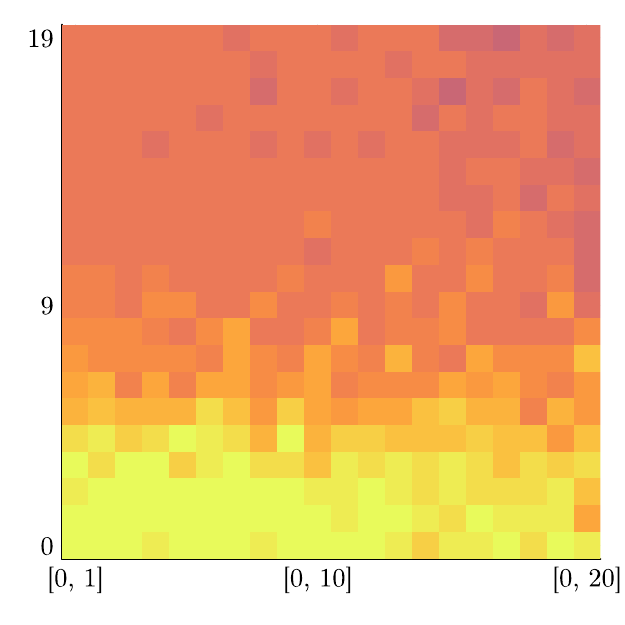}
        & \includegraphics[height=0.147\linewidth, clip=true, trim=10pt 10pt 0pt 5pt]{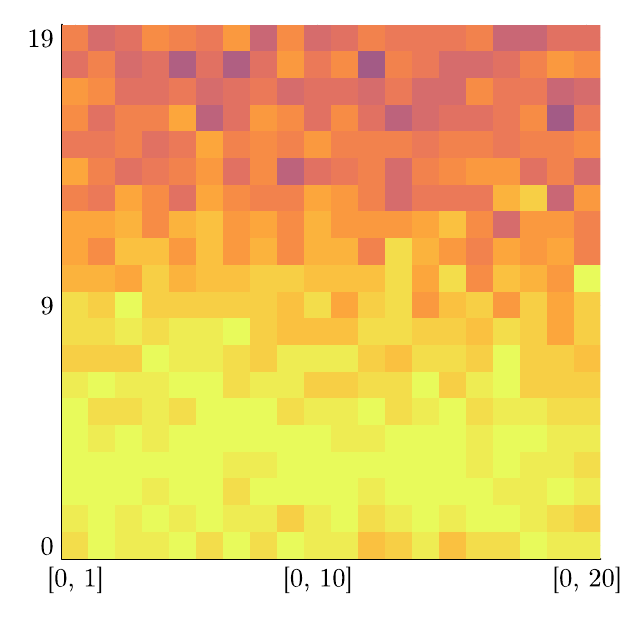}
        &\includegraphics[height=0.147\linewidth, clip=true, trim=10pt 10pt 0pt 5pt]{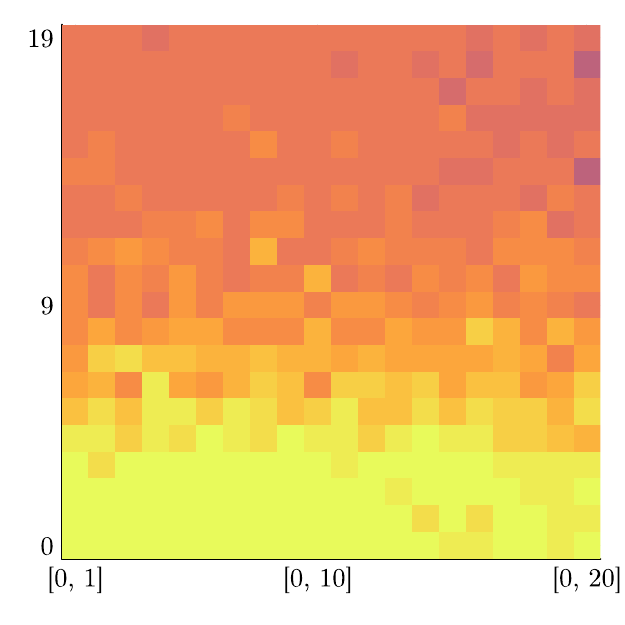}
        &\includegraphics[height=0.147\linewidth, clip=true, trim=10pt 10pt 0pt 5pt]{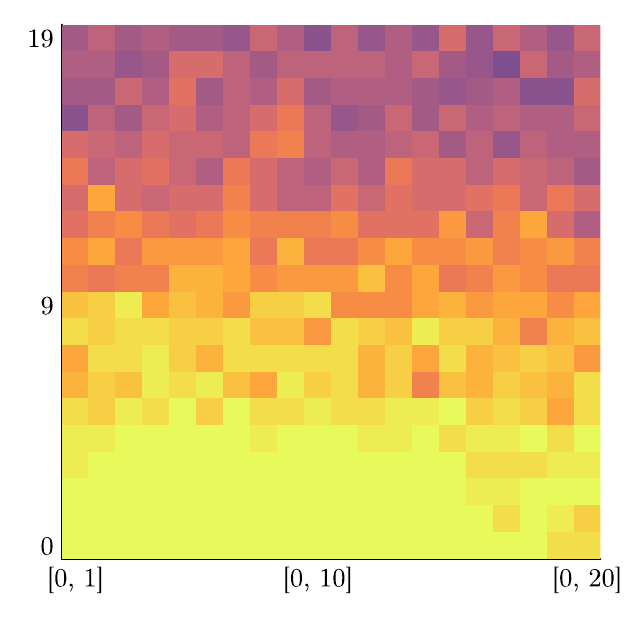}
        &\includegraphics[height=0.147\linewidth, clip=true, trim=10pt 10pt 0pt 5pt]{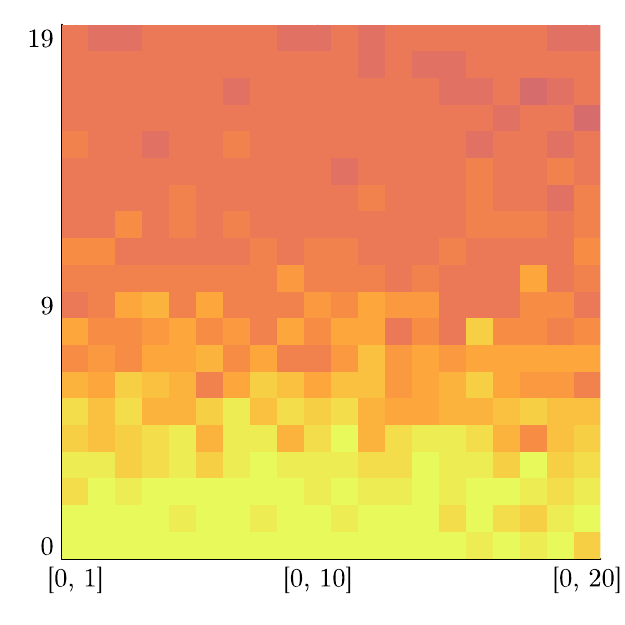}     
        &\includegraphics[height=0.147\linewidth, clip=true, trim=10pt 10pt 0pt 5pt]{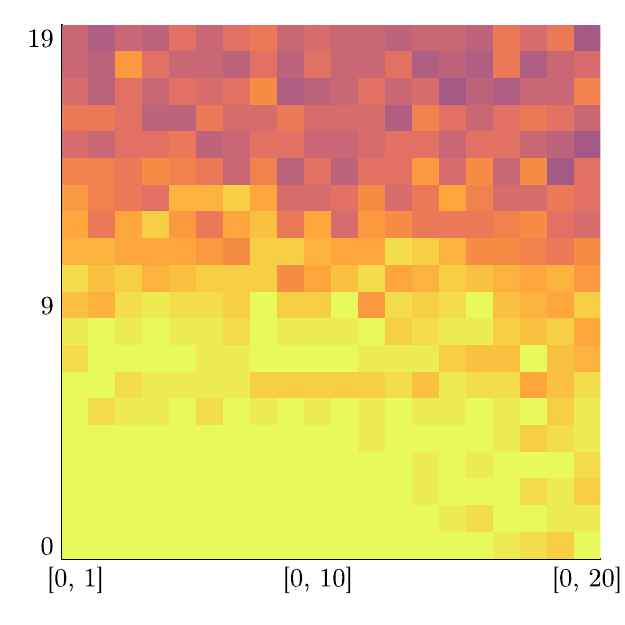} 
        &\includegraphics[height=0.147\linewidth, clip=true, trim=10pt 0pt 20pt 5pt]{Images/colorbar_thermal.pdf}\\
        \mNRCDT
        & \aNRCDT
        & \mNRCDT
        & \aNRCDT
        & \mNRCDT
        & \aNRCDT
    \end{tabular}}%
    \caption{
        Phase transition of NT classification accuracies 
        for 3-class academic datasets with 10 samples per class,
        non-affine deformations with frequencies in $[0,2]$
        and variable amplitude ranges (horizontal),
        as well as salt noise of strength 9 according to Figure~\ref{fig:salt-noise}
        and variable location numbers (vertical).
        The random affine transformations consist of rotations with angles in $[0^\circ,360^\circ]$
        and pixel shifts in $[-20,20]$.
        The experiment is repeated for varying numbers of Radon angles.
    }
    \label{fig:salt_elastic_added}
\end{figure}

\paragraph{Non-affine Deformations and Salt Noise}

In the final academic experiment,
we study the combined impact 
of non-affine deformations 
and salt noise.
Similar to before,
we consider 3-class academic datasets 
based on the symbols 1, 5, and 12 from Figure~\ref{fig:academic_dataset}.
The datasets themselves,
each consisting of 10 samples per class,
are generated by,
first, distorting via a non-affine deformation,
second, applying an affine transformation
(again without anisotropic scaling and shearing),
and third, corrupting with salt noise.
For classification,
we employ the NT approach.
The resulting phase transitions are reported
in Figure~\ref{fig:salt_elastic_added},
where the range of the random amplitudes of the non-affine deformation
and the number of locations corrupted by salt noise are varied.
We observe that
the salt noise has
more impact on the
classification success
and
that the area of near perfect classification
is larger for \aNRCDT{} than \mNRCDT{},
indicating more robustness against distortions.

\begin{table}[]
    \caption{
        NN classification accuracies 
        (mean plus/minus standard deviation)
        for academic datasets 
        with all 12 symbols from Figure~\ref{fig:academic_dataset}
        and 100 samples per class.
        In 2.\ and 4.,
        random non-affine deformations
        with frequencies in $[0.5, 2.0]$
        and amplitudes in $[2.5,7.5]$
        are applied.
        Affine transformations consist of
        rotations in $[0^\circ, 360^\circ]$,
        shifts in $[-20, 20]$
        and,
        in 1.\ and 2.,
        scaling in $[0.5, 1.25]$
        and shearing in $[-45^\circ,45^\circ]$
        as well as, 
        in 3.\ and 4.,
        scaling in $[0.75,1.0]$
        and shearing in $[-5^\circ, 5^\circ]$.
        In 3.\ and 4.,
        salt noise is of strength 9, cf.\ Figure~\ref{fig:salt-noise},
        with random location numbers in $[4,7]$.
        The best result per dataset and training number is highlighted.
    }
    \resizebox{\linewidth}{!}{%
    \begin{tabular}{l l l l l l l l l}
        \toprule
        dataset 
        & \multicolumn{4}{l}{5 training samples} 
        & \multicolumn{4}{l}{10 training samples} \\
        \cmidrule(r){2-5}
        \cmidrule(l){6-9}
        & Euclidean & R-CDT & \mNRCDT & \aNRCDT & Euclidean & R-CDT & \mNRCDT & \aNRCDT \\
        \midrule
        1. affine 
        & $0.0915\pm0.0052$ & $0.1217\pm0.0094$ & \bm{$0.9999\pm0.0001$} & $0.9010\pm0.0247$
        & $0.0958\pm0.0083$ & $0.1320\pm0.0091$ & \bm{$1.0000\pm0.0000$} & $0.9595\pm0.0122$ \\
        2. non-affine, affine
        & $0.0903\pm0.0081$ & $0.1212\pm0.0110$ & \bm{$0.9899\pm0.0048$} & $0.9055\pm0.0264$
        & $0.0922\pm0.0080$ & $0.1343\pm0.0111$ & \bm{$0.9962\pm0.0031$} & $0.9623\pm0.0128$ \\
        3. affine, salt
        & $0.1003\pm0.0072$ & $0.1707\pm0.0111$ & $0.5669\pm0.0226$ & \bm{$0.6378\pm0.0300$}
        & $0.1103\pm0.0085$ & $0.2003\pm0.0085$ & $0.6542\pm0.0209$ & \bm{$0.7236\pm0.0157$} \\
        4. non-affine, affine, salt
        & $0.1042\pm0.0075$ & $0.1666\pm0.0132$ & $0.5412\pm0.0294$ & \bm{$0.6325\pm0.0271$}
        & $0.1108\pm0.0068$ & $0.1904\pm0.0080$ & $0.6296\pm0.0223$ & \bm{$0.7149\pm0.0223$} \\
        \bottomrule
    \end{tabular}}%
    \label{tab:k-NN_academic}
\end{table}

\begin{figure}
    \resizebox{\linewidth}{!}{%
    \scriptsize
        \begin{tabular}{c @{\hspace{3pt}} c @{\hspace{10pt}} c @{\hspace{3pt}} c @{\hspace{3pt}} c}
            \multicolumn{2}{c}{3. experiment: affine, salt}
            & \multicolumn{2}{c}{4. experiment: non-affine, affine, salt} \\
            \includegraphics[width=0.23\linewidth, clip=true, trim=10pt 10pt 70pt 0pt]{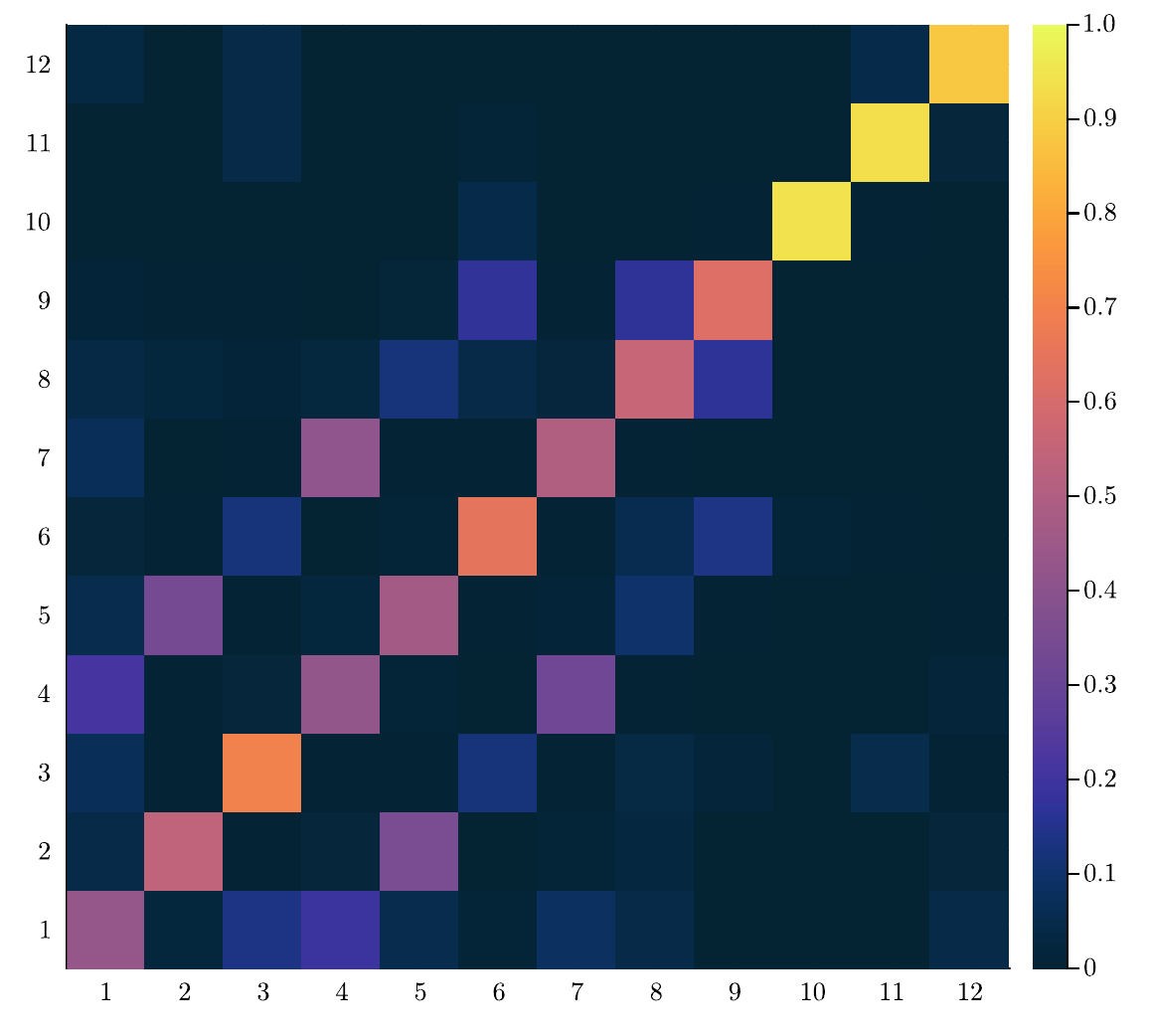}
            &\includegraphics[width=0.23\linewidth, clip=true, trim=10pt 10pt 70pt 0pt]{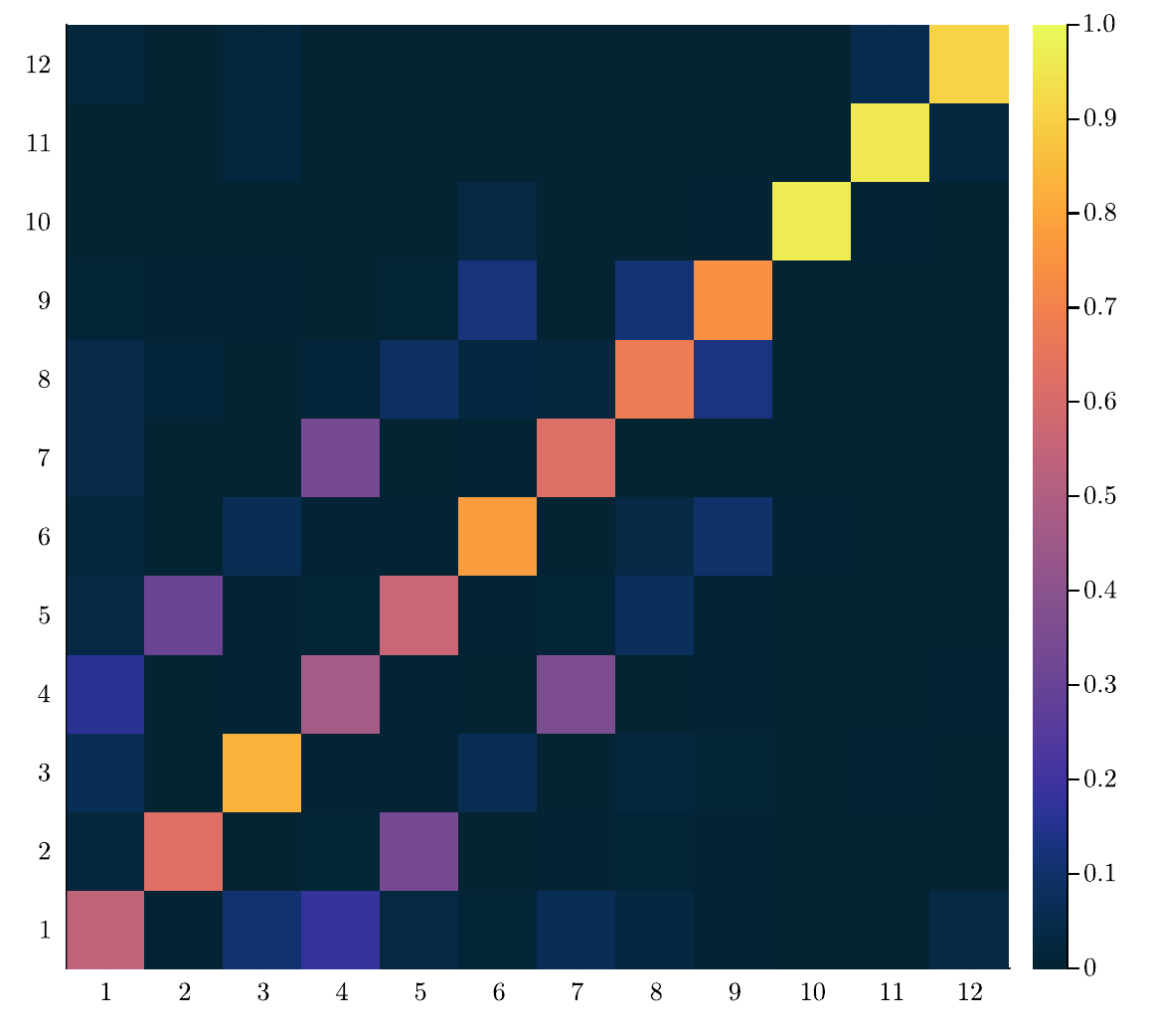}
            &\includegraphics[width=0.23\linewidth, clip=true, trim=10pt 10pt 70pt 0pt]{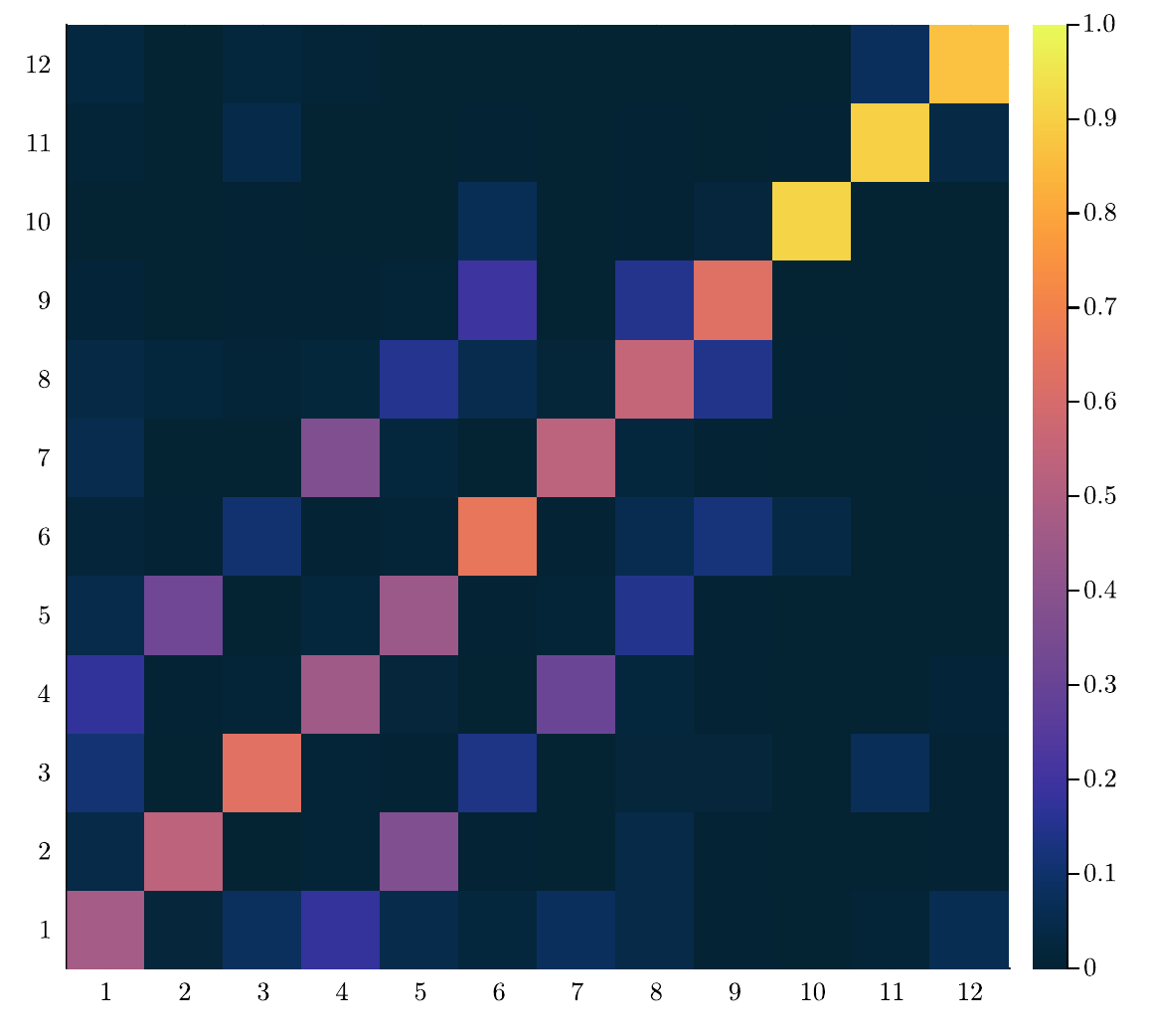} 
            &\includegraphics[width=0.23\linewidth, clip=true, trim=10pt 10pt 70pt 0pt]{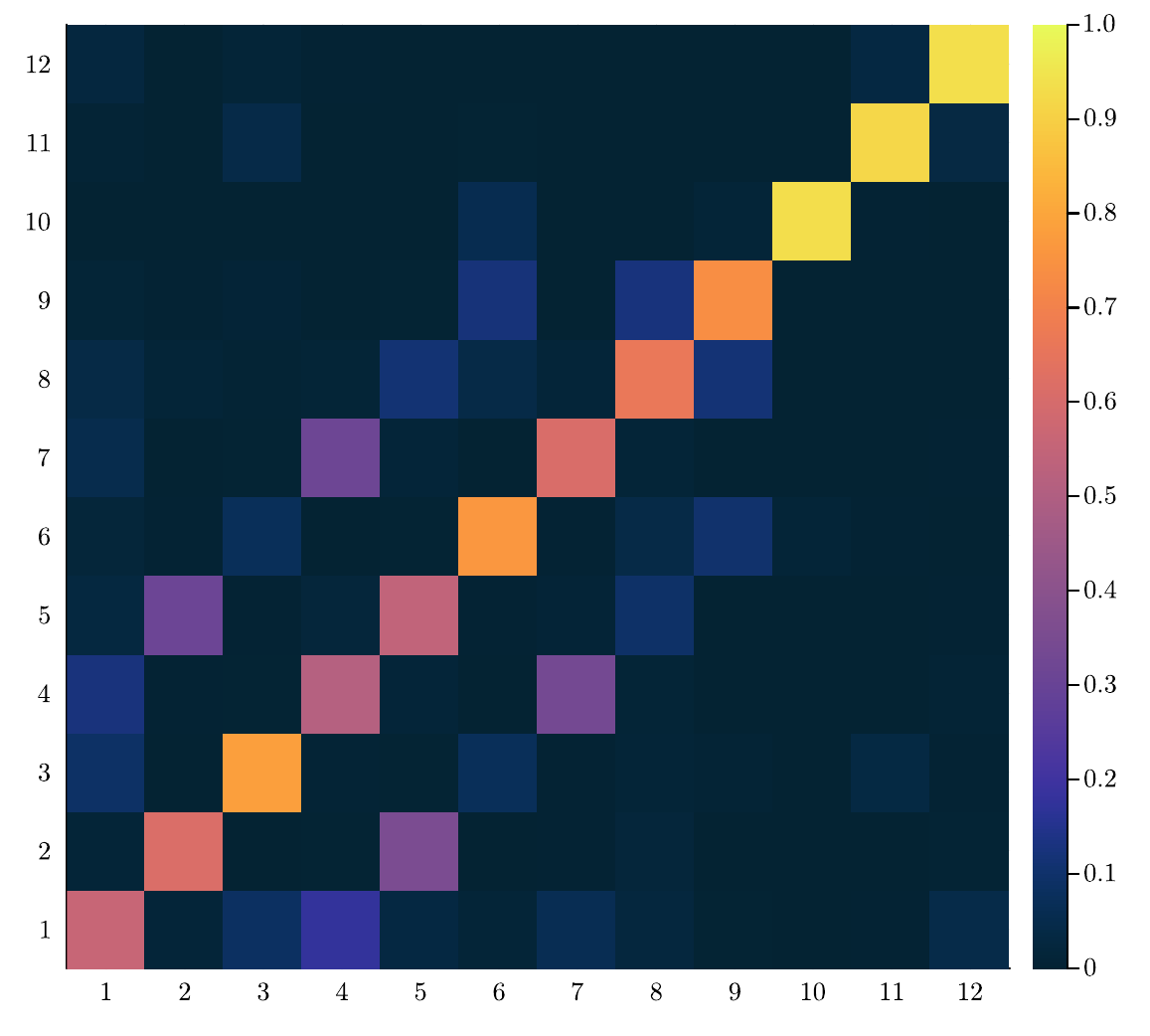}
            &\includegraphics[height=0.237\linewidth, clip=true, trim=10pt 10pt 20pt 5pt]{Images/colorbar_thermal.pdf}\\
            5 training samples 
            & 10 training samples
            & 5 training samples
            & 10 training samples
        \end{tabular}}%
    \caption{
    Confusion maps for \aNRCDT{} in the 3.\ and 4.\ experiment
    in Table~\ref{tab:k-NN_academic}.
    True labels are vertical 
    and the classified labels horizontal.
    }
    \label{fig:conf_maps_k-NN_academic_entire}
\end{figure}

\subsubsection{Nearest Neighbour Classification}

In the final experiments,
regarding academic datasets,
the NT method
is replaced by the nearest neighbour (NN) method
to demonstrate that the classification accuracy 
can be improved,
especially under presence of non-affine distortions
and noise.
The parameter regarding the discretized Radon transform 
and CDT 
remain unchanged,
i.e.,~we use $128$ Radon angles,
$850$ radii,
and $64$ interpolation points.
The generated datasets
consists of $100$ samples 
per each of the 12 classes 
in Figure~\ref{fig:academic_dataset}.
Furthermore,
the employed NN classifiers 
rely on up to 10 random samples per class.
In Table~\ref{tab:k-NN_academic},
the mean NN accuracies
for different distortion settings
are recorded,
where each experiment is repeated 20 times 
with different training samples.
In the ideal setting
and under non-affine deformations, 
rows 1 and 2,
\mNRCDT\ yields perfect classifications,
whereas \aNRCDT\ performs slightly worse.
The parameter ranges 
of the anisotropic scaling and shearing 
in these experiments 
are relatively large,
such that the decrease of the performance 
of \aNRCDT\ is expected.
Under the presence of salt noise,
rows 3 and 4,
\aNRCDT\ significantly outperforms \mNRCDT.
Considering the confusion maps 
of these experiments in Figure~\ref{fig:conf_maps_k-NN_academic_entire},
we notice that 
\aNRCDT\ can clearly distinguish 
the shield symbols 
and the crosses at the top 
but has problems to classify 
the basis (circle, square, triangle) of the symbols.

\subsection{Semi-synthetic Chinese Character Dataset}

To demonstrate the applicability of our approach to
multi-class problems with a large number of classes,
we consider 
the first 1000 classes
of the Chinese character dataset~\cite{Bliem2022}.
For each of these,
we select the first representative as template,
see Figure~\ref{fig:chinese_character} for the first 12 classes,
which is then randomly scaled, rotated, sheared and shifted
to form our semi-synthetic Chinese character datasets.

\begin{figure}[t]
    \resizebox{\linewidth}{!}{%
    \begin{tabular}{c c c c c c c c c c c c}
        \includegraphics[width=0.33\linewidth, clip=true, trim=120pt 20pt 120pt 20pt]{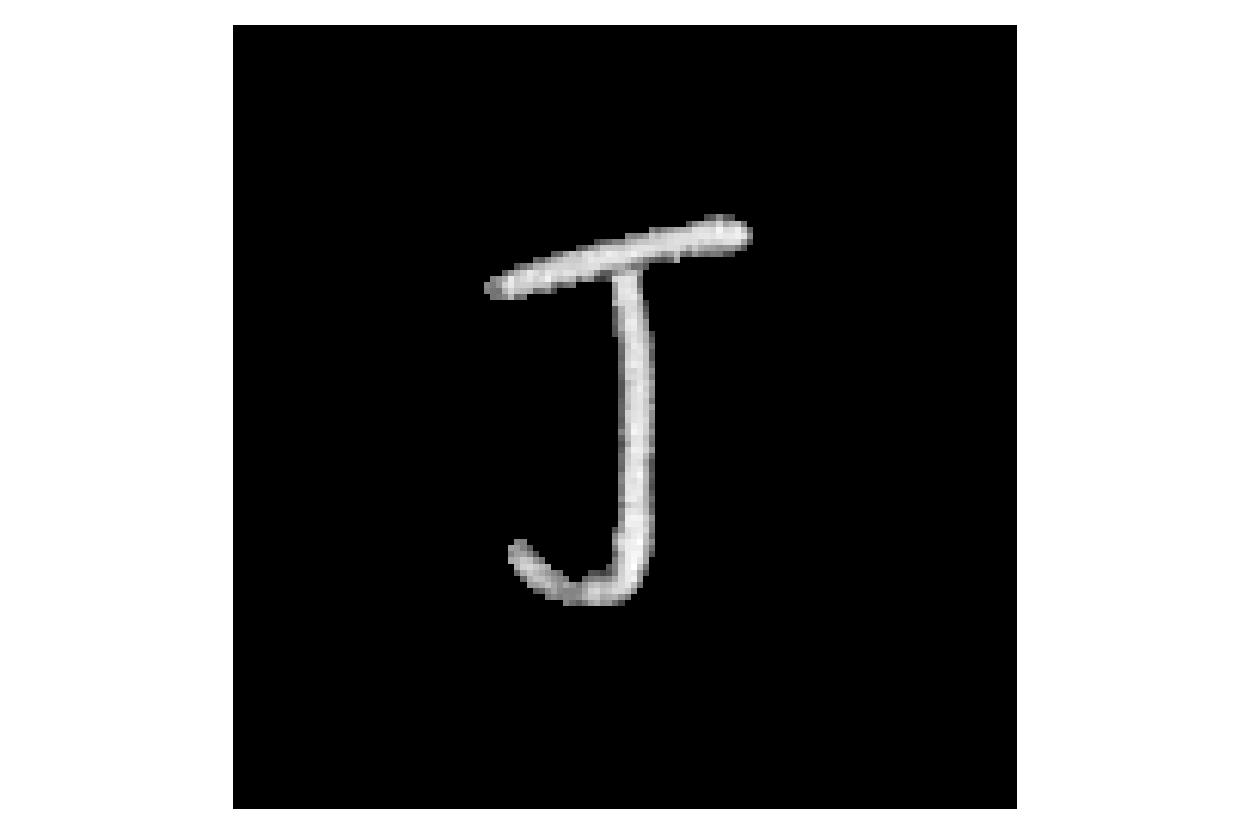}
        &\includegraphics[width=0.33\linewidth, clip=true, trim=120pt 20pt 120pt 20pt]{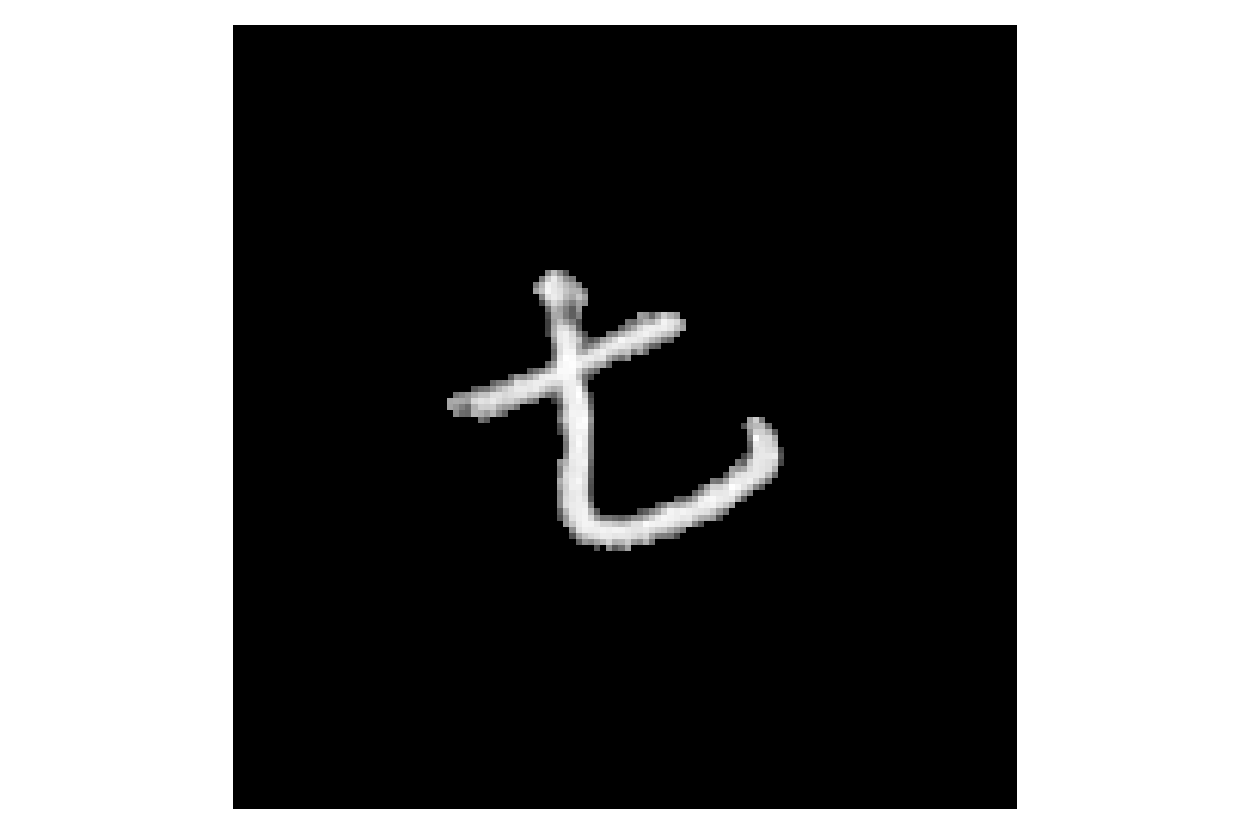}
        &\includegraphics[width=0.33\linewidth, clip=true, trim=120pt 20pt 120pt 20pt]{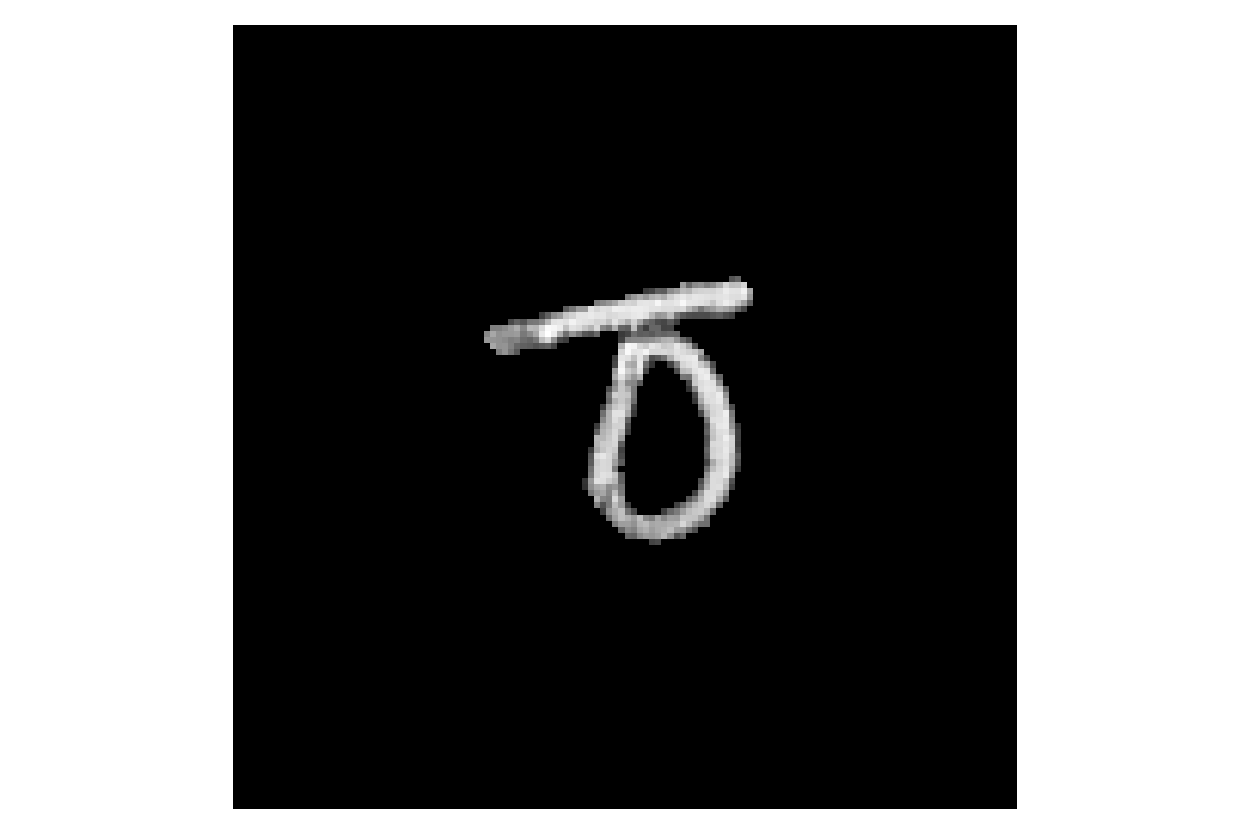}
        &\includegraphics[width=0.33\linewidth, clip=true, trim=120pt 20pt 120pt 20pt]{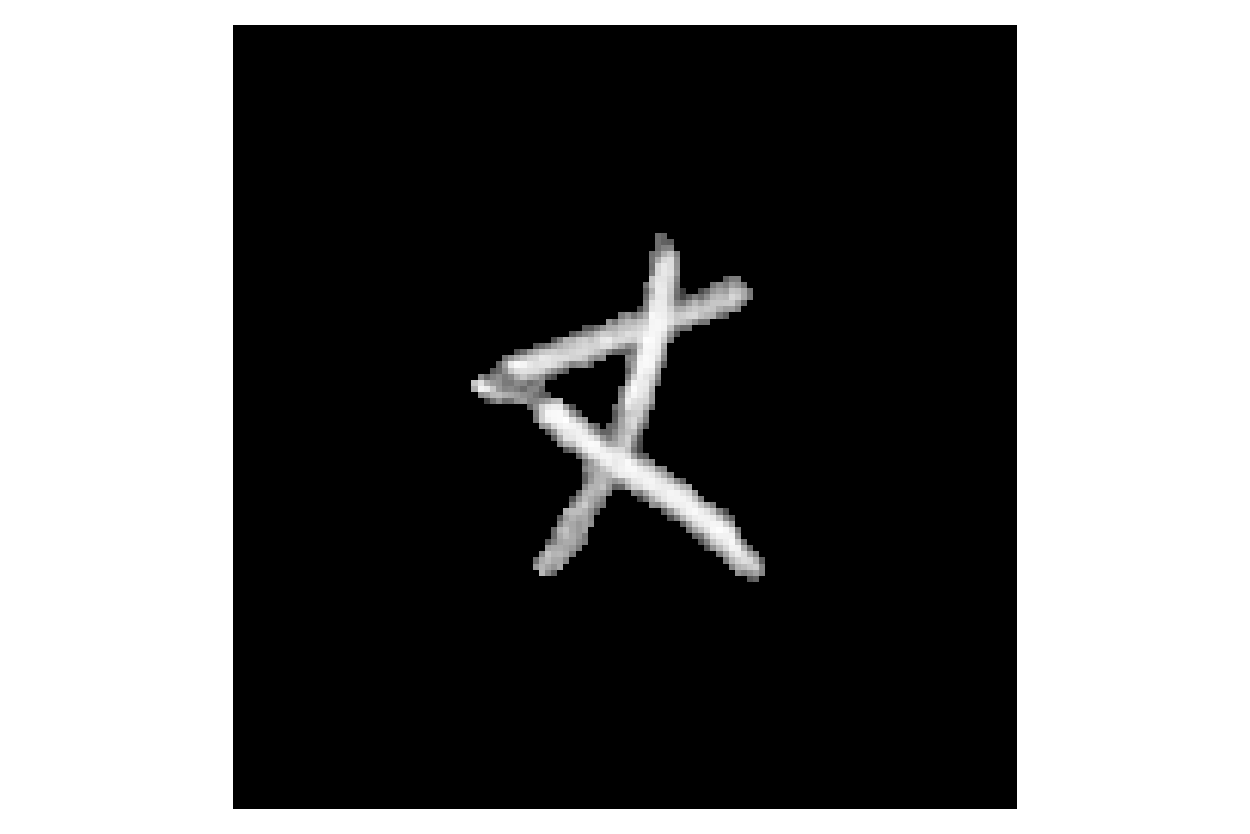}
        &\includegraphics[width=0.33\linewidth, clip=true, trim=120pt 20pt 120pt 20pt]{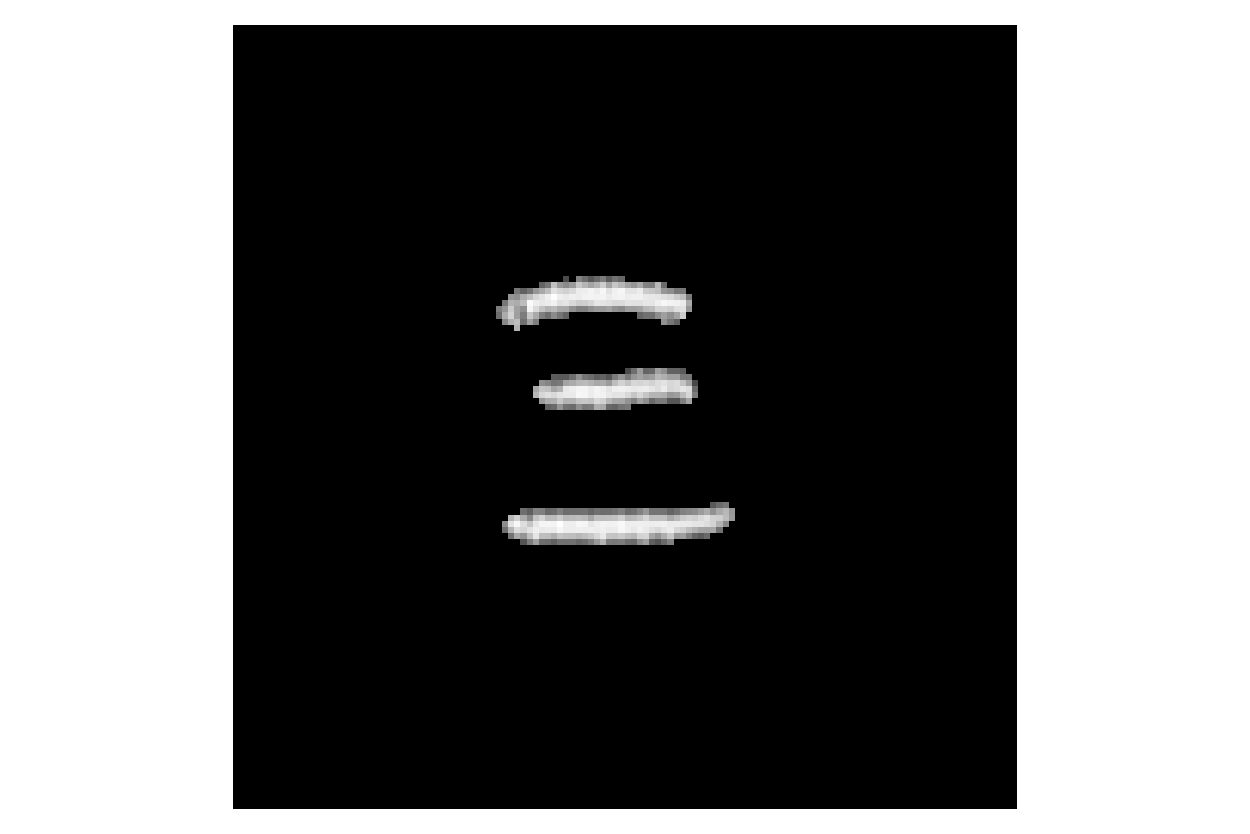}
        &\includegraphics[width=0.33\linewidth, clip=true, trim=120pt 20pt 120pt 20pt]{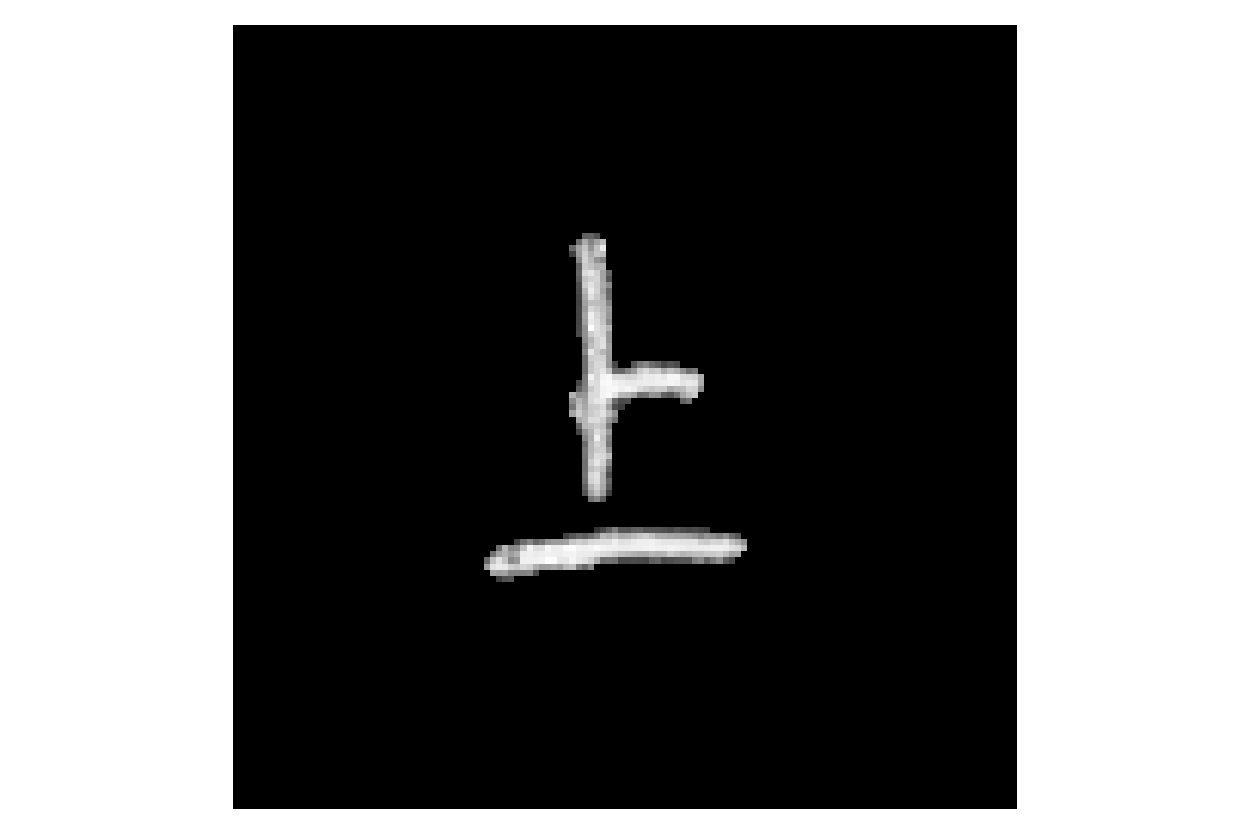}
        &\includegraphics[width=0.33\linewidth, clip=true, trim=120pt 20pt 120pt 20pt]{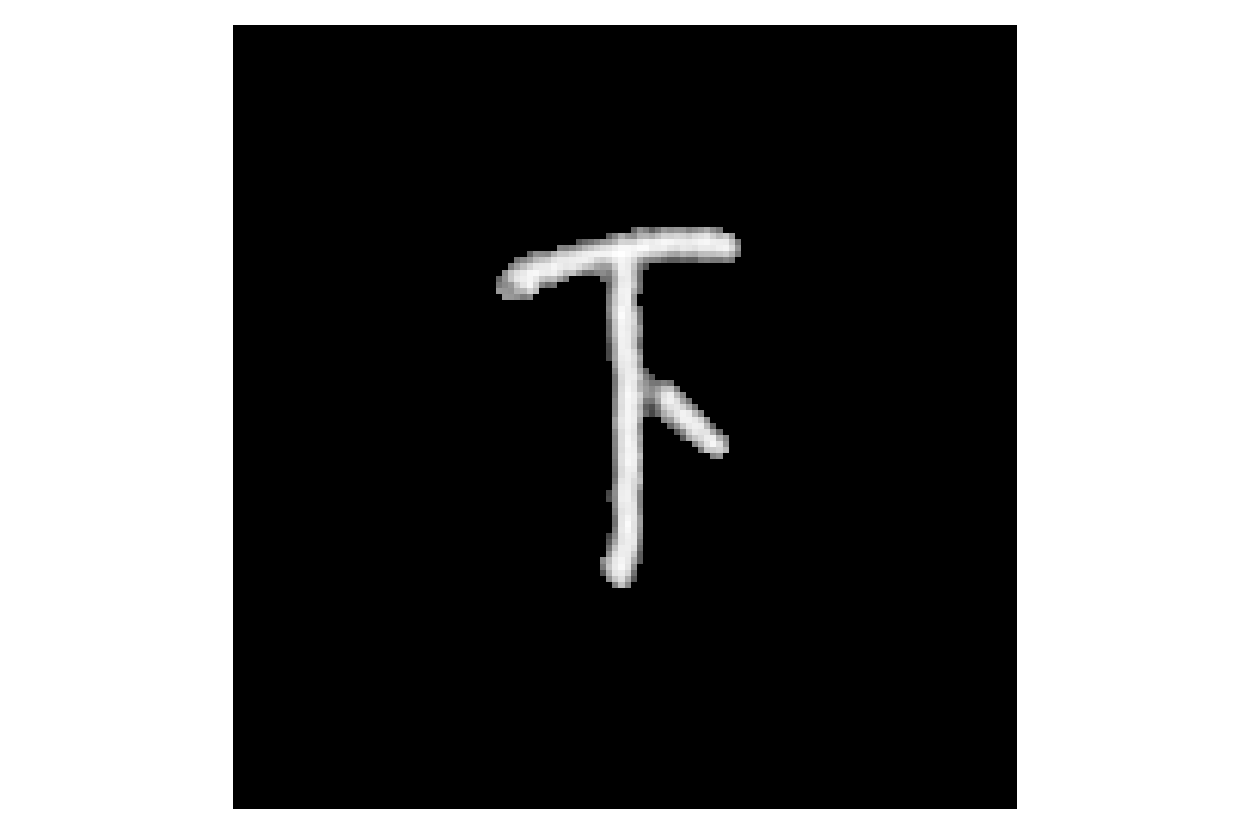}
        &\includegraphics[width=0.33\linewidth, clip=true, trim=120pt 20pt 120pt 20pt]{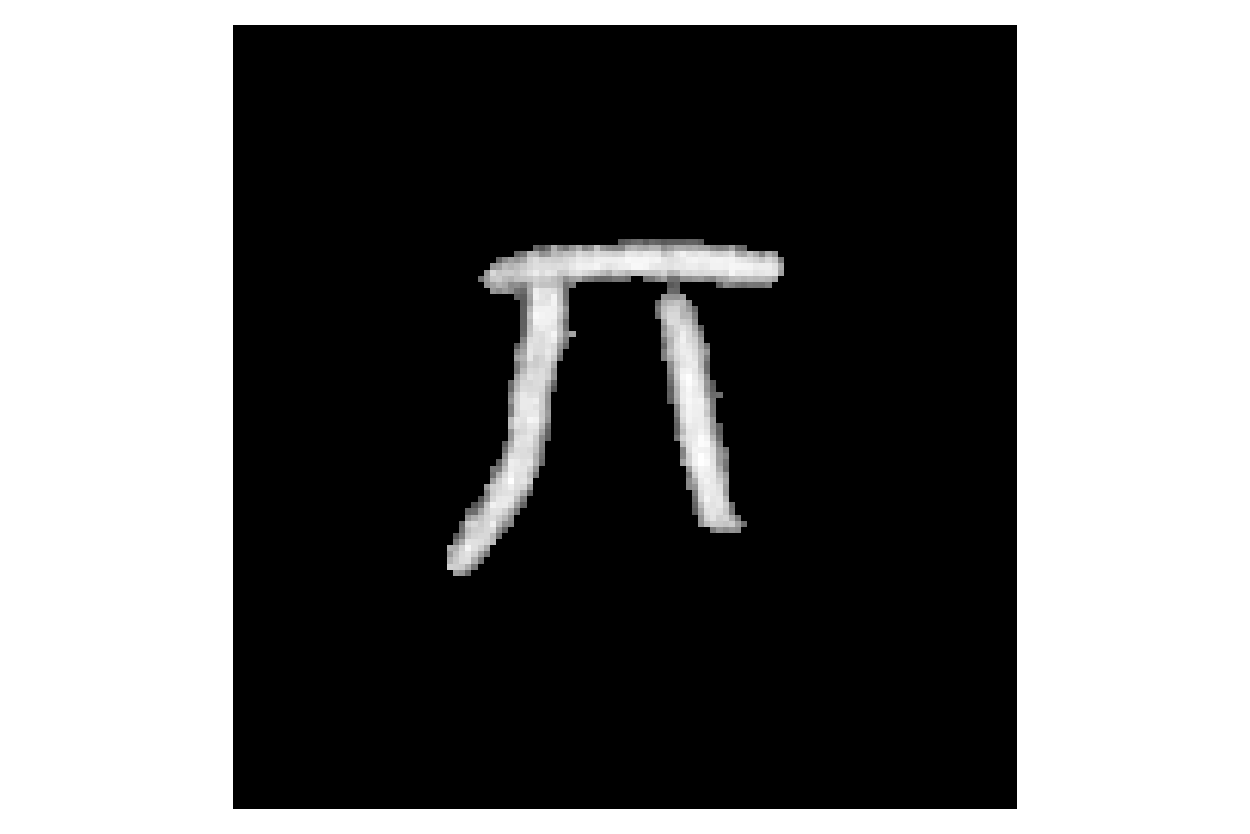}
        &\includegraphics[width=0.33\linewidth, clip=true, trim=120pt 20pt 120pt 20pt]{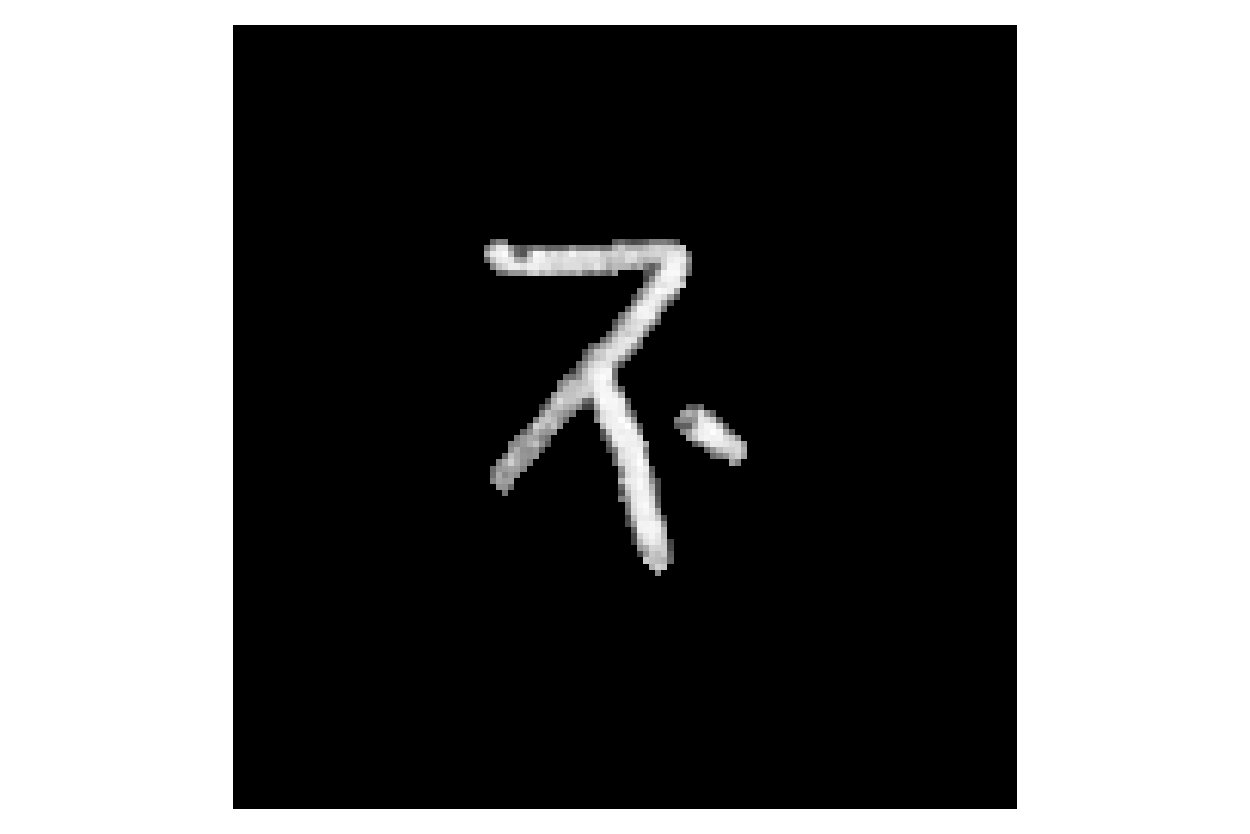}
        &\includegraphics[width=0.33\linewidth, clip=true, trim=120pt 20pt 120pt 20pt]{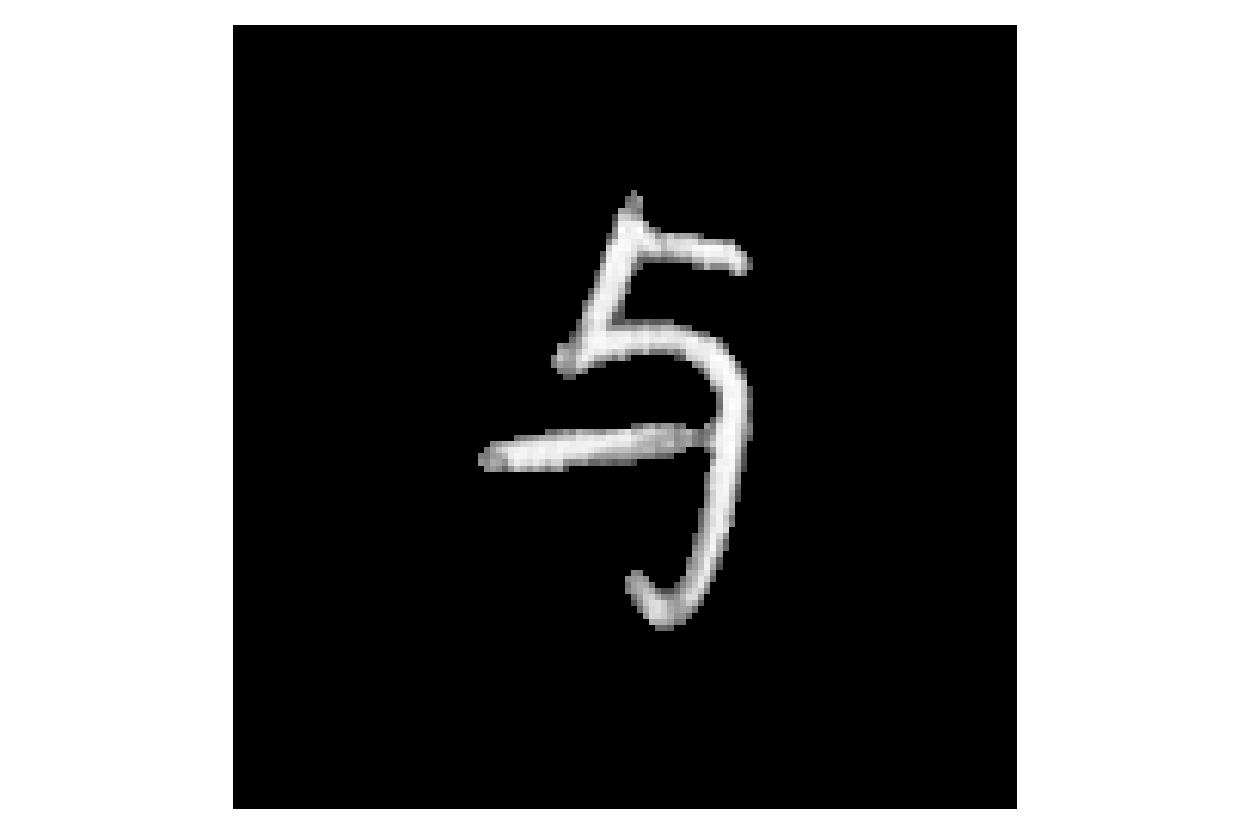}
        &\includegraphics[width=0.33\linewidth, clip=true, trim=120pt 20pt 120pt 20pt]{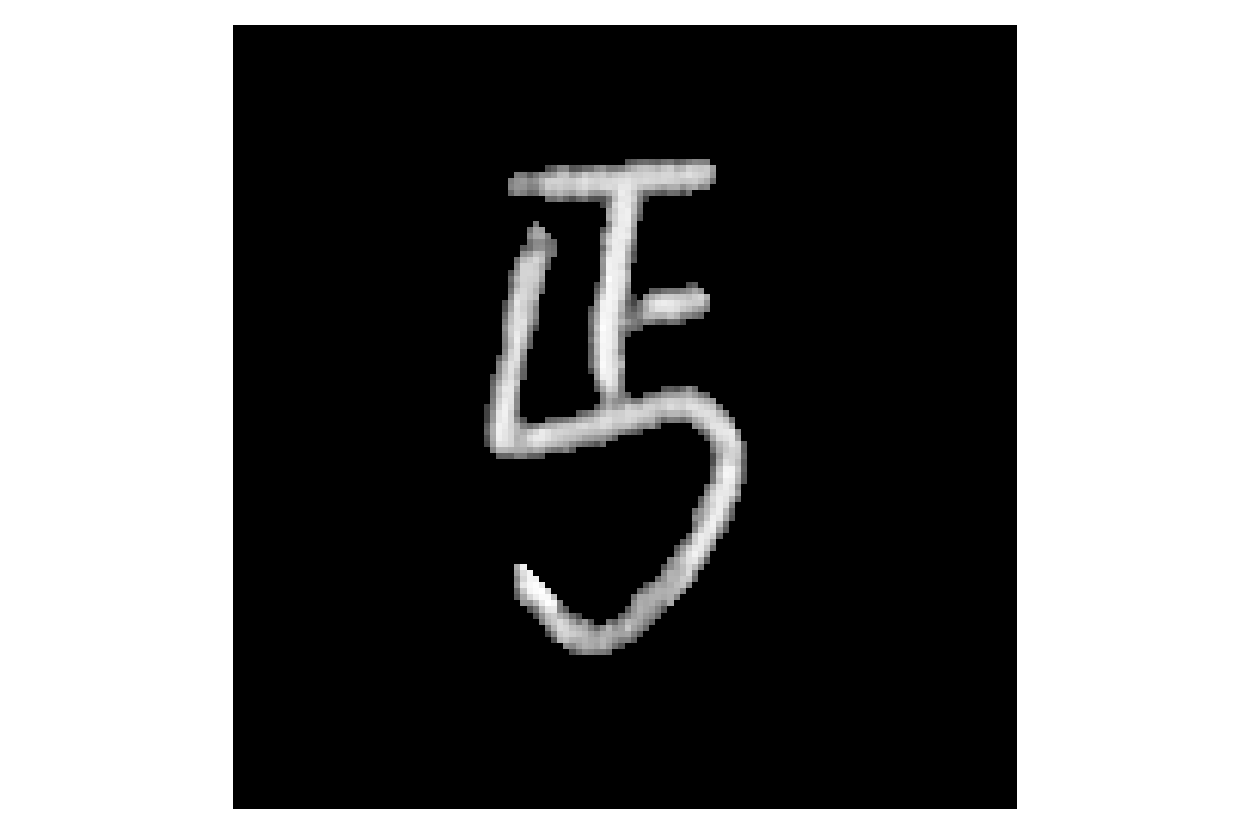}
        &\includegraphics[width=0.33\linewidth, clip=true, trim=120pt 20pt 120pt 20pt]{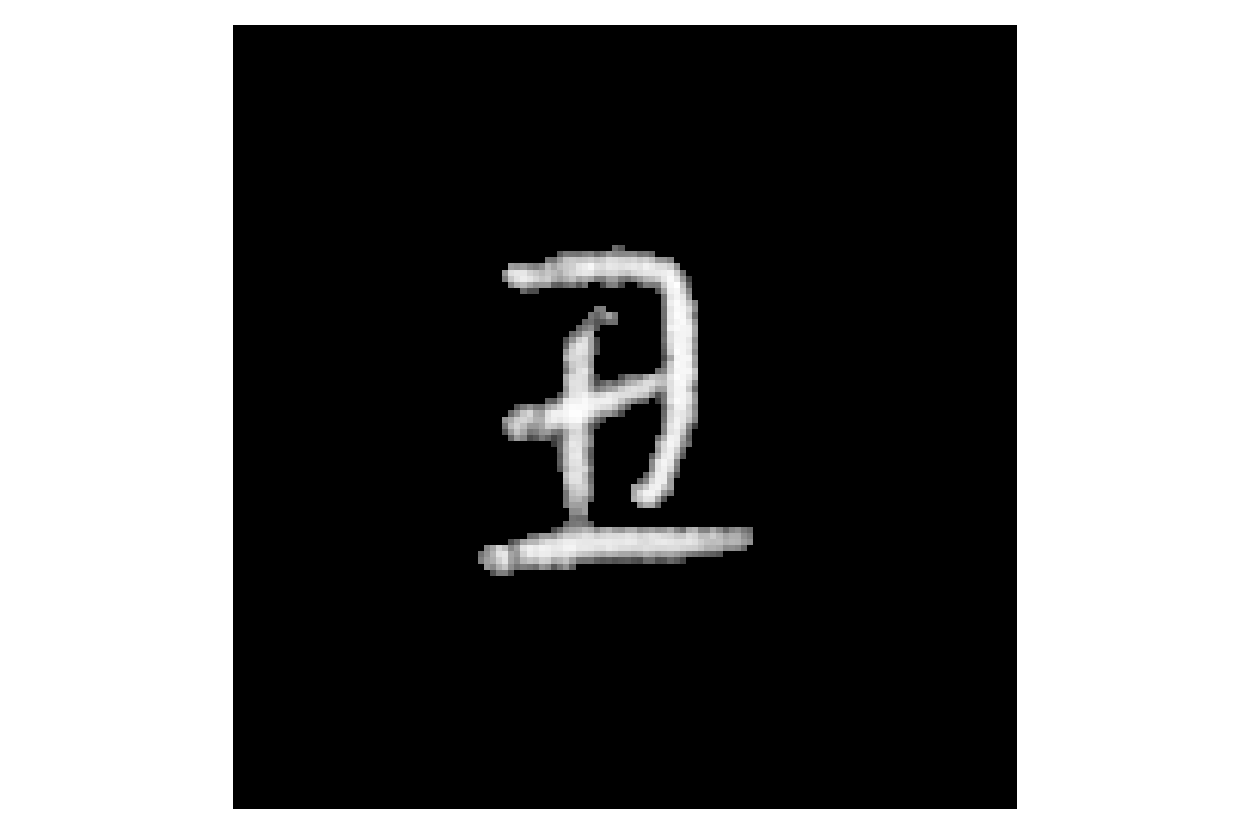}
    \end{tabular}}%
    \caption{
    Chinese character dataset.
    One member of each class is selected as template symbol and
    affinely transformed to create the dataset.
    Here,
    the templates
    of classes 1 to 12 are shown,
    each of size $128{\times}128$ pixels.
    }
    \label{fig:chinese_character}
\end{figure}

\begin{table}[t]
    \caption{
    NT and NN classification accuracies
    for the Chinese character dataset,
    where each class consists of 50 samples,
    generated by random affine transformations with
    scaling in $[0.5,1.0]$,
    shearing in $[-25^\circ, 25^\circ]$,
    rotation angles in $[0^\circ, 360^\circ]$
    and pixel shifts in $[-20,20]$.
    Separately for NT and NN,
    the best result is highlighted.
    }
    \label{tab:chinese}
    \resizebox{\linewidth}{!}{%
    \begin{tabular}{l l  l l l l l l l  l l}
        \toprule
        \multirow{3}{*}{\rotatebox[origin=c]{90}{\scriptsize \# classes}} & method 
        & \multicolumn{7}{l}{NT}
        & \multicolumn{2}{l}{NN} \\
        & & \multicolumn{7}{l}{\# angles} 
        & \multicolumn{2}{l}{\# training samples} \\
        & & 2 & 4 & 8 & 16 & 32 & 64 & 128 
        & 5 & 10
        \\
        \midrule
        \multirow{4}{*}{\rotatebox[origin=c]{90}{\scriptsize 100}}
        & Euclidean 
        & $0.0096$ & & & & & & 
        & $0.0202\pm0.0021$
        & $0.0242\pm0.0022$
        \\
        & R-CDT
        & $0.0160$ & $0.0180$ & $0.0172$ & $0.0164$ & $0.0168$& $0.0170$ & $0.0170$
        & $0.0188\pm0.0020$
        & $0.0217\pm0.0021$
        \\
        & \mNRCDT{}
        & \bm{$0.0924$} & \bm{$0.1038$} & \bm{$0.3124$} & \bm{$0.8422$} & \bm{$0.9836$} & \bm{$1.0000$} & \bm{$1.0000$}
        & \bm{$1.0000\pm0.0000$}
        & \bm{$1.0000\pm0.0000$}
        \\
        & \aNRCDT{}
        & $0.0810$ & $0.0172$ & $0.1860$ & $0.4950$ & $0.6486$ & $0.6814$ & $0.6852$ 
        & $0.8345\pm0.0097$
        & $0.9139\pm0.0062$
        \\
        \midrule
        \multirow{2}{*}{\rotatebox[origin=c]{90}{\scriptsize 1000}}
        & \mNRCDT{}
        & \bm{$0.0848$} & \bm{$0.0620$} & \bm{$0.2178$} & \bm{$0.7368$} & \bm{$0.9791$} & \bm{$0.9975$} & \bm{$0.9981$} 
        & \bm{$0.9987\pm0.0001$}
        & \bm{$0.9987\pm0.0001$}
        \\
        & \aNRCDT{}
        & $0.0629$ & $0.0362$ & $0.0751$ & $0.3475$ & $0.5554$ & $0.6030$ & $0.6104$  
        & $0.7651\pm0.0026$
        & $0.8631\pm0.0018$
        \\
        \bottomrule
    \end{tabular}}
\end{table}

To start with,
we restrict ourselves to
the leading $100$ classes
with $50$ samples per class
and compare \mNRCDT{} and \aNRCDT{}
with the Euclidean and R-CDT representations.
To this end,
we again use the NT approach
as well as the NN method
with $5$ or $10$ training samples,
respectively.
For the discretization of the
Radon transform and CDT 
we use $128$ Radon angles,
$850$ radii,
and $64$ interpolation points.
The classification accuracies are reported
in Table~\ref{tab:chinese} (top).
For both NT and NN,
we see that \mNRCDT{} and \aNRCDT{}
clearly outperform the Euclidean and R-CDT approaches,
which only perform at the level of random guessing.
As expected by our theory,
we observe perfect NT and NN classification
using \mNRCDT{} with sufficiently many angles.
For \aNRCDT{},
the performance of the NT classification is worse,
which is also expected due to the
application of rather severe scaling and shearing.
However,
the classification accuracy is significantly
improved by NN
and we achieve nearly perfect results
with increasing number of training samples.

Since the Euclidean and R-CDT approach cannot
successfully classify the first $100$ classes,
we only consider our \mNRCDT{} and \aNRCDT{} representations
when dealing with $1000$ Chinese characters.
The results are
shown in Table~\ref{tab:chinese} (bottom).
Again,
\mNRCDT{} yields nearly perfect results,
while the performance of \aNRCDT{}
is improved by NN with increasing
number of training samples.
Hence, all in all,
our numerical observations
reflect our theoretical results
also in the challenging case of
a tremendous number of classes.

\subsection{LinMNIST Dataset}

For a more realistic scenario,
we finally consider the LinMNIST dataset~\cite{Beckmann2024}
consisting of affinely transformed MNIST digits~\cite{Deng2012},
cf.\ Figure~\ref{fig:linmnist}.
More precisely,
this dataset is generated by
selecting the first $500$ samples
of each of the ten MNIST classes
and, thereon, applying a random
affine transformation.
In this way,
we combine our theoretically inspired setting
of affinely transformed classes
with the variety in real-world datasets.
To account for this,
we change the NT and NN approach to
$k$-NN classification
and vary the value of $k$
as well as the number of
training samples.
Again, we compare
\mNRCDT{} and \aNRCDT{}
with the Euclidean and R-CDT representations.
For the discretization of the
Radon transform and CDT 
we use $128$ Radon angles,
$300$ radii,
and $64$ interpolation points.

\begin{figure}[t]
    \resizebox{\linewidth}{!}{%
    \begin{tabular}{c c c c c c c c c c}
        \includegraphics[width=0.2\linewidth, clip=true, trim=40pt 40pt 20pt 20pt]{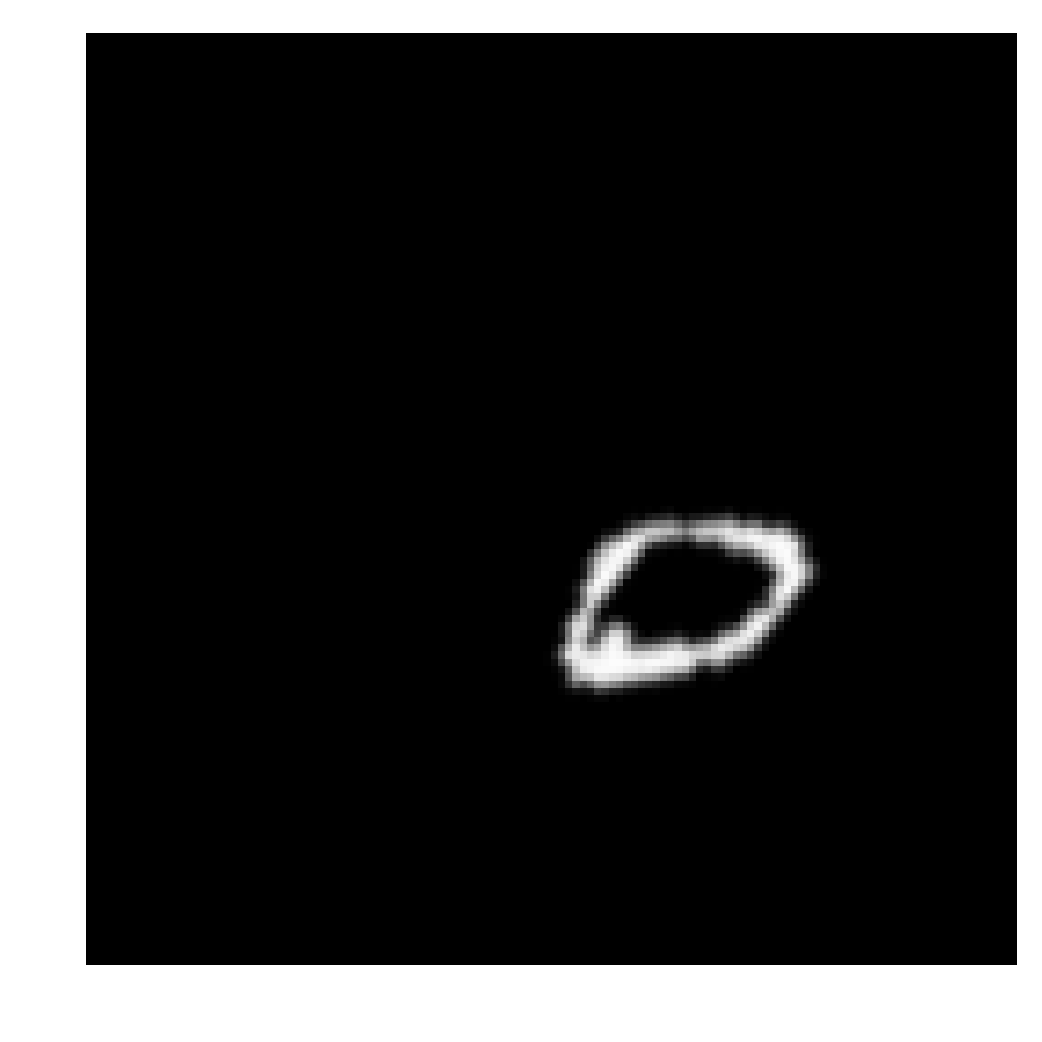}
        & \includegraphics[width=0.2\linewidth, clip=true, trim=40pt 40pt 20pt 20pt]{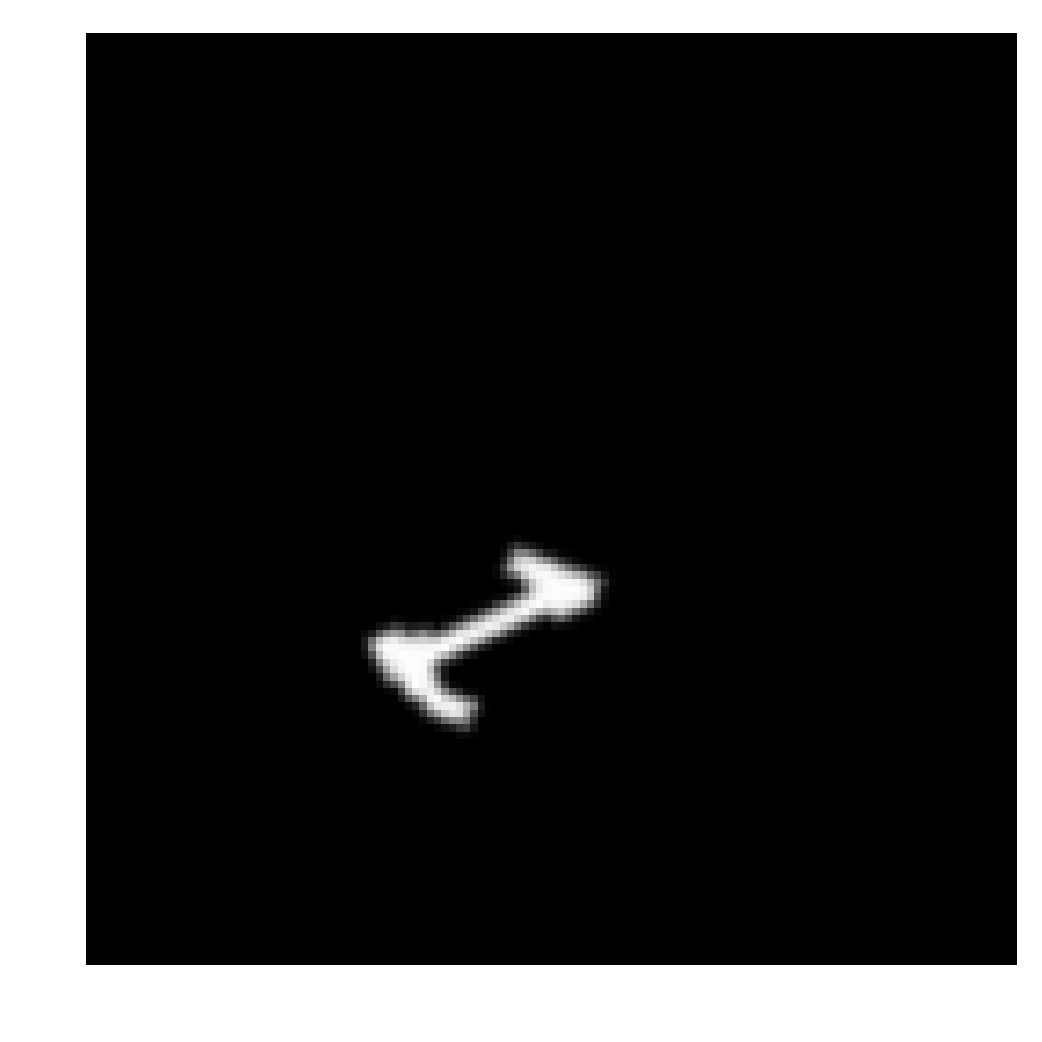}
        & \includegraphics[width=0.2\linewidth, clip=true, trim=40pt 40pt 20pt 20pt]{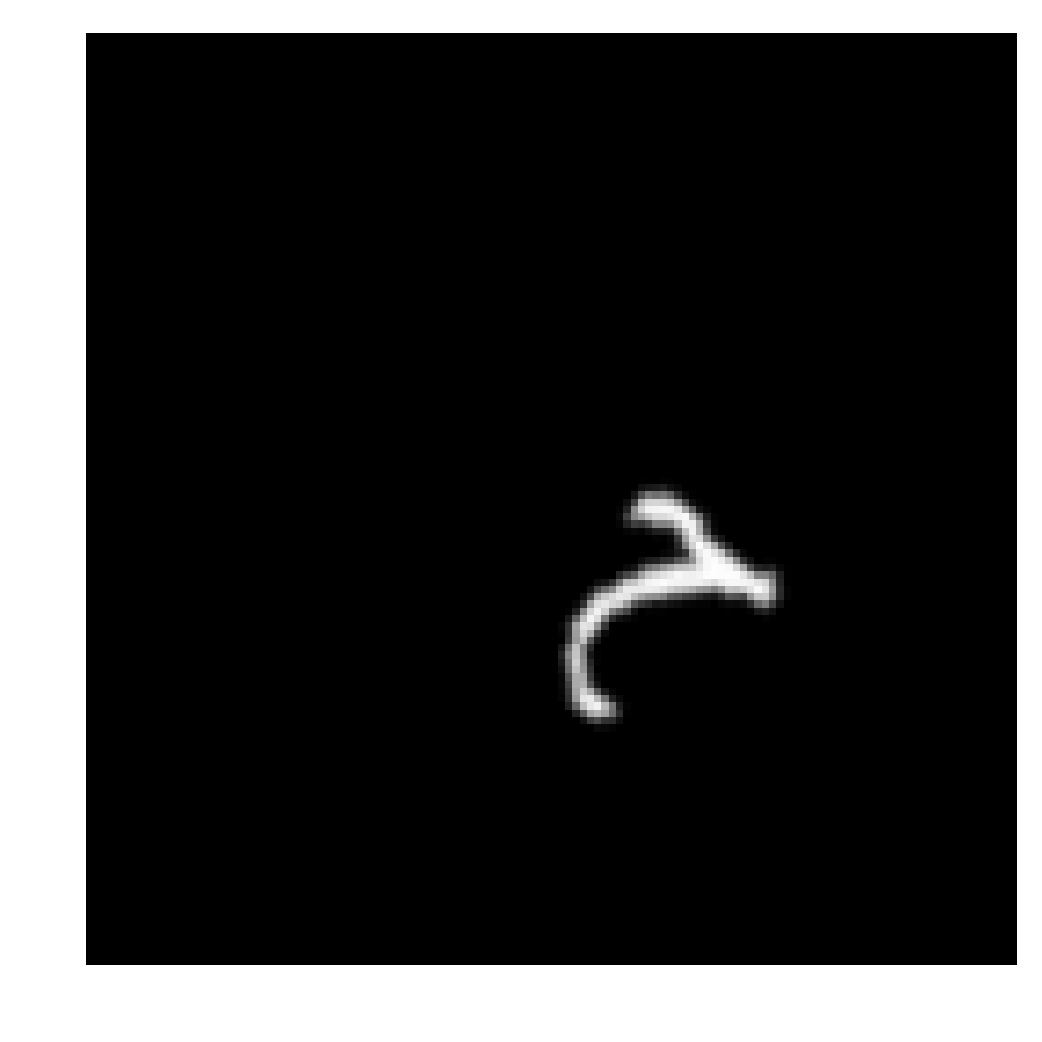}
        & \includegraphics[width=0.2\linewidth, clip=true, trim=40pt 40pt 20pt 20pt]{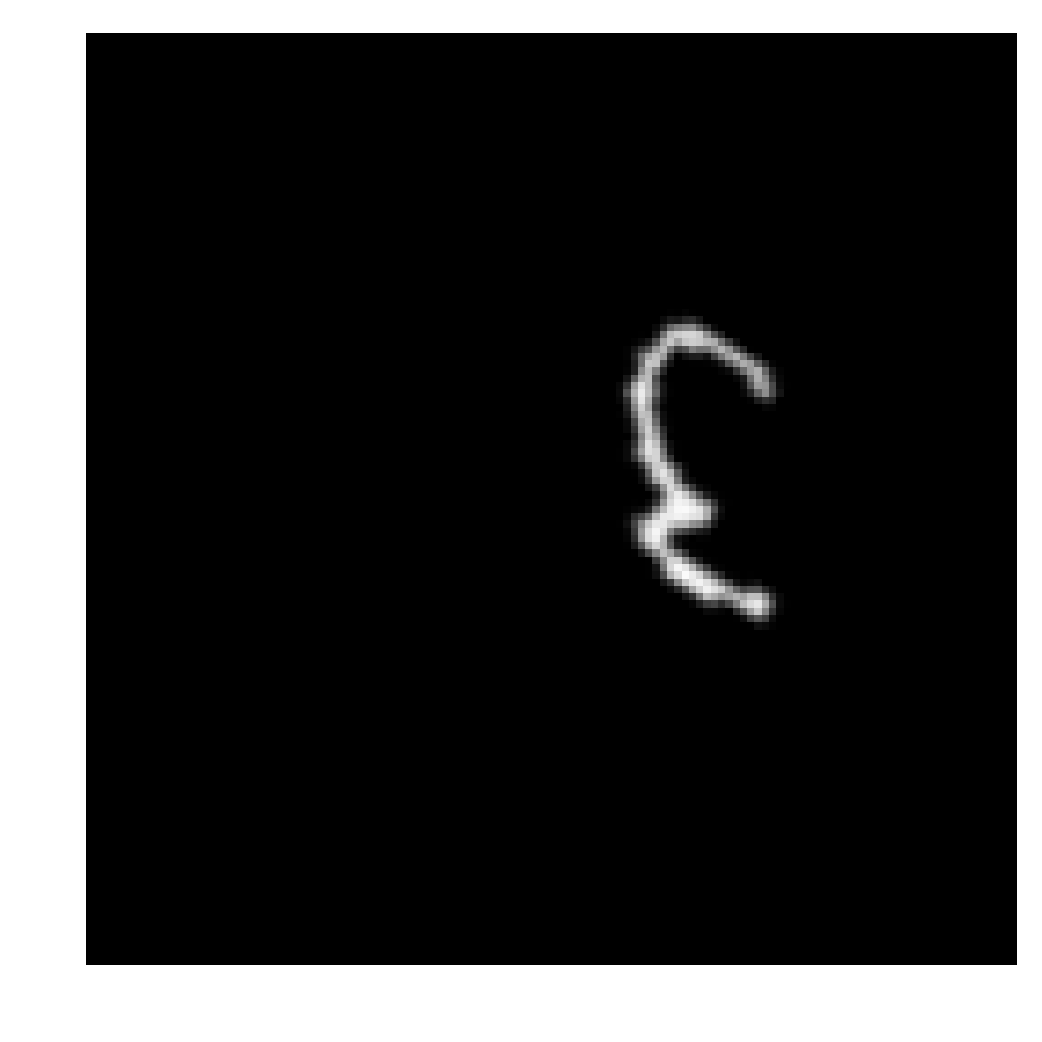}
        & \includegraphics[width=0.2\linewidth, clip=true, trim=40pt 40pt 20pt 20pt]{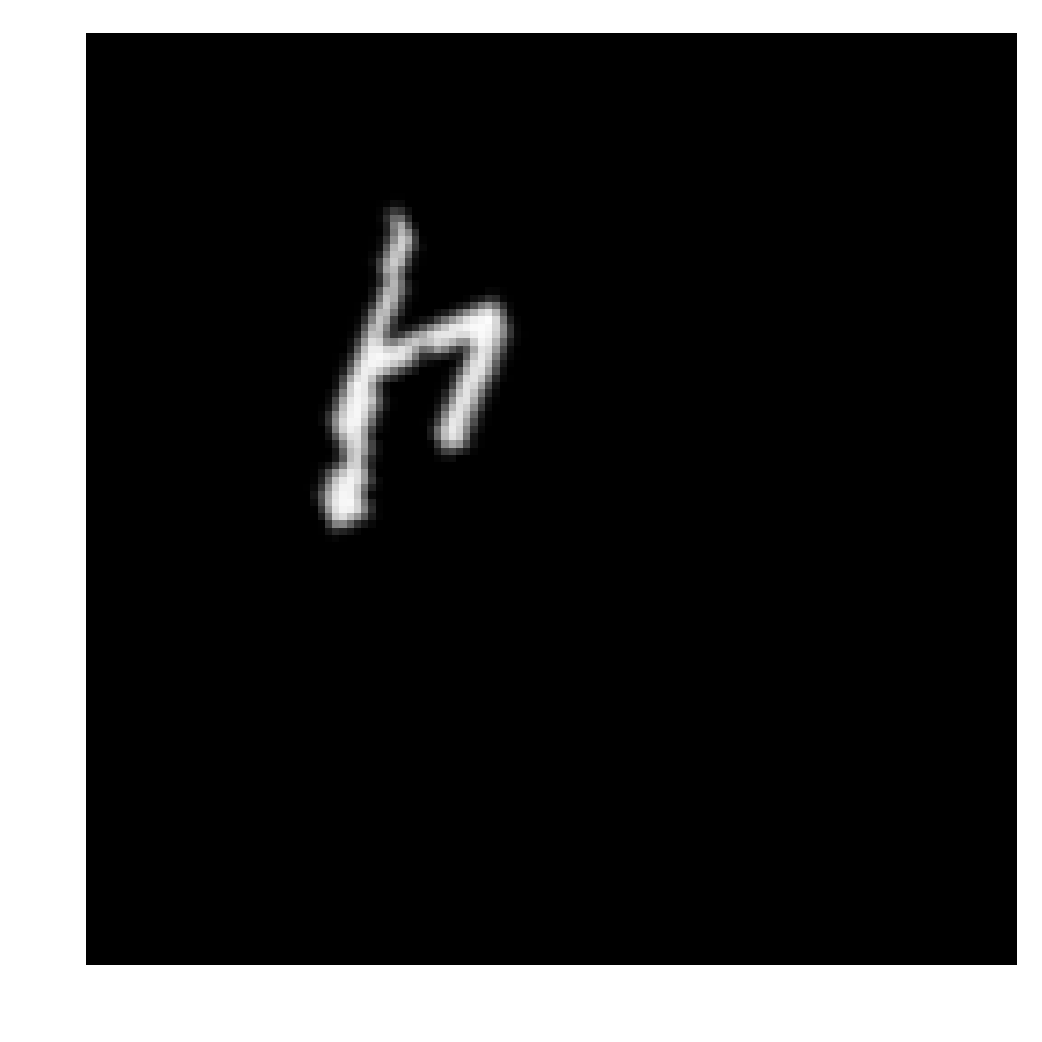}
        & \includegraphics[width=0.2\linewidth, clip=true, trim=40pt 40pt 20pt 20pt]{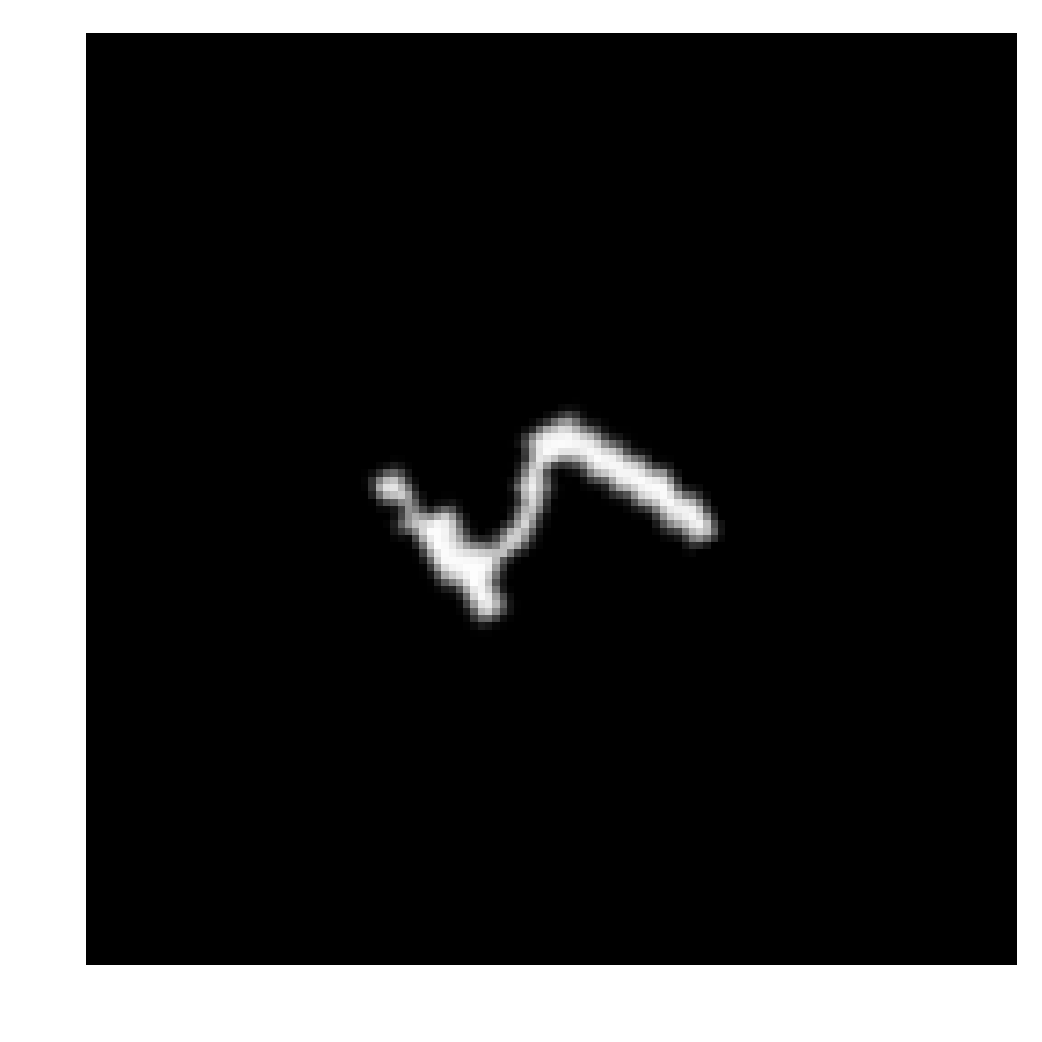}
        & \includegraphics[width=0.2\linewidth, clip=true, trim=40pt 40pt 20pt 20pt]{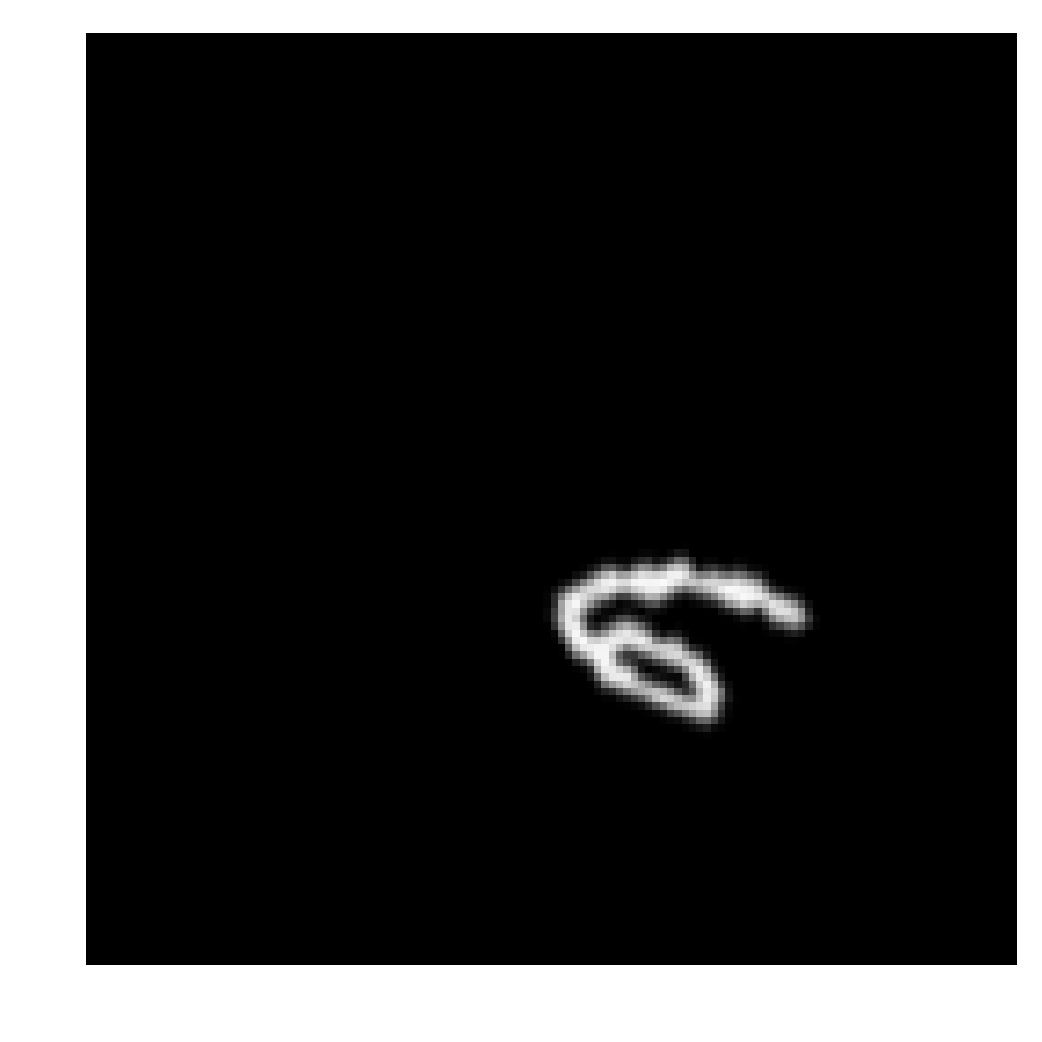}
        & \includegraphics[width=0.2\linewidth, clip=true, trim=40pt 40pt 20pt 20pt]{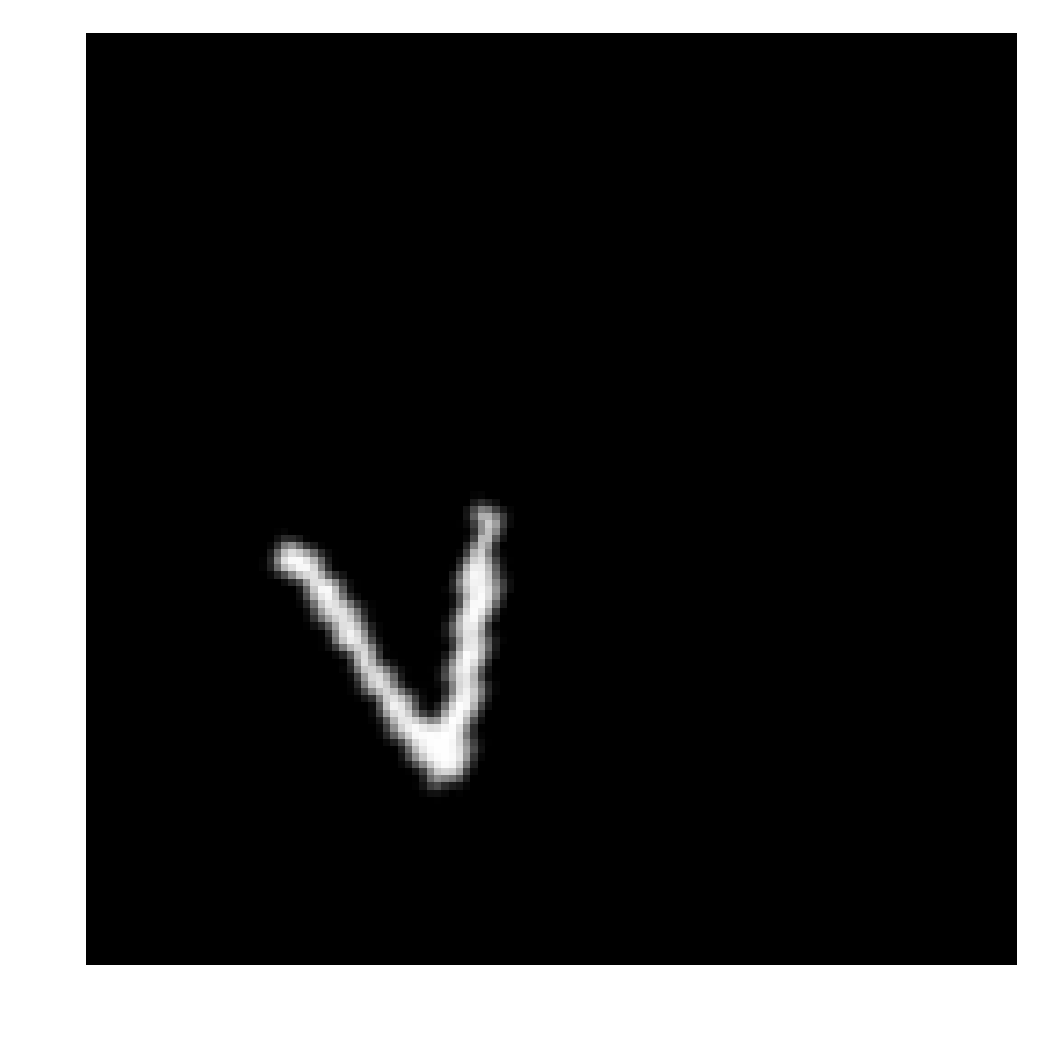}
        & \includegraphics[width=0.2\linewidth, clip=true, trim=40pt 40pt 20pt 20pt]{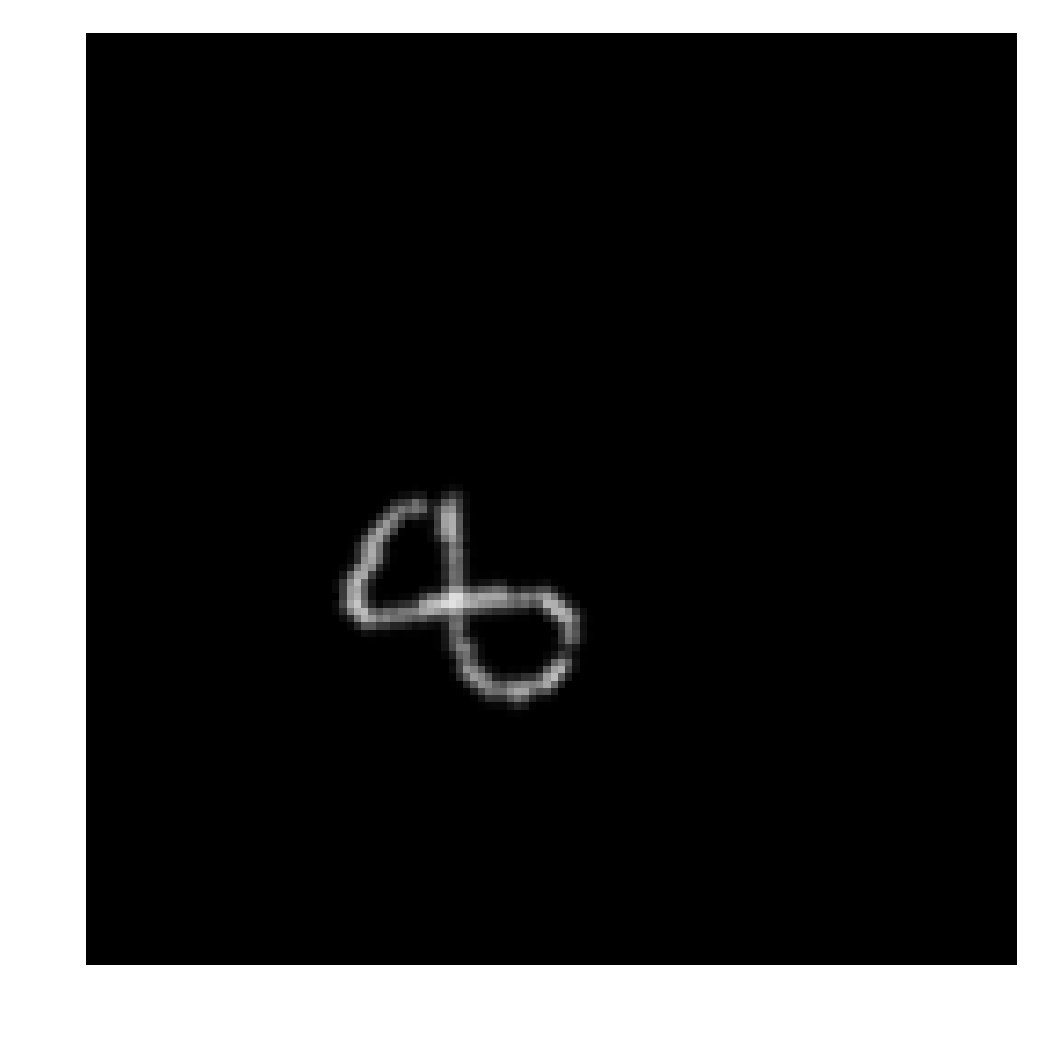}
        & \includegraphics[width=0.2\linewidth, clip=true, trim=40pt 40pt 20pt 20pt]{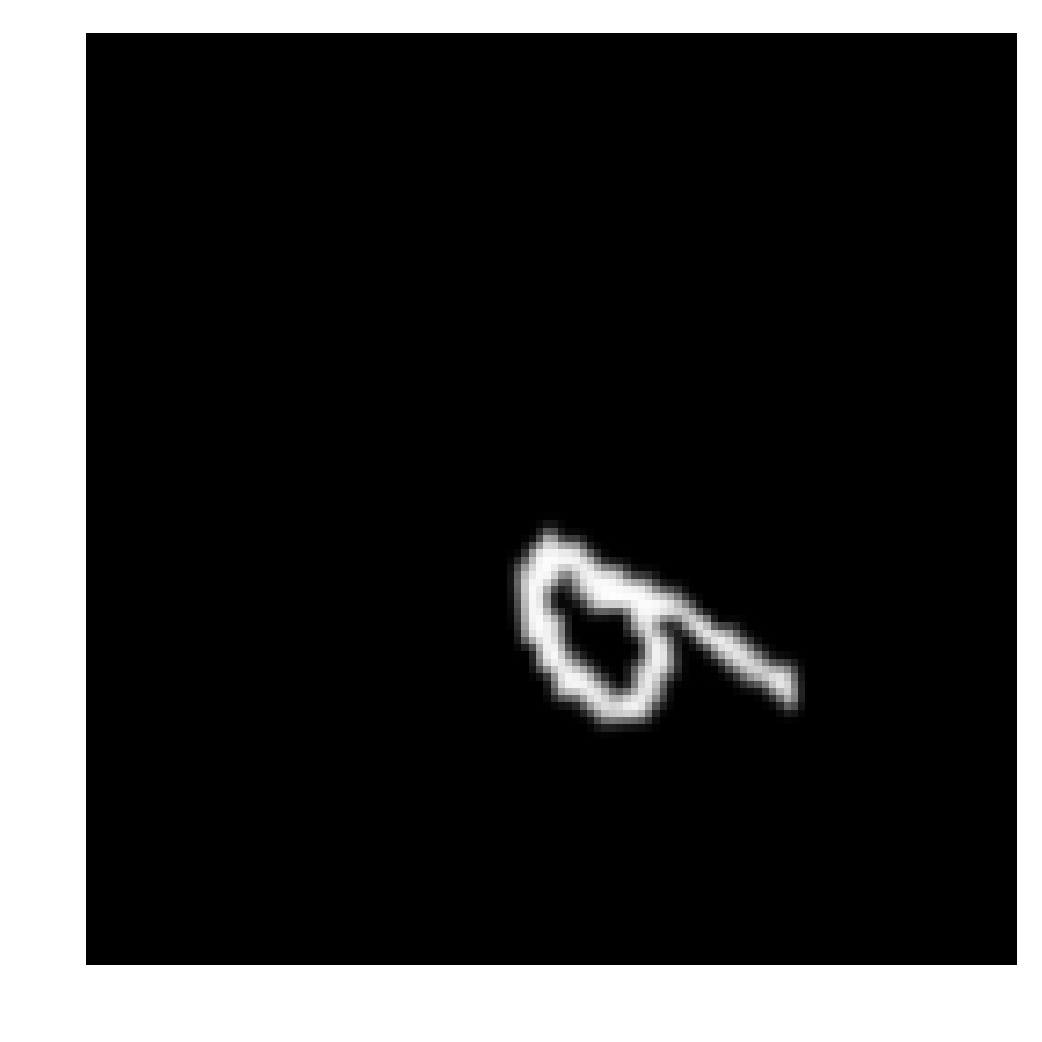} \\
        \includegraphics[width=0.2\linewidth, clip=true, trim=40pt 40pt 20pt 20pt]{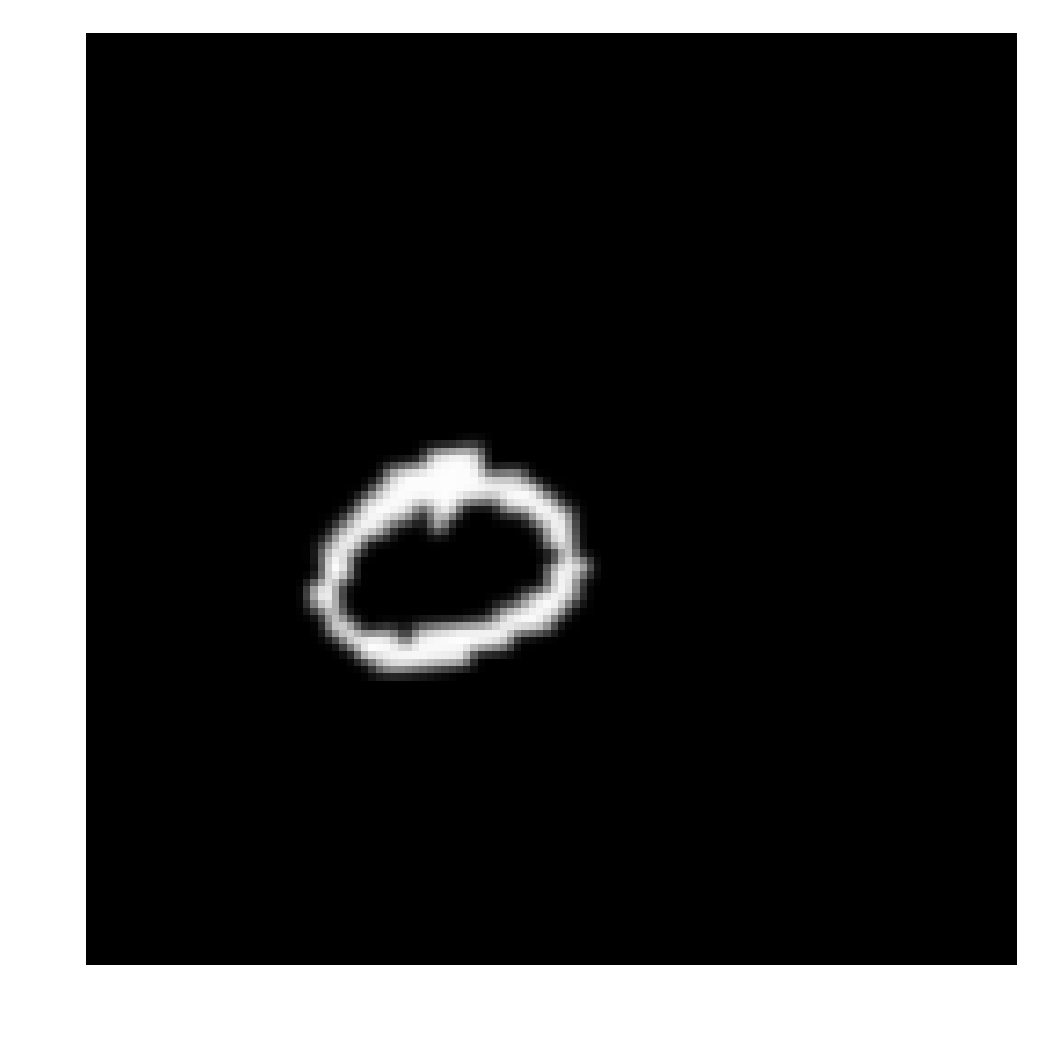}
        & \includegraphics[width=0.2\linewidth, clip=true, trim=40pt 40pt 20pt 20pt]{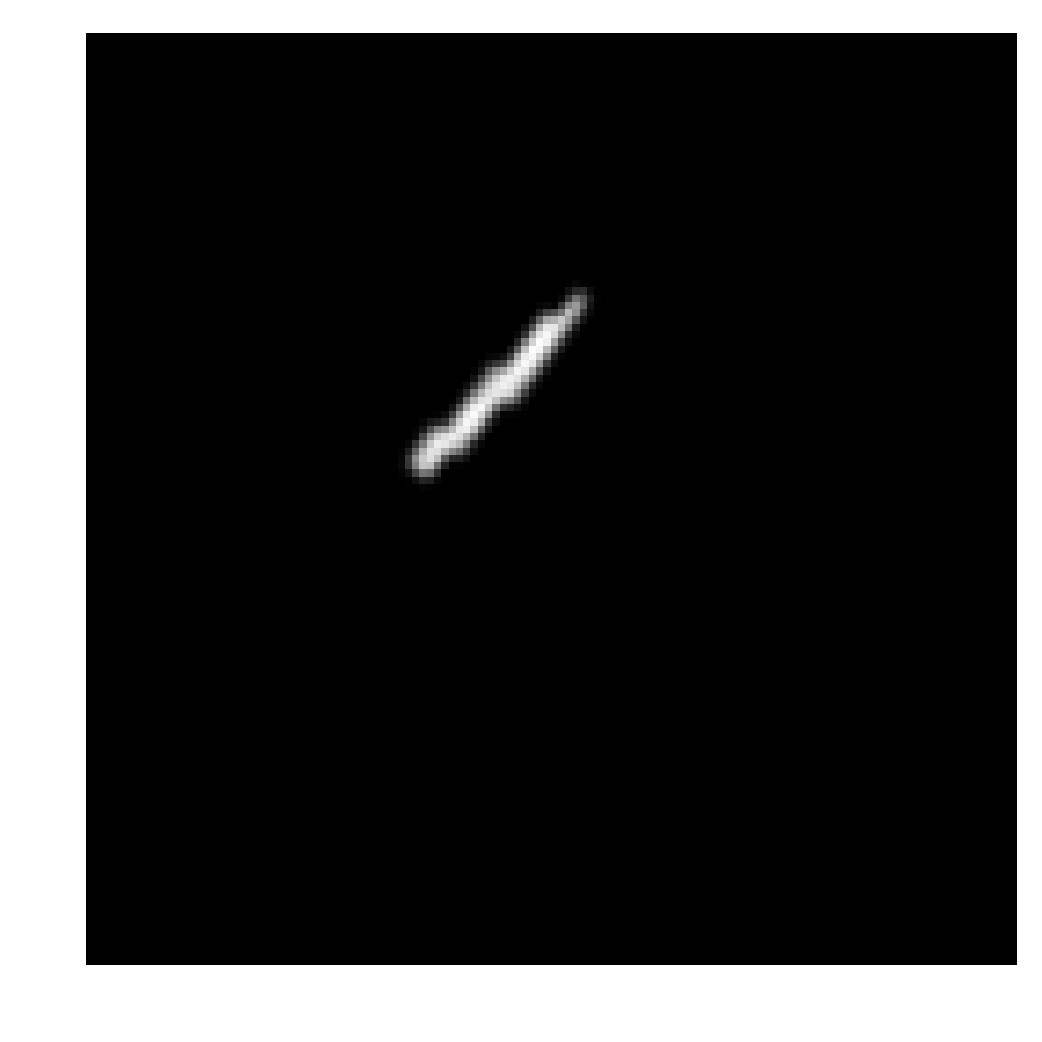}
        & \includegraphics[width=0.2\linewidth, clip=true, trim=40pt 40pt 20pt 20pt]{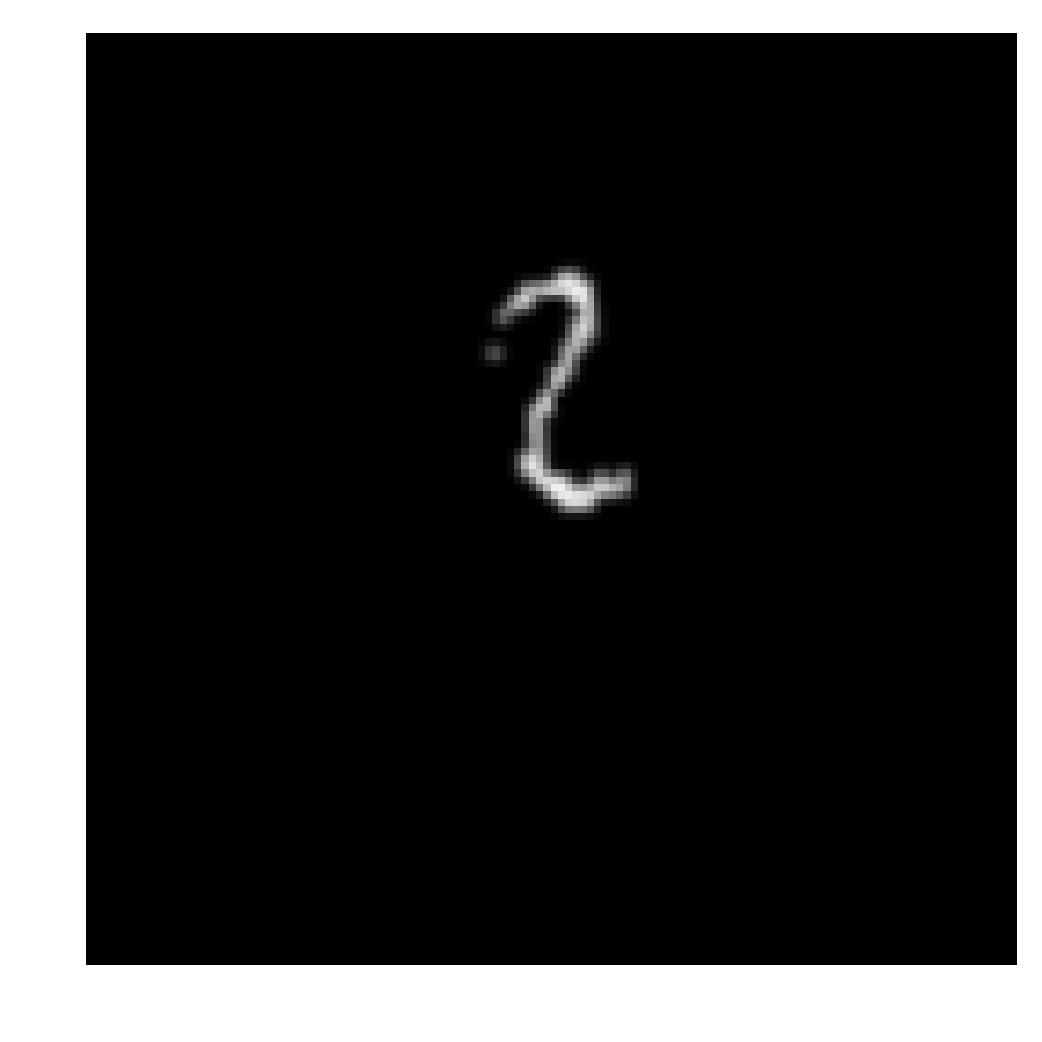}
        & \includegraphics[width=0.2\linewidth, clip=true, trim=40pt 40pt 20pt 20pt]{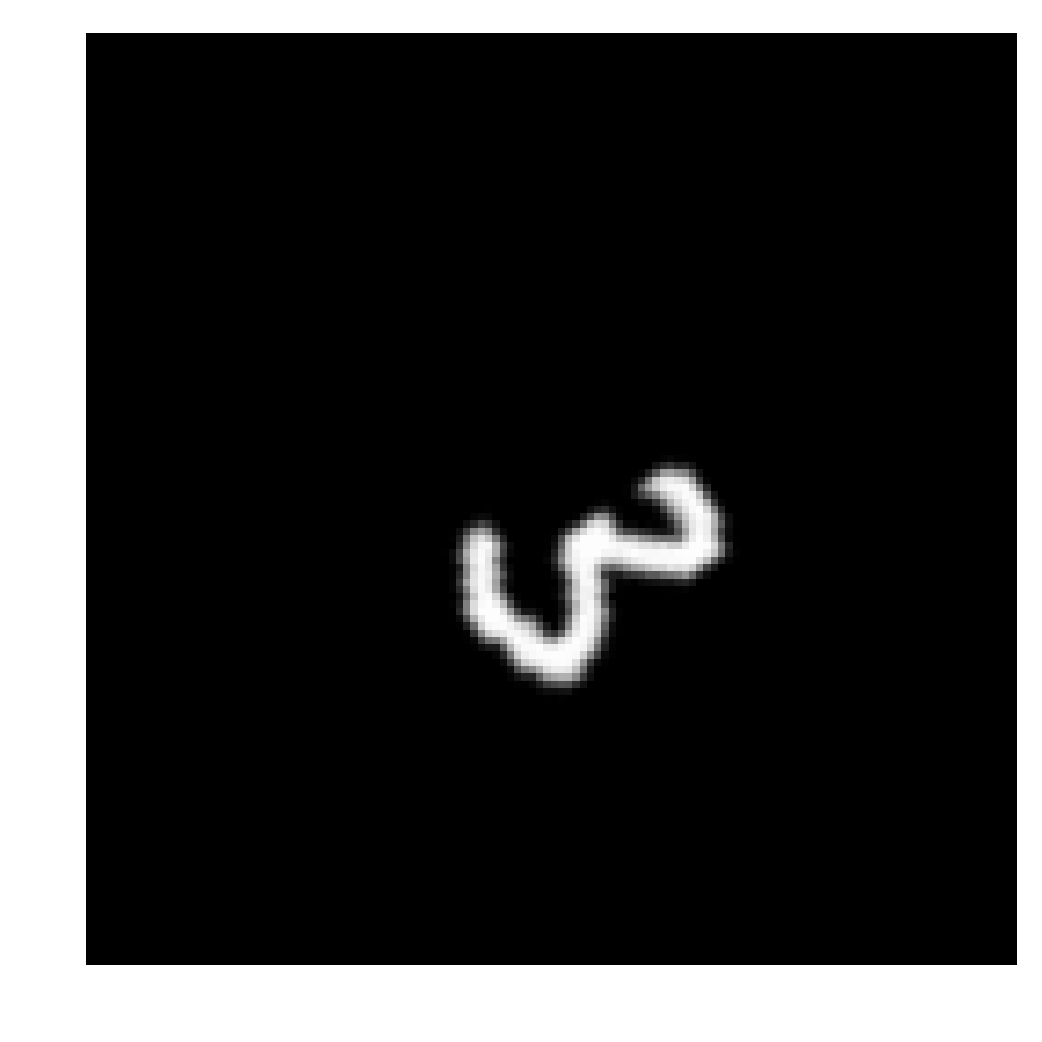}
        & \includegraphics[width=0.2\linewidth, clip=true, trim=40pt 40pt 20pt 20pt]{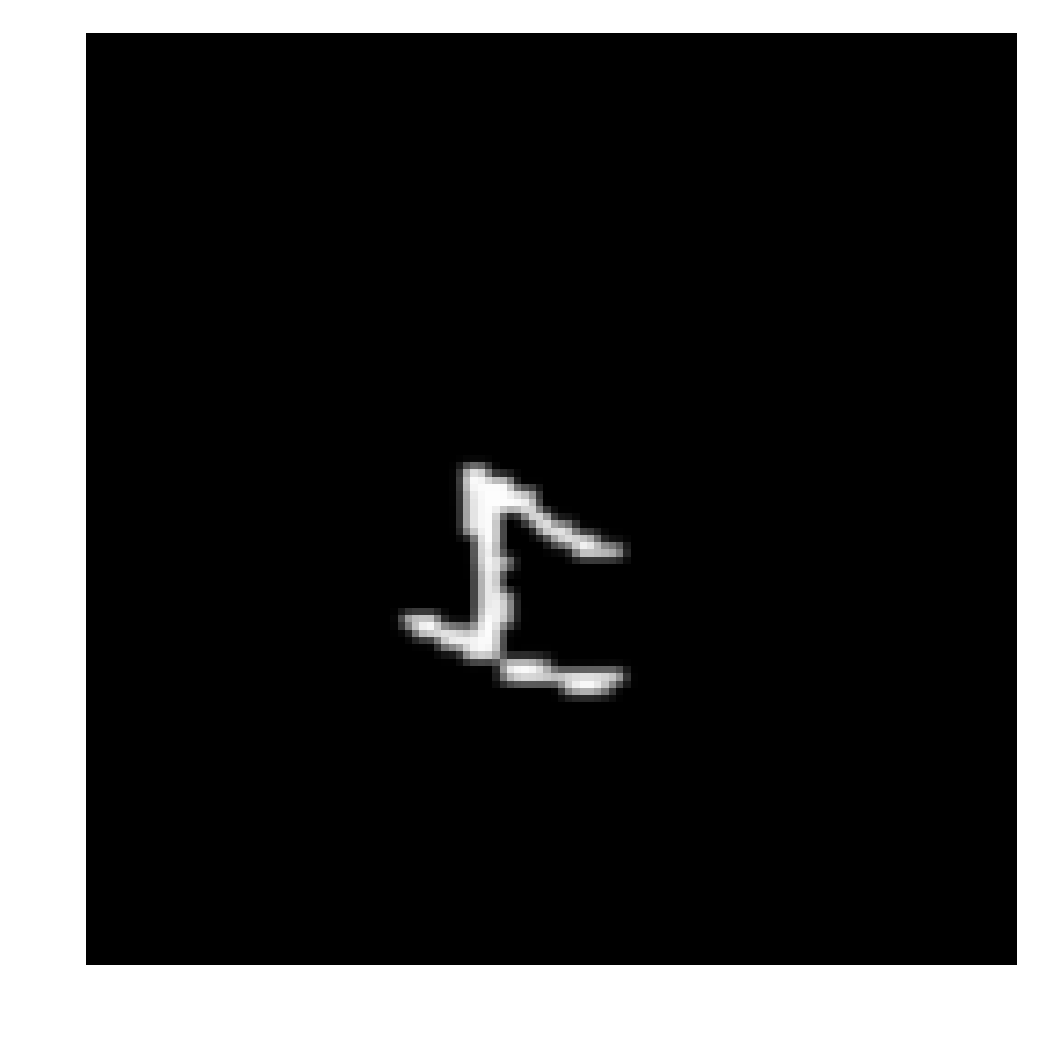}
        & \includegraphics[width=0.2\linewidth, clip=true, trim=40pt 40pt 20pt 20pt]{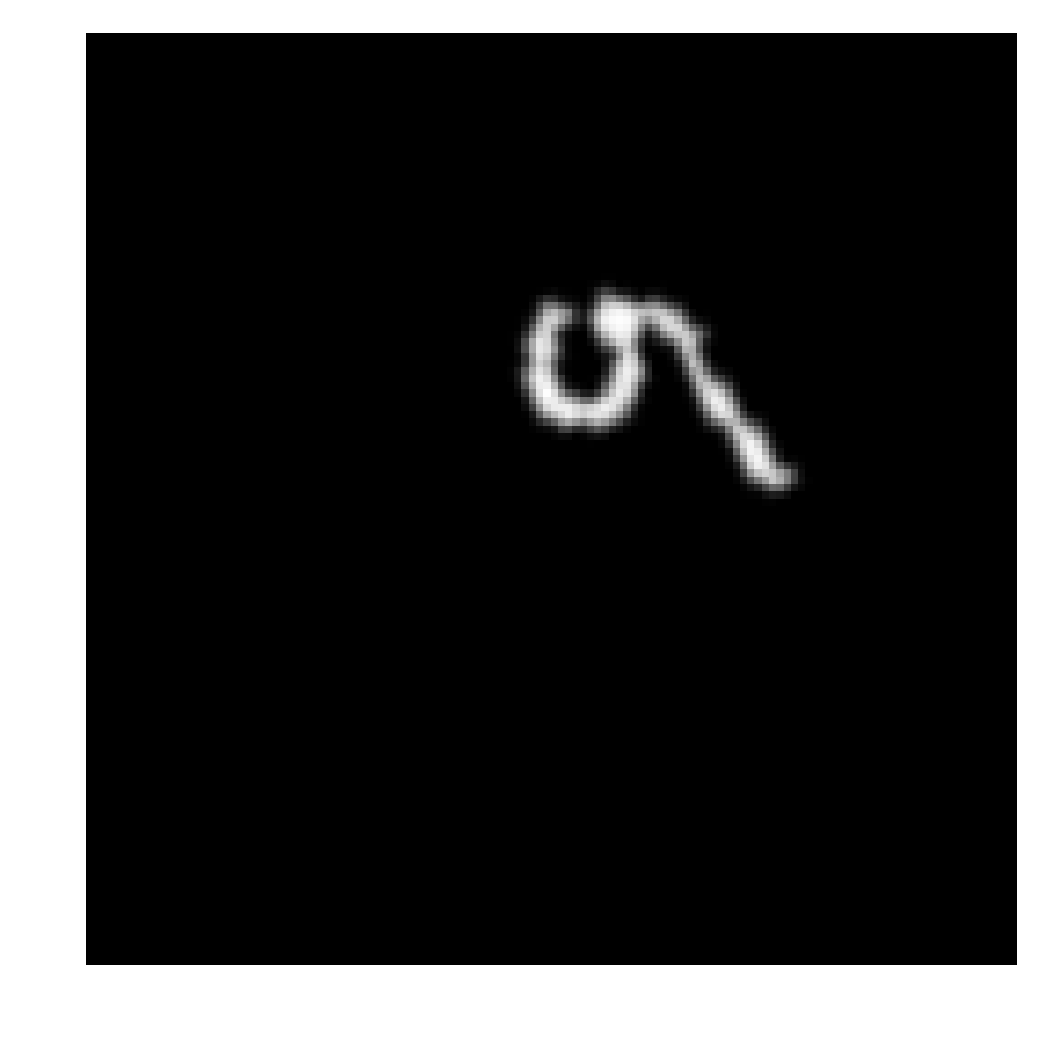}
        & \includegraphics[width=0.2\linewidth, clip=true, trim=40pt 40pt 20pt 20pt]{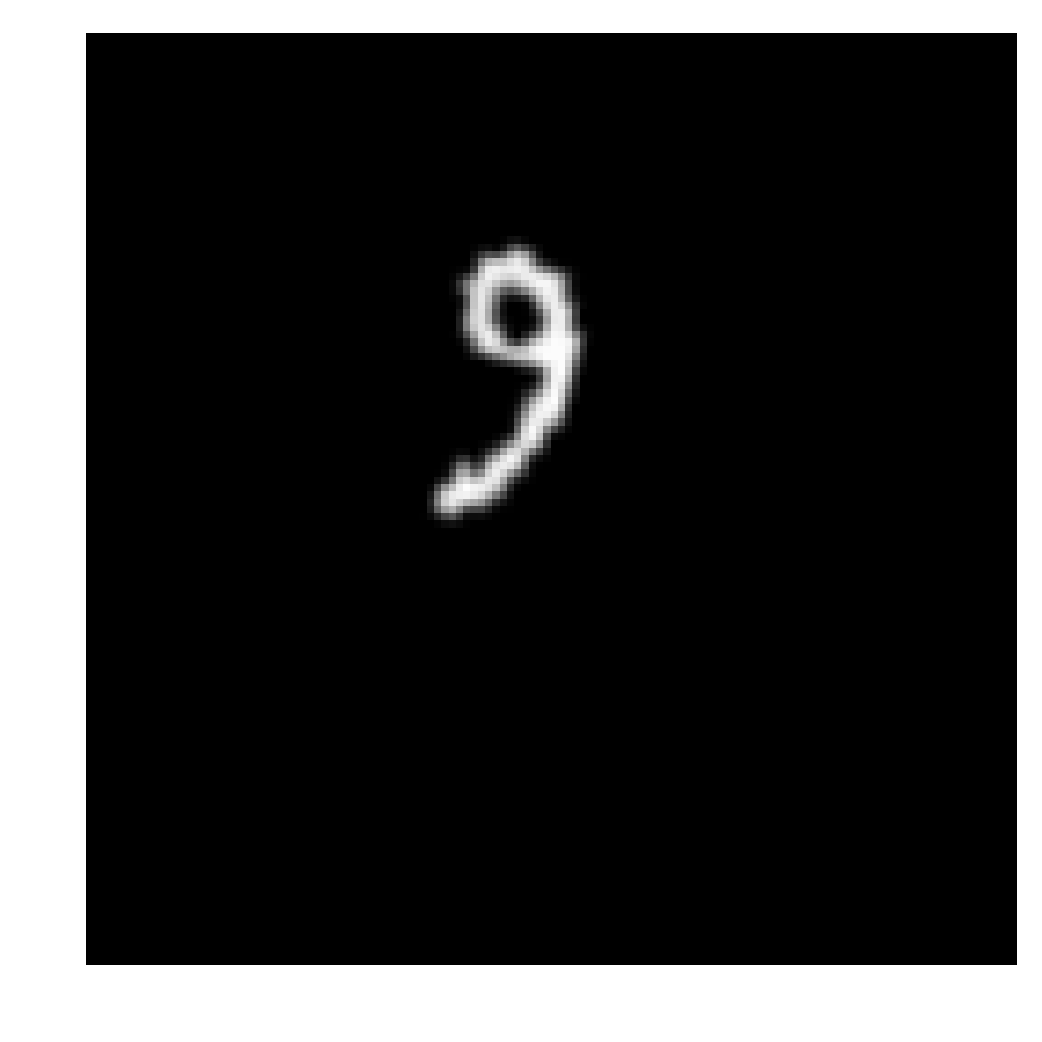}
        & \includegraphics[width=0.2\linewidth, clip=true, trim=40pt 40pt 20pt 20pt]{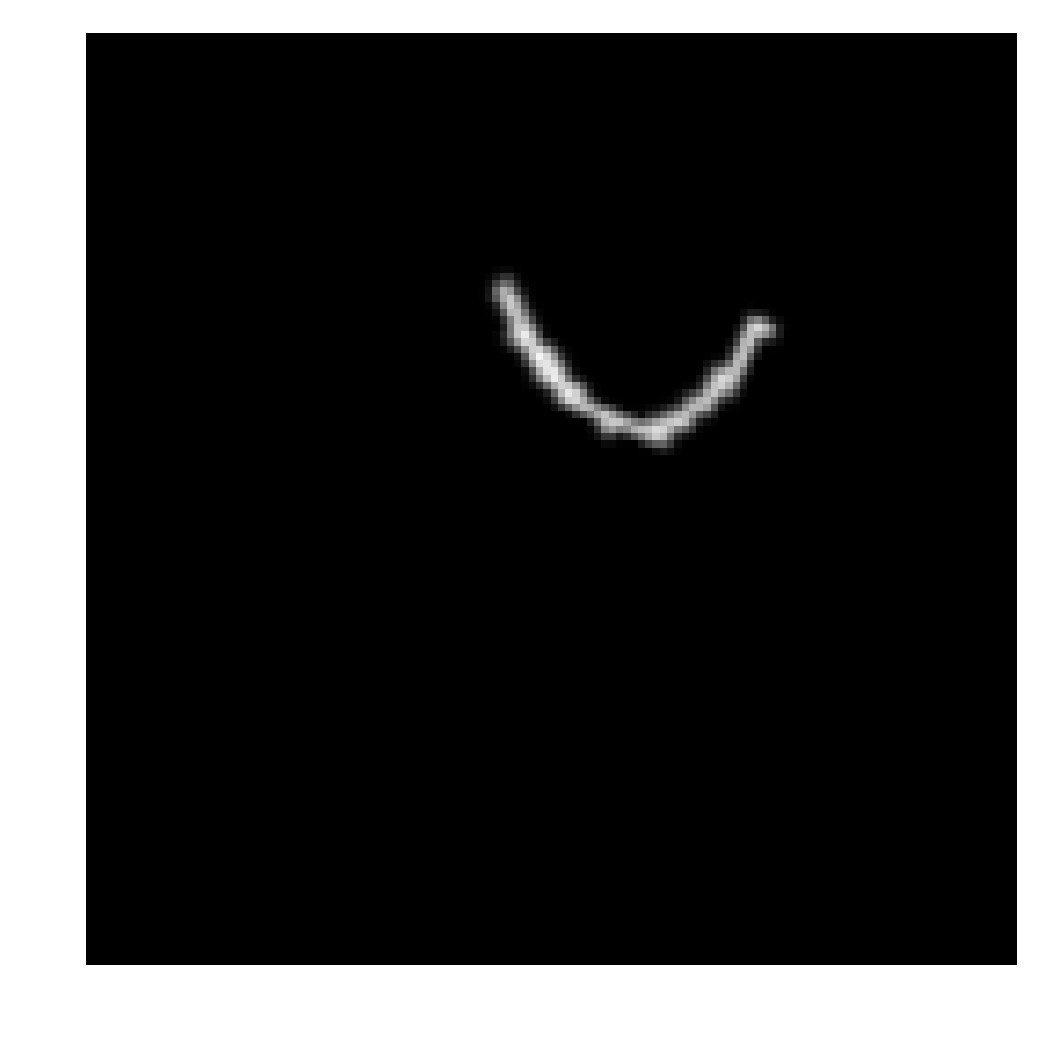}
        & \includegraphics[width=0.2\linewidth, clip=true, trim=40pt 40pt 20pt 20pt]{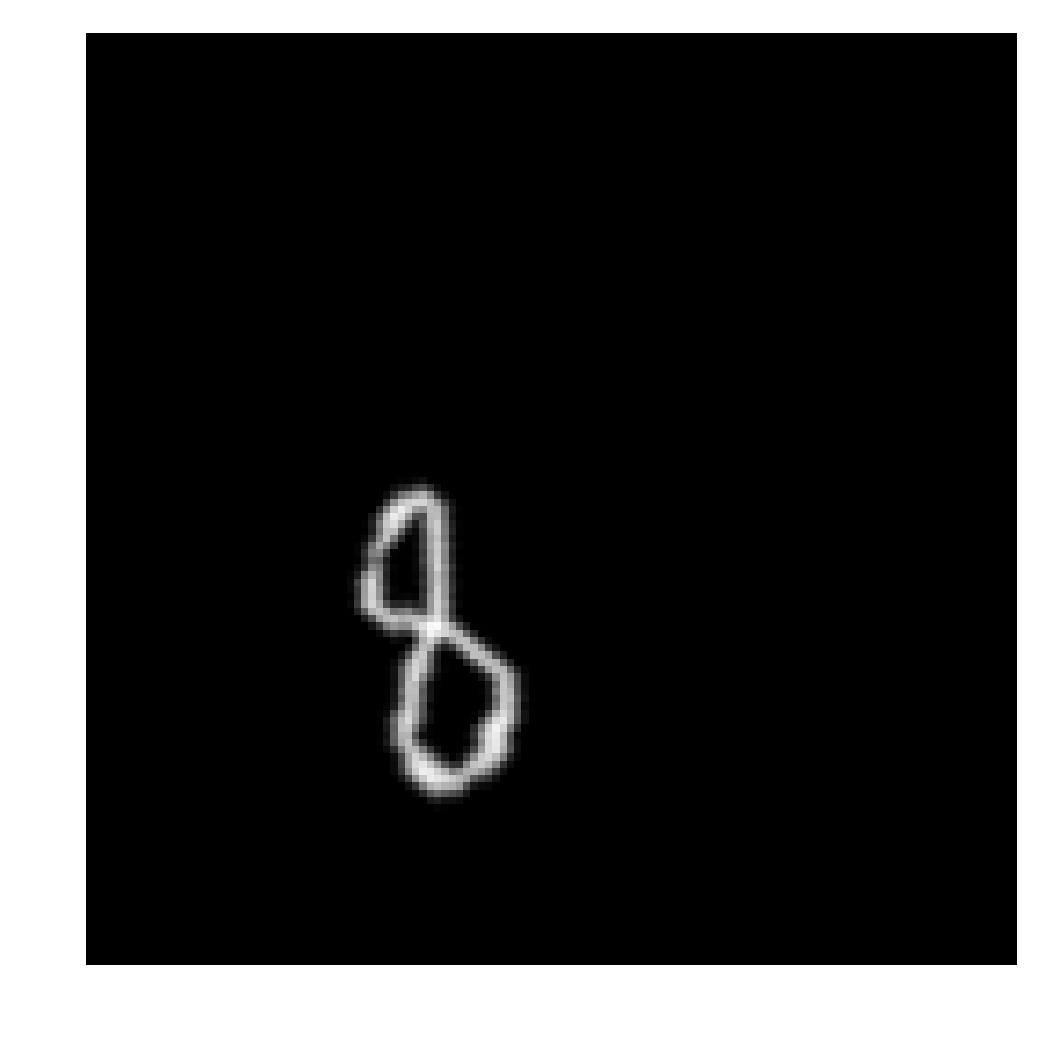}
        & \includegraphics[width=0.2\linewidth, clip=true, trim=40pt 40pt 20pt 20pt]{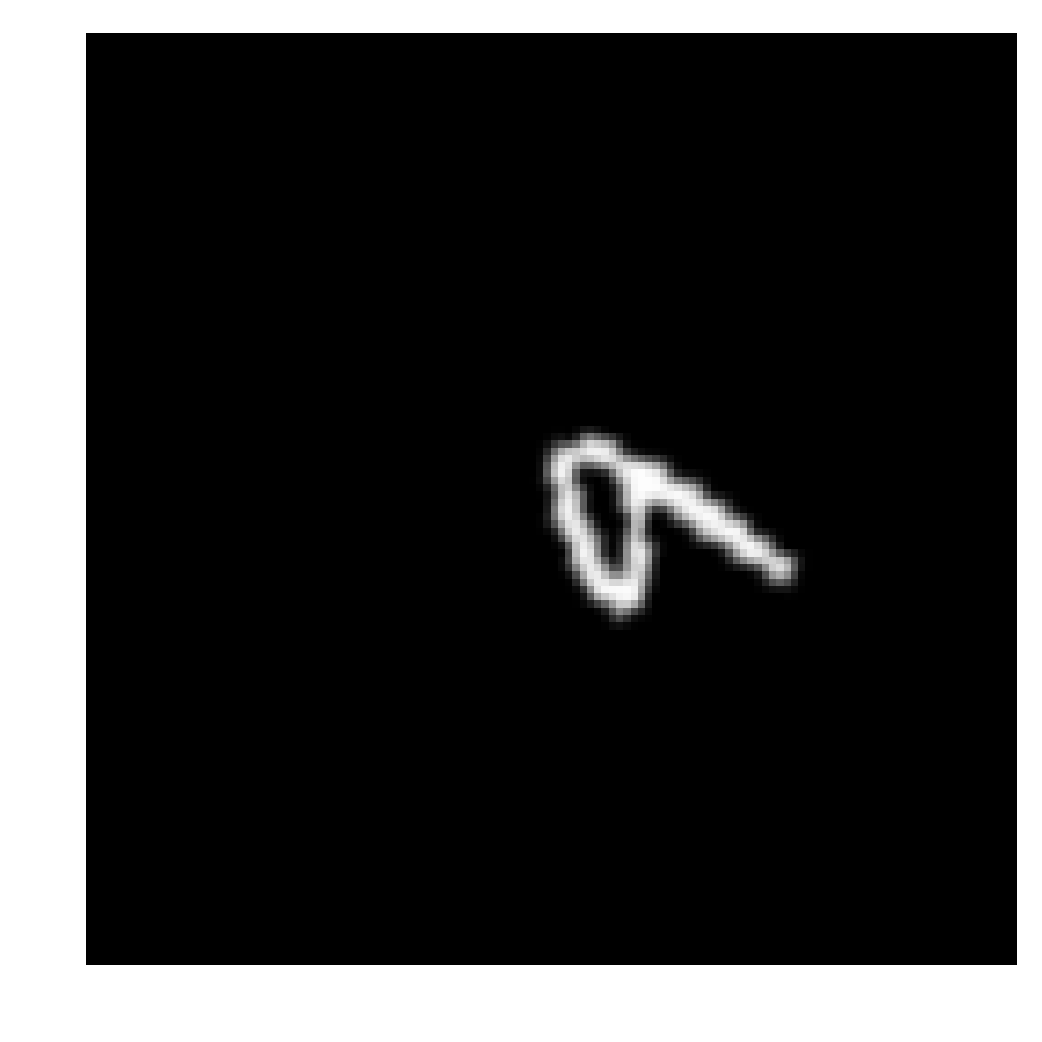}
    \end{tabular}}
    \caption{
    Two samples of each class (zero to nine)
    of the LinMNIST dataset, 
    generated by randomly rotating, 
    anisotropically scaling
    and shifting
    a random MNIST sample of the corresponding class.
    Here, 
    rotation angles are in $[0^{\circ},360^{\circ}]$,
    scalings in $[0.75,1.0]$
    and shifts in $[-20,20]$.
    The image resolution is
    $128 \times 128$ pixels.
    }
    \label{fig:linmnist}
\end{figure}

\begin{table}
    \caption{
    $k$-NN classification accuracies
    (mean plus/minus standard deviation)
    for the LinMNIST dataset with class size $500$
    generated by anisotropic scaling in $[0.75,1.0]$,
    rotation angles in $[0^\circ, 360^\circ]$
    and pixel shifts in $[-20,20]$.
    For each number of training samples,
    the best result is highlighted.
    }
    \resizebox{\linewidth}{!}{%
    \begin{tabular}{l l  l l l l}
        \toprule
        \# training samples & $k$ & Euclidean & R-CDT & \mNRCDT{} & \aNRCDT{} \\
        \midrule
        \multirow{3}{*}{11}
        & 1 
        & $0.1222\pm0.0078$ 
        & $0.1279\pm0.0067$ 
        & $0.5541\pm0.0134$
        & $0.3899\pm0.0140$ \\
        & 5 
        & $0.1168\pm0.0100$
        & $0.1105\pm0.0037$
        & \bm{$0.5591\pm0.0148$}
        & $0.4005\pm0.0135$ \\
        & 11
        & $0.1135\pm0.0097$ 
        & $0.1053\pm0.0038$
        & $0.5445\pm0.0184$
        & $0.4015\pm0.0131$ \\
        \midrule
        \multirow{3}{*}{25} 
        & 1 
        & $0.1387\pm0.0067$
        & $0.1434\pm0.0067$
        & $0.6010\pm0.0122$
        & $0.4208\pm0.0098$ \\
        & 5 
        & $0.1278\pm0.0113$
        & $0.1260\pm0.0060$
        & \bm{$0.6147\pm0.0094$}
        & $0.4402\pm0.0111$ \\
        & 11 
        & $0.1229\pm0.0121$
        & $0.1156\pm0.0047$
        & $0.6132\pm0.0100$
        & $0.4453\pm0.0107$ \\
        \midrule
        \multirow{3}{*}{50} 
        & 1 
        & $0.1597\pm0.0052$
        & $0.1619\pm0.0056$
        & $0.6283\pm0.0083$ 
        & $0.4467\pm0.0076$ \\
        & 5 
        & $0.1416\pm0.0069$
        & $0.1457\pm0.0047$
        & $0.6524\pm0.0084$ 
        & $0.4722\pm0.0067$ \\
        & 11 
        & $0.1318\pm0.0089$
        & $0.1336\pm0.0054$
        & \bm{$0.6524\pm0.0075$} 
        & $0.4776\pm0.0089$ \\
        \bottomrule
    \end{tabular}}
    \label{tab:linmnist_kNN}
\end{table}

The classification accuracies
are reported in Table~\ref{tab:linmnist_kNN}.
We observe that \mNRCDT{} clearly outperforms
the other approaches followed by \aNRCDT{}
and reaches a $k$-NN classification
accuracy of $65\%$
when using $k = 11$
and $50$ training samples.
In contrast to this,
the Euclidean and R-CDT representation
perform on the level of random guessing.
Inspecting the confusion maps
in Figure~\ref{fig:linmnist_conf_map}
reveals that both \mNRCDT{} and \aNRCDT{}
nearly perfectly classify classes $0$ and $1$.
The $4$s are better classified in \aNRCDT{} space.
For all other numbers,
the representation in \mNRCDT{} space
leads to better classifications.

\begin{figure}
    \resizebox{\linewidth}{!}{%
    \scriptsize
    \begin{tabular}{c @{\hspace{3pt}} c @{\hspace{10pt}} c @{\hspace{3pt}} c @{\hspace{10pt}} c @{\hspace{3pt}} c @{\hspace{3pt}} c}
        \multicolumn{2}{c}{11 training samples} 
        & \multicolumn{2}{c}{25 training samples}
        & \multicolumn{2}{c}{50 training samples} \\ 
        \mNRCDT{} & \aNRCDT{} & \mNRCDT{} & \aNRCDT{} & \mNRCDT{} & \aNRCDT{} \\
        \includegraphics[width=0.167\linewidth, clip=true, trim=10pt 10pt 70pt 0pt]{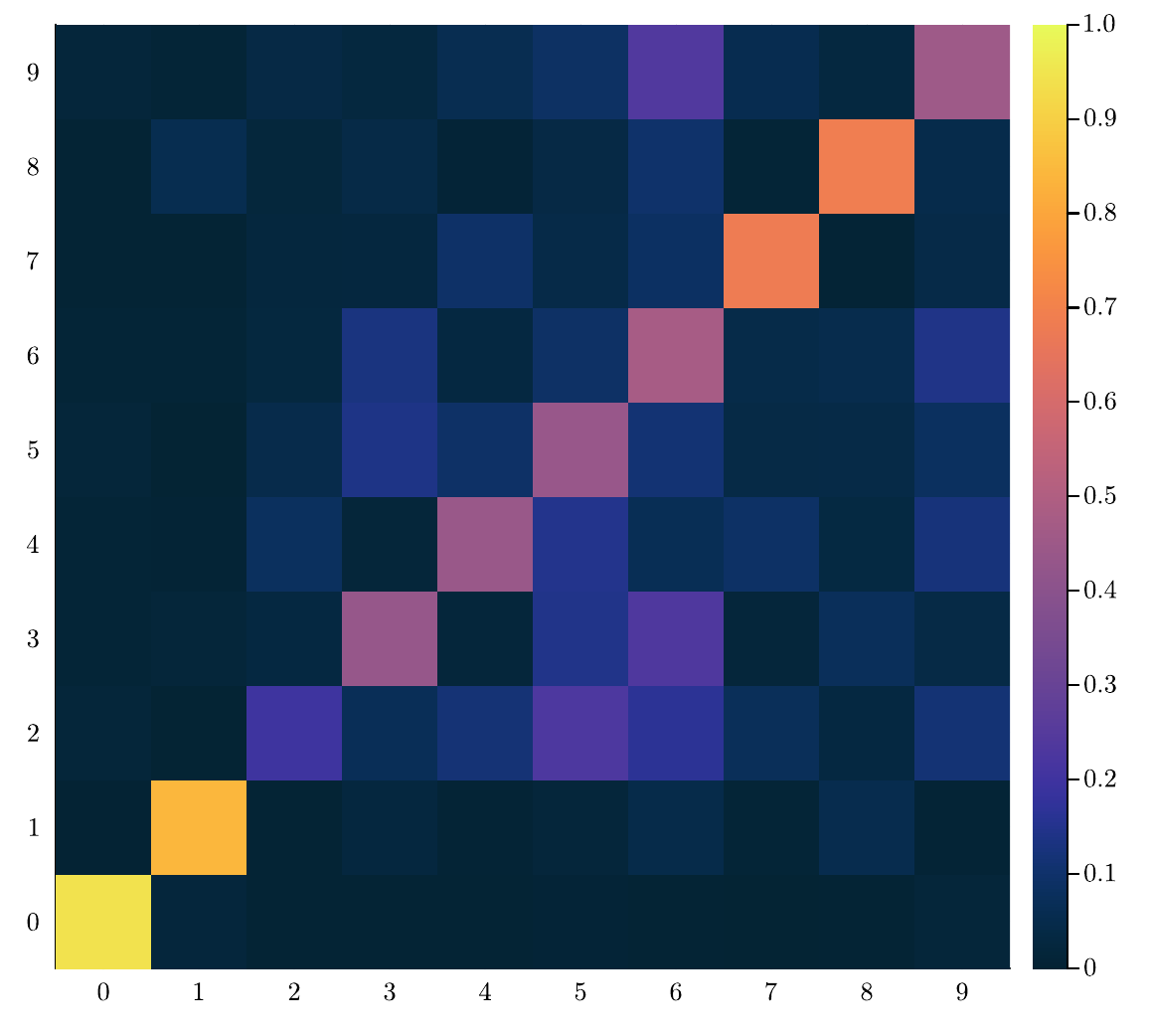}
        & \includegraphics[width=0.167\linewidth, clip=true, trim=10pt 10pt 70pt 0pt]{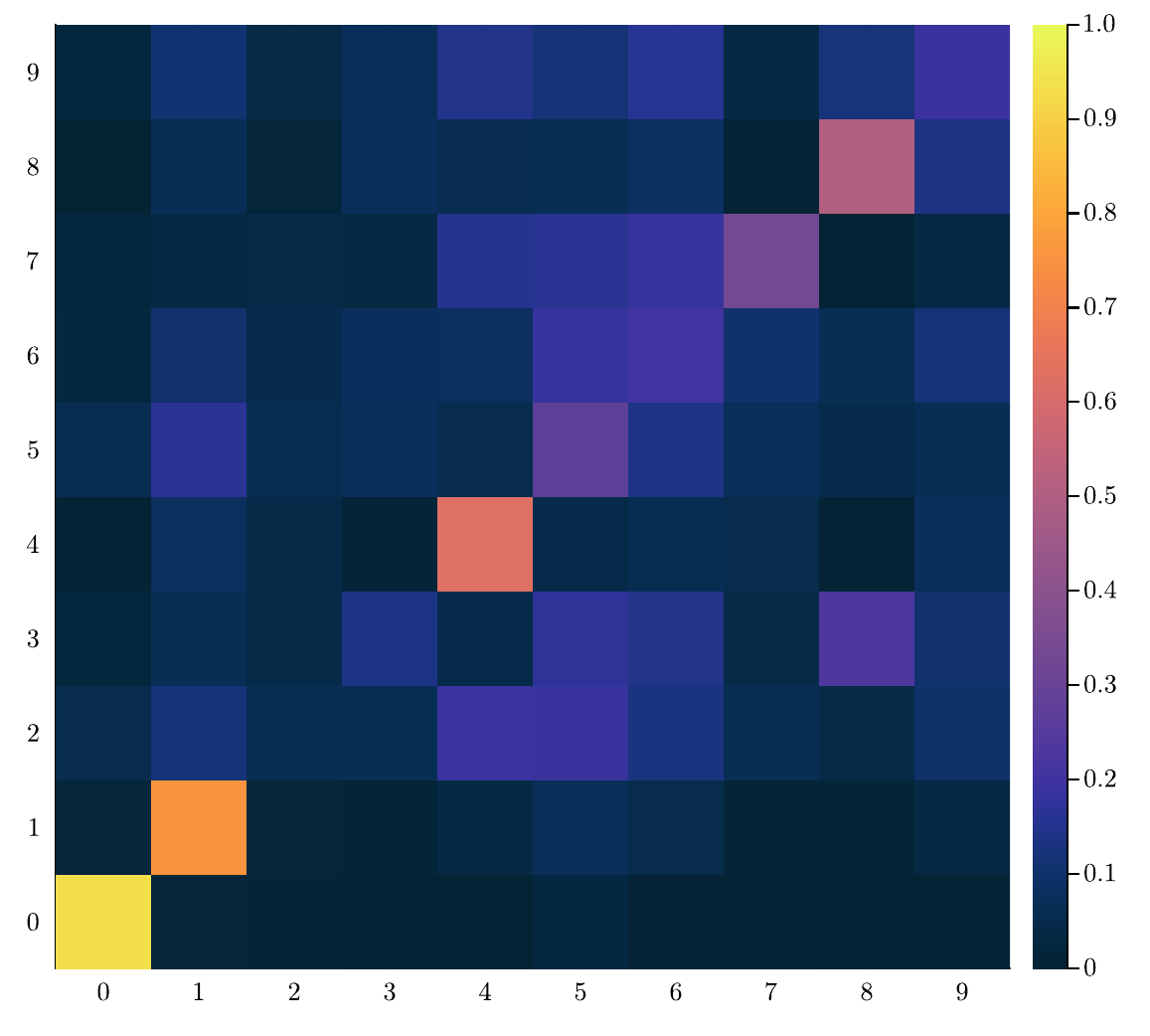}
        & \includegraphics[width=0.167\linewidth, clip=true, trim=10pt 10pt 70pt 0pt]{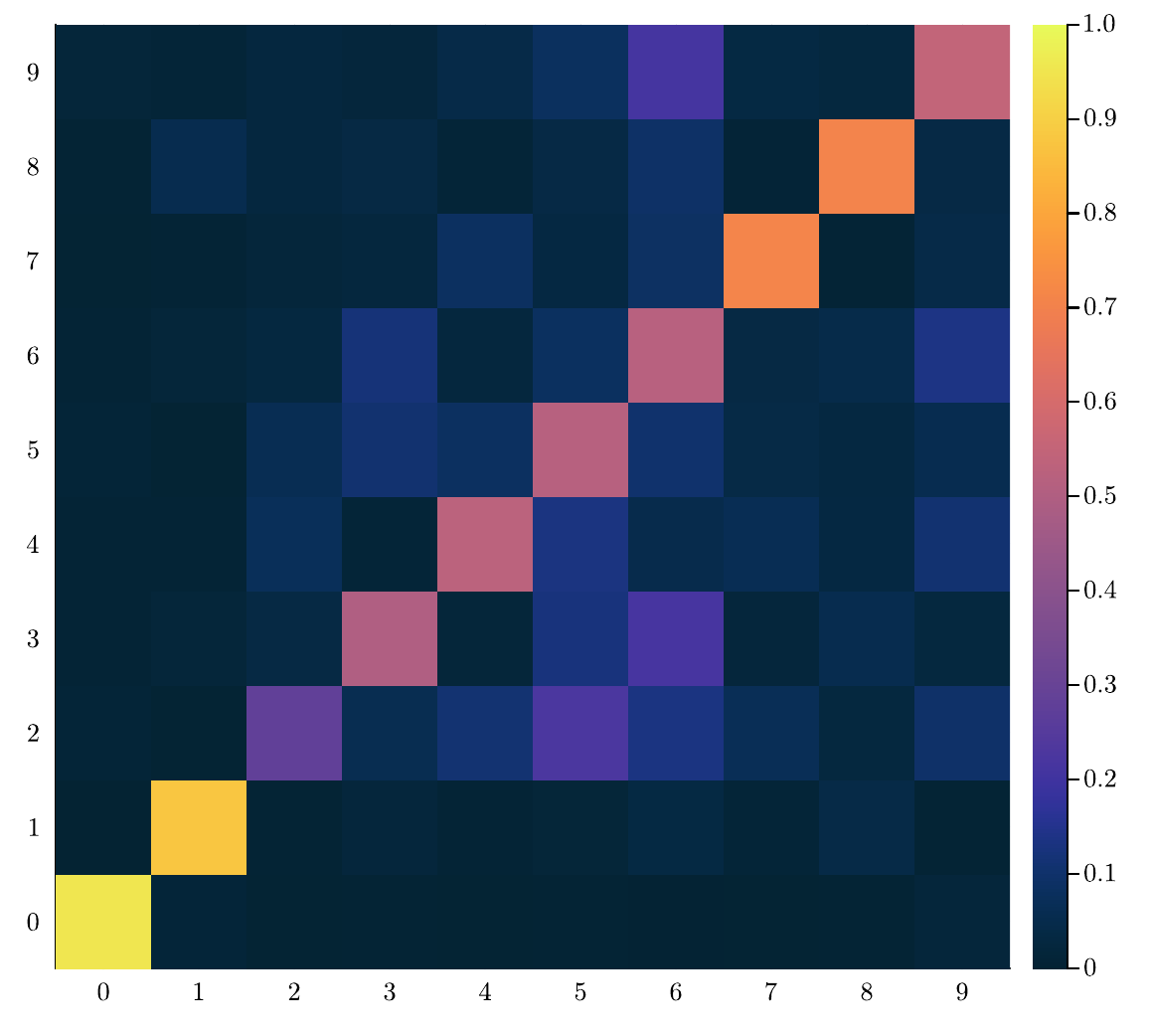}
        & \includegraphics[width=0.167\linewidth, clip=true, trim=10pt 10pt 70pt 0pt]{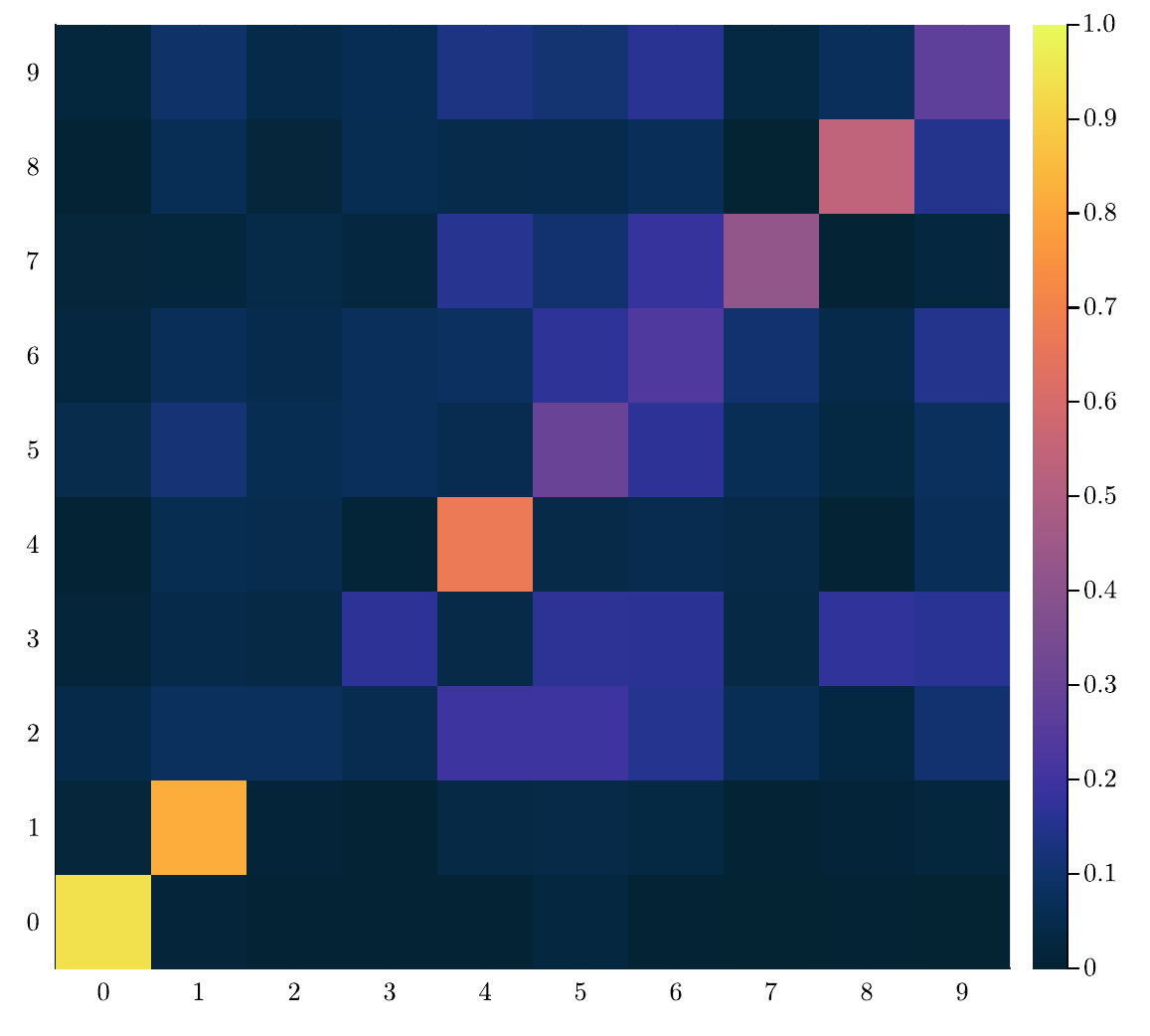}
        & \includegraphics[width=0.167\linewidth, clip=true, trim=10pt 10pt 70pt 0pt]{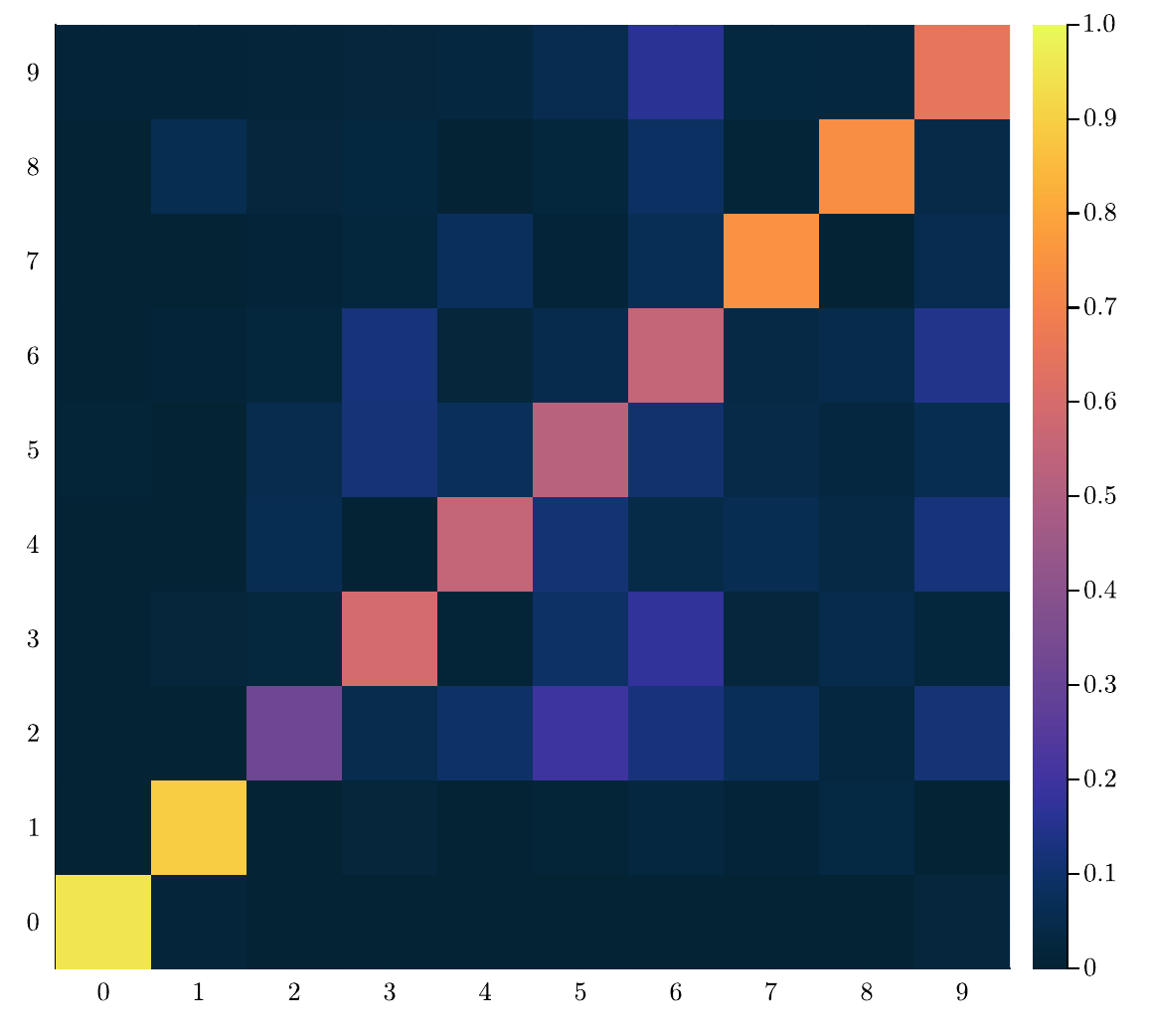}
        & \includegraphics[width=0.167\linewidth, clip=true, trim=10pt 10pt 70pt 0pt]{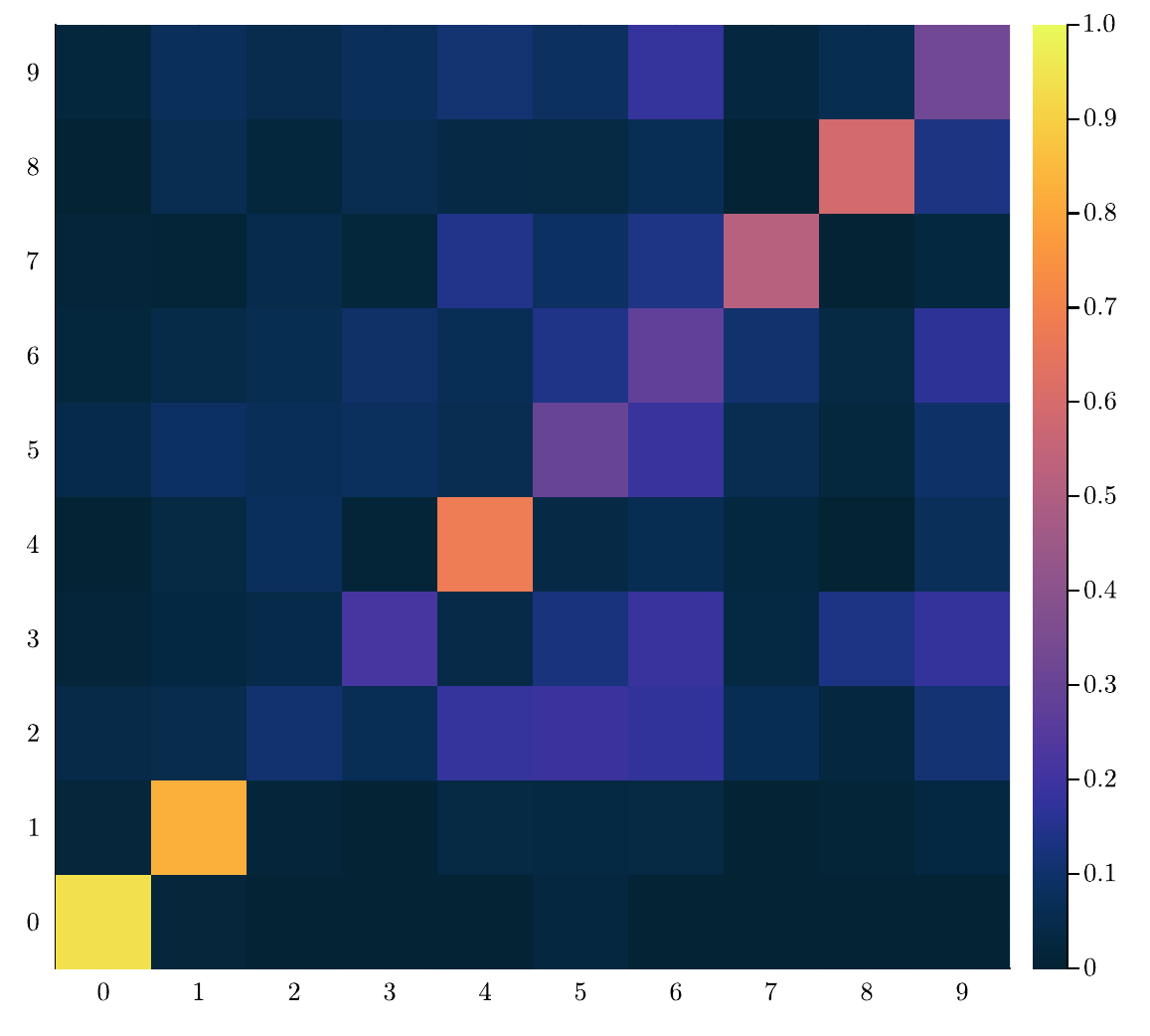} 
        &\includegraphics[height=0.172\linewidth, clip=true, trim=10pt 10pt 20pt 5pt]{Images/colorbar_thermal.pdf}\\
        $5$-NN & $11$-NN & $5$-NN & $11$-NN & $11$-NN & $11$-NN
    \end{tabular}}%
    \caption{
    Confusion maps for the best
    \mNRCDT{} and \aNRCDT{} results
    per number of training samples.
    }
    \label{fig:linmnist_conf_map}
\end{figure}

\section{Conclusion}
\label{sec:conclusion}

In this paper,
we continued our study
of the \mNRCDT{} introduced in~\cite{Beckmann2024a}
to enhance separability
by analysing its robustness
with respect to non-affine perturbations.
In addition,
we introduced the \aNRCDT{},
which shows an improved numerical performance
especially in the presence of impulsive noise.
In future works,
we wish to design refined approaches
for handling more severe and realistic noise models,
specifically for our motivating pattern recognition task in filigranology.
In so doing,
we aim to surmount the gap between mathematical theory and practice.
In particular,
our \mNRCDT{} and \aNRCDT{} feature representations
are to be used in real-world applications
like a fully automated watermark recognition and classification pipeline. 
Moreover,
we wish to improve our estimates
especially regarding the \mNRCDT{}
as this appears to perform
better than suggested by our theoretical findings.
In our proof-of-concept experiments,
we relied on NN classifiers 
to show the superior performance of the proposed feature extractors
in comparison with the Euclidean and R-CDT approach.
As in \cite{Shifat-E-Rabbi2021},
the NN classifier may be replaced by
more advanced classification methods
to improve the shown results.
Finally,
besides classification tasks,
we want to study the impact of our feature extractors
on clustering problems.

\vfill

%
% ---- Bibliography ----
%

\end{document}